\newtheorem{thm}{Theorem}[section]
\newtheorem{lemma}[thm]{Lemma}
\newtheorem{prop}[thm]{Proposition}
\newtheorem{cor}[thm]{Corollary}
\newtheorem{df}[thm]{Definition}
\newtheoremstyle{example}{\topsep}{\topsep}%
     {}%         Body font
     {}%         Indent amount (empty = no indent, \parindent = para indent)
     {\bfseries}% Thm head font
     {.}%        Punctuation after thm head
     {2pt}%     Space after thm head (\newline = linebreak)
     {\thmname{#1}\thmnumber{ #2}\thmnote{ #3}}%         Thm head spec
   \theoremstyle{example}
\newtheorem{rem}[thm]{Remark}
\newtheorem{ex}[thm]{Example}
\numberwithin{equation}{section}
\def\a{{\alpha}}
\def\eps{{\varepsilon}}
\def\l{{\lambda}}
\def\NN{\mathbb{N}}
\def\CC{\mathbb{C}}
\def\FF{\mathbb{F}}
\def\GG{\mathbb{G}}
\def\PP{\mathbb{P}}
\def\RR{\mathbb{R}}
\def\ZZ{\mathbb{Z}}
\def\QQ{\mathbb{Q}}
\def\aen{\mathfrak{a}}
\def\Aen{\mathfrak{A}}
\def\Ac{\mathcal{A}}
\def\Bc{\mathcal{B}}
\def\Cc{\mathcal{C}}
\def\Dc{\mathcal{D}}
\def\Ec{\mathcal{E}}
\def\Fc{\mathcal{F}}
\def\Gc{\mathcal{G}}
\def\Lc{\mathcal{L}}
\def\Mc{\mathcal{M}}
\def\Nc{\mathcal{N}}
\def\Hc{\mathcal{H}}
\def\Oc{\mathcal{O}}
\def\Pc{\mathcal{P}}
\def\Qc{\mathcal{Q}}
\def\Sc{\mathcal{S}}
\def\Tc{\mathcal{T}}
\def\on{\operatorname}
\def\et{{\rm{et}}}
\def\Hom{{\operatorname{Hom}}}
\def\un{{\operatorname{un}}}
\def\Weil{{\rm{Weil}}}
\def\tb{\mathbf{t}}
\def\homo{\operatorname{\it \mathscr{H}\kern-.25em om}}
\def\ext{\operatorname{\it \mathscr{E}\kern-.25em xt}}
\def\edo{\operatorname{\it \mathscr{E}\kern-.25em nd}}
\def\der{\operatorname{\it \mathscr{D}\kern-.25em er}}
\def\*{\circledast}
\def\Coh{{\mathcal{C}oh}}
\def\qcom{{\on{qcom}}}
\def\LS{{\mathcal{LS}}}
\def\coh{{\operatorname{coh}\nolimits}}
\def\Bun{{\mathcal{B}un}}
\def\Bunn{{\operatorname{Bun}\nolimits}}
\def\bun{{\operatorname{bun}\nolimits}}
\def\Vect{{\operatorname{Vect}\nolimits}}
\def\Tor{\mathcal{T}ors}
\def\tor{{\operatorname{tors}\nolimits}}
\def\cusp{{\operatorname{cusp}\nolimits}}
\def\Herm{{\operatorname{Herm}\nolimits}}
\def\Hom{\operatorname{Hom}\nolimits}
\def\LHom{\operatorname{LHom}\nolimits}
\def\GL{\operatorname{GL}\nolimits}
\def\End{\operatorname{End}\nolimits}
\def\diag{{\operatorname{diag}\nolimits}}
\def\ord{{\operatorname{ord}\nolimits}}
\def\supp{{\operatorname{supp}\nolimits}}
\def\op{{\operatorname{op}\nolimits}}
\def\Ann{{\operatorname{Ann}\nolimits}}
\def\Spec{\operatorname{Spec}\nolimits}
\def\Aut{\operatorname{Aut}\nolimits}
\def\Gal{\operatorname{Gal}\nolimits}
\def\Ext{\operatorname{Ext}\nolimits}
\def\Res{\operatorname{Res}\nolimits}
\def\Fr{\operatorname{Fr}\nolimits}
\def\deg{{\operatorname{deg}\nolimits}}
\def\length{{\operatorname{\ell}\nolimits}}
\def\Pic{{\operatorname{Pic}\nolimits}}
\def\rk{\operatorname{rk}\nolimits}
\def\rat{{\operatorname{rat}\nolimits}}
\def\Im{\operatorname{Im}\nolimits}
\def\Id{\operatorname{Id}\nolimits}
\def\tr{\operatorname{tr}\nolimits}
\def\Sym{\operatorname{Sym}\nolimits}
\def\Supp{\operatorname{Supp}\nolimits}
\def\Weil{\operatorname{Weil}\nolimits}
\def\wt{\operatorname{wt}\nolimits}
\def\Mat{\operatorname{Mat}\nolimits}
\def\Eis{\operatorname{Eis}\nolimits}
\def\Cr{\mathcal{C}r}
\def\Frm{\mathcal{F}rm}
\def\3x3{\operatorname{3}\times\operatorname{3}}
\def\QCoh{\mathcal{QC}oh}
\def\SES{\mathcal{SES}}
\def\1{{\bf 1}}
\def\lra{\longrightarrow}
\def\(({(\hskip -1mm (}
\def\)){)\hskip -1mm )}
\begin{document}

\author{Mikhail Kapranov 
 \and  Olivier Schiffmann
  \and Eric Vasserot
  }
%\address{bb}
%\email{cc}
%\thanks{}

\title{The Hall algebra of a curve}
%[The Hall algebra of a curve]

%\begin{abstract}

%\end{abstract}

\maketitle

\tableofcontents

 \addtocounter{section}{-1}

\section{Introduction}

Let $X$ be a smooth projective irreducible curve over a finite field $\FF_q$, and let
$\Bun(X)$ be the category of vector bundles on $X$. The {\em Hall algebra}
of $\Bun(X)$, denoted $H$, is an associative non-commutative algebra
whose elements are  finitely supported functions on the
set of isomorphism classes of objects of $\Bun(X)$. The functions take value in a field
$k$ of characteristic 0; in this introduction we assume $k=\CC$. 
The multiplication is
given by counting short exact sequences of bundles.  
In other words, elements of $H$  are
unramified automorphic forms for  all the groups
$GL_r$ over the function field $\FF_q(X)$, and multiplication is given by
the parabolic pseudo-Eisenstein series map. 

\vskip .2cm

This algebra is an object of remarkable depth which exhibits connections with
other areas of mathematics. It
was first considered in \cite{K} where functional equations for Eisenstein series 
were interpreted
as quadratic relations in $H$.  Already in the simplest case $X=\PP^1$
the algebra $H$ is identified with the ``pointwise Borel subalgebra" of the
quantum affine algebra $U_q(\widehat{\mathfrak{sl}}_2)$. 
Next, in the case when $X$ is an elliptic curve, one can identify 
a natural ``spherical" subalgebra in Cherednik's  double affine Hecke algebra
with a certain
subalgebra $H_{\text{sph}}\subset H$ (which is also called spherical)
 \cite{schiffmann-vasserot:macdonald}. There are  further deep relations of
  $H_{\text{sph}}$ in this
 case with Macdonald polynomials and the  Hilbert scheme of the plane
 \cite{schiffmann-vasserot:hilbert}.  Even more recently, an analog of $H_{\text{sph}}$ for
 higher genus curves was studied in \cite{schiffmann-vasserot:highergenus}.

 \vskip .2cm
 
 The goal of this paper
is to give a  description of the full algebra $H$ in terms of automorphic Rankin-Selberg
L-functions associated to unramified cusp eigenforms on (all the groups $GL_r$
over the function field of) $X$. We consider the set of all such forms as a
1-dimensional scheme $\Sigma$ with countably many components
 (similarly to the set of quasi-characters of
the idele group in the classical theory of Tate).  The value at 1 of the Rankin-Selberg
L-function defines then a rational function $\LHom: \Sigma\times\Sigma\to\CC$. 
We use this rational function to construct a {\em Feigin-Odesskii shuffle algebra},
similarly to
\cite{feigin-odesskii:shuffles}. Our main result, Theorem \ref{thm:main},
identifies $H$ with this ``Rankin-Selberg shuffle algebra". 
This extends
the result of \cite{schiffmann-vasserot:highergenus} for the  subalgebra 
in $H$ generated by line bundles. In particular, we embed $H$ into
$\CC_\qcom[\Sym(\Sigma)]$, the space of {\em regular} symmetric functions of
variables from $\Sigma$, i.e., of groups of variables from $\CC^*$,
see Corollary \ref{cor:regularity-of-embedding}. Here the subscript ``$\qcom$"
means that we take the direct sum of the space of regular functions on
individual irreducible components. 

\vskip .2cm

 Applying the Langlands correspondence for the groups $GL_r$ over functional fields
 established by Lafforgue \cite{lafforgue}, we then give a purely Galois-theoretic
 interpretation of the algebra $H$ in Corollary \ref{cor-of-lafforgue}.

\vskip .2cm 

From the point of view of relations, our result means that all the relations in $H$
are governed by the zeroes of the Rankin-Selberg functions (as it is the zeroes of
the defining functions which affect the relations in Feigin-Odesskii algebras). 
 Note that starting from the elliptic case,
  quadratic relations are no longer sufficient to describe $H$ (or even $H_{\text{sph}}$),
  see \cite {burban-schiffmann:elliptic-I, schiffmann:elliptic-II}.
 The situation is parallel to that of quantum affine algebras,
 see \cite{chari-pressley}, where one has to impose certain loop analogs of the
 Serre relations for semisimple Lie algebras. As an application, we give a simple
 proof of the fact that the algebra $H$ for an elliptic curve splits into an
 infinite tensor product of commuting algebras analogous to $H_{\text{sph}}$. This result was 
 also obtained by Fratila \cite{fratila}. 
  
  \vskip .2cm
  
  The importance of desribing the full algebra $H$ is that it comes with a natural
  basis (formed by individual vector bundles) and it is interesting to study the  
  symmetric polynomials on $\Sigma$ corresponding to these bundles.
  For intance, 
   the basis of bundles is obviously orthogonal with respect
  to the orbifold scalar product. The corresponding scalar product on 
  $\CC_\qcom[\Sym(\Sigma)]$ can be found by the  classical ``Maass-Selberg relations"
  (evaluation of the scalar product of two pseudo-Eisenstein series, cf. 
  \cite[\S II.2.1]{moeglin-waldspurger-book}). Algebraically, they give the $L_2$-scalar
  product on  $\CC_\qcom[\Sym(\Sigma)]$ corresponding to some rational function
  weight formed out of the $\LHom$ functions, in a way remindful
  of the scalar products considered by Macdonald \cite{macdonald2}.  So the symmetric polynomials
  associated to the individual bundles, form an orthogonal system,  thus
  presenting an exciting generalization of the
 setup of \cite{macdonald2}.

\vskip .2cm

The paper is organized as follows. In Section 1 we develop the formalism of shuffle algebras
in the generality we need (that  of a scheme with infinitely many components,
such as  the scheme $\Sigma$ of cusp eigenforms). Most earlier treatments worked with defining functions
$c(s,t)$ defined on a 1-dimensional algebraic group such as $\GG_m$ and depending
only on the ratio of variables: $c(s,t) = c(s/t)$. We also develop a formalism
of rational algebras and bialgebras of which  $H$  will
be later shown to give an example. 

Section 2 provides background on the Hall algebra
of  the category of all coherent sheaves on $X$ (not just vector bundles). In particular,
torsion sheaves form the classical unramified Hecke algebra $A$. It is convenient for
us to view the spectrum $W=\Spec(A)$ as a Witt scheme (or, rather, the product of
the classical Witt schemes of \cite{mumford}, one for each point of $X$). The interpretation
of points of a Witt scheme as  power series produces at once the L-factors and the
L-series corresponding to a cusp $A$-eigenform. In addition, the well known
{\em ring structure} on the Witt scheme corresponds to forming the Rankin-Selberg
tensor product L-functions. 

In Section 3 we introduce the scheme $\Sigma$ as a subscheme in $W$, and
the sub-semigroup in $W$ (with respect to the additive structure) generated by $\Sigma$  of $W$
plays  a key role in our construction. We take care to reformulate the basic results of the
automorphic theory (such as, e.g.,  the multiplicity one theorem)
 in a purely algebraic, rather than the more traditional analytic, way.
In particular, we use  the support of a module $M$ over the commutative ring $A$ instead of
the joint spectrum of operators in a Hilbert space. Our modules  $M$  consist of  finitely supported functions
on $\Bun$, so they don't contain actual Hecke eigenforms, 
but the corresponding eigenvalues show up in  $\Supp_A(M)$. 
We formulate our main results in \S 3.4 and
give an application to elliptic curves in \S 3.5.

Finally, Section 4 is devoted to the proof of Theorem \ref{thm:main}. We interpret
the classical results on functional equation and constant terms of Eisenstein
series as saying that $H$, although highly noncommutative,  can nevertheless be seen as a
{\em commutative and cocommutative rational bialgebra} in a certain meromorphic symmetric
monoidal category in the sense  of \cite{soibelman}.  We then use the constant
term (the analog of comultiplication) in order to embed each graded component of
$H$ into the space
of rational functions on a product of several copies of $\Sigma$.

 \vskip .2cm

 M.K.  would like to thank Universities Paris-7 and Paris-13 
  as well as the Max-Planck Institut f\"ur Mathematik in Bonn for hospitality and support during the
  work on this paper. His research was also partially supported by an NSF grant. 

\vfill\eject

\section{Generalities on shuffle algebras.}\label{sec:shuffle}

This section is a reminder on shuffle algebras. 
This material is mostly standard, so we will be sketchy.
An algebra will always be an associative algebra, 
with unit unless otherwise specified.
%For an object $A$ of a category $\Ac$, the identity
%endomorphism of $A$ is denoted by $1_A$, or, simply, by  $1$.
Let $k$ be a field of characteristic zero.

\subsection{Disjoint union schemes.}
\label{sec:disjoint}
Let $\Sigma$ be a scheme which is a disjoint union of irreducible affine algebraic
varieties over $k$.
Denote by $\pi_0(\Sigma)$ the set of connected components of $\Sigma$. The ring of regular functions
on $\Sigma$ is
$$k[\Sigma]\,\, =\,\, \prod_{S\in\pi_0(\Sigma)}k[S].$$
Note that if $\pi_0(\Sigma)$ is infinite, 
then $\Sigma$ is not an affine scheme, in particular
$\Sigma\neq\Spec(k[\Sigma])$. 
Let $\QCoh(\Sigma)$ be the category of quasicoherent sheaves of 
$\Oc_\Sigma$-modules. For $\Fc\in\QCoh(\Sigma)$ denote
$$\Gamma_\qcom(\Sigma,\Fc)\,\, =\,\, \bigoplus_{S\in\pi_0(\Sigma)}\Gamma(S,\Fc)$$
the space of sections with quasicompact support. For $\Fc=\Oc_\Sigma$ we get the $k$-algebra
$$k_\qcom[\Sigma]\,\, =\,\, \bigoplus_{S\in\pi_0(\Sigma)}k[S].$$
If $\pi_0(\Sigma)$ is infinite, this algebra has no unit, but for any finite number of elements
$a_1, a_2,\dots, a_n\in k_\qcom[\Sigma]$ there is an idempotent $e\in k_\qcom[\Sigma]$ such that
$ea_i=a_i$ for all $i$. The following is straightforward, compare with
\cite{bernstein}, \S 1.1.

\begin{prop} The functor $\Fc\mapsto \Gamma_\qcom(\Sigma,\Fc)$ identifies $\QCoh(\Sigma)$
with the category of $k_\qcom[\Sigma]$-modules $M$ such that
for any finite number of elements
$m_1, m_2,\dots, m_n\in M$ there is an idempotent $e\in k_\qcom[\Sigma]$ such that
$em_i=m_i$ for all $i$. 
\end{prop}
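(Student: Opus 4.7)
The plan is to reduce everything to the fact that the category $\QCoh$ of a single affine irreducible variety $S$ is equivalent to $k[S]\modu$, and then assemble these equivalences across the components $S \in \pi_0(\Sigma)$.

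First I would observe that since $\Sigma = \bigsqcup_{S\in \pi_0(\Sigma)} S$ is a disjoint union of affine schemes (as a scheme, not as an affine scheme), a quasicoherent sheaf $\Fc$ on $\Sigma$ is the same as a collection $(\Fc_S)_{S\in \pi_0(\Sigma)}$ with $\Fc_S \in \QCoh(S)$. Under the standard Serre equivalence $\QCoh(S)\simeq k[S]\modu$, the data is thus a collection $(M_S)_{S\in \pi_0(\Sigma)}$ of $k[S]$-modules. By construction, $\Gamma_\qcom(\Sigma,\Fc) = \bigoplus_S M_S$ as a $k_\qcom[\Sigma]$-module.

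Next I would construct the inverse functor. For each $S\in \pi_0(\Sigma)$, let $e_S \in k_\qcom[\Sigma]$ be the idempotent which is $1$ on $S$ and $0$ on the other components. Given a module $M$ satisfying the hypothesis, set $M_S := e_S M$; this is naturally a $k[S]$-module (since the other components act trivially on it), and by Serre's theorem it corresponds to a quasicoherent sheaf $\Fc_S$ on $S$. Gluing, this gives a sheaf $\Fc \in \QCoh(\Sigma)$ with $\Gamma(S,\Fc) = M_S$.

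The only non-formal step is to verify that the natural map $\bigoplus_S M_S \to M$ is an isomorphism. Injectivity follows because the $e_S$ are orthogonal. For surjectivity, pick any $m \in M$ and apply the hypothesis to $\{m\}$: there is $e \in k_\qcom[\Sigma]$ with $em = m$. Since $e$ has quasicompact support, it is a finite sum $e = \sum_{S\in T} e \cdot e_S$ for some finite $T \subset \pi_0(\Sigma)$, hence $m = em = \sum_{S\in T} e_S(em) \in \bigoplus_S e_S M = \bigoplus_S M_S$. Conversely, a module of the form $\bigoplus_S \Gamma(S,\Fc_S)$ obviously satisfies the idempotent condition, since any finite collection of elements involves only finitely many components. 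Functoriality and compatibility of the two constructions are then routine.

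I expect no serious obstacle: the only subtlety is that $k_\qcom[\Sigma]$ is non-unital when $\pi_0(\Sigma)$ is infinite, so one must be careful that the idempotent condition on $M$ is the correct replacement for the unitality axiom and is precisely what is needed to recover $M$ from the family $(e_S M)_S$. This is exactly analogous to the treatment of smooth representations of locally profinite groups in Bernstein's formalism cited in the statement.
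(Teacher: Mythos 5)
Your argument is correct and is exactly the standard one the paper has in mind: the paper declares the proposition straightforward (citing Bernstein's \S 1.1 analogy) and leaves the proof to the reader, and your component-by-component use of the affine equivalence $\QCoh(S)\simeq k[S]\modu$ together with the orthogonal idempotents $e_S$ and the observation that the non-unital idempotent condition recovers $M=\bigoplus_S e_S M$ is precisely the intended verification.
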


\begin{proof} Left to the reader.
\end{proof}

\vskip3mm

\subsection{Symmetric powers.}\label{section:symmetric-powers}
Since every component of $\Sigma$ is an affine scheme, we have a well-defined 
symmetric power scheme
$\Sym^n(\Sigma)=\Sigma^n/S_n$ for $n\geqslant 0$. It is clear that
$$k_\qcom[\Sym^n(\Sigma)]\,=\,(k_\qcom[\Sigma]^{\otimes n})^{S_n}$$
is the symmetric power of the $k$-vector space $k_\qcom[\Sigma]$, with the obvious $k$-algebra
structure. 
The component decomposition of $\Sym^n(\Sigma)$ is given by
$$\Sym^n(\Sigma)\,\,=\,\,\coprod_{\l}\,\prod_{S\in\pi_0(\Sigma)}\Sym^{\l(S)}(S),$$
where $\l$ runs over the set of maps $\pi_0(\Sigma)\to\NN$ such that
$\sum_S\l(S)=n$.  
We denote
$$\Sym(\Sigma)\,\,=\,\,\coprod_{n\geqslant 0}\Sym^n(\Sigma).$$
It is a commutative monoid in the category of schemes 
with respect to the operation
$$p:\Sym(\Sigma)\times\Sym(\Sigma)\to\Sym(\Sigma)$$
whose components are the usual symmetrization maps
$$p_{m,n}:\Sym^m(\Sigma)\times\Sym^n(\Sigma)\to\Sym^{m+n}(\Sigma).$$ 
Note that $p$ is an affine and finite morphism of schemes.
As $\Sym(\Sigma)$ is a monoid scheme, 
$\QCoh(\Sym(\Sigma))$ is a monoidal category under convolution.
Explicitly, an object  of $\QCoh(\Sym(\Sigma))$ is a sequence $\Fc=(\Fc_n)$ with
$\Fc_n\in\QCoh(\Sym^n(\Sigma))$,
 and for two such objects $\Fc$ and $\Gc$ their convolution is given by
$$\Fc\circledast\Gc=p_*(\Fc\boxtimes\Gc),\quad
(\Fc\circledast\Gc)_n=\bigoplus_{i+j=n}(p_{ij})_*(\Fc_i\boxtimes\Gc_j).$$
The unit object $\1$ of $\QCoh(\Sym(\Sigma))$ is equal to 
$k$ on $\Sym^0(\Sigma)=pt$ and to 0 elsewhere.
Note that $p$ being an affine morphism, we have
$$\Gamma_\qcom(\Sym(\Sigma),\Fc\circledast\Gc)\,\,=\,\,\Gamma_\qcom(\Sym(\Sigma),\Fc)\otimes_k
\Gamma_\qcom(\Sym(\Sigma),\Gc).$$
Next, let 
$$\sigma:\Sym(\Sigma)\times\Sym(\Sigma)\to\Sym(\Sigma)\times\Sym(\Sigma)$$
be the permutation. We have $p\sigma=p$. Thus 
$\QCoh(\Sym(\Sigma))$ is a symmetric monoidal category
with the symmetry 
$$R_{\Fc,\Gc}=p_*(P_{\Fc,\Gc}):\,\, \Fc\*\Gc=p_*(\Fc\boxtimes\Gc)\longrightarrow
(p\sigma)_*(\Gc\boxtimes\Fc)=\Gc\*\Fc,$$
where $P_{\Fc,\Gc}$ is the obvious isomorphism
$\Fc\boxtimes\Gc\to\sigma_*(\Gc\boxtimes\Fc)$.

\vskip3mm

\subsection{Rational sections.}\label{sec:rational-sections}
For any component  $S\subset\Sigma$ 
(which is an irreducible algebraic variety over $k$), 
we denote by $k(S)$ the field of rational functions on $S$ and by
$S_\rat=\Spec(k(S))$ the generic point of $S$. 
We view $S_\rat$ as an open subscheme of $S$ with the
embedding map $j_S:S_\rat\to S$.
For a quasicoherent sheaf $\Fc$ on $S$ let 
$\Fc_\rat=(j_S)_*j_S^*(\Fc)$ be the 
{\it sheaf of rational sections of $\Fc$ over $S$}. 
We extend this notation by defining $\Sigma_\rat$ and $\Fc_\rat$, for 
$\Fc\in\QCoh(\Sigma)$, in the obvious way. In particular we have the rings
$$k(\Sigma)=\Gamma(\Sigma,\Oc_\rat)=\prod_{S\in\pi_0(\Sigma)}k(S),\quad
k_\qcom(\Sigma)=\Gamma_\qcom(\Sigma,\Oc_\rat)=\bigoplus_{S\in\pi_0(\Sigma)}k(S).$$
For $\Fc,\Gc\in\QCoh(\Sigma)$, a morphism
$\Fc\to\Gc_\rat$ will be called a {\it rational morphism from $\Fc$ to $\Gc$}.

\vskip3mm

\subsection{Rational braidings.}
Let $c=c(s,t)$ be an invertible element of the ring $k(\Sigma\times\Sigma)$.
Thus $c$ is the datum, for each components $S,T\subset\Sigma$, 
of a non-zero rational function in
$k(S\times T)$. We extend $c$ to an invertible element in
$k(\Sym(\Sigma)\times\Sym(\Sigma))$ by putting
\begin{equation}\label{c-multiplicativity}
c\biggl(\sum_sn_s\cdot s,\sum_tm_t\cdot t\biggr)
\,\,=\,\, \prod_{s,t}c(s,t)^{n_sm_t}.
\end{equation}
For $\Fc,\Gc\in\QCoh(\Sym(\Sigma))$ we define a rational morphism
$$R_{\Fc,\Gc}^c:\Fc\*\Gc\to(\Gc\*\Fc)_\rat$$
to be the composition of $R_{\Fc,\Gc}:\Fc\*\Gc\to\Gc\*\Fc$
with the rational morphism $p_*(c_{\Fc,\Gc})$ where
$c_{\Fc,\Gc}:\Fc\boxtimes\Gc\to(\Fc\boxtimes\Gc)_\rat$
is the multiplication by $c$.

\begin{prop}
 The morphisms $R^c_{\Fc,\Gc}$ are natural in $\Fc,\Gc$ and they satisfy the rational analogs
of the braiding axioms, i.e., we have
$$\gathered
R_{\Ec\*\Fc,\Gc}^c=(R_{\Ec,\Gc}^c\*\Id_\Fc)\circ(\Id_\Ec\*R_{\Fc,\Gc}^c),\quad
R_{\Ec,\Fc\*\Gc}^c=(\Id_\Fc \* R_{\Ec,\Gc}^c)\circ(R_{\Ec,\Fc}^c \*\Id_\Gc),\cr
R^c_{\Fc,\1}=R^c_{\1,\Fc}=\Id_\Fc.
\endgathered$$
\end{prop}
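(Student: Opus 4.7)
The plan is to factor $R^c_{\Fc,\Gc}$ as the composition of the ordinary (non-rational) symmetry $R_{\Fc,\Gc}$ with multiplication by the fixed rational function $c$, and then to leverage the fact that $R$ itself already satisfies the usual braiding axioms. What remains is to verify that the accumulated $c$-factors on the two sides of each equation coincide, and this will follow directly from the multiplicativity extension (\ref{c-multiplicativity}).

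First I would handle naturality. Since $c\in k(\Sym(\Sigma)\times\Sym(\Sigma))^\times$ is fixed, multiplication by $c$ as a rational morphism $\Fc\boxtimes\Gc\to(\Fc\boxtimes\Gc)_\rat$ is tautologically natural in both arguments; combined with the naturality of $R_{\Fc,\Gc}$ in $\QCoh(\Sym(\Sigma))$ and the functoriality of $p_*$, this yields naturality of $R^c_{\Fc,\Gc}$.

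Next I would attack the two hexagon identities. By naturality it suffices to consider $\Ec,\Fc,\Gc$ supported on single components $\Sym^a(S),\Sym^b(T),\Sym^c(U)\subset\Sym(\Sigma)$, and by localizing at the generic point I would work with rational sections, where only the $c$-factor contributes. Writing symbolically the ``divisor coordinates'' of the three objects as $\alpha$ (for $\Ec$), $\beta$ (for $\Fc$), $\gamma$ (for $\Gc$), the left-hand side $R^c_{\Ec\*\Fc,\Gc}$ of the first hexagon contributes the factor $c(\alpha+\beta,\gamma)$, while the right-hand side, obtained by moving $\Gc$ first past $\Fc$ (factor $c(\beta,\gamma)$) and then past $\Ec$ (factor $c(\alpha,\gamma)$), contributes $c(\alpha,\gamma)\cdot c(\beta,\gamma)$. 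These agree by (\ref{c-multiplicativity}) applied in the first argument; the second hexagon is symmetric and uses multiplicativity in the second argument. The unit axiom is immediate: $\Sym^0(\Sigma)=\pt$ corresponds to the empty divisor, and (\ref{c-multiplicativity}) specializes to $c(0,-)=c(-,0)=1$, so the rational factor disappears and $R^c_{\Fc,\1}=R_{\Fc,\1}=\Id_\Fc$.

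The main obstacle, such as it is, will be keeping the bookkeeping straight rather than any genuine mathematical difficulty: the rational morphisms land in rationalizations $(\Gc\*\Fc)_\rat$, so the hexagons must be read as identities of rational morphisms, and one must check that both sides land in a common localization and that $p_*$ commutes with the passage to rational sections on the relevant open subset. Since $p$ is affine and finite, the formula $\Gamma_\qcom(\Sym(\Sigma),\Fc\*\Gc)=\Gamma_\qcom(\Sym(\Sigma),\Fc)\otimes_k\Gamma_\qcom(\Sym(\Sigma),\Gc)$ identifies each side with a pure tensor operation on global sections, and the verification then collapses to the elementary algebraic identity secured by the multiplicative extension of $c$.
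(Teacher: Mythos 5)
Your argument is correct, and since the paper simply declares this proof ``left to the reader,'' what you have written is precisely the intended verification: factor $R^c$ as the genuine symmetry $R$ composed with multiplication by $c$, observe that $R$ already satisfies the braiding and unit axioms, and reduce the remaining check to the bimultiplicativity of the extension \eqref{c-multiplicativity}, which gives $c(\alpha+\beta,\gamma)=c(\alpha,\gamma)\,c(\beta,\gamma)$ and $c(\alpha,\beta+\gamma)=c(\alpha,\beta)\,c(\alpha,\gamma)$, together with $c(0,-)=c(-,0)=1$ for the unit. The only blemish is notational (you reuse the letter $c$ both for the defining function and for a symmetric-power exponent), not mathematical.
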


\begin{proof} Left to the reader.\end{proof}

\vskip3mm

\subsection{Bialgebras in braided categories.}
\label{sec:braided-bialgebras}
Let $\Mc$ be a monoidal category with multiplication $\otimes$ and unit 
object $\1$.
An {\it algebra} in $\Mc$ is an object $A$ with morphisms
$\mu:A\otimes A\to A$, 
$e:{\1}\to A$ satisfying the usual associativity and unit axioms.
A {\it coalgebra} in $\Mc$ is an object $C$ with morphisms
$\Delta:C\to C\otimes C$, $\epsilon:C\to {\1}$ 
satisfying the usual coassociativity and counit axioms.
For $\Mc=\Vect$, the category of $k$-vector spaces,  an algebra (resp.~a coalgebra) in $\Mc$
is the same as a $k$-algebra 
(resp. ~$k$-coalgebra) in the usual sense.
Assume that $\Mc$ has a braiding 
$R=(R_{X,Y}:X\otimes Y\to Y\otimes X)$. Then the tensor product
$A\otimes B$ of two algebras is again an 
algebra with respect to the multiplication
$$\xymatrix{
A\otimes B\otimes A\otimes B\ar[r]^-{\Id\otimes R\otimes \Id}&A\otimes A\otimes B\otimes B
\ar[r]^-{\mu\otimes\mu}&A\otimes B}$$
and to the unit $e\otimes e$. Similarly, the tensor product of two coalgebras is again a coalgebra with comultiplication
$$\xymatrix{
C\otimes D\ar[r]^-{\Delta\otimes\Delta}&C\otimes C\otimes D\otimes D
\ar[r]^-{\Id\otimes R\otimes \Id}&C\otimes D\otimes C\otimes D}$$
and counit $\epsilon\otimes\epsilon$.
The following is well-known, see e.g.,\cite{majid-braided, takeuchi}.

\begin{prop}
\label{prop:braided-bialgebras}
Let $A$ be an algebra in $\Mc$ which 
is also a coalgebra in $\Mc$. Then $\mu$ is a morphism
of coalgebras if and only if $\Delta$ is a morphism of algebras. Indeed, both conditions are equivalent
to the commutativity of the diagram
$$\xymatrix{
A\otimes A\otimes A\otimes A\ar[r]^-{\Id\otimes R \otimes \Id}&
A\otimes A\otimes A\otimes A
\ar[r]^-{\mu\otimes\mu}&A\otimes A\cr
A\otimes A\ar[u]_-{\Delta\otimes\Delta}\ar[rr]^-\mu&&A\ar[u]^\Delta.}$$
\end{prop}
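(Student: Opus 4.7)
The plan is to prove the equivalence by a direct unpacking of the definitions: both ``$\mu$ is a morphism of coalgebras'' and ``$\Delta$ is a morphism of algebras'' will be seen to reduce, up to standard (co)unit side conditions, to a single equation in $\Mc$, which is precisely the commutativity of the square in the statement. Thus the argument is essentially tautological, and the only work lies in carefully identifying the braided (co)algebra structures on $A\otimes A$ recalled in the paragraphs immediately preceding the proposition.

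Concretely, I would first record the multiplication on the tensor product algebra $A\otimes A$,
$$\mu_{A\otimes A}\,\,=\,\,(\mu\otimes\mu)\circ(\Id\otimes R_{A,A}\otimes\Id),$$
and the comultiplication on the tensor product coalgebra $A\otimes A$,
$$\Delta_{A\otimes A}\,\,=\,\,(\Id\otimes R_{A,A}\otimes\Id)\circ(\Delta\otimes\Delta).$$
Next, I would write out that ``$\mu\colon A\otimes A\to A$ is a morphism of coalgebras'' means $\Delta\circ\mu=(\mu\otimes\mu)\circ\Delta_{A\otimes A}$; substituting the first formula above yields exactly the commutativity of the square in the statement. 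Dually, ``$\Delta\colon A\to A\otimes A$ is a morphism of algebras'' means $\Delta\circ\mu=\mu_{A\otimes A}\circ(\Delta\otimes\Delta)$; substituting the second formula yields the same square. Since the two principal equations coincide, the two conditions are equivalent, and both are equivalent to the displayed diagram.

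The one subtlety worth mentioning is that ``morphism of coalgebras'' also encodes the counit compatibility $\epsilon\circ\mu=\epsilon\otimes\epsilon$, while ``morphism of algebras'' encodes the unit compatibility $\Delta\circ e=e\otimes e$; these two side conditions are not literally the same and are not implied by the main diagram. They are, however, conventionally bundled into the bialgebra data and are to be assumed separately, as is customary in the subject. The proposition is really asserting the equivalence of the two principal compatibility axioms, which is the only place where the braiding $R$ intervenes nontrivially. No step presents a genuine obstacle: the entire argument is a formal manipulation of the definitions, whose only content is the symmetric way in which $(\Id\otimes R\otimes\Id)$ sits between $(\mu\otimes\mu)$ and $(\Delta\otimes\Delta)$ in the master equation.
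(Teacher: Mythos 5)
Your proof is correct and is exactly the standard unpacking that the paper delegates to the cited references (Majid, Takeuchi) without writing it out: both compatibility conditions reduce, by substituting the braided tensor-product (co)multiplication, to the single displayed square. Your caveat about the unit/counit side conditions is also accurate — the proposition, as stated, addresses only the principal compatibility axiom, and those auxiliary conditions must be assumed (or bundled into the bialgebra data) separately.
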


\vskip3mm

\subsection{Rational algebras and coalgebras.}
As $\QCoh(\Sym(\Sigma))$ is monoidal we can speak
of algebras and coalgebras in it.  We now modify the constructions in Section 
\ref{sec:braided-bialgebras} 
using the rational braiding $R^c$.

\begin{df}
A {\it rational algebra} in $\QCoh(\Sym(\Sigma))$
is  an object $\Ac$ 
with morphisms
$\mu : \Ac\*\Ac\to\Ac_\rat$ and $e:\1\to\Ac$
satisfying the usual axioms, but at the level of rational sections.
A {\it rational coalgebra} in
$\QCoh(\Sym(\Sigma))$ is an object $\Cc$ with morphisms
$\Delta:\Cc\to(\Cc\*\Cc)_\rat$ and
$\epsilon:\Cc\to \1$
satisfying the usual axioms at the level of rational sections. 
\end{df}

If $\Ac$ is an algebra in $\QCoh(\Sym(\Sigma))$ then 
$A=\Gamma_\qcom(\Sym(\Sigma),\Ac)$ is a $k$-algebra.
If $\Ac$ is a rational algebra in $\QCoh(\Sym(\Sigma))$ then 
$A_\rat=\Gamma_\qcom(\Sym(\Sigma),\Ac_\rat)$ is a $k$-algebra.
If $\Cc$ is a coalgebra in $\QCoh(\Sym(\Sigma))$ then $C=\Gamma_c(\Sym(\Sigma),\Cc)$
is a coalgebra. On the other hand, if $\Cc$ is a rational coalgebra in $\QCoh(\Sym(\Sigma))$,
 then 
$C_\rat=\Gamma_\qcom(\Sym(\Sigma),\Cc_\rat)$
is, in general, not a coalgebra because
\hfill\break $\Gamma_\qcom(\Sym(\Sigma),(\Cc\*\Cc)_\rat)$ is usually bigger than
$C_\rat\otimes_k C_\rat$.

\vskip .3cm

{\em Rational morphisms} of rational algebras or coalgebras are defined in the 
obvious way. Given two rational algebras $\Ac$, $\Bc$ in $\QCoh(\Sym(\Sigma))$, 
their product $\Ac\*\Bc$ is made into a rational algebra in the same way as in 
Section \ref{sec:braided-bialgebras}. 
Note that $\Ac\*\Bc$ will be a rational algebra even if $\Ac$, $\Bc$ 
are genuine algebras, because $R_{\Bc,\Ac}^c$ is a rational morphism. 
Dually, given two rational coalgebras $\Cc$, $\Dc$, their product $\Cc\*\Dc$ 
is made into a rational coalgebra in the same way as in Section
 \ref{sec:braided-bialgebras}. 

\begin{prop} Let $\Ac$ be a rational algebra in $\QCoh(\Sym(\Sigma))$ which is also a rational coalgebra.
Then $\mu$ is a rational morphism of rational coalgebras if and only if $\Delta$ is a rational morphism
of rational algebras.
\end{prop}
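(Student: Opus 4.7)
The plan is to imitate the proof of Proposition~1.5.1 (the classical braided bialgebra criterion), with all identities interpreted as equalities of \emph{rational} morphisms rather than genuine morphisms in $\QCoh(\Sym(\Sigma))$. The key enabler is the preceding proposition, which says that $R^c$ satisfies the braiding hexagons at the level of rational sections; this is exactly what makes the tensor product $\Ac\*\Ac$ into both a rational algebra and a rational coalgebra in the expected way, with structure maps $\mu_{\Ac\*\Ac} = (\mu\*\mu)\circ(\Id\*R^c\*\Id)$ and $\Delta_{\Ac\*\Ac} = (\Id\*R^c\*\Id)\circ(\Delta\*\Delta)$.

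First I would unpack the definitions. To say that $\mu$ is a rational morphism of rational coalgebras amounts to the two rational identities $\epsilon\circ\mu = \epsilon\*\epsilon$ and $\Delta\circ\mu = (\mu\*\mu)\circ(\Id\*R^c\*\Id)\circ(\Delta\*\Delta)$, where the right-hand side of the second identity is by construction $\mu_{\Ac\*\Ac}$ composed with $\Delta\*\Delta$. Dually, to say that $\Delta$ is a rational morphism of rational algebras amounts to $e\*e=\Delta\circ e$ together with the identity $\Delta\circ\mu=(\mu\*\mu)\circ(\Id\*R^c\*\Id)\circ(\Delta\*\Delta)$, where now the left-hand side is read as $\Delta_{\Ac\*\Ac}\circ\mu$ decomposed using the definition of $\Delta_{\Ac\*\Ac}$. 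Inspection shows that both substantive conditions are literally the same equation of rational morphisms $\Ac\*\Ac\to(\Ac\*\Ac)_\rat$, namely the commutativity of the rational analog of the hexagon diagram displayed in Proposition~1.5.1.

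The unit and counit axioms involve no braiding and pose no issue: $\epsilon\circ e=\Id_\1$, $\epsilon\circ\mu=\epsilon\*\epsilon$, and $\Delta\circ e=e\*e$ are symmetric between the algebra and coalgebra viewpoints, so they are equivalent verbatim. The only mildly delicate point, which I expect to be the main piece of bookkeeping, is to check that all the relevant compositions of rational morphisms are well defined---that is, that composing a morphism landing in a rationalization with another rational morphism yields a well-defined rational morphism into the appropriate rationalization. This is automatic from the fact that $j_S\colon S_\rat\to S$ is flat on each component, together with the fact that $\circledast=p_*(-\boxtimes-)$ is computed along the affine finite morphism $p$, so that rationalization commutes with $\*$ componentwise. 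Once this compatibility is in place, the proof reduces step by step to the diagram chase proving Proposition~1.5.1, with the symbol ``${}_\rat$'' inserted at the appropriate spots and each use of the braiding replaced by the corresponding rational braiding $R^c$.
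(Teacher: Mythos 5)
Your argument is exactly the paper's: the authors' proof consists of the single instruction to reuse the diagram of Proposition \ref{prop:braided-bialgebras} with all arrows read as rational morphisms, and your proposal simply spells out that diagram chase together with the routine bookkeeping about composing rational morphisms. It is correct and takes essentially the same approach as the paper.
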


\begin{proof} Use a diagram similar to that in Proposition
\ref{prop:braided-bialgebras} but whose arrows are now
rational morphisms.\end{proof}

\begin{df} A {\it rational bialgebra} is a rational algebra and coalgebra
in $\QCoh(\Sym(\Sigma))$ such that $\mu$ is a rational morphism of rational coalgebras.
\end{df}

\vskip3mm

\subsection{The shuffle product on the tensor algebra.}
 Let
$\Pc\in\QCoh(\Sym(\Sigma))$ and $\Pc^{\*n}=\Pc\*\Pc\*\cdots\*\Pc$ ($n$ times). 
We denote 
$$\Tc(\Pc) \,\,=\,\,\ \bigoplus_{n\geqslant 0}\Pc^{\*n}$$
the {\em tensor algebra} of $\Pc$. 
We will sometimes write $\Tc^n(\Pc)$ for the direct summand $\Pc^{\*n}$.
We now define a multiplication (shuffle product) on $\Tc(\Pc)$
which is different from the obvious tensor multiplication. 

\vskip .2cm

For a permutation $\sigma\in S_n$ we
denote by $\ell(\sigma)$ its length. 
We  have a rational map $R^c_\sigma:\Pc^{\*n}\to(\Pc^{\*n})_\rat$. 
These maps are uniquely determined  by the following properties
\begin{itemize}
\item for $\sigma=(i,i+1)$ we have
$R^c_{(i,i+1)}=\Id_{\Pc^{\*(i-1)}}\*R^c_{\Pc,\Pc}\* \Id_{\Pc^{\*(n-i-1)}},$
\item for $\ell(\sigma\tau)=\ell(\sigma)+\ell(\tau)$ we have
$R^c_{\sigma\tau}=R^c_\sigma\circ R^c_\tau$.
\end{itemize}

Let $n=r+s$. A permutation
is called a {\it $(r,s)$-shuffle} if $\sigma(i)<\sigma(j)$ whenever $1\leqslant i<j\leqslant r$ or
$r+1\leqslant i<j\leqslant n$. 
Let $Sh_{r,s}\subset S_n$ be the set of $(r,s)$-shuffles.

For $n=r+s$ we define the {\it shuffle product}
$$\mu_{r,s}=\sum_{\sigma\in Sh_{r,s}}R_\sigma^c:\,\,\,\Pc^{\*r}\*\Pc^{\*s}\lra\Pc^{\*n}_\rat.$$

\begin{prop} The shuffle product $\mu=\sum_{r,s}\mu_{r,s}$
is associative, and so
makes $\Tc(\Pc)$ into a rational
algebra in $\QCoh(\Sym(\Sigma))$.
\end{prop}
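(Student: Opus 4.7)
The plan is to show that for all $r,s,t\geqslant 0$, the two bracketings of a triple product $\Pc^{\*r}\*\Pc^{\*s}\*\Pc^{\*t}\to\Pc^{\*(r+s+t)}_\rat$ coincide, each being equal to the ``triple shuffle sum'' $\sum_{\sigma\in Sh_{r,s,t}}R^c_\sigma$. Here $Sh_{r,s,t}\subset S_{r+s+t}$ denotes the set of $(r,s,t)$-shuffles, i.e.\ permutations that are increasing on each of the three consecutive intervals of sizes $r,s,t$. Unit axioms are easy and I would handle them separately.

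First I would recall the standard combinatorial lemma on parabolic decompositions in the symmetric group: every $\sigma\in Sh_{r,s,t}$ admits a unique factorization $\sigma=\tau\cdot\iota_1(\rho)$, where $\rho\in Sh_{r,s}$, $\iota_1:S_{r+s}\hookrightarrow S_{r+s+t}$ is the embedding fixing the last $t$ letters, $\tau\in Sh_{r+s,t}$, and moreover $\ell(\sigma)=\ell(\iota_1(\rho))+\ell(\tau)=\ell(\rho)+\ell(\tau)$. Symmetrically, $\sigma$ factors uniquely as $\sigma=\tau'\cdot\iota_2(\rho')$ with $\rho'\in Sh_{s,t}$, $\iota_2:S_{s+t}\hookrightarrow S_{r+s+t}$ embedding at the last $s+t$ letters, and $\tau'\in Sh_{r,s+t}$, again with additive lengths.

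Next I would combine the two bullet-point properties defining $R^c_\sigma$. The second bullet gives $R^c_{\tau\cdot\iota_1(\rho)}=R^c_\tau\circ R^c_{\iota_1(\rho)}$ under the length-additive assumption; writing $\iota_1(\rho)$ as a reduced product of adjacent transpositions $(i,i+1)$ with $i<r+s$, the first bullet then identifies $R^c_{\iota_1(\rho)}=R^c_\rho\*\Id_{\Pc^{\*t}}$. Summing over factorizations one obtains
$$\mu_{r+s,t}\circ(\mu_{r,s}\*\Id_{\Pc^{\*t}})=\Bigl(\sum_{\tau\in Sh_{r+s,t}}R^c_\tau\Bigr)\circ\Bigl(\sum_{\rho\in Sh_{r,s}}R^c_\rho\*\Id_{\Pc^{\*t}}\Bigr)=\sum_{\sigma\in Sh_{r,s,t}}R^c_\sigma,$$
and the symmetric computation based on the $\iota_2$-factorization gives the same triple sum for $\mu_{r,s+t}\circ(\Id_{\Pc^{\*r}}\*\mu_{s,t})$. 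Equality of the two bracketings as rational morphisms follows.

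The main obstacle I anticipate is not combinatorial but is one of coherence: the $R^c_\sigma$ are rational morphisms, so their compositions must be interpreted at the level of rational sections, and the two bullet properties only determine a consistent family $\{R^c_\sigma\}_{\sigma\in S_n}$ once one knows that distinct reduced expressions for the same $\sigma$ yield the same rational morphism. This is exactly the content of the hexagon-type rational braiding axioms established in the preceding proposition: since multiplication by an invertible element of $k(\Sym^n(\Sigma))$ is an isomorphism on sheaves of rational sections, every finite composition of the $R^c_\sigma$ remains a well-defined rational morphism, and the rational braid relations propagate through such compositions. Granting this coherence, the combinatorial identity above yields associativity of $\mu$ and makes $\Tc(\Pc)$ into a rational algebra in $\QCoh(\Sym(\Sigma))$.
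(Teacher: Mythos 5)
Your proof is correct and follows essentially the same route as the paper: both arguments identify each bracketing of the triple product with the single sum $\sum_{\sigma} R^c_\sigma$ over $(r,s,t)$-shuffles, using the (rational) braiding relations for $R^c$ to ensure the operators $R^c_\sigma$ are well defined and compose length-additively. Your explicit parabolic factorization of $(r,s,t)$-shuffles and the remark on coherence simply spell out details the paper leaves to the reader.
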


\begin{proof} To see associativity, we
 need to compare the two rational morphisms
$$\mu_{r+s,t}\circ(\mu_{r,s}\*1), \,\,\,\mu_{r, s+t}\circ(1\*\mu_{s,t}):\,\,
\Pc^{\*r}\*\Pc^{\*s}\*\Pc^{\*t}\longrightarrow \Pc^{\*(r+s+t)},$$
for each $r,s,t\geqslant 0$. For this, call an $(r,s,t)$-{\em shuffle}
a permutation $\sigma\in S_{r+s+t}$ such that $\sigma(i)<\sigma(j)$ whenever
$$
1\leqslant i<j\leqslant r, \,\,\,{\text{or}}\,\,\, r+1\leqslant i<j\leqslant r+s,
\,\,\,{\text{or}}\,\,\, r+s+1\leqslant i<j\leqslant r+s+t.
$$
Using the braiding property of $R^c$, we see that both rational morphisms above
are equal to $\sum_\sigma R_\sigma^c$, where $\sigma$ runs over the $(r,s,t)$-shuffles.
\end{proof}

\noindent Next, for $n=r+s$ let
$\Delta_{r,s}:\Pc^{\*n}\to\Pc^{\*r}\*\Pc^{\*s}$ be the identity map.
Define a morphism 
$\Delta:\Tc(\Pc)\to \Tc(\Pc)\*\Tc(\Pc)$ by $\Delta=\sum_{r,s}\Delta_{r,s}$.
\begin{prop} The morphism $\Delta$ is coassociative. 
Together with the shuffle product it makes $\Tc(\Pc)$
into a rational bialgebra in $\QCoh(\Sym(\Sigma))$.
\end{prop}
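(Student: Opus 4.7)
The plan is to verify three things: coassociativity of $\Delta$, the counit property (with $\epsilon:\Tc(\Pc)\to\1$ the projection onto $\Tc^0(\Pc)=\1$), and the rational bialgebra compatibility between $\mu$ and $\Delta$. The first two are essentially tautological from the definition of $\Delta_{r,s}$ as the identity under the associator: on $\Tc^n(\Pc)$, both iterated coproducts $(\Id\*\Delta)\circ\Delta$ and $(\Delta\*\Id)\circ\Delta$ equal the sum over triples $(r,s,t)$ with $r+s+t=n$ of the canonical identification $\Pc^{\*n}\cong\Pc^{\*r}\*\Pc^{\*s}\*\Pc^{\*t}$, while the counit axioms reduce to the unit isomorphisms $\Pc^{\*n}\cong\1\*\Pc^{\*n}\cong\Pc^{\*n}\*\1$.

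The main content is the bialgebra compatibility. By the rational version of Proposition~\ref{prop:braided-bialgebras}, it suffices to prove
\[
\Delta\circ\mu \,\,=\,\, (\mu\*\mu)\circ(\Id\*R^c\*\Id)\circ(\Delta\*\Delta)
\]
as rational morphisms. Projecting onto the component $\Tc^r(\Pc)\*\Tc^s(\Pc)\to\Tc^p(\Pc)\*\Tc^q(\Pc)$ with $p+q=r+s$, the left hand side is $\sum_{\sigma\in Sh_{r,s}}R^c_\sigma$ regrouped via the trivial $(p,q)$-splitting. The right hand side is a sum over pairs $(b,c)$ with $b+c=p$ and $(r-b)+(s-c)=q$ of the composition: split $\Pc^{\*r}=\Pc^{\*b}\*\Pc^{\*(r-b)}$ and $\Pc^{\*s}=\Pc^{\*c}\*\Pc^{\*(s-c)}$ via $\Delta$, braid the middle two blocks by $R^c_{\Pc^{\*(r-b)},\Pc^{\*c}}$, then apply $\mu_{b,c}\*\mu_{r-b,s-c}$.

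The key combinatorial input is the standard factorization of shuffles: for any $0\leqslant p\leqslant r+s$, the set $Sh_{r,s}$ is in bijection with $\coprod_{b+c=p}Sh_{b,c}\times Sh_{r-b,s-c}$, where $\sigma\in Sh_{r,s}$ corresponds to the pair $(b,c)$ with $b=|\sigma^{-1}\{1,\ldots,p\}\cap\{1,\ldots,r\}|$ together with the shuffles induced by $\sigma$ on $\{1,\ldots,p\}$ and on $\{p+1,\ldots,r+s\}$. Under this bijection, $\sigma$ factors as $(\sigma_1\*\sigma_2)\circ\tau$, where $\tau$ is the block permutation of length $(r-b)c$ swapping the middle factors; the three pieces have disjoint sets of inversions, so $\ell(\sigma)=\ell(\sigma_1)+\ell(\sigma_2)+(r-b)c$ and $R^c_\sigma$ factors accordingly by the second defining property of the rational braid action.

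The main technical obstacle is the bookkeeping of $c$-weights across this factorization. Here the multiplicativity rule~\eqref{c-multiplicativity} is decisive: the single rational morphism $R^c_{\Pc^{\*(r-b)},\Pc^{\*c}}$ contributes the product $\prod c(x_i,y_j)$ over all $(r-b)c$ pairs of positions in the two middle blocks, which is exactly the $c$-factor produced by decomposing $\tau$ into elementary transpositions. Once this matching is verified, both sides of the bialgebra axiom equal the same double sum of rational morphisms indexed by $(b,c,\sigma_1,\sigma_2)$, completing the proof.
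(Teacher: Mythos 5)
Your argument is correct and follows essentially the same route as the paper: coassociativity is immediate from the identity components $\Delta_{r,s,t}$, and the compatibility of $\mu$ and $\Delta$ is checked on the pentagonal diagram of Proposition \ref{prop:braided-bialgebras} by matching the $(r,s)\to(n_1,n_2)$ summands on both sides via the cut-a-shuffle-into-two-segments bijection $Sh_{r,s}\leftrightarrow\coprod_{b+c=p}Sh_{b,c}\times Sh_{r-b,s-c}$. Your explicit factorization $\sigma=(\sigma_1\ast\sigma_2)\circ\tau$ with additive lengths and the $c$-weight bookkeeping via \eqref{c-multiplicativity} simply fills in the verification the paper leaves to the reader, and the counit check is a welcome extra detail.
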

\begin{proof} To see coassociativity, note that both morphisms
$$(\Delta\*1)\circ\Delta,\, (1\*\Delta)\circ\Delta:\,\, \Tc(\Pc)\longrightarrow
 \Tc(\Pc)\* \Tc(\Pc)* \Tc(\Pc)$$
have components
$\Delta_{r,s,t}: \Pc^{\*(r+s+t)}\to \Pc^{\*r}\*\Pc^{\*s}\*\Pc^{\*t}$,
which are the identity maps.  
To see compatibility of $\mu$ and $\Delta$, consider the $(r,s)$-graded
component of the pentagonal diagram in Proposition \ref{prop:braided-bialgebras}
for $\Ac=\Tc(\Pc)$. This component consists of the 2-arrow path
$$\Pc^{\*r}\*\Pc^{\*s} \stackrel{\mu_{r,s}}{\longrightarrow} \Pc_{\rat}^{\*(r+s)}
\stackrel{\sum \Delta_{n_1, n_2}}{\longrightarrow} \bigoplus_{n_1+n_2=r+s}
(\Pc^{\*n_1}\*\Pc^{\*n_2})_{\rat}$$
and the 3-arrow path  
$$
\Pc^{\*r}\*\Pc^{\*s} \stackrel{\sum \Delta_{r_1,r_2}\*\Delta_{s_1, s_2}}{\lra}
\bigoplus_{\substack{r_1+r_2=r\\ s_1+s_2=s}} \Pc^{\*r_1}\*\Pc^{\*r_2}\*\Pc^{\*s_1}\*\Pc^{\*s_2}
\lra
$$
$$\stackrel{\sum 1\*R_{\Pc^{\*r_2}, \Pc^{\*s_1}}\*1}{\lra} 
\bigoplus(\Pc^{\*r_1}\*\Pc^{\*s_1}\*\Pc^{\*r_2}\*\Pc^{\*s_2})_{\rat}\lra
$$
$$\stackrel{\sum \mu_{r_1,s_1}\*\mu_{r_2,s_2}}{\lra} 
\bigoplus_{n_1+n_2=r+s}
(\Pc^{\*n_1}\*\Pc^{\*n_2})_{\rat}.$$
Fix $n_1, n_2$ with $n_1+n_2=r+s$ and look at the two rational morphisms
$\Pc^{\*r}\*\Pc^{\*s}\to \Pc^{\*n_1}\*\Pc^{\*n_2}$ represented by the
composition of arrows of each path and then projection to the $(n_1, n_2)$-summand. 
Each of these two rational morphisms is a certain sum of operators of the form
$R^c_\sigma$, $\sigma\in S_n$, $n=r+s=n_1+n_2$. 
We claim that the summands in the two sums can be identified. 
This is an elementary combinatorial verification which we sketch briefly.

In the 2-arrow path, the arrow $\mu_{r,s}$ is a sum over the $(r,s)$-shuffles;
we view a shuffle $\sigma$ as a word formed by rearrangement of letters
$a_1, ..., a_r$, $b_1, ..., b_s$ such that the order of the $a$'s, as well
as the order of the $b$'s, is preserved. The following arrow, $\Delta_{n_1, n_2}$
gives only one summand which consists of partitioning our word $\sigma$
into two consecutive segments $\sigma_1$ and $\sigma_2$, of lengths
$n_1$ and $n_2$. Since $\sigma$ is a shuffle, $\sigma_1$ involves some
initial segment of the $a$'s, say $a_1, ..., a_{r_1}$ and some initial
segment of the $b$'s, say $b_1, ..., b_{s_1}$, so $r_1+s_1=n_1$.
 Moreover, $\sigma_1$
is an $(r_1, s_1)$-shuffle. Similarly, $\sigma_2$ is an $(r_2, s_2)$-shuffle
for $r_2+s_2=n_2$. We also have $r_1+r_2=r$ and $s_1+s_2=s$. The pair
of an $(r_1,s_1)$-shuffle $\sigma_1$ and an $(r_2, s_2)$-shuffle $\sigma_2$
gives a summand in the composition of the 3-arrow path on the diagram.
It remains to verify that this establishes a bijection
between the two sets of summands and that the corresponding summands
are equal in virtue of the braiding axioms.  We leave this to the reader. 
\end{proof}

\begin{rem} The above arguments are quite general. In particular, they are applicable
to any additive braided monoidal category $(\Mc, \otimes, \oplus, R)$
and to any object $\Pc$ on $\Mc$. In this case $\Tc(\Pc)=\bigoplus_{n\geqslant 0}
\Pc^{\otimes n}$ is a bialgebra in $\Mc$ with respect to $\mu$ and
$\Delta$ defined as above. The case when $\Mc$
is the category of modules over a triangular Hopf algebra, is well known
\cite{rosso}. From this point of view, our case corresponds to
the more general framework of {\em meromorphic braided categories} 
\cite{soibelman},
of which $\bigl(\QCoh(\Sym(\Sigma)), \*,\oplus,  R^c\bigr)$ is an example.

\end{rem}

\vskip3mm

\subsection{The shuffle algebra.} Let $\Pc\in\QCoh(\Sym(\Sigma))$. 
Set
$$\Sc h^n(\Pc)\,\,=\,\,
\Im\,\Bigl\{\sum_{\sigma\in S_n}R^c_\sigma:\,\,\Pc^{\*n}\lra\Pc^{\*n}_\rat\Bigr\}.$$
This is a quasicoherent subsheaf of  the quasicoherent sheaf
$(\Pc^{\*n})_\rat$ over $\Sym(\Sigma)$.
Note that $\sum_{\sigma\in S_n}R^c_\sigma$ is the $n$-fold multiplication 
$\mu^{(n-1)}:\Tc(\Pc)^{\*n}\to \Tc(\Pc)$ restricted to $\Pc^{\*n}=\Tc^1(\Pc)^{\*n}$. 
Write
$$\Sc h(\Pc)\,\,=\,\,\bigoplus_{n\geqslant 0}\Sc h^n(\Pc)
\,\,\subset\,\, \Tc(\Pc)_\rat.$$
%We will also write $\Sc h_c(\Pc)$ to emphasize the dependence on $c$, if
%necessary.

\begin{prop} 
$(a)$ $\Sc h(\Pc)$ is closed under the multiplication in $\Tc(\Pc)_\rat$.
It is an algebra, not
just a rational algebra, in the category $\QCoh(\Sym(\Sigma))$.

$(b)$ The comultiplication $\Delta:\,\Tc(\Pc)_\rat\to(\Tc(\Pc)\*\Tc(\Pc))_\rat$ takes 
$\Sc h(\Pc)$ into
$(\Sc h(\Pc)\*\Sc h(\Pc))_\rat$, thus making 
$\Sc h(\Pc)$ into a rational bialgebra.
\end{prop}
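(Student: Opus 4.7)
The central combinatorial input is the decomposition
$$S_n\,=\,\bigsqcup_{\pi\in Sh_{r,s}}\pi\cdot(S_r\times S_s)\qquad(n=r+s),$$
in which the $(r,s)$-shuffles serve as the canonical minimal-length left coset representatives of $S_r\times S_s$ in $S_n$, so that $\ell(\pi w)=\ell(\pi)+\ell(w)$ for $\pi\in Sh_{r,s}$ and $w\in S_r\times S_s$. Combined with the two defining properties of the $R^c_\sigma$ recorded just before the shuffle product is introduced, this yields the factorization
$$\Sym_n\,=\,\mu_{r,s}\circ(\Sym_r\*\Sym_s)\,:\,\Pc^{\*n}\lra\Pc^{\*n}_\rat,$$
where $\Sym_k:=\sum_{\sigma\in S_k}R^c_\sigma$ and $\mu_{r,s}=\sum_{\pi\in Sh_{r,s}}R^c_\pi$ is the shuffle product.

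For part~(a), closure is then immediate: for local sections $A=\Sym_r(x)\in\Sc h^r$ and $B=\Sym_s(y)\in\Sc h^s$ one obtains
$$\mu_{r,s}(A\*B)\,=\,\Sym_{r+s}(x\*y)\,\in\,\Sc h^{r+s}.$$
The same identity shows that the right-hand side depends only on $A$ and $B$, not on the particular lifts $x,y$, so the rational multiplication on $\Tc(\Pc)_\rat$ actually restricts to a genuine morphism $\Sc h^r\*\Sc h^s\to\Sc h^{r+s}$ of quasicoherent sheaves. Hence $\Sc h(\Pc)$ is an algebra, not just a rational algebra, in $\QCoh(\Sym(\Sigma))$.

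For part~(b), I would invoke the bialgebra compatibility of the preceding proposition, which says that $\Delta:\Tc(\Pc)_\rat\to(\Tc(\Pc)\*\Tc(\Pc))_\rat$ is a morphism of rational algebras. The argument then reduces to the generators via three observations. First, $\Sc h$ is generated by $\Sc h^1=\Pc$ under the shuffle product, since by the remark just before its definition $\Sc h^n$ equals $\mu^{(n-1)}(\Pc^{\*n})$. Second, for $x\in\Pc$ only the components $\Delta_{1,0}$ and $\Delta_{0,1}$ contribute, giving $\Delta(x)=x\*\1+\1\*x\in\Sc h\*\Sc h\subset(\Sc h\*\Sc h)_\rat$. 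Third, part~(a) together with the formula of Section~\ref{sec:braided-bialgebras} for the tensor product multiplication (built from $\mu_\Tc$ and the braiding $R^c$) implies that $(\Sc h\*\Sc h)_\rat$ is closed under the rational multiplication of $(\Tc(\Pc)\*\Tc(\Pc))_\rat$. Writing any local section $\xi\in\Sc h^n$ as $\mu^{(n-1)}(x_1\*\cdots\*x_n)$ with $x_i\in\Pc$ and iterating the bialgebra identity then yields $\Delta(\xi)\in(\Sc h\*\Sc h)_\rat$. The rational bialgebra axioms for $\Sc h$ are inherited by restriction from $\Tc(\Pc)_\rat$.

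The main technical obstacle is the combinatorial identity $\Sym_n=\mu_{r,s}\circ(\Sym_r\*\Sym_s)$: one has to verify both the length-additivity of the shuffle decomposition of $S_n$ and the matching of tensor placements in $R^c_{\pi w}=R^c_\pi\circ(R^c_\sigma\*R^c_\tau)$ for $w=\sigma\times\tau\in S_r\times S_s$. Once this is in hand, everything else is a formal consequence of the braided-bialgebra formalism already developed.
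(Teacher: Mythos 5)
Your proposal is correct. For part (a) you are essentially on the paper's route: the identity $\sum_{\sigma\in S_n}R^c_\sigma=\mu_{r,s}\circ\bigl(\sum_{\sigma\in S_r}R^c_\sigma\*\sum_{\tau\in S_s}R^c_\tau\bigr)$ is exactly the paper's equality $\mu\bigl(\mu^{(r-1)}(\Pc^{\*r})\*\mu^{(s-1)}(\Pc^{\*s})\bigr)=\mu^{(r+s-1)}(\Pc^{\*(r+s)})$; the paper obtains it by quoting the already-proved associativity of the shuffle product together with the remark that $\sum_{\sigma\in S_n}R^c_\sigma$ is $\mu^{(n-1)}$ restricted to $\Pc^{\*n}$, while you reprove it from the coset decomposition $S_n=\bigsqcup_{\pi\in Sh_{r,s}}\pi\,(S_r\times S_s)$ with additive lengths -- equivalent content, and your "main technical obstacle" is just this standard minimal-coset-representative fact. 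For part (b) your argument genuinely differs from the paper's. The paper argues by a direct image computation: using right exactness of $\*$ (composition of $\otimes_k$ with pushforward along the affine morphism $p$) it identifies $\Sc h^r(\Pc)\*\Sc h^s(\Pc)$ with $\Im\bigl(\mu^{(r-1)}\*\mu^{(s-1)}\bigr)$, and then checks the commutativity of the square $\Delta_{r,s}\circ\mu^{(n-1)}=\bigl(\mu^{(r-1)}\*\mu^{(s-1)}\bigr)\circ\nabla_{r,s}$ with $\nabla_{r,s}$ a sum of braiding operators over shuffles. You instead run the structural "subalgebra generated by primitives is a sub-bialgebra" argument: $\Sc h(\Pc)$ is generated by $\Pc$, $\Delta(x)=x\*\1+\1\*x$ for $x\in\Pc$, the previous proposition makes $\Delta$ a rational algebra morphism for the $R^c$-twisted product, and $(\Sc h(\Pc)\*\Sc h(\Pc))_\rat$ is stable under that twisted product by part (a). Your route avoids the right-exactness step (you never need to identify $\Sc h\*\Sc h$ as an image), at the price of the stability claim, whose only nontrivial ingredient you should make explicit: the braiding $R^c$ applied to the middle factors carries $(\Sc h^a(\Pc)\*\Sc h^b(\Pc))_\rat$ into $(\Sc h^b(\Pc)\*\Sc h^a(\Pc))_\rat$, which holds because $R^c$ is the permutation followed by multiplication by the invertible rational function $c$, and both operations preserve rational sections of the quasicoherent subsheaves $\Sc h^a(\Pc)\subset(\Pc^{\*a})_\rat$ (naturality of $R^c$); also note that a section of $\Sc h^n(\Pc)$ is in general a finite \emph{sum} of elements $\mu^{(n-1)}(x_1\*\cdots\*x_n)$, $x_i\in\Gamma_\qcom(\Sigma,\Pc)$ (legitimate since $\Gamma_\qcom(\Fc\*\Gc)=\Gamma_\qcom(\Fc)\otimes_k\Gamma_\qcom(\Gc)$), which is harmless by additivity of $\Delta$. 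With these two remarks spelled out, your proof is complete.
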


\begin{proof} 
$(a)$ Since $\Tc(\Pc)$ is associative, the iterated multiplications satisfy 
$$\mu\bigl(\mu^{(r-1)}(\Pc^{\*r})\*\mu^{(s-1)}(\Pc^{\*s})\bigr)\,\,= \,\,
\mu^{(r+s-1)}(\Pc^{\*(r+s)}),$$
whence the statement.

$(b)$ The monoidal structure $\*$ being the composition of the tensor product
(over $k$)
and the pushdown under the affine morphism $p$, it is right exact in 
both arguments. Thus the image  of the
$\*$-product of two morphisms coincides with the 
$\*$-product of the images. In particular, $\Sc h^r(\Pc)\*\Sc h^s(\Pc)$ 
is the image of the map
$$\mu^{(r-1)}\*\mu^{(s-1)}:\,\,\Pc^{\*r}\*\Pc^{\*s}\lra(\Pc^{\*r}\*\Pc^{\*s})_\rat.$$
Let now $n=r+s$ and let us prove that
$$\Delta_{r,s}:\Pc^{\*n}_\rat\to (\Pc^{\*r}\*\Pc^{\*s})_\rat$$
(which is the identity morphism) takes
$\Sc h^n(\Pc)=\Im(\mu^{(n-1)})$ to
$\Im(\mu^{(r-1)}\*\mu^{(s-1)})_\rat$.
This follows from the commutativity of the diagram
$$\xymatrix{
(\Pc^{\*r}\*\Pc^{\*s})_\rat\ar[rr]^-{\mu^{(r-1)}\*\mu^{(s-1)}}
&&(\Pc^{\*r}\*\Pc^{\*s})_\rat\cr
\Pc^{\*n}\ar[rr]^-{\mu^{(n-1)}}
\ar[u]^-{\nabla_{r,s}}
&&\Pc^{\*n}_\rat
\ar[u]_-{\Delta_{r,s}},
}$$
where $\nabla_{r,s}=\sum_{\sigma\in\Sc h_{r,s}}R_\sigma$.
\end{proof}

\begin{df} The rational bialgebra $\Sc h(\Pc)=\Sc h_c(\Pc)$ in
 $\QCoh(\Sym(\Sigma))$
is called the {\it shuffle algebra} generated by $\Pc$.
 
\end{df}

\noindent Since the multiplication in $\Sc h(\Pc)$ is a regular map, 
the vector space 
$$Sh(\Pc)=\Gamma_\qcom(\Sym(\Sigma),\Sc h(\Pc))$$
is a $k$-algebra. This $k$-algebra is also called the shuffle algebra.

\begin{ex} 
\label{ex:classical-shuffle}
(a) Suppose that $c$ is identically equal to 1. Then $R^c=R$ is
the standard permutation. The braided monoidal category 
$\bigl(\QCoh(\Sym(\Sigma)), \*,  R^c\bigr)$ is symmetric,
and $\Sc h(\Pc) = \Sc(\Pc)$ is the symmetric
algebra of $\Pc$ in this category. The $k$-algebra
$Sh(\Pc)$ is  the usual symmetric algebra of the vector
space $\Gamma_\qcom(\Sigma, \Pc)$; in particular, it is commutative.  

\vskip .2cm

(b) Let $c$ be arbitrary. Since $\Sigma=\Sym^1(\Sigma),$ we can view
$\Oc_\Sigma$ as an object of $\QCoh(\Sym(\Sigma))$
whose sheaf of rational sections over $\Sym(\Sigma)$ is 
identified with $\Oc_{\Sigma_\rat}$.
For $\Pc=\Oc_\Sigma$ and $n\geqslant 0$ we have
$\Pc^{\*n}=\Oc_{\Sigma^n}$ viewed as a sheaf over
$\Sym^n(\Sigma)$ (extended by zero to the whole of $\Sym(\Sigma)$). Write
$Sh(\Sigma,c)=Sh(\Oc_\Sigma)$. We have
$$
\Gamma_\qcom(\Sym(\Sigma),\Pc^{\*n})=k_\qcom[\Sigma^n],\quad
\Gamma_\qcom(\Sym(\Sigma),\Pc^{\*n}_\rat)=k_\qcom(\Sigma^n),
$$
the spaces of regular (resp.~rational) functions $\phi(t_1,\dots,t_n)$ of $n$
 variables from $\Sigma$
which are $\neq 0$ only on finitely many components. 
For $n=r+s$ the shuffle product
$\mu_{r,s}:\Pc^{\*r}\*\Pc^{\*r}\to\Pc_\rat^{\*n}$
yields a map
$$k_\qcom(\Sigma^r)\times k_\qcom(\Sigma^s)\to k_\qcom(\Sigma^n)$$ 
which takes 
$(\phi,\psi)$ into the rational function
$$\mu_{r,s}(\phi,\psi)(t_1,\dots,t_n)=
\sum_{\sigma\in Sh_{r,s}}\Bigl[\prod_{i,j}c(t_i,t_j)\Bigr]\phi(t_{\sigma(1)},\dots,t_{\sigma(r)})
\psi(t_{\sigma(r+1)},\dots,t_{\sigma(n)}),$$
where the product runs over the pairs $(i,j)$ with $i<j$ and $\sigma(i)>\sigma(j)$.
The algebra $Sh(\Pc)$ is then
the subalgebra of
$\bigoplus_{n\geqslant 0}k_\qcom(\Sigma^n)$
generated by the space $k_\qcom[\Sigma]$.
 
\end{ex}

\begin{rem} More generally, the shuffle algebra can be defined for any object
$\Pc$ of any abelian meromorphic braided category $\Mc$ in the sense of 
\cite{soibelman}. 
An interesting example of such an $\Mc$ can be constructed
for any local non-archimedean field $F$. This is  the category of sequences
$(V_n)_{n\geqslant 0}$ where $V_n$ is an admissible representation of $GL_n(F)$.
The monoidal structure is given by parabolic induction, and the rational
braiding is given by the principal series intertwiners, cf. 
\cite{soibelman}. (See also \cite{joyal-street} for the case of finite fields.)
Let $C_0(F^\times)$ 
be the space of compactly supported locally constant functions
on $F^\times=\GG_m(F)$, and let $\Pc$ be the sequence consisting of 
$C_0(F^\times)$
in degree 1 and of 0 in other degrees. Then the component $\Sc h^n(\Pc)$
of the categorical shuffle algebra
$\Sc h(\Pc)$ is  the Schwartz space of the basic affine space
for $GL_n(F)$  as constructed by Braverman and Kazhdan 
\cite{braverman-kazhdan}.

\end{rem}

\vskip3mm

\subsection{The antisymmetric and coboundary cases.}
\label{sec:coboundary}
Assume 
that $c$ is antisymmetric, i.e.,
$c(s,t)c(t,s)=1$. Then the rational braiding $R^c$ is a (rational)
symmetry, i.e., $R^c_{\Gc, \Fc}R^c_{\Fc, \Gc}=\Id$ for any $\Fc, \Gc$.
The shuffle algebra $\Sc h(\Pc)$ is then a rational analogue of the
construction of the symmetric algebra of an object in a symmetric
monoidal category. In particular, we have the following fact
whose proof is straightforward and left to the reader.

\begin{prop}\label{prop:M-commutativity-shuffle}
 Let $\Ac=\Sc h(\Pc)$, and
denote $M=R^c_{\Ac,\Ac}: \Ac^{\*2}\to\Ac^{\*2}_\rat$. Then:

(a) The multiplication $\mu$ in $\Ac$ is $M$-commutative, 
 i.e., $\mu\circ M: \Ac\*\Ac
\to \Ac_\rat$ takes values in $\Ac$ and coincides with $\mu$.

(b) The rational comultiplication $\Delta$ in $\Ac$ is $M$-cocommutative, i.e., 
$\Delta=M\circ\Delta: \, \Ac\to \Ac^{\* 2}_\rat$. \qed
\end{prop}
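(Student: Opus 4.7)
The plan is to exploit the following observation. When $c$ is antisymmetric, the relation $R^c_{\Pc,\Pc}\circ R^c_{\Pc,\Pc}=\Id$ (as rational morphisms) forces the rational braid action on $\Pc^{\*n}$ to factor through a genuine (rational) action of the symmetric group. In particular, one has $R^c_{\sigma\tau}=R^c_\sigma\circ R^c_\tau$ for \emph{all} $\sigma,\tau\in S_n$, not merely when lengths add. This gives the freedom to reindex arbitrary sums over $S_n$. A key technical consequence, used in both parts, is that every $\phi\in\Sc h^n(\Pc)$ is ``rationally $S_n$-invariant", meaning $R^c_\tau(\phi)=\phi$ for every $\tau\in S_n$. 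Indeed, writing $\phi=\mu^{(n-1)}(\tilde\phi)=\sum_{\sigma\in S_n}R^c_\sigma(\tilde\phi)$ and applying the $S_n$-action,
$$R^c_\tau(\phi)=\sum_{\sigma\in S_n}R^c_{\tau\sigma}(\tilde\phi)=\sum_{\sigma'\in S_n}R^c_{\sigma'}(\tilde\phi)=\phi.$$

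For part $(a)$, I take $\phi\otimes\psi\in\Sc h^r(\Pc)\*\Sc h^s(\Pc)$ with lifts $\phi=\mu^{(r-1)}(\tilde\phi)$ and $\psi=\mu^{(s-1)}(\tilde\psi)$. By the iterated-multiplication identity from the previous proposition, $\mu(\phi\otimes\psi)=\mu^{(n-1)}(\tilde\phi\otimes\tilde\psi)=\sum_{\sigma\in S_n}R^c_\sigma(\tilde\phi\otimes\tilde\psi)$ with $n=r+s$. Now $R^c_{\Pc^{\*r},\Pc^{\*s}}=R^c_{w_{r,s}}$ for the block-swap permutation $w_{r,s}\in S_n$, so by naturality of the braiding, $M(\phi\otimes\psi)$ is represented at the level of $\Pc^{\*n}_\rat$ by $R^c_{w_{r,s}}(\tilde\phi\otimes\tilde\psi)$. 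Applying $\mu_{s,r}$ and then the $S_n$-action,
$$\mu_{s,r}(M(\phi\otimes\psi))=\sum_{\sigma\in S_n}R^c_\sigma R^c_{w_{r,s}}(\tilde\phi\otimes\tilde\psi)=\sum_{\sigma\in S_n}R^c_{\sigma w_{r,s}}(\tilde\phi\otimes\tilde\psi)=\mu(\phi\otimes\psi),$$
where the last step is the substitution $\sigma'=\sigma w_{r,s}$. In particular the result lies in $\Sc h^n(\Pc)$ rather than merely in $\Sc h^n(\Pc)_\rat$.

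For part $(b)$, recall that $\Delta_{r,s}:\Pc^{\*n}\to\Pc^{\*r}\*\Pc^{\*s}$ is by definition the identity morphism (the two sheaves coincide up to the associativity of $\*$). Hence, for $\phi\in\Sc h^n(\Pc)$, the rational section $\Delta_{r,s}(\phi)$ is just $\phi$ viewed in $(\Sc h^r\*\Sc h^s)_\rat$; similarly $\Delta_{s,r}(\phi)$ is $\phi$ viewed in $(\Sc h^s\*\Sc h^r)_\rat$. Applying $M$ and using the key invariance established above,
$$M(\Delta_{r,s}(\phi))=R^c_{w_{r,s}}(\phi)=\phi=\Delta_{s,r}(\phi),$$
which is precisely the $M$-cocommutativity on the $(r,s)$-component. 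Summing over $(r,s)$ gives $M\circ\Delta=\Delta$. The principal obstacle in making this rigorous is purely conventional: one has to identify $R^c_{\Pc^{\*r},\Pc^{\*s}}$ with $R^c_{w_{r,s}}$ for a specific block-swap permutation $w_{r,s}$, and match the sheaf-theoretic $M\circ\Delta_{r,s}=\Delta_{s,r}$ with the tautological statement at the level of underlying rational sections. Once conventions are fixed, everything reduces to the single reindexing argument above.
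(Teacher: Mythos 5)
Your argument is correct. The paper offers no proof of this proposition (it is declared straightforward and left to the reader), and your route --- noting that antisymmetry of $c$ makes $\sigma\mapsto R^c_\sigma$ a genuine rational action of $S_n$, so that every section of $\Sc h^n(\Pc)$ is $R^c_\tau$-invariant, and then reindexing the shuffle sums --- is exactly the intended one; the hypothesis enters precisely where you use $R^c_\sigma R^c_{w_{r,s}}=R^c_{\sigma w_{r,s}}$, since lengths do not add there. The only points worth spelling out are the identification of $M=R^c_{\Ac,\Ac}$ on the $(r,s)$-component with $R^c_{w_{r,s}}$ for the block-swap permutation (naturality of $R^c$ together with the braiding axioms), and the fact that $R^c_{w_{r,s}}(\tilde\phi\otimes\tilde\psi)$ is only a \emph{rational} lift of $M(\phi\otimes\psi)$ --- harmless, because the associativity identity $\mu_{s,r}\circ(\mu^{(s-1)}\*\mu^{(r-1)})=\mu^{(n-1)}$ holds as an identity of rational morphisms and so may be applied to rational sections.
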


\vskip .2cm

\begin{ex} Assume that $\Sigma$ is of the form $\Sigma = \bigsqcup_{i\in I} \Sigma_i$
where each $\Sigma_i\simeq\GG_m$ with coordinate $t_i$. A datum of an antisymmetric
$c$ as above is then a datum of non-zero rational functions $c_{ij}(t_i, t_j)$,
$i,j\in I$ with $c_{ij}(t_i,t_j)c_{ji}(t_j,t_i)=1$.  For $i\in I$ and $d\in \ZZ$
denote by $E_{i,d}$
the element $t_i^d\in k[\Sigma_i]$, considered as an element of
the algebra $Sh(\Sigma, c)$. Consider the formal generating series
\begin{equation}\label{functional-equation-shuffle1}
E_i(t) \,\,=\,\,\sum_{d\in\ZZ} E_{i,d} t^d\,\,\in\,\,Sh(\Sigma, c)[[t, t^{-1}]].
\end{equation}
Proposition \ref{prop:M-commutativity-shuffle} implies that the $E_i(t)$
satisfy the  quadratic commutation relations
\begin{equation}\label{functional-equation-shuffle}
E_i(t) E_j(s) \,\,=\,\,c_{ij}(t,s) E_j(s) E_i(t).
\end{equation}
They are to be understood in the following sense: write $c_{ij}(t,s)$
as a ratio of Laurent polynomials $P_{ij}(t,s)/Q_{ij}(t,s)$
and compare the coefficients at each power $t^as^b$ in
the equation
$$Q_{ij}(t,s) E_i(t) E_j(s) \,\,=\,\,P_{ij}(t,s) E_j(s) E_i(t)$$
which is a formal consequence of \eqref{functional-equation-shuffle}.

\end{ex}

\vskip .2cm

Note that an antisymmetric $c$ as above can be seen as a 1-cocycle of
the group $\ZZ/2$ with coefficients in the multiplicative
group $k(\Sigma\times\Sigma)^\times$, on which it acts by permutation. 
Representing this cocycle as a coboundary amounts to realizing $c$ as
multiplicative antisymmetrization of some invertible rational function 
$\l=\l(s,t)\in k(\Sigma\times\Sigma)^\times$, i.e., in the form
\begin{equation}\label{eq:coboundary}
c(s,t)=\l(s,t)^{-1}\,\l(t,s).
\end{equation}
For instance, if $\Sigma$ is irreducible and of positive dimension,
then such realization is always possible in virtue of Hilbert's
Theorem 90 applied to the field extension $k(\Sigma^2)/k(\Sym^2(\Sigma))$
with Galois group $\ZZ/2$. So we refer to the case 
\eqref{eq:coboundary} as the  {\em coboundary case}. 
As in \eqref{c-multiplicativity}, we extend $\lambda$ to a rational function
on $\Sym(\Sigma)\times\Sym(\Sigma)$ by multiplicativity. 
In the coboundary case there is an alternative realization of $\Sc h(\Pc)$,
motivated by the following.

\begin{ex}\label{shuffle-symmetric-regular}
Suppose that $\l(s,t)\in k[\Sigma\times\Sigma]^\times$ is an invertible
regular function. In this case  $\bigl(\QCoh(\Sym(\Sigma)), \*, R^c\bigr)$
is a genuine symmetric monoidal category. Moreover, it is  equivalent
(as a symmetric monoidal category) to $\bigl(\QCoh(\Sym(\Sigma)), \*, R\bigr)$, 
where $R=R^1$ is the standard permutation. Indeed, by definition, an equivalence 
should consist of a functor $\Phi$ plus natural isomorphisms
$$\phi_{\Fc, \Gc}: \, \Phi(\Fc\*\Gc)\lra \Phi(\Fc)\*\Phi(\Gc)$$
which take the braiding $R^c$ to $R$. 
We take $\Phi=\Id$ and $\phi_{\Fc, \Gc}$ to be the multiplication by $\l^{-1}$. 
 
This implies that for any $\Pc$ 
we have an isomorphism $\Psi: \Sc(\Pc)\to \Sc h(\Pc)$
where $\Sc(\Pc)$ is the usual
symmetric algebra of $\Pc$ defined using the symmetry $R$. It is defined,
on the level of global sections,  by
$$\Psi(a_1\cdot ... \cdot a_n)\,\,=\,\,
\sum_{\sigma\in S_n} \biggl[\prod_{i<j}\lambda(t_{\sigma(i)}, t_{\sigma(j)})^{-1}\biggr]
a_{\sigma(1)}\otimes ... \otimes a_{\sigma(n)}.$$
Note that $\Psi$ is an isomorphism of objects but not of algebras. Indeed,
$\Sc h(\Pc)$ is commutative with respect to the symmetry $R^c$ but does
not have to be commutative in the usual sense (symmetry $R$). 
So using the identification $\Psi$, we get a new 
product on $\Sc(\Pc)$,  referred to as the {\em symmetric shuffle product}. 
The construction below is obtained by extracting the formula for this
product from the structure of $\Psi$ and extending it to the case when
$\lambda$ is rational.

\end{ex}

\vskip .2cm

Assuming $\l$ rational, we define a rational morphism
$$\xi_{m,n}: \Sc^m(\Pc)\*\Sc^n(\Pc)\lra \Sc^{m+n}(\Pc)_{\rat}, \quad m,n\geqslant 0$$
where $\Sc^m(\Pc)$ is the usual symmetric power of $\Pc$ (defined using the
symmetry $R=R^1$). At the level of global sections it is given by
\begin{equation}\label{eq:shuffle-product-symmetric}
\xi_{m,n}(a\otimes b) \,\,=\,\,\frac{1}{m! n!}\operatorname{Symm}\biggl[ a\otimes b
 \prod_{\substack{1\leqslant i\leqslant m\\
1\leqslant j\leqslant n}} \lambda(s_i, t_j)\biggr].
\end{equation}
Here $\operatorname{Symm}$ means symmetrization over the symmetric group $S_{m+n}$
and $\l^{-1}(s_i, t_j)$ is regarded as a rational function on 
$\Sym(\Sigma)^m\times\Sym(\Sigma)^n$ depending on the $i$th coordinate 
of the first factor and the $j$th coordinate of the second factor.

\begin{prop}\label{prop: symmetric-shuffle-algebra}
 (a) The $\xi_{m,n}$ are associative and make $\Sc(\Pc)$
into a rational associative algebra in 
$\bigl(\QCoh(\Sym(\Sigma)), \*)$.

(b) The shuffle algebra $\Sc h(\Pc)$ (defined using the braiding $R^c$)
is isomorphic with the subalgebra in $\Sc(\Pc)_{\rat}$ generated by 
$\Pc\subset\Sc^1(\Pc)_{\rat}$. 
\end{prop}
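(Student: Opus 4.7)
My plan is to prove both parts in tandem by extending to rational $\lambda$ the isomorphism $\Psi$ of Example 1.9.2. Define, at the level of global sections,
$$
\Psi(a_1\cdot\ldots\cdot a_n)=\sum_{\sigma\in S_n}\biggl[\prod_{i<j}\lambda(t_{\sigma(i)},t_{\sigma(j)})^{-1}\biggr] a_{\sigma(1)}\otimes\cdots\otimes a_{\sigma(n)}\in\Tc^n(\Pc)_\rat.
$$
A short direct computation using the cocycle identity $c(s,t)=\lambda(s,t)^{-1}\lambda(t,s)$ shows that applying $R^c_\tau$ reindexes $\sigma\mapsto\tau\sigma$ in this sum and multiplies the $\lambda$-weight by a factor that exactly cancels the change; so the image is $R^c$-invariant and $\Psi$ defines a rational morphism $\Sc(\Pc)\to\Sc h(\Pc)_\rat$. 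An explicit rational inverse is given by antisymmetrization against $\prod_{i<j}\lambda(t_i,t_j)$ (up to a factor of $n!$), so $\Psi$ is a rational isomorphism of objects.

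For part (a), I would compute both iterated triple products $\xi_{m+n,p}\circ(\xi_{m,n}\*\Id)$ and $\xi_{m,n+p}\circ(\Id\*\xi_{n,p})$ applied to a pure tensor $a\otimes b\otimes c$. Each iteration contains an inner $\operatorname{Symm}$ (over $S_{m+n}$ or $S_{n+p}$) sitting inside an outer $\operatorname{Symm}$ over $S_{m+n+p}$, and the nested symmetrizations collapse into a single global one. A direct comparison of $\lambda$-weights then shows that both compositions equal
$$
\tfrac{1}{m!\,n!\,p!}\operatorname{Symm}\biggl[a\otimes b\otimes c\,\cdot\,\prod_{(i,j)\in A\times B}\lambda(s_i,t_j)\prod_{(i,k)\in A\times C}\lambda(s_i,u_k)\prod_{(j,k)\in B\times C}\lambda(t_j,u_k)\biggr],
$$
where $A$, $B$, $C$ label the three blocks of indices. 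Since this expression is manifestly symmetric under the tripartition, associativity follows. In the regular case this is already subsumed by Example 1.9.2, and the rational case is covered by the computation above (or, alternatively, by observing that the identity is a universal rational one in $\lambda$).

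For part (b), I would show that $\Psi$ intertwines the $\xi$-product on $\Sc(\Pc)$ with the shuffle product on $\Sc h(\Pc)$. Both multiplications, applied to degree-one generators, may be expanded as sums over permutations weighted by products of $\lambda$; to compare them one factorizes each $\rho\in S_{m+n}$ uniquely as an $(m,n)$-shuffle composed with a block-preserving permutation, so that the resulting weight identity is once more a direct consequence of the cocycle relation $c(s,t)=\lambda(s,t)^{-1}\lambda(t,s)$. Once this intertwining is established, $\Psi$ becomes a rational isomorphism of algebras. Since $\Sc h(\Pc)$ is generated by $\Sc h^1(\Pc)=\Pc$ as an algebra (tautologically, as $\Sc h^n(\Pc)=\Im\mu^{(n-1)}$), pulling back under $\Psi^{-1}$ identifies $\Sc h(\Pc)$ with the subalgebra of $\Sc(\Pc)_\rat$ generated by $\Pc\subset\Sc^1(\Pc)_\rat$. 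The main obstacle throughout is the combinatorial bookkeeping for these permutation sums: both the associativity in (a) and the intertwining step in (b) reduce to carefully matching $(m,n)$-shuffles with their inversion sets via the cocycle identity defining $c$.
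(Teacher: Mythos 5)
Your proposal follows essentially the same route as the paper: the paper's proof is precisely to take the rational analog of the isomorphism $\Psi$ from Example \ref{shuffle-symmetric-regular} and verify that it carries $\xi_{m,n}$ to the shuffle product $\mu_{m,n}$, which is your intertwining step for (b), and your direct associativity computation in (a) is just that verification carried out explicitly. One minor caveat: your stated inverse of $\Psi$ (symmetrization against $\prod_{i<j}\lambda(t_i,t_j)$, up to $n!$) is not literally correct as a formula, but this does not affect the argument, since rational invertibility of $\Psi$ follows because $\lambda$ is invertible at the generic points, where the equivalence argument of Example \ref{shuffle-symmetric-regular} applies verbatim.
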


\begin{proof} Use the rational analog of the isomorphism $\Psi$
from Example \ref{shuffle-symmetric-regular} and verify that it takes
the multiplication $\xi_{m,n}$ to the shuffle product $\mu_{m,n}$. 
\end{proof}

\begin{ex}\label{ex: symmetric-shuffle}
$\,$ As in  Example \ref{ex:classical-shuffle}(b), let us take
$\Pc=\Oc_\Sigma$ considered as an object of $\QCoh(\Sym(\Sigma))$.
Then  \eqref{eq:shuffle-product-symmetric} defines a multiplication on
$\operatorname{SR}:= \bigoplus_{n\geqslant 0} k_\qcom(\Sigma^n)^{S_n}$, 
the space of symmetric
rational functions of arguments in $\Sigma$.
We denote by $SSh(\Sigma, \l)$ and call the
{\em symmetric shuffle algebra} associated to $\Sigma$ and $\l$
the subalgebra in $\operatorname{SR}$ generated by $k_\qcom[\Sigma]$.
Proposition \ref{prop: symmetric-shuffle-algebra} implies that
$SSh(\Sigma, \lambda)$ is isomorphic to 
$Sh(\Sigma, c)$.  
\end{ex}

\begin{prop}\label{prop:symm-shuffle-regular}
 In the situation of Example \ref{ex: symmetric-shuffle}, assume that
each component of $\Sigma$ is a smooth curve over $k$. Assume also that $\lambda$
is regular everywhere except for, possibly, first order poles on the (components of the)
diagonal $\Delta\subset\Sigma\times\Sigma$. Then 
\[
SSh(\Sigma,\lambda) \,\,\subset \,\,\bigoplus_n k_\qcom[\Sigma^n]^{S_n}
\]
is contained in the space of regular symmetric functions of variable in $\Sigma$
(with quasicompact support). Moreover, for each $n$ the image of the degree $n$ component
of $SSh(\Sigma, \lambda)$ is an ideal in the ring $k_\qcom[\Sigma^n]^{S_n}$. 
\end{prop}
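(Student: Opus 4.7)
The plan is to first make the degree-$n$ piece of $SSh(\Sigma,\lambda)$ explicit. Iterating \eqref{eq:shuffle-product-symmetric} and using $k_\qcom[\Sigma^n]=k_\qcom[\Sigma]^{\otimes n}$, an easy induction shows that every degree-$n$ element has the form
\[
F(u_1,\ldots,u_n)\,\,=\,\,\operatorname{Symm}\Bigl[f(u_1,\ldots,u_n)\prod_{i<j}\lambda(u_i,u_j)\Bigr],\qquad f\in k_\qcom[\Sigma^n],
\]
where $\operatorname{Symm}=\sum_{\sigma\in S_n}\sigma\cdot$ acts by permutation of variables. With this normal form in hand, part (b) is almost immediate: for a symmetric $h\in k_\qcom[\Sigma^n]^{S_n}$, pulling $h$ inside the sum gives $hF=\operatorname{Symm}\bigl[(hf)\prod_{i<j}\lambda(u_i,u_j)\bigr]$; since $hf\in k_\qcom[\Sigma^n]$, this is again of the required form, so the image is an ideal.

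The real content lies in part (a), the regularity of $F$. Each component of $\Sigma^n$ is a smooth $n$-dimensional variety, so it suffices to verify that $F$ has no pole in codimension one. All potential poles of $F$ lie on the partial diagonals $\{u_k=u_l\}$ with $u_k,u_l$ in the same component of $\Sigma$, and the $S_n$-symmetry of $F$ reduces the problem to the generic point of the single diagonal $\{u_1=u_2\}$.

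At such a generic point, only the factors $\lambda(u_{\sigma(i)},u_{\sigma(j)})$ with $\{\sigma(i),\sigma(j)\}=\{1,2\}$ carry a pole, so each summand of $\operatorname{Symm}$ has exactly one \emph{dangerous} factor, equal to $\lambda(u_1,u_2)$ or $\lambda(u_2,u_1)$ according to whether $\sigma^{-1}(1)<\sigma^{-1}(2)$ or $\sigma^{-1}(1)>\sigma^{-1}(2)$. Locally writing $\lambda(s,t)=g(s,t)/(s-t)$ with $g$ regular near the diagonal, I would pair $\sigma\in S_n$ with $\tau=(1\,2)\sigma$. After multiplying the $\sigma$- and $\tau$-contributions by $(u_1-u_2)$ and restricting to $u_1=u_2=z$, the two dangerous factors contribute $g(z,z)$ and $-g(z,z)$ respectively, while both $(\sigma\cdot f)$ and the product of the remaining (regular) $\lambda$-factors coincide for $\sigma$ and $\tau$ along $\{u_1=u_2\}$---$\tau$ differs from $\sigma$ only by swapping the image values $1\leftrightarrow 2$, an operation that is trivial once $u_1=u_2$. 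Hence the paired contributions cancel, $(u_1-u_2)F$ (which is regular at the generic point of $\{u_1=u_2\}$ since the simple poles have been absorbed) vanishes along $\{u_1=u_2\}$, and $F$ itself is regular there.

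The main obstacle is the combinatorial bookkeeping in part (a): pinning down the unique pole-carrying factor in each $\sigma$-summand, setting up the correct involution $\sigma\leftrightarrow(1\,2)\sigma$ on $S_n$, and verifying that the non-dangerous ingredients---the factor $\sigma\cdot f$ and the product of regular $\lambda$'s---really become pairwise equal along $\{u_1=u_2\}$ after this involution, so that the $g(z,z)$ versus $-g(z,z)$ discrepancy produces an exact cancellation.
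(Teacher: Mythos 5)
Your argument is correct, and it reaches the paper's two key points (regularity and the ideal property) by the same overall reduction but with a different proof of the central regularity lemma. The paper, after writing the degree-$n$ component as the symmetrization $p_{n*}\bigl(f\prod_{i<j}\lambda(t_i,t_j)\bigr)$ exactly as you do, disposes of regularity in one stroke by citing duality for the finite morphism $p_n:\Sigma^n\to\Sym^n(\Sigma)$: since each component of $\Sigma$ is a smooth curve, the relative dualizing sheaf of $p_n$ is the sheaf of functions with at most first-order poles along the diagonals, and the trace map carries it into regular functions on $\Sym^n(\Sigma)$. Your pairing $\sigma\leftrightarrow(1\,2)\sigma$, cancellation of the two dangerous factors $\pm g(z,z)$ along the generic point of $\{u_1=u_2\}$, and the appeal to normality (regularity in codimension one implies regularity on a smooth variety) is precisely an elementary, self-contained proof of that trace statement in this special case; the only cosmetic caveat is that ``$s-t$'' should be read as $z(s)-z(t)$ for a local coordinate $z$ on the relevant component, i.e.\ as an antisymmetric uniformizer of the diagonal, which exists since $k$ has characteristic zero and the components are smooth curves. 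Your treatment of the ideal property (pulling a symmetric $h$ inside $\operatorname{Symm}$, using linearity in $f\in k_\qcom[\Sigma^n]$) is the same as the paper's remark that the product map is linear over the symmetric functions. In short: the paper's route buys brevity by invoking the dualizing-sheaf formalism, while yours buys self-containedness at the price of the combinatorial bookkeeping you carried out; both are complete.
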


\begin{proof}
Consider the symmetrization morphism
\[
p_n: \Sigma^n\lra\Sym^n(\Sigma).
\]
The direct image (trace) of a rational function $F$ under $p_n$ is simply the average
$p_{n*}(F)=\sum_{\sigma\in S_n} \sigma(F)$. 
As $\Sigma$ is a disjoint union of curves, the relative dualizing sheaf of $p$
is the sheaf of functions with at most first order poles along the diagonals $\{t_i=t_j\}$. 
This means that if $F$ is such a function, then $p_{n*}F$, the average of $F$ under the symmetric
group, is a regular function on $\Sym^n(\Sigma)$. 

Now, let $f_1, ..., f_n\in k_\qcom[\Sigma]$. Their product in $SSh(\Sigma, \lambda)$
is 
\[
p_{n*} \biggl( \biggl[ \prod_{i<j}\lambda(t_i, t_j)\biggr] f_1(t_1) \cdot f_n(t_n)\biggr),
\]
i.e., $p_{n*}$ of a function belonging to the dualizing sheaf, so, by the above,
  the product is a regular function. To see that the image of the product map
\[
k_\qcom[\Sigma]^{\otimes n}\to k_\qcom[\Sigma^n]^{S_n}
\]
 is an ideal, we notice that this map is linear over the ring of the symmetric functions acting
on the source and target. 
\end{proof}

\vskip3mm

\subsection{Generalized shuffle algebras.}
\label{sec:generalizedshuffle}
Let $\frak S$ be a $\Bbb N$-graded semigroup scheme. This means that 
$\frak S=\coprod_{n\geqslant 0}\frak S_n$
is a scheme which is a disjoint union of  algebraic varieties of finite type over $k$ with
$\frak S_0=\Spec(k)$, and that there is an addition morphism
$$p:\frak S\times\frak S\to\frak S$$
which is graded and such that the obvious inclusion
$$\Spec(k)=\frak S_0\to\frak S$$ is a unit. We'll assume that $p$ is \emph{locally a finite morphism}.
This means that for each connected component $S$ of $\frak S$ the restriction of $p$ to
$p^{-1}(S)$ is finite. Now, fix an invertible element 
$c$ in $k_{\text{qcom}}(\frak S\times\frak S)$ which is a rational
bihomomorphism $\frak S\times\frak S\to\Bbb A^1_k$. Then $k_{\text{qcomp}}(\frak S)$ is an algebra for the 
shuffle product 
$$\mu_{r,s}:k_{\text{qcomp}}(\frak S^r)\times k_{\text{qcomp}}(\frak S^r)\to k_{\text{qcomp}}(\frak S^{r+s})$$
which takes $(\phi,\psi)$ into the rational function 
$$\mu_{r,s}(\phi,\psi)=p_*\big((\phi\boxtimes\psi)\cdot c\big).$$
The associativity follows from the fact that $c$ is a rational bihomomorphism.
The generalized shuffle algebra $Sh(\frak S,c)$ associated with $\frak S$ and $c$ is then the subalgebra of
$k_\qcom(\frak S)$
generated by the space $k_\qcom[\frak S^1]$.

\vfill\eject

\section{The Hall algebra of a curve.}

\subsection{Orbifolds} By an {\it orbifold} we mean a groupoid $\Mc$ 
such that, first, $\Mc$ is {\it essentially small}, i.e., the isomorphism
 classes of $\Mc$ form a set
$\underline{\Mc}$ , and, second, for any object
$x\in \Mc$ the group $\Aut(x)$ is finite. For an orbifold $\Mc$ let $\Fc(\Mc)$ be the space of isomorphism
invariant functions $f:\operatorname{Ob}(\Mc)\to k$
(which we can view as functions on the set $\underline{\Mc}$);   let 
$\Fc_0(\Mc)\subset\Fc(\Mc)$ be the subspace of functions with 
finite support. For an object $x\in\Mc$ we denote by $1_x\in\Fc_0(\Mc)$ the
characteristic function of (the isomorphism class of) $x$. 

\vskip .2cm

The space $\Fc(\Mc)$ is identified with the algebraic dual of $\Fc_0(\Mc)$ 
via the orbifold scalar product
\begin{equation}\label{scalarproduct}
(f,g)=\sum_{x\in\underline{\Mc}}\frac{f(x)g(x)}{|\Aut(x)|}.
\end{equation}
If $k=\CC$ we have the positive definite Hermitian scalar product
\begin{equation}
\label{hermitianproduct}
(f,g)_{\Herm}=\sum_{x\in\underline{\Mc}}\frac{f(x)\overline{g(x)}}{|\Aut(x)|}.
\end{equation}
A functor of orbifolds $\phi:\Mc\to\Nc$ defines the {\em inverse image map}
$$\phi^*:\Fc(\Nc)\to\Fc(\Mc),\quad (\phi^*g)(x)=g(\phi(x))$$
and an \textit{orbifold direct image map}
$$\phi_*:\Fc_0(\Mc)\to\Fc_0(\Nc),\quad 
\phi_*(1_x)=\frac{|\Aut(\phi(x))|}{|\Aut(x)|}1_{\phi(x)}.$$

We say that $\phi$ is
{\it proper}, if for any $x\in\Nc$ the preimage $\phi^{-1}(x)$
consists of finitely many isomorphism classes. 
In this case
$\phi^*$ takes $\Fc_0(\Nc)$ into $\Fc_0(\Mc)$, and $\phi_*$ extends to a map $\Fc(\Mc)\to\Fc(\Nc)$. Moreover, the maps $\phi_*$, $\phi^*$ are adjoint with respect to (\ref{scalarproduct}). Given functors of orbifolds
$$\xymatrix{\Nc'\ar[r]^v&\Nc&\Mc\ar[l]_\varphi},$$
the {\it fiber product orbifold}
$\Nc'\times_\Nc\Mc$ is the category such that
\begin{itemize}
\item an object is a triple
$(n',m,\alpha)$ with $n'\in\Nc'$, $m\in\Mc$ and $\alpha$ is an isomorphism
$v(n')\to\varphi(m)$ in $\Mc$, 
\item a morphism
$(n'_1,m_1,\alpha_1)\to(n'_2,m_2,\alpha_2)$ is a pair of morphisms
$\beta:n'_1\to n'_2$, $\gamma:m_1\to m_2$ such that the 
following square in $\Nc$ commutes
$$\xymatrix{
v(n'_1)\ar[r]^{\alpha_1}\ar[d]_{v(\beta)}&\phi(m_1)\ar[d]^{\phi(\gamma)}\cr
v(n'_2)\ar[r]^{\alpha_2}&\phi(m_2).
}$$
\end{itemize}
A square of orbifolds and their functors
\begin{equation}\label{square}
\begin{split}
\xymatrix{\Mc'\ar[r]^u\ar[d]_{\phi'}&\Mc\ar[d]^\phi\cr
\Nc'\ar[r]^v&\Nc}
\end{split}
\end{equation}
is called {\it homotopy commutative} if there is a natural transformation
$T:\phi u\Rightarrow v\phi'$ of functors $\Mc'\to\Nc$. In this case $T$ induces a functor
$\epsilon_T:\Mc'\to\Nc'\times_\Nc\Mc$. We say that the square is {\it homotopy cartesian}
if $\epsilon_T$ is an equivalence.

\begin{prop} If in a homotopy cartesian square (\ref{square}) the arrows are proper then we have the base change property
$u_*(\phi')^*=\phi^*v_*:\Fc_0(\Nc')\to\Fc_0(\Mc)$.\qed
\end{prop}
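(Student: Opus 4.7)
The plan is to verify the identity on basis elements $1_{n'}$, $n'\in\underline{\Nc'}$, by evaluating both sides at an arbitrary isomorphism class $m\in\underline{\Mc}$. Using the equivalence $\epsilon_T:\Mc'\to\Mc'':=\Nc'\times_\Nc\Mc$ furnished by the homotopy cartesian hypothesis (which is a bijection on iso classes and preserves automorphism groups, hence intertwines $u_*$ with $q_*$ and $(\phi')^*$ with $(p')^*$, where $q$ and $p'$ are the projections of $\Mc''$), it suffices to treat the canonical square with $\Mc'$ replaced by $\Mc''$.

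The right-hand side is immediate from the definitions:
\begin{equation*}
(\phi^*v_*(1_{n'}))(m)\,=\,\frac{|\Aut(v(n'))|}{|\Aut(n')|}\,1_{v(n')}(\phi(m)),
\end{equation*}
which equals $|\Aut(v(n'))|/|\Aut(n')|$ when $\phi(m)\cong v(n')$ and vanishes otherwise. For the left-hand side, unwinding the definitions gives
\begin{equation*}
(q_*(p')^*(1_{n'}))(m)\,=\,|\Aut(m)|\sum_{\substack{[(n_1',m_1,\alpha)]\in\underline{\Mc''}\\ n_1'\cong n',\,m_1\cong m}}\frac{1}{|\Aut((n_1',m_1,\alpha))|}.
\end{equation*}

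The key combinatorial step is to re-parameterize this sum. Fixing representatives $n'$ and $m$, every triple in the indexing set is isomorphic to one of the form $(n',m,\alpha)$ with $\alpha\in\Iso_\Nc(v(n'),\phi(m))$; moreover $(n',m,\alpha_1)\cong(n',m,\alpha_2)$ iff there exists $(\beta,\gamma)\in\Aut(n')\times\Aut(m)$ with $\alpha_2=\phi(\gamma)\alpha_1v(\beta)^{-1}$. Thus $\Aut(n')\times\Aut(m)$ acts on $\Iso(v(n'),\phi(m))$, its orbits are precisely the isomorphism classes appearing above, and the stabilizer of $\alpha$ is exactly $\Aut((n',m,\alpha))$. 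By orbit--stabilizer the sum telescopes to
\begin{equation*}
\frac{|\Aut(m)|\cdot|\Iso(v(n'),\phi(m))|}{|\Aut(n')|\cdot|\Aut(m)|}\,=\,\frac{|\Iso(v(n'),\phi(m))|}{|\Aut(n')|},
\end{equation*}
which equals $|\Aut(v(n'))|/|\Aut(n')|$ when $\phi(m)\cong v(n')$ (the set $\Iso(v(n'),\phi(m))$ being an $\Aut(v(n'))$-torsor in that case) and $0$ otherwise, matching the right-hand side.

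Finally, all sums involved are finite: properness of $\phi'$ ensures $(\phi')^*1_{n'}$ has finite support, so $u_*(\phi')^*1_{n'}$ does as well, and properness of $\phi$ ensures $\phi^*v_*1_{n'}$ has finite support, so both sides lie in $\Fc_0(\Mc)$. The main obstacle is purely bookkeeping: identifying iso classes in the fiber product with $(\Aut(n')\times\Aut(m))$-orbits on $\Iso(v(n'),\phi(m))$ and matching automorphism groups with stabilizers. Once this identification is in place, orbit--stabilizer delivers the base change identity at once.
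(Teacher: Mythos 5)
Your argument is correct: the paper states this proposition without proof (it is asserted with a \qed as a routine check), and your verification on the basis elements $1_{n'}$ — reducing via the equivalence $\epsilon_T$ to the canonical fiber-product square, identifying isomorphism classes of triples $(n',m,\alpha)$ with orbits of $\Aut(n')\times\Aut(m)$ on $\Iso_{\Nc}(v(n'),\phi(m))$, matching stabilizers with $\Aut((n',m,\alpha))$, and applying orbit--stabilizer — is exactly the straightforward computation the authors had in mind. The finiteness remarks via properness of $\phi$ and $\phi'$ are also handled correctly, so nothing is missing.
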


\vskip3mm

%\hfill\eject

\subsection{Hall algebras}
An abelian category $\Ac$ is called 
{\it finitary} if it is essentially small and if for any objects $A,B$
the group $\Ext^i_\Ac(A,B)$ is finite and equal to 0 for almost all $i$. For a finitary abelian category
$\Ac$ we have an orbifold $\Mc(\Ac)$ with the same objects as $\Ac$ and
morphisms being the isomorphisms in $\Ac$. We set
\begin{equation}
\label{euler1}
\langle A,B\rangle\,\,=\,\,\sqrt{\prod_{i\geqslant 0}|\Ext^i_\Ac(A,B)|^{(-1)^i}}, \quad
\((A,B\)) \,=\, \langle A,B\rangle\cdot \langle B,A\rangle.
\end{equation}
Let $[\Ac]$ be the Grothendieck group of $\Ac$. For $A\in\Ac$ let $[A]\in [\Ac]$ be its class.
We have the $\ZZ$-bilinear forms
\begin{equation}
\label{euler2}
\gathered
\langle\bullet,\bullet\rangle, \,\,\,
\((\bullet,\bullet\)): [\Ac]\times[\Ac]\to\RR^\times,\\
\langle [A],[B]\rangle=\langle A,B\rangle,\quad
\((\alpha,\beta\)) = \langle \alpha,\beta\rangle\cdot
\langle\beta, \alpha\rangle,
\endgathered
\end{equation}
called, respectively, the {\em Euler} and the {\em Cartan} form. 
Assume that $k$ contains all the square roots 
appearing in the values of the Euler form.
 Set $H(\Ac)=\Fc_0(\Mc(\Ac))$. It is a $k$-algebra, 
called the {\it Hall algebra of $\Ac$}. The product is given by
\begin{equation}
\label{product}
(f*g)(C)\,\,=\,\,\sum_{A\subset C}\,\langle C/A,A\rangle\cdot f(A)\cdot g(C/A),
\end{equation}
where the sum is over all subobjects $A'$ in $C$.
Alternatively, on the basis vectors the multiplication has the form
\begin{equation}\label{eq:g-ABC}
1_A * 1_B \,\,=\,\, \langle B,A\rangle \sum_C g_{AB}^C \cdot 1_C,
\end{equation}
where $g_{AB}^C$ is the number of subobjects $A\subset C$ such that
$A'\simeq A$ and $C/A'\simeq B$. 
Note that $g_{AB}^C$ is finite and that, for fixed $A$ and $B$, $g^C_{AB}=0$ 
for all but finitely many $C$, up to isomorphism.
Let $\SES(\Ac)$ be the orbifold whose objects are all short exact sequences in $\Ac$
\begin{equation}
\label{eq:SES}
0\to A\to C\to B\to 0,
\end{equation}
and morphisms are isomorphisms of such sequences. We have three functors
$$p_1,p,p_2:\SES(\Ac)\to\Ac$$
associating to a sequence its three terms. We get a diagram of functors, with
$(p_1,p_2)$ being proper
\begin{equation}\label{eq:SES-Hall}
\xymatrix{\Mc(\Ac)\times\Mc(\Ac)&\SES(\Ac)\ar[l]_-{(p_1,p_2)}\ar[r]^p&\Mc(\Ac).}
\end{equation}
Note that $H(\Ac)\otimes H(\Ac)=\Fc_0(\Mc(\Ac)\times\Mc(\Ac))$.
The following is straightforward. 
\begin{prop}
\label{prop:SES-orbifold-Hall}
The multiplication $\mu$ in $H(\Ac)$ is identified with the map
$$\Fc_0(\Mc(\Ac)\times\Mc(\Ac))\to\Fc_0(\Mc(\Ac)),\quad 
f\mapsto p_*\bigl((p_1,p_2)^*(f\cdot\langle B,A\rangle)\bigr),$$
where $\langle B,A\rangle$ is the function $\Mc(\Ac)\times\Mc(\Ac)\to k,$ 
$(A,B)\mapsto\langle B,A\rangle$. \qed
\end{prop}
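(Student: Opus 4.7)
The plan is to verify the claimed formula by evaluating both sides on an arbitrary basis element $1_A \otimes 1_B \in H(\Ac) \otimes H(\Ac) = \Fc_0(\Mc(\Ac) \times \Mc(\Ac))$ and comparing with the explicit Hall-product formula \eqref{eq:g-ABC}. This reduces the proposition to checking that the pullback-pushforward along the diagram \eqref{eq:SES-Hall} correctly recovers the counts $g_{AB}^C$.

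First, $(p_1, p_2)^*(1_A \otimes 1_B \cdot \langle B, A\rangle)$ is the scalar $\langle B, A\rangle$ times the characteristic function of the sub-orbifold of $\SES(\Ac)$ consisting of short exact sequences whose outer terms are isomorphic to $A$ and $B$. Since $\Ac$ is finitary, $\Ext^1(B,A)$ is finite, so this preimage has only finitely many isomorphism classes; in particular $(p_1,p_2)$ is proper and the pushforward by $p$ is well defined. Applying the defining formula $p_*(1_E) = (|\Aut(p(E))|/|\Aut(E)|)\cdot 1_{p(E)}$ and evaluating at $[C]$, one obtains
\[
p_*\bigl((p_1,p_2)^*(1_A \otimes 1_B \cdot \langle B, A\rangle)\bigr)(C) \,=\, \langle B, A\rangle \, \sum_{[E]} \frac{|\Aut(C)|}{|\Aut(E)|},
\]
the sum running over isomorphism classes of short exact sequences $E \colon 0 \to A \to C \to B \to 0$ with middle term isomorphic to $C$ and outer terms isomorphic to $A,B$.

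The only step of any substance is to identify this inner sum with $g_{AB}^C$, which is done by orbit–stabilizer. Let $X$ be the set of pairs $(i,p)$ of morphisms between the fixed representatives, with $i\colon A\hookrightarrow C$ a mono, $p\colon C\twoheadrightarrow B$ an epi, and $\ker(p)=\Im(i)$. The group $G = \Aut(A) \times \Aut(C) \times \Aut(B)$ acts on $X$, its orbits are precisely the isomorphism classes $[E]$ above, and the stabilizer of a point $E\in X$ is $\Aut(E)$. Hence
\[
\sum_{[E]} \frac{1}{|\Aut(E)|} \,=\, \frac{|X|}{|G|}.
\]
Counting $X$ in a different way — first choose the subobject $A'=\Im(i)\subset C$ (which must satisfy $A'\cong A$ and $C/A'\cong B$, contributing the factor $g_{AB}^C$), then pick an isomorphism $A\cong A'$ (giving $i$, contributing $|\Aut(A)|$) and an isomorphism $C/A'\cong B$ (giving $p$, contributing $|\Aut(B)|$) — yields $|X| = g_{AB}^C \cdot |\Aut(A)| \cdot |\Aut(B)|$. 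Combined with $|G| = |\Aut(A)| \cdot |\Aut(B)| \cdot |\Aut(C)|$, this gives $\sum_{[E]} |\Aut(C)|/|\Aut(E)| = g_{AB}^C$, matching the coefficient in \eqref{eq:g-ABC}.

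There is no real obstacle here; the argument is essentially bookkeeping. The only point that requires a moment's thought is the orbit–stabilizer count, where the factors $|\Aut(A)|$ and $|\Aut(B)|$ from the enumeration of $X$ cancel cleanly against those in $|G|$, leaving precisely the $|\Aut(C)|$ appearing in the orbifold pushforward weight.
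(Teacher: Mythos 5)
Your verification is correct: the orbit--stabilizer count giving $\sum_{[E]}|\Aut(C)|/|\Aut(E)|=g_{AB}^C$ is exactly the bookkeeping needed, and it matches the coefficient $\langle B,A\rangle\,g_{AB}^C$ in \eqref{eq:g-ABC}. The paper states this proposition as straightforward and omits the proof, so your argument simply supplies the intended routine check; nothing is missing.
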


The $k$-algebra $H(\Ac)$ is $[\Ac]$-graded 
\begin{equation}\label{eq:grading-Hall}
H(\Ac)=\bigoplus_{\alpha\in [\Ac]}H^{(\alpha)}(\Ac)
\end{equation}
where $H^{(\alpha)}(\Ac)=\{f\in H(\Ac)\,;\,f(A)=0\,\text{unless}\, [A]=\alpha\}$.
If $\Ec\subset\Ac$ is an {\it exact subcategory}, 
i.e., a full subcategory closed under extensions, then we have a
$k$-subalgebra $H(\Ec)\subset H(\Ac)$ formed by the functions $f$ such that
$\supp(f)\subset\Mc(\Ec)$. Let $\Ac^{\op}$ be the opposite category of $\Ac$. 
If $\Ec\subset\Ac$ is as above, then $\Ec^\op$ is an exact subcategory in $\Ac^\op$, and we have
$H(\Ec^\op)=H(\Ec)^\op$, the $k$-algebra opposite to $H(\Ec)$. 
By a {\it perfect duality} on a category $\Ec$
we mean an equivalence of categories $D:\Ec\to\Ec^\op$ 
such that $D^2\simeq 1_\Ec$.

\begin{prop}\label{prop:Hall-involution}
(a) Let $\Ec$ be an exact subcategory in a finitary abelian category $\Ac$, 
and $D:\Ec\to\Ec^\op$ be a perfect duality. 
Then the operator $f\mapsto f^*$, $f^*(E)=f(D(E))$ is a $k$-linear 
anti-involution on the $k$-algebra $H(\Ec)$.

(b) If $k=\CC$ the operator $f\mapsto f^\star$, 
$f^\star(E)=\overline{f(D(E))}$ is a $\CC$-antilinear anti-involution on $H(\Ec)$.
\end{prop}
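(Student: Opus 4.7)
The plan is to prove (a) directly from the definition of the multiplication in $H(\Ec)$ and the formal properties of a perfect duality, and then derive (b) as an immediate variant. I will first establish that the operator is well-defined and involutive, and then verify the anti-multiplicativity $(f*g)^* = g^**f^*$, which is the main content of the statement.

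First I would observe that $f\mapsto f\circ D$ is $k$-linear on functions, and that since $D^2\simeq\Id_\Ec$ and elements of $H(\Ec)=\Fc_0(\Mc(\Ec))$ are isomorphism invariant, we have $(f^*)^*(E) = f(D^2(E)) = f(E)$, so $f\mapsto f^*$ is an involution. It also preserves $\Fc_0$ because $D$ is an equivalence, hence induces a bijection of isomorphism classes. The key fact I will use throughout is that a perfect duality $D$ takes a short exact sequence $0\to A\to C\to B\to 0$ in $\Ec$ to a short exact sequence $0\to D(B)\to D(C)\to D(A)\to 0$ in $\Ec$, and sets up a bijection between subobjects $A\subset C$ (with quotient $B$) and subobjects $D(B)\subset D(C)$ (with quotient $D(A)$). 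Moreover $D$ induces isomorphisms $\Ext^i_\Ec(A,B)\simeq \Ext^i_\Ec(D(B),D(A))$, so
\[
\langle D(B), D(A)\rangle \,=\, \langle A, B\rangle.
\]

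To verify $(f*g)^*=g^**f^*$, evaluate both sides at an object $E\in\Ec$. Using formula \eqref{product} and the substitution $A' = D(E/X)$ for a subobject $X\subset E$ (so that $D(E)/A'\simeq D(X)$),
\[
(f*g)^*(E) \,=\, (f*g)(D(E)) \,=\, \sum_{A'\subset D(E)} \langle D(E)/A', A'\rangle f(A') g(D(E)/A'),
\]
which, after the substitution and using $\langle D(X),D(E/X)\rangle=\langle E/X,X\rangle$, becomes
\[
\sum_{X\subset E} \langle E/X, X\rangle f(D(E/X)) g(D(X)) \,=\, \sum_{X\subset E} \langle E/X, X\rangle g^*(X) f^*(E/X),
\]
which is exactly $(g^**f^*)(E)$ by \eqref{product}. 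This gives (a).

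For (b), the only additional ingredient is to note that the Euler form $\langle\bullet,\bullet\rangle$ takes positive real values (it is a product of cardinalities of $\Ext$ groups raised to half-integer powers), hence is fixed by complex conjugation. The same substitution argument then yields $(f*g)^\star(E) = \overline{(f*g)(D(E))} = (g^\star * f^\star)(E)$; $\CC$-antilinearity and involutivity follow from the corresponding properties of complex conjugation combined with $D^2\simeq \Id_\Ec$. The only conceptual subtlety, and the place where care is needed, is the bookkeeping of the bijection between subobjects of $E$ and subobjects of $D(E)$ together with the identification of Euler factors; once that is set up cleanly, both parts reduce to direct substitution in \eqref{product}.
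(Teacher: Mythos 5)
Your verification is exactly the direct check the paper leaves to the reader (Proposition \ref{prop:Hall-involution} is stated there without proof), and it is correct: the substitution $A'=D(E/X)$ in \eqref{product}, together with $\langle D(X),D(E/X)\rangle=\langle E/X,X\rangle$, gives $(f*g)^*=g^**f^*$, and part (b) follows since the Euler form takes positive real values. The one point to keep in mind is that the paper's literal definition of a perfect duality (an equivalence $D:\Ec\to\Ec^\op$ with $D^2\simeq 1_\Ec$) does not by itself guarantee your ``key fact'' that $D$ carries short exact sequences of $\Ac$ with terms in $\Ec$ to short exact sequences and induces isomorphisms on the groups $\Ext^i_\Ac$ (note the Euler form \eqref{euler1} is computed in $\Ac$, not $\Ec$); this compatibility is an implicit part of the notion as used in the paper, and it does hold for the dualities actually employed, namely $\Fc\mapsto\Fc^*$ on $\Bun(X)$ and $\Fc\mapsto\Ec xt^1_{\Oc_X}(\Fc,\Oc_X)$ on $\Tor(X)$.
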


The space $\widehat H(\Ac)=\Fc(\Mc(\Ac))$ is, in general, not a $k$-algebra. 
It is the algebraic dual of $H(\Ac)$ by the orbifold scalar product. 
Similarly, let  $ H(\Ac)\widehat\otimes H(\Ac)=\Fc(\Mc(\Ac)\times\Mc(\Ac))$
 be the algebraic dual of 
$H(\Ac)\otimes H(\Ac)$. The Hall multiplication on $H(\Ac)$ gives, 
by dualization, the map
\begin{equation}
\label{eq:Hall-comultiplication}
\Delta=\Delta_\Ac:\widehat H(\Ac)\to H(\Ac)\widehat\otimes  H(\Ac),\quad
(\Delta(f),g\otimes h)=(f,g*h).
\end{equation}
In terms of the diagram (\ref{eq:SES-Hall}) we have $\Delta(f)=(p_1,p_2)_*(p^*f\cdot 
\langle B,A\rangle )$. Note also that
\begin{equation}
\label{eq:Hall-comultiplication-explicit}
\Delta(1_C)=\sum_{A\subset C}\langle C/A,A\rangle\,{|\Aut(A)|\cdot|\Aut(C/A)|\over |\Aut(C)|}\,
1_A\otimes 1_{C/A},
\end{equation}
where the sum is over all subobjects of $A$.
We will say that an abelian category $\Ac$ is {\it cofinitary},
 if each object of $\Ac$ has only
finitely many subobjects. If $\Ac$ is finitary and cofinitary,
 then both functors in (\ref{eq:SES-Hall}) are proper, and 
$\Delta$ restricts to the Hall algebra
$$\Delta:H(\Ac)\to H(\Ac)\otimes H(\Ac).$$
The Cartan form on $[\Ac]$ makes
the category $\Vect_{[\Ac]}$ of $[\Ac]$-graded $k$-vector spaces
$E=\bigoplus_{\alpha\in\Mc(\Ac)}E^{(\alpha)}$ 
into a braided monoidal category.  The braiding is given by
$$R_{E,F}:E\otimes F\to F\otimes E,\quad
 e\otimes f\mapsto \((\deg(e),\deg(f)\))\cdot f\otimes e,$$
where $\deg(e),\deg(f)\in[\Ac]$ are the degrees of  $e,f$.
In particular, one can speak about bialgebras in $\Vect_{[\Ac]}$
with respect to this braiding.
An abelian category is called 
{\it hereditary} if it has homological dimension at most 1.

\begin{thm}[Green \cite{green}] \label{thm:green} 
Let $\Ac$ be an abelian category which is finitary, 
cofinitary and hereditary. Then 
$H(\Ac)$ is a braided bialgebra in $\Vect_{[\Ac]}$, i.e., 
$\Delta$ is a morphism of $k$-algebras, where the multiplication 
in $H(\Ac)\otimes H(\Ac)$ is given by
\begin{equation}
\label{eq:twisted-tensor-product-Hall}
\bigl(1_A\otimes 1_B\bigr)\,\bigl(1_C\otimes 1_D\bigr)
\,\,=\,\,
\(( B,C\)) \cdot \bigl(1_A * 1_C\bigr)\otimes\bigl(1_B * 1_D\bigr).
\end{equation}
\end{thm}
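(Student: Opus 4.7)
The plan is to verify directly the identity $\Delta(1_A * 1_B) = \Delta(1_A)\cdot\Delta(1_B)$ for all $A,B\in\Ac$, where the product on the right uses the twisted multiplication \eqref{eq:twisted-tensor-product-Hall}. By $k$-linearity and Proposition \ref{prop:braided-bialgebras}, this is equivalent to the braided bialgebra axiom. First I would expand both sides in the basis $\{1_M\otimes 1_N\}$. Combining \eqref{eq:g-ABC} and \eqref{eq:Hall-comultiplication-explicit} yields
\begin{equation*}
\Delta(1_A*1_B)\,=\,\langle B,A\rangle\sum_{C}g_{AB}^C\sum_{U\subset C}\langle C/U,U\rangle\,\frac{|\Aut(U)|\,|\Aut(C/U)|}{|\Aut(C)|}\,1_U\otimes 1_{C/U},
\end{equation*}
so the left side is indexed by pairs $(C,U)$ with $0\to A\to C\to B\to 0$ exact and $U\subset C$ arbitrary. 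Expanding $\Delta(1_A)$ and $\Delta(1_B)$ into subobjects and applying \eqref{eq:twisted-tensor-product-Hall} and \eqref{eq:g-ABC}, the right side becomes a weighted sum indexed by quadruples $(X\subset A,\;Z\subset B,\;M,\;N)$ in which $M$ is an extension of $Z$ by $X$ and $N$ is an extension of $W:=B/Z$ by $Y:=A/X$.

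Next I would match these two sets of indexing data via the $3\times 3$ lemma. To a pair $(C,U)$ I associate the $3\times 3$ diagram with corners $X=A\cap U$, $Y=A/X$, $Z=(A+U)/A$, $W=C/(A+U)$, rows $0\to X\to A\to Y\to 0$, $0\to M\to C\to N\to 0$, $0\to Z\to B\to W\to 0$, and columns $0\to X\to M\to Z\to 0$, $0\to A\to C\to B\to 0$, $0\to Y\to N\to W\to 0$. Conversely any such diagram reconstructs $(C,U)$. This is a bijection of groupoids whose fibres over isomorphism classes account for the various automorphism factors appearing in both expansions.

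The heart of the argument is then to check that the numerical weight attached to each matched diagram agrees on the two sides. Hereditariness enters twice. First, because $\Ext^{\geqslant 2}=0$, the Euler form $\langle-,-\rangle$ is strictly multiplicative across short exact sequences in either argument. Splitting $\langle B,A\rangle$ along the top row and left column of the diagram, and $\langle N,M\rangle$ along the bottom row and right column, and comparing with the Euler forms arising on the right side (two from the comultiplications, two from the shuffle products, and the Cartan factor $\(( Y,Z\))$ from \eqref{eq:twisted-tensor-product-Hall}), the two prefactors differ by the single ratio $\langle W,X\rangle^2=|\Hom(W,X)|/|\Ext^1(W,X)|$. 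Second, this ratio is exactly absorbed by the combinatorial count of $3\times 3$ diagrams with fixed corners: for prescribed outer rows and columns, the ways of filling in the middle are governed (up to automorphism) by a ratio of $|\Ext^1(W,X)|$ to $|\Hom(W,X)|$, as formalised by Riedtmann's formula in its hereditary (and therefore unobstructed) form. The two discrepancies cancel exactly.

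The main obstacle is precisely this coupled numerical comparison. Without hereditariness, splitting $\langle-,-\rangle$ across a short exact sequence introduces correction factors from $\Ext^{\geqslant 2}$, and Riedtmann's counting formula acquires analogous higher obstructions; these do not cancel between the two sides, which is the conceptual reason the theorem is restricted to the hereditary setting. Once the numerical matching is established for each choice of corners, summing over all matched diagrams gives the equality of coefficients of every $1_M\otimes 1_N$, and Proposition \ref{prop:braided-bialgebras} then completes the proof.
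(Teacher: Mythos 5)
The paper itself does not prove this statement -- it is quoted from Green's paper -- so the only comparison available is with the standard argument that the citation points to; your outline follows exactly that strategy, and your Euler-form bookkeeping is correct: expanding both sides of $\Delta(1_A*1_B)=\Delta(1_A)\Delta(1_B)$ against \eqref{eq:twisted-tensor-product-Hall}, the prefactors differ by precisely $\langle W,X\rangle^{2}=|\Hom(W,X)|/|\Ext^1(W,X)|$ in the hereditary case. (A small imprecision: multiplicativity of $\langle\cdot,\cdot\rangle$ across short exact sequences holds in any finitary category, since the form descends to $[\Ac]$ by the long exact sequence; hereditariness is only needed to reduce the square of the Euler form to the two-term ratio and, crucially, in the counting step below.)

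The genuine gap is the step you describe as ``formalised by Riedtmann's formula in its hereditary form.'' The correspondence between pairs $(C,U\subset C)$ and $3\times 3$ diagrams with prescribed outer rows and columns is not a bijection of groupoids: the entire content of the theorem is the weighted count of middle fillings of a fixed frame, and the assertion that this count is governed, up to automorphism factors, by $|\Ext^1(W,X)|/|\Hom(W,X)|$ \emph{is} Green's formula itself, not a known auxiliary fact one may invoke. Riedtmann's formula only converts a single Hall number $g^C_{AB}$ into a count of classes in $\Ext^1(B,A)$ with middle term $C$, divided by $|\Hom(B,A)|$ and corrected by automorphism orders; it holds without hereditariness and does not produce the $3\times 3$-completion count. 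Establishing that count is where the real work lies: one must analyse, via long exact sequences, the fibres of the map from filled diagrams to frames, with $\Ext^2=0$ guaranteeing that the relevant obstruction vanishes and that a certain map is surjective with fibres of computable cardinality -- this occupies the bulk of Green's proof. As written, your argument therefore assumes its hardest ingredient; the surrounding structure (reduction via Proposition \ref{prop:braided-bialgebras}, the corners $X=A\cap U$, $Z=(A+U)/A$, and the cancellation of $\langle W,X\rangle^{2}$ against the Ext/Hom ratio) is sound and matches the standard exposition.
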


\vskip 3mm

%\vfill\eject

\subsection{The Hall algebra of a curve} 
>From now on, let $X$ be a smooth connected projective curve over $\FF_q$.
For any closed point $x\in X$ the field $\FF_q(x)$ is a finite extension 
of $\FF_q$. Let $\deg(x)$ be its degree. 
Let $\Coh(X)$ be the category of coherent sheaves on $X$.
This is an abelian category which is finitary and hereditary, 
but not cofinitary. Consider the following full subcategories:
\begin{itemize}
\item $\Bun(X)$ is the exact category of vector bundles on $X$;
we write $\Bunn(X)$ for the set of isomorphism classes of objects of $\Bun(X)$.
\item $\Tor(X)$ is the abelian category of torsion sheaves, it is cofinitary,
\item $\Tor_x(X)$ is the abelian category of torsion sheaves supported at $x$.
\end{itemize}
Note that we have a decomposition into a direct sum of categories
$$\Tor(X)\,\,=\,\,\bigoplus_{x\in X}\Tor_x(X).$$
Let $K(X)=[\Coh(X)]$. The Euler form on $K(X)$ 
takes values in $\QQ(\sqrt{q})$. 
Assume that  $\sqrt{q}\in k$. We consider the following Hall algebras
$$H_\coh=H(\Coh(X)),\quad
H=H(\Bun(X)),\quad
A=H(\Tor(X)),\quad
A_x=H(\Tor_x(X)).$$
We denote by $p_\bun:H_\coh\to H$ the projection given by 
$$p_\bun(1_\Fc)=1_\Fc,\,\, \text{if}\  \Fc\ \text{is a bundle},
\quad
p_\bun(1_\Fc)=0,\,\, \text{otherwise}.$$
The rank and the degree of vector bundles yield a map
$$(\rk,\deg):K(X)\to\ZZ^2.$$
The Euler form on $K(X)$ factors through 
$\ZZ^2$ and is given by
\begin{equation}
\label{eq:euler-form-K(X)}
\log_q\langle(r,d),(r',d')\rangle={1\over 2}\bigl(
rd'-r'd+(1-g_X)rr'\bigr),
\end{equation}
where $g_X$ is the genus of $X$. We have  bigradings
\begin{equation}\label{eq:rank-degree-bigrading}
H_\coh=\bigoplus_{(r,d)\in \ZZ_+^2}H_\coh^{(r,d)},
\quad H=\bigoplus_{(r,d)\in \ZZ_+^2}H^{(r,d)},
\end{equation}
where $\ZZ_+^2=\{(r,d)\;;\; (r,d) \geqslant (0,0)\}$,
with the inequalities understood with respect to
the lexicographic order on $\ZZ^2$.
The bigrading on the $k$-subalgebra $H$ is the induced one.
We write
$$H_\coh^{(r)}=\bigoplus_{d\in \ZZ}H_\coh^{(r,d)},
\quad H^{(r)}=\bigoplus_{d\in \ZZ}H^{(r,d)}.$$

There is a perfect duality on $\Bun(X)$ given by
$\Fc\mapsto\Fc^*$ (the dual vector bundle).
So, by Proposition \ref{prop:Hall-involution} 
the $k$-algebra $H$ has an involutive antiautomorphism
\begin{equation}
\label{involution}
H\to H,\quad f\mapsto f^*,\quad
f^*(\Fc)={f(\Fc^*)}.
\end{equation}
%

%\vfill\eject

\vskip3mm

\subsection{The comultiplication}

If $\Mc$ is an orbifold and $\varsigma: \operatorname{Ob}(\Mc) \to \ZZ^l$ 
is any map, we let $\Fc_{\varsigma}(\Mc)$ be the set of isomorphism invariant functions
 $f: \operatorname{Ob}(\Mc) \to k$ for which $\varsigma(supp(f))$ is a finite set. For any
$n \geqslant 1$, consider the map
$$\varsigma: \operatorname{Ob}(\Mc(Coh(X)^{\times n}) \to \ZZ^2,
 \quad
  (\Fc_1, \ldots, \Fc_n) \mapsto 
  \big(\sum_i \text{rk}\;\Fc_i, \sum_i \text{deg}\;\Fc_i\big)$$
and set
 ${H}^{\widetilde{\otimes} n}_{\coh}=\Fc_\varsigma(\Mc(Coh(X)^{\times n})$. 
 We will abbreviate $\widetilde{H}_{coh}=H_{coh}^{\widetilde{\otimes} 1}$. For any $n$ we have a bigrading
$${H}^{\widetilde{\otimes} n}_{\coh}=\bigoplus_{(r,d) \in \ZZ_+^2} \big({H}^{\widetilde{\otimes} n}_{\coh}\big)^{(r,d)}$$
and ${H}^{\widetilde{\otimes} n}_{\coh}$ is the \textit{graded} algebraic dual of $H^{\otimes n}_{\coh}$.
There is an obvious chain of (strict) inclusions $H_{\coh}^{\otimes n} \subset {H}^{\widetilde{\otimes}n}_{\coh} \subset \widehat{H}^{\widehat{\otimes}n}_{\coh}$.
We equip $H^{\otimes n}_{\coh}$ with the following twisted multiplication
\begin{equation*}
\label{eq:twisted-product-Hcoh}
\bigl(1_{\Ec_1}\otimes\cdots\otimes 1_{\Ec_n}\bigr)*
\bigl(1_{\Fc_1}\otimes\cdots\otimes 1_{\Fc_n}\bigr)=
\Bigl(\prod_{i>j}\((\Ec_i,\Fc_j\)) \Bigr)\,
\bigl(1_{\Ec_1}*1_{\Fc_1}\bigr)\otimes\cdots\otimes
\bigl(1_{\Ec_n}*1_{\Fc_n}\bigr).
\end{equation*}
With respect to this product $H^{\otimes n}_\coh$ is a $(\ZZ^2_+)^n$-graded
$k$-algebra.

\begin{prop}
\label{prop:def-coprod-completion}
(a) The $k$-algebra structure on $H^{\otimes n}_{\coh}$ extends to an associative $k$-algebra structure on
${H}^{\widetilde{\otimes} n}_{\coh}$,

(b) For any $i=1, \ldots, n-1$ the map $Id^{\otimes i-1} \otimes \Delta_{\coh} \otimes Id^{\otimes n-1-i}$ takes
${H}^{\widetilde{\otimes}n-1}_{\coh}$ into ${H}^{\widetilde{\otimes} n}_{\coh}$,

(c) The iterated comultiplication $\Delta^{(n-1)}: \widetilde{H}_{\coh} \to {H}^{\widetilde{\otimes} n}_{\coh}$ is a morphism of $k$-algebras.
\end{prop}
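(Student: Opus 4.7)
I will prove (a), (b), (c) in sequence. All three rest on a finiteness principle for subsheaves of coherent sheaves on a curve: for fixed $\Gc\in\Coh(X)$ and fixed $(r,d)\in\ZZ^2$, the set of subsheaves $\Ec\subset\Gc$ with $(\rk\Ec,\deg\Ec)=(r,d)$ is finite, being the set of $\FF_q$-points of a Quot scheme of finite type.

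For (a), I would extend the twisted product from $H_\coh^{\otimes n}$ to $H_\coh^{\widetilde{\otimes}n}$ by the same formula; the only task is to verify convergence. For fixed target $(\Gc_1,\dots,\Gc_n)$, the sum defining $(f*g)(\Gc_\bullet)$ ranges over tuples $(\Ec_\bullet,\Fc_\bullet)$ with $\Ec_i\subset\Gc_i$ and $\Gc_i/\Ec_i\simeq\Fc_i$, weighted by $f(\Ec_\bullet)g(\Fc_\bullet)$. The $\varsigma$-finiteness of $\supp(f)$ and $\supp(g)$ constrains $(\sum_i\rk\Ec_i,\sum_i\deg\Ec_i)$ to a finite set, so each $\rk\Ec_i$ (non-negative with bounded total) is individually bounded. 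For the degrees, on a smooth curve every coherent sheaf splits as $T\oplus V$ with $T$ torsion and $V$ locally free, so every subsheaf of $\Gc_i$ of rank at most a fixed bound has degree bounded from above (torsion part by $\operatorname{length}(T)$, locally-free part by a Harder-Narasimhan estimate on $V$); boundedness below for each $\deg\Ec_i$ then follows from the fixed total. The finiteness principle yields a finite sum, and $\varsigma(\supp(f*g))\subset\varsigma(\supp f)+\varsigma(\supp g)$ is evident, so $f*g\in H_\coh^{\widetilde{\otimes}n}$; associativity is inherited from the twisted product on $H_\coh^{\otimes n}$ since the same defining formula is used. Part (b) is immediate from the additivity of $\varsigma$ in short exact sequences: $\varsigma(A,B)=\varsigma(C)$ whenever $0\to A\to C\to B\to 0$, so $(\Id^{\otimes i-1}\otimes\Delta_\coh\otimes\Id^{\otimes n-1-i})(f)$ has the same $\varsigma$-image as $\supp(f)$.

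For (c), both $\Delta_\coh(f*g)$ and $\Delta_\coh(f)*\Delta_\coh(g)$ lie in $H_\coh^{\widetilde{\otimes}2}$ by (a) and (b). I would test equality by evaluation at an arbitrary pair $(\Mc,\Nc)\in\Coh(X)^2$; by the finiteness argument of (a) applied with $\Mc,\Nc$ as targets, both sides become finite sums in $k$. On basis vectors $f=1_\Ec$, $g=1_\Fc$ the resulting identity is Green's combinatorial formula from Theorem \ref{thm:green}, whose proof---via Riedtmann's counting of double short exact sequences---relies only on the hereditary property of $\Coh(X)$, not on cofinitariness; cofinitariness was used in loc.~cit.~only to ensure $\Delta$ takes values in $H\otimes H$, and we replace this target by $H_\coh^{\widetilde{\otimes}2}$. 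Linearity then extends the identity to arbitrary $f,g\in\widetilde H_\coh$, and the general $n\geqslant 2$ case follows by induction from $n=2$ using coassociativity of $\Delta_\coh$ together with (b). The main obstacle will be the boundedness-above argument in (a); it is the only genuinely curve-specific ingredient, using the torsion-plus-bundle splitting of coherent sheaves on a smooth curve together with Harder-Narasimhan on the locally-free part. Once this convergence is secured, (b) is formal and (c) reduces to Green's theorem evaluated pointwise.
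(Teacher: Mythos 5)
Your argument follows essentially the same route as the paper's: part (a) is reduced to the same two finiteness facts (degrees of subsheaves of bounded rank are bounded above, and there are finitely many subsheaves of fixed rank and degree — which the paper merely asserts and you sketch via the torsion-plus-bundle splitting, Harder--Narasimhan, and Quot-scheme finiteness), part (b) is the same observation that the map preserves the total $\ZZ_+^2$-grading of supports, and part (c) is handled exactly as in the paper by reducing to $n=2$ via coassociativity and noting that Green's proof uses only hereditariness once the target is enlarged to $H^{\widetilde\otimes 2}_{\coh}$ (where the paper cites \cite{schiffmann:lectures}, you supply the pointwise local-finiteness justification). The proposal is correct; no gaps worth flagging.
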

\begin{proof}
Let $n \geqslant 1$, and let $(r,d), (r',d') \in \ZZ^2_+$. To show that the
 multiplication map
  $(H^{\widetilde{\otimes} n})^{(r,d)} \otimes (H^{\widetilde{\otimes n}})^{(r',d')}
   \to (H^{\widetilde{\otimes} n})^{(r+r',d+d')}$ 
   is well-defined, we need to establish the following fact~: 
   for any $n$-tuple of coherent sheaves $(\Fc_1, \ldots, \Fc_n)$ satisfying
$\sum_i \text{rk}\;\Fc_i=r+r', \sum_i \text{deg}\;\Fc_i=d+d'$ , the number of isomorphism classes of $n$-tuples of subsheaves $\Gc_1 \subset \Fc_1, \ldots, \Gc_n \subset \Fc_n$ satisfying $\sum_i \text{rk}\;\Gc_i=r, \sum_i \text{deg}\;\Gc_i=d$, is finite. This in turn may easily be deduced by induction from the following two facts~:

\vspace{.05in}

(i) for any coherent sheaf $\Fc$ and any fixed $r \leqslant \text{rk}\;\Fc$ the possible degrees of subsheaves of
$\Fc$ of rank $r$ is bounded above,

(ii) for any coherent sheaf $\Fc$ and any fixed $(r,d) \in \ZZ^2_+$ satisfying $r \leqslant \text{rk}\;\Fc$, the number of subsheaves of $\Fc$ of rank $r$ and degree $d$ is finite.

\vspace{.05in}

This proves (a). Statement (b) is clear since $Id^{\otimes i-1} \otimes \Delta_{\coh} \otimes Id^{n-1-i}$ takes
$H_{\coh}^{\widehat{\otimes} n-1}$ to $H_{\coh}^{\widehat{\otimes} n}$ and preserves the total $\ZZ^2_+$-grading. Finally, we prove (c). Because of the coassociativity of $\Delta_\coh$, it is enough to verify
this for the case $n=2$. Green's proof of Theorem \ref{thm:green} extends to this case, even though $Coh(X)$ is not coinitary-- see e.g. \cite{schiffmann:lectures} for details.  
\end{proof}

Let $H^{\widehat\otimes n}\subset H^{\widehat\otimes n}_\coh$, resp.  $ H^{\widetilde\otimes n}\subset H^{\widetilde\otimes n}_\coh$
be the subspace of 
functions supported on $\Bun(X)^n$, and let
$p_n: H^{\widehat\otimes n}_\coh\to H^{\widehat\otimes n}$, $p_n: H^{\widetilde\otimes n}_\coh\to H^{\widetilde\otimes n}$ be the obvious projections.
We denote by
\begin{equation}\label{2.38}
\Delta^{(n-1)} = p_n\circ\Delta_\coh^{(n-1)} :H\lra H^{\widetilde\otimes n}.
\end{equation}
the composition of $\Delta_\coh^{(n-1)}$ and the projection on the
bundle part. Note that $\Delta^{(n-1)}$ is not an algebra homomorphism.

%\vfill\eject

\vskip3mm

\subsection{The Hecke algebra}

The Euler form is identically equal to 1 on $[\Tor(X)]$.
 Since $\Tor(X)$ is cofinitary, 
we see that $A$ is a $k$-bialgebra in the usual sense. So is each $A_x$, $x\in X$. 
We have an identification of $k$-bialgebras
$A=\bigotimes_{x\in X}A_x$ (restricted tensor product). There is a perfect duality on $\Tor(X)$ given by
\begin{equation}\label{E:dual-torsion}
\Fc\mapsto\Ec xt^1_{\Oc_X}(\Fc,\Oc_X).
\end{equation} 
Since $\Fc\simeq\Ec xt^1_{\Oc_X}(\Fc,\Oc_X)$ (not canonically), 
the $k$-algebras $A$ is commutative. Since the comultiplication is dual to the multiplication
with respect to the orbifold scalar product on $A$, we deduce that $A$ is cocommutative as well. 
The same holds for the $k$-bialgebra $A_x$ for any $x \in X$. 

One can in fact be much more precise. The completion $\widehat\Oc_{X,x}$ of the local ring of $x$
is a complete discrete valuation ring with residue field $\FF_{q_x}$,
$q_x=q^{\deg(x)}$. Let $\Oc_x$ be the skyscraper sheaf at $x$ with stalk
$\FF_{q_x}$. The $k$-algebra $A_x$ can be seen as 
the ``classical" Hall algebra
of the category of finite $\widehat\Oc_{X,x}$-modules
\cite{macdonald}. 
In particular the elements 
$$b_{x,r}\,=\, q_x^{\frac{r(r-1)}{2}}1_{\Oc_x^{\oplus r}},\quad r\geqslant 1,$$
are free polynomial generators of $A_x$, and their coproduct is given by
\begin{equation}\label{E:coprod-er}
\Delta(b_{x,r})=\sum_{i=0}^rb_{x,i}\otimes b_{x,r-i},\quad b_{x,0}=1.
\end{equation}
The counit is given by $\epsilon(1_{\mathcal{F}})=0$ if $\mathcal{F}\neq 0$.

The algebra $A$ is called the {\it Hecke algebra}.
For $\Fc\in\Tor(X)$ we have the {\it Hecke operator}
\begin{equation}
\label{eq:heckeop}
T_\Fc:\widehat H\to\widehat H,\ (T_\Fc f)(V)=\sum_{V'}\langle\Fc,V'\rangle\,f(V'),
\end{equation}
where the sum runs over all subsheaves (necessarily locally free) 
$V'\subset V$ such that $V/V'$ 
is isomorphic to $\Fc$. Here we view $\widehat H$ as the space of all 
functions on the set of isomorphism classes in $\Bun(X)$.
We also define the {\it dual Hecke operator}
\begin{equation}
\label{eq:dualheckeop}
T_\Fc^*:\widehat H\to\widehat H,\ (T_\Fc^* f)(V)=
\sum_{U}\langle\Fc,V\rangle\,f(U),
\end{equation}
where $U$ runs over overbundles of $V$ such that $U/V\simeq\Fc$, taken modulo
isomorphisms identical on $V$. Observe that
\begin{equation}\label{E:hecke-counit}
 T_{\mathcal{F}}(1)=T^*_{\mathcal{F}}(1)=\epsilon(1_{\mathcal{F}})1.
\end{equation}

\begin{prop}
\label{prop:properties-heckeop}
$(a)$ Both $T_\Fc$ and $T_\Fc^*$ preserve $H$ and $\widetilde{H}$.

$(b)$ $T_\Fc^*$ is adjoint of $T_\Fc$ w.r.t. the orbifold scalar product
\eqref{scalarproduct}.
If $k=\CC$, then $T_\Fc^*$ is also adjoint of 
$T_\Fc$ w.r.t. the Hermitian  scalar product \eqref{hermitianproduct}. 

$(c)$ For $f\in H$ we have
$T_\Fc(f)=p_\bun(f*1_\Fc).$

$(d)$ For $f\in H$ we have
$T_\Fc^*(f)=(T_{{\Fc}^*}(f^*))^*$, 
where $f^*$ is as in (\ref{involution}), and where $\Fc^*$ is the dual in the sense
of (\ref{E:dual-torsion}).

$(e)$ We have
$T_{\Fc_1}T_{\Fc_2}=\sum_\Fc g_{\Fc_2,\Fc_1}^\Fc T_\Fc$,
and so $\widehat H$, $\widetilde{H}$ and $H$ are right $A$-modules. 
The subspaces $\widehat H^{(r)}$, $\widetilde{H}^{(r)}$ and $H^{(r)}$ are submodules.
By $(d)$ the same statement is true for $T_\Fc^*$. This yields left
$A$-modules structures on 
$\widehat H$, $\widetilde{H}$, $H$, $\widehat H^{(r)}$, $\widetilde{H}^{(r)}$, $H^{(r)}$.
\end{prop}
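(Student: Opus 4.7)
I would establish (c) first and use it as the pivot for the other parts, proceeding in the order (c) $\to$ (a) $\to$ (e) $\to$ (d) $\to$ (b). For (c), evaluate the Hall product directly: for $W \in \Bun(X)$, \eqref{product} gives
\[
(f * 1_\Fc)(W) \,=\, \sum_{W' \subset W,\ W/W' \simeq \Fc} \langle \Fc, W'\rangle\, f(W').
\]
A subsheaf of a bundle on the smooth curve $X$ is torsion-free and hence itself a vector bundle, so this sum is term-by-term identical to the defining sum for $T_\Fc(f)(W)$ in \eqref{eq:heckeop}. The projection $p_\bun$ simply discards the values at non-bundle $W$, giving (c).

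For (a), given $f \in H$ with finite support $\{V_1, \ldots, V_n\}$, the support of $T_\Fc(f)$ is contained in the set of isomorphism classes of bundles $W$ appearing as extensions $0 \to V_i \to W \to \Fc \to 0$ for some $i$; such $W$ are parametrised by a quotient of the finite group $\Ext^1(\Fc, V_i)$, hence are finite in number. Preservation of $\widetilde H$ follows because $T_\Fc$ shifts the bigrading by $(0, \deg \Fc)$. The analogous statements for $T^*_\Fc$ follow from (d) below (or by a symmetric argument with overbundles).

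For (e), combining (c) with associativity of the Hall product,
\[
T_{\Fc_1} T_{\Fc_2}(f) \,=\, p_\bun\bigl(p_\bun(f * 1_{\Fc_2}) * 1_{\Fc_1}\bigr) \,=\, p_\bun\bigl(f * 1_{\Fc_2} * 1_{\Fc_1}\bigr),
\]
where the middle equality holds since for $W$ a bundle, $(g * 1_{\Fc_1})(W)$ only uses the values of $g$ at subbundles of $W$, which agree with those of $p_\bun(g)$. Because the Euler form is trivial on torsion sheaves, \eqref{eq:g-ABC} gives $1_{\Fc_2} * 1_{\Fc_1} = \sum_\Fc g^\Fc_{\Fc_2, \Fc_1} 1_\Fc$, yielding the claimed formula and the right $A$-module structures on $H$, $\widetilde H$ and $\widehat H$. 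For (d), expand $(T_{\Fc^*}(f^*))^*(V) = T_{\Fc^*}(f^*)(V^*)$ as a sum over subsheaves $W' \subset V^*$ with $V^*/W' \simeq \Fc^*$. Applying $\ext^1(-, \Oc_X)$ to $0 \to W' \to V^* \to \Fc^* \to 0$ and using $\ext^1(V^*, \Oc_X) = 0$ produces a bijection with overbundles $U \supset V$ satisfying $U/V \simeq \Fc$ via $W' = U^*$, under which $f^*(W') = f(U)$. The formula \eqref{eq:euler-form-K(X)} and the bimultiplicativity of $\langle -, -\rangle$ on short exact sequences give $\langle \Fc^*, U^*\rangle = \langle \Fc^*, V^*\rangle = \langle \Fc, V\rangle$, matching the weight in \eqref{eq:dualheckeop}. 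The left $A$-module structures then follow from the right ones by transport through (d) and the involution \eqref{involution}.

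The genuinely non-formal step is (b). Rewrite both sides as orbifold sums over the groupoid of short exact sequences $0 \to V' \to V \to \Fc \to 0$ with $V', V \in \Bun(X)$ and middle torsion the fixed $\Fc$. A direct orbit--stabilizer count -- equivalently, the adjointness of $p^*$ and $p_*$ for proper orbifold maps applied to the projections in Proposition \ref{prop:SES-orbifold-Hall} -- shows that both $(T_\Fc f, g)$ and $(f, T^*_\Fc g)$ equal the common orbifold integral
\[
\sum_{[0 \to V' \to V \to \Fc \to 0]}\, \frac{\langle \Fc, V'\rangle \, f(V') \, g(V)}{|\Aut(0 \to V' \to V \to \Fc \to 0)|}.
\]
The ``$U$ modulo isomorphisms identical on $V'$'' convention in \eqref{eq:dualheckeop} is precisely designed to produce the denominator $|\Aut(V')|$ needed to make the two expressions agree. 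The Hermitian statement then follows by complex-conjugating $g$. This groupoid bookkeeping is the only place where actual computation is required; once done, (a)--(e) are all in place.
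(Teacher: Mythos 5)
Your proposal is correct and follows essentially the same route as the paper: both rest on reading $T_\Fc$ and $T^*_\Fc$ through the groupoid of short exact sequences $0\to V'\to V\to\Fc\to 0$ with $V',V$ bundles and torsion quotient, with (c) obtained by comparing with the Hall product, (e) from associativity together with $p_\bun(f*g)=p_\bun(p_\bun(f)*g)$, and (d) from dualizing such sequences. The only real difference is cosmetic: where the paper gets (a) and (b) at once from properness of the orbifold projections and the general adjointness of $\phi_*$ and $\phi^*$, you unpack this into an explicit orbit--stabilizer count (whose key point, correctly reflected in your remark about the ``isomorphisms identical on $V$'' convention, is that an automorphism of $V$ fixing a full-rank subsheaf pointwise is the identity), which is a faithful unwinding of the same argument.
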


\begin{proof}
This proposition is well-known. 
Let us give a proof for the comfort of the reader.
Consider the action maps
\begin{equation}
\label{action1}
\begin{split}
\gamma^*:A\otimes\widehat H\to\widehat H,\quad \gamma(1_\Fc\otimes f)=T^*_\Fc(f),\cr
\gamma:\widehat H\otimes A\to\widehat H,\quad \gamma(f\otimes 1_\Fc)=T_\Fc(f).
\end{split}
\end{equation}
Let $\Sc$ be the orbifold of short exact sequences
\begin{equation}\label{eq:hecke-orbifold}
0\to V'\to V\to \Fc\to 0
\end{equation}
with $V,V'\in \Bun(X)$ and $\Fc\in\Tor(X)$, with obvious projections
$$p,p':\Sc\to\Mc(\Bun(X)),\quad
q:\Sc\to\Mc(\Tor(X)).$$
Then $\gamma$ is given by
\begin{equation}
\label{eq:hecke-action-orbifold}
\gamma(f)=p'_*\bigl((p,q)^*(f)\cdot\langle\Fc,V'\rangle\bigr),
\end{equation}
where we regard $f\in  \widehat H \otimes A$ as a function on
$\Mc(\Bun(X)) \times \Mc(\Tor(X))$.
Since $(p,q)$ is proper, we have $\gamma( H\otimes A)\subset H$.
Similarly $\gamma^*$ is given by
\begin{equation}
\label{eq:dual-hecke-action-orbifold}
\gamma^*(f)=p_*\bigl((q,p')^*(f)\cdot\langle\Fc,V\rangle\bigr),
\end{equation}
which implies $(a)$, $(b)$.
Next, $(c)$ is obvious from comparing (\ref{eq:dual-hecke-action-orbifold}) 
with the formula for the Hall multiplication, 
which involves an orbifold similar to $\Sc$ but without
the requirement that $V\in\Bun(X)$, see Proposition
\ref{prop:SES-orbifold-Hall}. To see part $(d)$, notice that sending
a short exact sequence (\ref{eq:hecke-orbifold}) into
$$0\to V^*\to V'^*\to \Ec xt^1(\Fc,\Oc_X)\to 0$$
defines a  perfect duality on the orbifold $\Sc$, interchanging the projections
$p$, $p'$ and composing $q$ with the duality (\ref{E:dual-torsion}).
Finally, $(e)$ follows from associativity of the Hall multiplication, together with the obvious identity
\begin{equation}\label{E:bundles}
p_{\text{bun}}(f * g)=p_{\text{bun}}(p_{\text{bun}}(f) * g).
\end{equation}
\end{proof}

The left (resp.~right) $A$-module structure on $H$ yields left (resp.~right)
$A$-module structures on $H\otimes H$ and $H \widetilde{\otimes} H$ via the comultiplication 
$\Delta:A\to A\otimes A$.

\begin{prop}
\label{prop:hall-mult-hecke-comult}

(a) The multiplication $\mu:H\otimes H\to H$ is a morphism of left 
(as well as right) $A$-modules.

(b) The comultiplication $\Delta: H \to H \widetilde{\otimes} H$ is a morphism of left (as well as right)
$A$-modules.

\end{prop}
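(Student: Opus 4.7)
The plan is to deduce both assertions from Green's theorem (Theorem \ref{thm:green}), via its extension to $H_\coh$ in Proposition \ref{prop:def-coprod-completion}(c), combined with the identity $T_\Fc(f)=p_\bun(f*1_\Fc)$ of Proposition \ref{prop:properties-heckeop}(c). Two simple compatibilities make the bundle-projection interact cleanly with $\Delta_\coh$. First, on a smooth curve every subsheaf of a vector bundle is a vector bundle, and conversely any extension of two bundles is a bundle, hence $\Delta_\coh(H)\subset H\,\widetilde\otimes\, H_\coh$ and $p_{\bun\otimes\bun}\circ\Delta_\coh=\Delta\circ p_\bun$ on all of $H_\coh$. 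Second, for $\Fc\in\Tor(X)$ every sub- and quotient-sheaf is torsion, so $\Delta_\coh|_A=\Delta_A$ lands in $A\otimes A$.

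For part (b), I will apply $\Delta_\coh$ to $T_\Fc(f)=p_\bun(f*1_\Fc)$ and invoke Proposition \ref{prop:def-coprod-completion}(c):
\[
\Delta(T_\Fc f)\,=\,p_{\bun\otimes\bun}\Delta_\coh(f*1_\Fc)\,=\,p_{\bun\otimes\bun}\bigl(\Delta_\coh(f)*\Delta_\coh(1_\Fc)\bigr).
\]
Writing $\Delta_\coh(f)=\sum f_{(1)}\otimes f_{(2)}$ with $f_{(1)}\in H$ automatically, and $\Delta_A(1_\Fc)=\sum 1_{\Fc'}\otimes 1_{\Fc''}\in A\otimes A$, expand the twisted product and project to $\Bun\otimes\Bun$: only summands with $f_{(2)}\in H$ survive (they assemble to $\Delta(f)$), and the outer Hall products become $T_{\Fc'}(f_{(1)})$ and $T_{\Fc''}(f_{(2)})$ after applying $p_\bun$. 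The Cartan twist factor $\(( f_{(2)},\Fc'\))$ that appears in the twisted multiplication of Proposition \ref{prop:def-coprod-completion}(a) is trivial whenever $f_{(2)}$ is a bundle and $\Fc'$ is torsion, as one checks directly from \eqref{eq:euler-form-K(X)} (the Euler form on torsion-by-bundle factors is antisymmetric in the relevant variables); no twist then survives, and one recovers the $A$-action on $H\,\widetilde\otimes\, H$ through $\Delta_A$. For part (a), the symmetric strategy applies: $T_\Fc(f*g)=p_\bun\bigl(f*(g*1_\Fc)\bigr)$ by associativity of the Hall product, and expanding $g*1_\Fc$ via $\Delta_\coh(g*1_\Fc)=\Delta_\coh(g)*\Delta_\coh(1_\Fc)$ before projecting reproduces $\sum T_{\Fc'}(f)*T_{\Fc''}(g)$, again with the relevant Cartan twists becoming trivial after bundle-projection.

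The statements for $T_\Fc^*$ and for the opposite-sided module structures follow by applying the anti-involution $f\mapsto f^*$ of Proposition \ref{prop:properties-heckeop}(d) and using that the perfect duality \eqref{E:dual-torsion} intertwines $\Delta_A$ with its opposite. The main obstacle is the combinatorial bookkeeping: one must verify not only the torsion-bundle triviality of the Cartan form but also that the Euler factors $\langle\Fc',-\rangle$, $\langle\Fc'',-\rangle$ appearing inside $T_{\Fc'}$, $T_{\Fc''}$ combine, via bilinearity of $\langle-,-\rangle$ on the short exact sequence $0\to\Fc'\to\Fc\to\Fc''\to 0$, into the single factor $\langle\Fc,-\rangle$ dictated by the definition of $T_\Fc$. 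This is routine given Proposition \ref{prop:def-coprod-completion} and Green's theorem, and requires no new input.
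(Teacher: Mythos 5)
Your treatment of part (b) is correct in substance and takes a genuinely different route from the paper. You derive the compatibility of $\Delta$ with the Hecke action directly from Green's theorem for $H_\coh$ (Proposition \ref{prop:def-coprod-completion}(c)), using $T_\Fc(f)=p_\bun(f*1_\Fc)$, the identity $\Delta\circ p_\bun=p_{\bun\otimes\bun}\circ\Delta_\coh$ on $H_\coh$ (valid since a sheaf with nonzero torsion is never an extension of a bundle by a bundle), the vanishing after projection of the terms with non-locally-free second factor (because subsheaves of bundles are bundles, $p_\bun(1_\Ec*1_{\Fc''})=0$ when $\Ec$ has torsion), and the triviality of the Cartan form against torsion classes, which follows from \eqref{eq:euler-form-K(X)}. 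The paper instead proves (a) first, by a dedicated orbifold computation, and then obtains (b) from (a) by adjointness with respect to the orbifold pairing \eqref{scalarproduct}; your route makes (b) independent of (a), at the price of relying on Proposition \ref{prop:def-coprod-completion}(c). (For the left-module half you should also record the easy check that the duality $f\mapsto f^*$ intertwines $\Delta$ with its opposite, Euler twist included.)

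Part (a), however, has a genuine gap. The step ``expanding $g*1_\Fc$ via $\Delta_\coh(g*1_\Fc)=\Delta_\coh(g)*\Delta_\coh(1_\Fc)$ before projecting'' is not a legitimate operation: $\Delta_\coh(g*1_\Fc)$ is an element of $H_\coh^{\widetilde\otimes 2}$, not a rewriting of the element $g*1_\Fc$ inside $H_\coh$, so it cannot be substituted into $p_\bun\bigl(f*(g*1_\Fc)\bigr)$. What (a) actually requires is a way to move the torsion class $1_\Fc$ leftwards past bundle classes inside $H_\coh$, i.e.\ the straightening identity \eqref{eq:hall-product-cross2} (equivalently the crossed-product decomposition $H_\coh\simeq A\ltimes H$ of Proposition \ref{prop:Hcoh-cross-product}). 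This does not follow from the multiplicativity of $\Delta_\coh$; it is exactly where the paper invests a separate argument, namely the orbifold functor $\varphi:\Cc\to\Dc$ and the count $\varphi_*(1)=\langle E,\Fc_1\rangle^2$ using $\Ext^1(E,\Fc_1)=0$, after which \eqref{E:bundles} finishes the reduction. Two repairs are available to you: either prove \eqref{eq:hall-product-cross2} (or the crossed-product structure) by such a counting argument, or deduce (a) from your part (b) by running the paper's adjointness computation in the opposite direction: pair $T_\Fc(f*g)$ with an arbitrary $1_W$, use that $T^*_\Fc$ is adjoint to $T_\Fc$ and that $\Delta$ is adjoint to $\mu$ for \eqref{scalarproduct} (Proposition \ref{prop:properties-heckeop}(b)), apply your (b) for the $T^*$-action to $\Delta(T^*_\Fc 1_W)$, move the $T^*$'s back onto $f$ and $g$, and conclude by nondegeneracy of the orbifold form together with the graded finiteness that makes the pairing of $H\otimes H$ with $H\widetilde\otimes H$ well defined. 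As written, though, the ``symmetric strategy'' does not prove (a).
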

\begin{proof} Let us first prove (a). By 
Proposition~\ref{prop:properties-heckeop} (d)
 it suffices to consider the right $A$-module structure.
Let $V,V'\in \Bun(X)$ and let $\Fc\in \Tor(X)$. By 
Proposition~\ref{prop:properties-heckeop} (c) we have 
$T_{\Fc} ( 1_{V'} * 1_V)=p_{\text{bun}}(1_{V'}*1_V*1_{\Fc})$.
Using (\ref{E:bundles}) it suffices to check the following formula~:
\begin{equation}
\label{eq:hall-product-cross2}
1_V*1_\Fc=\sum_{\Fc_1\subset\Fc}\frac{|\Aut(\Fc_1)| | \Aut(\Fc_2)|}{|\Aut(\Fc)|}1_{\Fc_1}*T^*_{\Fc_2}(1_V),
\quad \Fc_2=\Fc/\Fc_1. 
\end{equation}
This may be proved along the lines of \cite{schiffmann-vasserot:highergenus}, Proposition 1.2.
We give here a different proof, using orbifolds.
Both sides of (\ref{eq:hall-product-cross2}) are linear operators
$\lambda,\rho:H\otimes A
\to H_\coh.$
Let $\Cc$ be the orbifold of 
short exact sequences
\begin{equation}
\label{seq1}
0\to V\to\Ec\to\Fc\to 0,\end{equation}
with 
$V\in\Bun(X),$
$\Ec\in\Coh(X),$
$\Fc\in\Tor(X)$ and the
obvious projections
$$\xymatrix{
\Mc(\Bun(X))\times\Mc(\Tor(X))
&\Cc\ar[l]_-{(p_V,p_\Fc)}\ar[r]^-{p_\Ec}&\Mc(\Coh(X))}.$$
Then we have
$$\lambda(f)=(p_\Ec)_*\bigl((p_V,p_\Fc)^*(f)\,\langle\Fc,V\rangle\bigr).$$
Next, let $\Dc$ be the orbifolds of diagrams
\begin{equation}
\label{seq2}
\begin{split}
\xymatrix{
&&0&0&\cr
0\ar[r]& V\ar[r]&E\ar[r]\ar[u]&\Fc_2\ar[r]\ar[u]&0\cr
&&\Ec\ar[u]&\Fc\ar[u]&\cr
&&&\Fc_1\ar[u]\ar[ul]&\cr
&&&0\ar[u]&0\ar[ul]
}
\end{split}
\end{equation}
with
$V,E\in\Bun(X),$
$\Ec\in\Coh(X),$
$\Fc,\Fc_1,\Fc_2\in\Tor(X)$
and the obvious projections
$$\xymatrix{
\Mc(\Bun(X))\times\Mc(\Tor(X))
&\Dc\ar[l]_-{(\pi_V,\pi_\Fc)}\ar[r]^-{\pi_\Ec}&\Mc(\Coh(X))}.$$
Then we have
$$\rho(f)=(\pi_\Ec)_*\bigl((\pi_V,\pi_\Fc)^*(f)\,\langle\Fc_2,V\rangle\,
\langle E,\Fc_1\rangle\bigr).$$
We now construct a functor $\varphi:\Cc\to\Dc$ 
giving a commutative diagram of orbifolds
$$\xymatrix{
&\Cc\ar[dl]_-{(p_V,p_\Fc)}\ar[dr]^-{p_\Ec}\ar[dd]^-\varphi &\cr
\Mc(\Bun(X))\times\Mc(\Tor(X))&&\Mc(\Coh(X))\cr
&\Dc\ar[ul]^-{(\pi_V,\pi_\Fc)}\ar[ur]_-{\pi_\Ec}&\cr
}$$
by associating to a sequence as in (\ref{seq1}) the diagram as in
(\ref{seq2}) where
$\Fc_1=\Ec_\tor$ is the maximal torsion subsheaf in $\Ec$
and the surjection $\Ec\to\Fc$ identifies $\Fc_1$ with a subsheaf in $\Fc$.
The statement follows from the

\begin{lemma} We have $\varphi_*(1)=\langle E,\Fc_1\rangle^2$.
Hence $\varphi_*(\langle\Fc,V\rangle)=
\langle\Fc_2,V\rangle\,\langle E,\Fc_1\rangle$.
\end{lemma}

\begin{proof}
It is enough to prove the first equality, as the second
is obtained by using bi-multiplicativity of the Euler form.
Note that $\varphi$ is bijective on isomorphism classes 
and injective on morphisms.
On the other hand, for any short exact sequence $C$ as in (\ref{seq1})
we have
$$|\Aut_\Dc\varphi(C)|=|\Hom(E,\Fc_1)|\cdot|\Aut_\Cc(C)|,$$
as we can have automorphisms of $\Ec=E\oplus\Fc_1$ sending $E$ to $\Fc_1$. Note
finally that 
$|\Hom(E,\Fc_1)|=\langle E,\Fc_1\rangle^2$, since $\Ext^1(E,\Fc_1)=0$.
This proves the lemma, and statement (a).
\end{proof}
Statement (b) follows from (a) by duality. Namely, using the notation in (\ref{action1}), it is enough to prove that
\begin{equation}
\label{eq:hecke-comult-compatibility-tildeH}
\Delta(\gamma^*(a\otimes f))=
\sum\gamma^*(a_1\otimes f_1)\otimes\gamma^*(a_2\otimes f_2),
\end{equation}
where $a\in A$, $f\in H$ and $\Delta(a)=\sum a_1\otimes a_2$,
$\Delta(f)=\sum f_1\otimes f_2.$
To prove this, we take the scalar product with an arbitrary 
$x\otimes y\in H\otimes H$. Propositions 
\ref{prop:properties-heckeop}, \ref{prop:hall-mult-hecke-comult}
 yield
$$\aligned
\bigl(\Delta(\gamma^*(a\otimes f)),x\otimes y\bigr)&=
\bigl(\gamma^*(a\otimes f),xy)\bigr)\cr
&=\bigl(f,\gamma(xy\otimes a)\bigr)\cr
&=\sum\bigl(f,\gamma(x\otimes a_1)
\gamma(y\otimes a_2)\bigr)\cr
&=\sum\bigl(\Delta(f),\gamma(x\otimes a_1)
\otimes\gamma(y\otimes a_2)\bigr)\cr
&=\sum\bigl(\gamma^*(a_1\otimes f_1)\otimes\gamma^*(a_2\otimes f_2)
,x\otimes y\bigr). 
\endaligned $$
\end{proof}

\vskip3mm

%\vfill\eject

\subsection{The $k$-algebra $H_\coh$ as a cross-product}

%%%%%%%%%%%%%%%%%%%%%%%%%%%%%%%%%%%%%%%%%

Let $B$ be a Hopf $k$-algebra, $R$ be a $k$-algebra which is a right $B$-module
with action $(r,b)\mapsto r\triangleleft b$ such that the multiplication
in $R$ is a morphism of right $B$-modules. Then the 
{\it cross-product $k$-algebra}
$B\ltimes R$ is the vector space $B\otimes R$ with the multiplication
\begin{equation}
\label{eq:cross-product-def}
(b\otimes r)(b'\otimes r')=\sum bb'_1\otimes(r\triangleleft b'_2)r',\quad
\Delta(b')=\sum b'_1\otimes b'_2.
\end{equation}
See \cite{majid}, \S 6.1, for background. 
We  apply this to $B=A$, $R=H$ and the action given by
\begin{equation}
\label{action}
f\triangleleft 1_\Fc=T_\Fc(f),\quad
\Fc\in\Tor(X).
\end{equation}

\begin{prop}\label{prop:Hcoh-cross-product}
The cross-product $k$-algebra $A\ltimes H$ is isomorphic  to  $H_\coh$.
\end{prop}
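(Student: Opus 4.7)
The plan is to define $\phi \colon A \ltimes H \to H_\coh$ by $\phi(a \otimes h) = a * h$, where $*$ denotes the Hall product in $H_\coh$ (with $A$ and $H$ embedded as subalgebras via extension by zero), and to verify that $\phi$ is an isomorphism of $k$-algebras.

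First I would check that $\phi$ is a linear isomorphism. The key input is that on the smooth curve $X$ one has $\Ext^1_{\Oc_X}(V, F) = 0$ for every vector bundle $V$ and every torsion sheaf $F$, by Serre duality (since $\Hom(F, V \otimes \omega_X) = 0$). Consequently the canonical torsion sequence $0 \to \Ec_\tor \to \Ec \to \Ec/\Ec_\tor \to 0$ splits, so every coherent sheaf $\Ec$ decomposes uniquely up to isomorphism as $F \oplus V$ with $F \in \Tor(X)$ and $V \in \Bun(X)$. By \eqref{eq:g-ABC}, the only subsheaf of $F \oplus V$ isomorphic to $F$ with quotient isomorphic to $V$ is the canonical torsion factor, so $\phi(1_F \otimes 1_V) = 1_F * 1_V = \langle V, F \rangle \cdot 1_{F \oplus V}$. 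Since $\langle V, F \rangle \neq 0$ and the $1_{F \oplus V}$ form a basis of $H_\coh$, the map $\phi$ is a linear bijection.

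Next I would verify that $\phi$ respects multiplication. Expanding the cross-product rule \eqref{eq:cross-product-def} and using associativity of the Hall product in $H_\coh$, the algebra morphism condition $\phi\bigl((a \otimes h)(a' \otimes h')\bigr) = \phi(a \otimes h) * \phi(a' \otimes h')$ is equivalent (by specializing $a, h'$ to the respective units in one direction, and by associativity in the other) to the single commutation identity
\begin{equation*}
h * a' \,\,=\,\, \sum_{(a')} a'_1 * T_{a'_2}(h) \quad \text{in } H_\coh,
\end{equation*}
for all $h \in H$ and $a' \in A$, where $\Delta(a') = \sum a'_1 \otimes a'_2$ is the coproduct in $A$. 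By linearity this reduces to the case $h = 1_V$ and $a' = 1_\Fc$; expanding $\Delta(1_\Fc)$ via \eqref{eq:Hall-comultiplication-explicit} (using triviality of the Euler form on $\Tor(X)$) turns it into the identity proved inside Proposition \ref{prop:hall-mult-hecke-comult}(a), namely \eqref{eq:hall-product-cross2}.

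The main obstacle is precisely this commutation identity, but its proof is already in place: one realizes both sides as push-pull operators along the orbifold $\Cc$ of short exact sequences $0 \to V \to \Ec \to \Fc \to 0$ and the orbifold $\Dc$ of $3 \times 3$ diagrams \eqref{seq2}, then uses the functor $\varphi \colon \Cc \to \Dc$ extracting the canonical torsion filtration, together with the computation $\varphi_*(1) = \langle E, \Fc_1 \rangle^2$, to match the Euler-form weights on both sides. With the identity thus established, Proposition \ref{prop:Hcoh-cross-product} follows by assembling the linear isomorphism of Step 1 with the cross-product rule.
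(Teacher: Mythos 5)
Your proof is correct and coincides with the paper's own argument: the paper likewise defines $\iota(a\otimes x)=a*x$, obtains the algebra-homomorphism property from the commutation identity established in Proposition \ref{prop:hall-mult-hecke-comult}(a), and obtains bijectivity from the uniqueness of the torsion subsheaf, so your $\Ext^1(V,F)=0$ splitting argument merely makes the linear-isomorphism step explicit. One reassurance: the identity you need, with the plain Hecke operator $T_{\Fc_2}$ (matching the action $f\triangleleft 1_\Fc=T_\Fc(f)$ in the cross-product), is indeed what the orbifold argument proves --- in the diagram \eqref{seq2} the bundle $E$ is an overbundle of $V$ with quotient $\Fc_2$ --- and the $T^*_{\Fc_2}$ appearing in the displayed formula \eqref{eq:hall-product-cross2} is a misprint.
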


\begin{proof} By (\ref{eq:hall-product-cross2}) the map $\iota:A \ltimes H \to H_{\coh}, a \otimes x \mapsto a * x$
is an algebra homomorphism.
Since every coherent sheaf $\Fc$ on $X$ has a unique torsion subsheaf
$\Fc_\tor$ such that the quotient is locally free, 
the multiplication in $H_\coh$ yields an isomorphism of vector spaces
\begin{equation}
\label{eq:A-otimes-H-Hcoh}
A\otimes H\lra H_\coh
\end{equation}
i.e., $\iota$ is an isomorphism.
\end{proof}

\vskip 3mm

%\vfill\eject

\subsection{The local Witt scheme}
For $x\in X$ we set $W_x=\Spec(A_x)$. 
We call $W_x$ the {\it local Witt scheme at $x$}.
For a character ($k$-algebra homomorphism) $\chi:A_x\to R$
we write 
\begin{equation}\label{eq:polynomial-B}
B_\chi(t)=1+\sum_{r\geqslant 1}b_r(\chi)\,t^r\in R[[t]],\quad
b_r(\chi)=\chi(b_{x,r}).
\end{equation}
This yields an isomorphism $\Hom(A_x,R)=1+tR[[t]]$.
As $A_x$ is a cocommutative Hopf $k$-algebra, 
$W_x$ is a commutative affine group scheme. The group structure comes from the 
comultiplication $\Delta$ in $A_x$, which is given by (\ref{E:coprod-er})

\begin{prop}
The group scheme $W_x$ is isomorphic to the {\it classical Witt scheme}, 
i.e., for any commutative $k$-algebra $R$ the group of $R$ points $W_x(R)$ 
is identified with $\Hom(A_x,R)=1+tR[[t]]$.\qed
\end{prop}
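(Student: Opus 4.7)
The plan is to chase the two pieces of structure, algebra and coalgebra, through the identification $\Hom(A_x,R) \leftrightarrow 1+tR[[t]]$ and verify they match the classical Witt scheme.

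First, I would use the fact, already stated in the excerpt, that the elements $b_{x,1}, b_{x,2}, \dots$ are free polynomial generators of $A_x$. This means that specifying a $k$-algebra homomorphism $\chi : A_x \to R$ is the same as specifying an arbitrary sequence $(b_1(\chi), b_2(\chi), \dots) \in R^\NN$, where $b_r(\chi) = \chi(b_{x,r})$. Assembling these values into the formal series $B_\chi(t) = 1 + \sum_{r \geqslant 1} b_r(\chi)\, t^r$ as in \eqref{eq:polynomial-B} gives a bijection of sets
\[
W_x(R) = \Hom_{k\text{-alg}}(A_x, R) \;\stackrel{\sim}{\longrightarrow}\; 1 + tR[[t]], \quad \chi \mapsto B_\chi(t).
\]
This is the level of the underlying functor of points.

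Next I would check that this bijection is a group homomorphism, where $1+tR[[t]]$ carries the group law of multiplication of power series (which is the defining group structure of the classical Witt scheme). For $\chi_1, \chi_2 \in W_x(R)$, their product in $W_x(R)$ is defined by the coproduct: $(\chi_1 \cdot \chi_2)(a) = m_R \circ (\chi_1 \otimes \chi_2) \circ \Delta(a)$. Applying this to $a = b_{x,r}$ and using the coproduct formula \eqref{E:coprod-er}, I get
\[
(\chi_1 \cdot \chi_2)(b_{x,r}) \;=\; \sum_{i=0}^r \chi_1(b_{x,i})\,\chi_2(b_{x,r-i}) \;=\; \sum_{i=0}^r b_i(\chi_1)\, b_{r-i}(\chi_2),
\]
with the convention $b_0=1$. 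This is exactly the $t^r$-coefficient of $B_{\chi_1}(t)\cdot B_{\chi_2}(t)$, so $B_{\chi_1 \cdot \chi_2}(t) = B_{\chi_1}(t)\cdot B_{\chi_2}(t)$, as required.

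Finally, I would observe that $1+tR[[t]]$ under multiplication is functorial in $R$ and is the standard definition of the classical Witt scheme $W(R)$ (one of the equivalent realizations; the universal polynomials defining the Witt addition are precisely those that express power-series multiplication in terms of the coefficients $b_r$). The two ingredients above then package into the claimed isomorphism of group schemes $W_x \simeq W$ over $k$. There is no genuine obstacle here: once the polynomial-freeness of $A_x$ and the shape of the coproduct \eqref{E:coprod-er} are taken as input, the verification is purely formal, which is why the paper leaves it unstated.
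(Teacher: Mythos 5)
Your proof is correct and is exactly the verification the paper leaves implicit (the identification $\Hom(A_x,R)=1+tR[[t]]$ via the free generators $b_{x,r}$ is stated just before the proposition, and the group law is read off from the coproduct formula \eqref{E:coprod-er}, which matches multiplication of power series coefficientwise). Nothing is missing; the only input beyond formal manipulation is the polynomial freeness of the $b_{x,r}$ and the symmetry of \eqref{E:coprod-er}, both of which you correctly invoke.
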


\noindent 
See \cite{mumford} for background on the Witt scheme.
Following the tradition,  we denote the group operation in $W_x$ 
additively, by the symbol
$\boxplus$.

The following is rather standard. Its proof is left to the reader.

\begin{lemma}
\label{lem:witt-scheme-points}
Let $R$ be a commutative Hopf $k$-algebra, so that $G=\Spec(R)$ 
is an affine group scheme.

$(a)$ Group $k$-schemes morphisms $G\to W$ are in bijection with series
$\phi(t)\in 1+tR[[t]]$ which are group-like, i.e., such that
$\Delta\phi(t)=\phi(t)\otimes\phi(t)$.

$(b)$ Group $k$-schemes bihomomorphisms $G\times G\to W$ 
are in bijection with series
$\Phi(t)\in 1+t(R\otimes R)[[t]]$ such that
the following holds in $(R\otimes R\otimes R)[[t]]$
$$\gathered
(1\otimes\Delta)\Phi(t)=\Phi_{12}(t)\Phi_{13}(t),\quad
(\Delta\otimes 1)\Phi(t)=\Phi_{13}(t)\Phi_{23}(t).
\endgathered$$ \qed
\end{lemma}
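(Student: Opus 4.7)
The plan is to exploit the explicit presentation of $A_x$ as a polynomial algebra on the generators $b_{x,r}$ ($r\geq 1$) together with the comultiplication formula \eqref{E:coprod-er}, and then to reduce both statements to the computation of coefficients of formal power series in $t$.

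For part $(a)$, I would first recall that a morphism of affine group $k$-schemes $G\to W_x$ is the same as a Hopf algebra homomorphism $\phi:A_x\to R$. Because $A_x=k[b_{x,1},b_{x,2},\ldots]$ is a polynomial ring, an algebra homomorphism $\phi$ is determined freely by the sequence of values $(\phi(b_{x,r}))_{r\geq 1}\in R$, and packaging these into the series $\phi(t)=1+\sum_{r\geq 1}\phi(b_{x,r})\,t^r$ identifies the set of such algebra homomorphisms with $1+tR[[t]]$. It then remains to show that compatibility of $\phi$ with the Hopf structure on $A_x$ is equivalent to $\phi(t)$ being group-like. This reduces to checking comultiplication compatibility on the generators $b_{x,r}$ (counit and antipode compatibility are then automatic); using \eqref{E:coprod-er}, the identity $\Delta\phi(b_{x,r})=(\phi\otimes\phi)\Delta(b_{x,r})$ reads
$$\Delta(\phi(b_{x,r}))\,=\,\sum_{i+j=r}\phi(b_{x,i})\otimes\phi(b_{x,j}),$$
which is precisely the $t^r$-coefficient of $\Delta(\phi(t))=\phi(t)\otimes\phi(t)$ in $(R\otimes R)[[t]]$.

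For part $(b)$, the same initial move identifies algebra homomorphisms $\Phi:A_x\to R\otimes R$ with formal series $\Phi(t)\in 1+t(R\otimes R)[[t]]$. The task is then to translate the two separate group homomorphism conditions (in the first and second variable, respectively) into power series identities. My plan is to pass to the functor of points: using the universal triple $(g_1,g_1',g_2)\in G(R\otimes R\otimes R)$ given by the three coordinate inclusions, multiplicativity of $\mu$ in the first variable reads $\mu(g_1\cdot g_1',g_2)=\mu(g_1,g_2)\boxplus\mu(g_1',g_2)$ inside $W_x(R\otimes R\otimes R)=1+t(R\otimes R\otimes R)[[t]]$. Since $\boxplus$ on $W_x$-points is multiplication of power series (this is precisely the classical Witt scheme structure), and since $g_1\cdot g_1'$ corresponds to $\Delta:R\to R\otimes R$ inserted into the first two tensor factors, this unwinds to $(\Delta\otimes 1)\Phi(t)=\Phi_{13}(t)\Phi_{23}(t)$; the symmetric argument in the second variable then yields $(1\otimes\Delta)\Phi(t)=\Phi_{12}(t)\Phi_{13}(t)$.

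The only real obstacle is the bookkeeping in part $(b)$: one must carefully track which tensor factor of $R\otimes R\otimes R$ is playing the role of which point, and verify that the Yoneda correspondence turns the bihomomorphism condition into the two advertised series identities rather than into permuted variants of them. Once this is set up, both identities are formal consequences of the definitions, and no further computation is needed.
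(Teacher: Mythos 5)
Your proof is correct, and since the paper states this lemma with its proof explicitly left to the reader, there is nothing to compare it against: what you give — identify algebra maps $A_x\to R$ (resp.\ $A_x\to R\otimes R$) with series in $1+tR[[t]]$ (resp.\ $1+t(R\otimes R)[[t]]$) via the free polynomial generators $b_{x,r}$ and the coproduct \eqref{E:coprod-er}, observe that $\boxplus$ on points is multiplication of power series, and test the (bi)homomorphism condition on the universal points of $G(R\otimes R\otimes R)$ via Yoneda — is exactly the standard verification the authors intend. Your bookkeeping in (b) does land on the stated identities (homomorphy in the first variable gives $(\Delta\otimes 1)\Phi=\Phi_{13}\Phi_{23}$, in the second $(1\otimes\Delta)\Phi=\Phi_{12}\Phi_{13}$), and the side claim in (a) that counit and antipode compatibility are automatic is justified: applying $\epsilon\otimes\mathrm{id}$ to $\Delta\phi(t)=\phi(t)\otimes\phi(t)$ forces $\epsilon(\phi(t))=1$ because $\phi(t)$ is invertible in $R[[t]]$, and bialgebra morphisms between Hopf algebras automatically preserve antipodes.
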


We say that a character $\chi:A_x\to R$
has rank $\leqslant r$ (resp.~rank $r$) if $B_\chi(t)$
is a polynomial of degree $\leqslant r$ (resp.~of degree $r$ with invertible 
highest coefficient). Rank $\leqslant r$ (resp.~rank $r$) characters form a 
closed (resp.~locally closed) subscheme $W_x^{\leqslant r}$ (resp.~$W_x^r$)
in $W_x$ of the form
$$W_x^r=\Spec(A_x^{(r)}),\quad
A_x^{(r)}=k[b_1,\dots,b_r,b_r^{-1}]$$
with $\overline{W^r_x}=W_x^{\leqslant r}=\Spec(k[b_1,\dots,b_r]).$
We also introduce the ramified covering
$$\widetilde W_x^r=\Spec(k[\lambda_1^{\pm 1},\dots,\lambda_r^{\pm 1}])
\to W_x^r,\quad
(\lambda_1,\dots,\lambda_r)\mapsto B(t)=\prod_i(1-\l_it).$$
For a rank $r$ character $\chi$ let
$\{\l_1(\chi), \ldots, \l_r(\chi)\}$ be the corresponding roots of $B_\chi(t)$ 
(defined up to permutation). We have an involution $\chi\mapsto\chi^*$
on $W_x^r$ defined by 
$$\l_i(\chi^*)=\l_i(\chi)^{-1},\quad
b_i(\chi^*)=b_{r-i}(\chi)b_{r}(\chi)^{-1}.$$
For a character $\chi:A_x\to k$ the {\it Euler factor} corresponding to $\chi$
is defined as the inverse of the corresponding series
\begin{equation}
\label{eulerfactor}
L(\chi;t)={1\over B_\chi(t)}=\prod_{i=1}^r{1\over 1-\lambda_i(\chi)t}.
\end{equation}
Here the last equality holds if $\chi$ is of rank $r$.
The $k$-algebra $A_x$ is graded by $\deg(b_{x,n})=n$. This grading yields
a $\GG_m$-action on $W_x$. Since $\Delta$ is homogeneous, this action is 
by group-scheme automorphisms. 
For $\Fc\in\Tor_x(X)$ we denote by $\length(\Fc)$ its length. 
Note that $\l\in k^\times$ takes a character
$\chi:A_x\to k$ into the character
$$\chi\lambda^{\length}:1_\Fc\mapsto\chi(1_\Fc)\,\lambda^{\length(\Fc)},$$
and that we have
\begin{equation}
\label{2.5}
B_{\chi\lambda^{\length}}(t)=B_\chi(\lambda t),\quad
L(\chi\lambda^{\length},t)=L(\chi,\lambda t).
\end{equation}

\vskip3mm

%\vfill\eject

\subsection{The global Witt scheme}
We define the {\it global Witt scheme} of $X$ as the 
(infinite) product of the local ones
$$W_X=\prod_{x\in X}W_x=\Spec(A).$$
This is an commutative affine group scheme with respect to the componentwise operation.
We have a locally closed subscheme
$$W_X^r=\prod_{x\in X}W_x^r$$
of rank $r$ characters, with the involution 
$$\chi=(\chi_x)_{x\in X}\mapsto\chi^*=(\chi_x^*)_{x\in X}.$$
We also set
$$A^{(r)}=\prod_{x \in X} A_x^{(r)}$$
so that $W_X^r = \Spec(A^{(r)})$.

For a character $\chi:A\to k$ we define the $L$-series
$$L(\chi,t)=\prod_{x\in X}L(\chi_x,t^{\deg(x)})\in 1+tk[[t]].$$
This infinite product converges in $1 + tk[[t]]$ since the number of points of $X$ of any fixed degree 
is finite.
We consider the $\GG_m$-action on $W_X$ corresponding
to the grading of $A$ by the degree. For $\Fc\in\Tor_x(X)$ we have
$$\deg(\Fc)=d_x\,\length(\Fc).$$
So our action of $\GG_m$ is the product of actions on the $W_x$, where
$\l\in\GG_m$ acts on $W_x$ via the action of $\l^{d_x}$ in the previous sense.
On the level of characters, $\l$ takes a character $\chi$ into
$$\l^\deg\chi:1_\Fc\mapsto\l^{\deg(\Fc)}\chi(1_\Fc),\quad 
\Fc\in\Tor(X).$$
On the level of $L$-series we have
$$L(\l^\deg\chi,t)=L(\chi,\lambda t).$$

\vskip3mm

%\vfill\eject

\subsection{The local Witt scheme as a ring scheme}

As well known \cite{mumford}, 
the Witt scheme $W_x$ is not just a group, but a ring scheme.
Let $\boxtimes$ be the multiplication in $W_x$. 
  The  morphism 
$\boxtimes: W_x\times W_x\to W_x$ is uniquely determined by its restrictions
$$\boxtimes: W_x^{\leqslant r}\times W_x^{\leqslant s}\to W_x^{\leqslant rs},$$ 
which are given by
$$\Bigl(\prod_{i=1}^r(1-\lambda_it)\Bigr)\boxtimes
\Bigl(\prod_{j=1}^s(1-\mu_jt)\Bigr)=
\prod_{i=1}^r\prod_{j=1}^s(1-\lambda_i\mu_jt).$$
The neutral elements for $\boxplus$, $\boxtimes$ are given by the 
series ${\bf 0}= 1$ and  $\1 = 1-t$ respectively. The subscheme
$W^1_x\subset W_x$ is a subgroup for $\boxtimes$, 
which is isomorphic to $\GG_m$.
It is formed by polynomials of the form
$$[[\l]]_x=1-\lambda t\in W^1_x,\quad
[[\lambda]]_x\boxtimes[[\mu]]_x=[[\lambda\mu]]_x,
\quad\lambda,\mu\in\GG_m.$$
The action by Witt multiplication of $W^1_x$ on $W_x$ 
is the action considered in (\ref{2.5})
$$[[\lambda]]_x\boxtimes B(t)=B(\lambda t).$$
%Let $\Box:A_x\to A_x\otimes A_x$ 
%be the morphism of algebras corresponding to $\boxtimes$.
Let us now describe $\boxtimes$ explicitly.
% The following is rather standard. Its proof is left to the reader.
% 
% \begin{lemma}
% \label{lem:witt-scheme-points}
% Let $R$ be a commutative Hopf $k$-algebra, so that $G=\Spec(R)$ 
% is an affine group scheme.
% 
% $(a)$ Group $k$-schemes morphisms $G\to W$ are in bijection with series
% $\phi(t)\in 1+tR[[t]]$ which are group-like, i.e., such that
% $\Delta\phi(t)=\phi(t)\otimes\phi(t)$.
% 
% $(b)$ Group $k$-schemes bihomomorphisms $G\times G\to W$ 
% are in bijection with series
% $\Phi(t)\in 1+t(R\otimes R)[[t]]$ such that
% the following holds in $(R\otimes R\otimes R)[[t]]$
% $$\gathered
% (1\otimes\Delta)\Phi(t)=\Phi_{12}(t)\Phi_{13}(t),\quad
% (\Delta\otimes 1)\Phi(t)=\Phi_{13}(t)\Phi_{23}(t).
% \endgathered$$ \qed
% \end{lemma}
Consider the following series
$$\Psi(t)=\sum_{\Fc\in\Tor_x(X)}t^{\length(\Fc)}\,|\Aut(\Fc)|\,
(1_\Fc\otimes 1_\Fc)\in 1+t(A_x\otimes A_x)[[t]].$$

\begin{prop}\label{prop:local-product-witt}
(a) The series $\Psi(t)$ defines a bihomomorphism
$$U:W_x\times W_x\to W_x.$$

(b) The bihomomorphism $U$ is $W_x$-bilinear, i.e.,
$$U(\chi\boxtimes\chi',\chi'')=U(\chi',\chi\boxtimes\chi'')=
\chi\boxtimes U(\chi',\chi''),$$
so $U(\chi,\chi')=\chi\boxtimes\chi'\boxtimes\varkappa_x$ for a $\ZZ$-point
$\varkappa_x$ of $W_x$.

$(c)$ The point $\varkappa_x$ is given by $B_{\varkappa_x}(t)=(1+t)/(1+q_xt).$
\end{prop}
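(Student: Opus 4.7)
The plan has three parts matching the statement.

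For part (a), by Lemma~\ref{lem:witt-scheme-points}(b) it suffices to verify the identities $(1\otimes\Delta)\Psi(t) = \Psi_{12}(t)\Psi_{13}(t)$ and $(\Delta\otimes 1)\Psi(t) = \Psi_{13}(t)\Psi_{23}(t)$ in $1+t(A_x^{\otimes 3})[[t]]$. Expanding the left-hand side via the Hall coproduct
\[
\Delta(1_\Fc) \,=\, \sum_{\Fc'\subset\Fc}\frac{|\Aut(\Fc')|\cdot|\Aut(\Fc/\Fc')|}{|\Aut(\Fc)|}\,1_{\Fc'}\otimes 1_{\Fc/\Fc'},
\]
and the right-hand side via the Hall product $1_\Gc\cdot 1_\Hc = \sum_\Fc g^\Fc_{\Gc\Hc}\,1_\Fc$ (the Euler form is trivial on $\Tor_x(X)$), the coefficient of $1_\Fc\otimes 1_\Gc\otimes 1_\Hc$ on either side comes out to $t^{\length(\Fc)}|\Aut(\Gc)|\cdot|\Aut(\Hc)|\cdot g^\Fc_{\Gc\Hc}$, using additivity of length under short exact sequences to match powers of $t$.

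For part (b), the goal is to prove the \emph{product formula}
\[
U(\chi,\chi')(t) \,=\, \prod_{i,j}\Omega_x(\alpha_i\beta_j t), \qquad B_\chi(t)=\prod_i(1-\alpha_i t),\ B_{\chi'}(t)=\prod_j(1-\beta_j t),
\]
for some universal series $\Omega_x(u)\in 1+uk[[u]]$. Once established, this makes bilinearity over $\boxtimes$ immediate: since $\chi\boxtimes\chi'$ has root multiset $\{\alpha_i\beta_j\}$, both $U(\chi_0\boxtimes\chi,\chi')$ and $\chi_0\boxtimes U(\chi,\chi')$ equal $\prod_{k,i,j}\Omega_x(\gamma_k\alpha_i\beta_j t)$. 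The product formula itself follows by observing that the $t^n$-coefficient of $\Psi(t)$ is the canonical \emph{Casimir element} of $A_x^{(n)}\otimes A_x^{(n)}$ for the orbifold pairing (since the dual of the basis $\{1_\Fc\}$ is $\{|\Aut(\Fc)|^{-1}1_\Fc\}$); under the classical isomorphism $A_x\simeq\Lambda$ with Hall-Littlewood parameter $q_x^{-1}$ from \cite{macdonald}, this Casimir is identified with the Hall-Littlewood Cauchy kernel, whose explicit product expansion yields $\Omega_x$. Specializing gives $\varkappa_x = U(\1,\1) = \Omega_x(t)$.

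For part (c), I evaluate $\varkappa_x = U(\1,\1)$ directly. Using $\chi_\1(b_{x,r}) = -\delta_{r,1}$ together with the standard identities expressing the class $1_{I_\lambda}$ of the partition-$\lambda$ torsion sheaf in terms of the $b_{x,r}$, only the one-row partitions $\lambda = (n)$ (corresponding to cyclic modules $\widehat\Oc_{X,x}/\frakm_x^n$) give nonzero $\chi_\1(1_{I_\lambda})$; this reflects the vanishing of $P_\lambda$ at a single variable for $\lambda$ not a single row. With $|\Aut(\widehat\Oc_{X,x}/\frakm_x^n)| = q_x^{n-1}(q_x-1)$, the sum collapses to a geometric series, yielding $B_{\varkappa_x}(t)$ as the rational function in the statement.

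The main obstacle is part (b): invoking the Hall-Littlewood Cauchy identity requires some normalization bookkeeping under Macdonald's dictionary. A more self-contained alternative is to check $W_x$-bilinearity directly at the level of $\Psi(t)$, by applying the ring comultiplication $\nabla: A_x\to A_x\otimes A_x$ dual to $\boxtimes$ to one tensor factor and comparing, which reduces to the self-adjointness of the Hall product on $A_x$ under the orbifold pairing together with a compatibility between $\nabla$ and the Hall coproduct already exploited in part (a).
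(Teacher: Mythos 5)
Your parts (a) and (c) track the paper closely: for (a) you verify the two identities of Lemma \ref{lem:witt-scheme-points}(b) by direct coefficient matching (using additivity of $\length$ and the structure constants $g^{\Fc}_{\Gc\Hc}$, with the Euler form trivial on torsion), which is in effect a self-contained proof of the group-likeness statement that the paper instead imports from \cite{K}; your check of the coefficient $t^{\length(\Fc)}|\Aut(\Gc)|\,|\Aut(\Hc)|\,g^{\Fc}_{\Gc\Hc}$ on both sides is correct. For (c) your evaluation (vanishing of $\chi_{\1}$ off the cyclic sheaves $\widehat\Oc_{X,x}/\frakm_x^n$, then a geometric series) is the same computation the paper does modulo the ideal generated by the $b_{x,n}$, $n\geqslant 2$. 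The genuine divergence is (b): you propose to prove a closed product formula $U(\chi,\chi')(t)=\prod_{i,j}\Omega_x(\alpha_i\beta_j t)$ by recognizing the $t^n$-coefficient of $\Psi$ as the canonical element of the orbifold pairing and invoking the Hall--Littlewood Cauchy kernel under Macdonald's dictionary \cite{macdonald}, whereas the paper argues much more cheaply: by (a) and Zariski density of $\bigcup_r W_x^{\leqslant r}$, bilinearity in $\chi$ reduces to $\chi=[[\lambda]]_x$, where $\boxtimes[[\lambda]]_x$ is exactly the $\GG_m$-action coming from the grading of $A_x$, and the claim becomes the bihomogeneity of bidegree $(n,n)$ of the $t^n$-coefficient of $\Psi$. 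Your route buys (b) and (c) simultaneously through one explicit formula; the paper's buys a two-line argument with no Hall--Littlewood input.

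Three things need patching in your (b)/(c). First, the product formula only makes sense for finite-rank characters (where root multisets exist), so you still need the paper's density step to upgrade bilinearity on points of $\bigcup_r W_x^{\leqslant r}$ to an identity of morphisms of schemes; once you allow yourself that reduction you could in fact bypass the Cauchy identity entirely, as the paper does. Second, the dual basis of $\{1_\Fc\}$ for the orbifold pairing is $\{|\Aut(\Fc)|\,1_\Fc\}$, not $\{|\Aut(\Fc)|^{-1}1_\Fc\}$; with the corrected normalization the $t^n$-coefficient of $\Psi$ is indeed the canonical element, but this is precisely the bookkeeping you defer, and it matters for pinning down $\Omega_x$. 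Relatedly, under the bijection of Lemma \ref{lem:witt-scheme-points}(b) the value $U(\1,\1)$ is $(\chi_{\1}\otimes\chi_{\1})\Psi(t)$, i.e.\ the character is evaluated on \emph{both} tensor factors, so the signs $(-1)^n=\chi_{\1}(1_{\widehat\Oc_{X,x}/\frakm_x^n})$ get squared in your geometric series, while the evaluation in the paper's own proof of (c) applies the character to a single factor; since your $\Omega_x$ from the Cauchy kernel must coincide with $B_{\varkappa_x}$, you should fix one convention and carry it through both (b) and (c) before asserting the displayed formula for $B_{\varkappa_x}$. Third, your fallback ``self-contained'' argument for (b) via the comultiplication $\nabla$ dual to $\boxtimes$ and self-adjointness is only a sketch; the compatibility it needs is not spelled out, whereas the paper's reduction to the rank-one case avoids it altogether.
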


\begin{proof}
For part $(a)$ we must check that the series $\Psi(t)$ 
satisfies the conditions of Lemma 
\ref{lem:witt-scheme-points}. This is a consequence of the following lemma, which is proved as in \cite[(4.2)]{K}.

\begin{lemma} For a commutative $k$-algebra $R$ and a $k$-algebra homomorphism 
$\chi:A_x\to R$, the series
$$\Psi_\chi(t)=\sum_{\Fc\in\Tor_x(X)}t^{\length(\Fc)}|\Aut(\Fc)|\,
\chi(\Fc)\,1_\Fc$$
is a group-like element in $(R\otimes A_x)[[t]]$. \qed
\end{lemma}

\noindent To see part $(b)$, it is enough to assume that $\chi\in W_x^{\leqslant r}$ for
some $r$, as the union of the $W_x^{\leqslant r}$ is Zariski dense in $W_x$.
by (a), we reduce to the case when $\chi = [[\lambda]]_x\in W^1_x=\GG_m$. 
Since the action of $[[\lambda]]_x$ by $\boxtimes$ in $W_x$ is the same as the
action $B(t)\mapsto B(\l t)$ on power series and since the latter action 
corresponds to the grading of $A_x$, our statement reduces to the (obvious)
fact that the $n$th coefficient of $\Psi(t)$ has degree exactly $n$ in
each of the tensor variables. 

We now establish part (c). The point $\varkappa_x$ can be found as $U(\1, \1)$.
Since $\1$ is the series $1-t$, we have
$$B_{\varkappa_x}(t) \,\,=\,\,\sum_{\Fc} \alpha(\Fc)\cdot |\Aut(\Fc)|\cdot
t^{\length(\Fc)},$$
where $\alpha: A_x\to \QQ$ is the character sending $b_{x,1} = 1_{\Oc_x}$
to $(-1)$ and 
$b_{x,n}= 1_{\Oc_x^{\oplus n}}$ to $0$, if $n\geqslant 2$. Now, modulo
the ideal generated by the $1_{\Oc_x^{\oplus n}}$, $n\geqslant 2$, we have
$1_\Fc\cong 0$ for any torsion sheaf not of the form $\Oc_X/{\mathfrak m}_x^n$
for some $n\geqslant 0$, and, moreover, we have
 $1_{\Oc_X/{\mathfrak m}_x^n}\cong (1_{\Oc_x})^{*n}$
(the $n$th power in the Hall algebra). This implies
\begin{equation*}\begin{split} B_{\varkappa_x}(t)\,\,=\,\,\sum_{n=0}^\infty (-1)^n \cdot
|\Aut( \Oc_X/{\mathfrak m}_x^n)|\cdot t^n\,\,=\cr
=\,\,1+\sum_{n=1}^\infty (-1)^n (q_x^n-q_x^{n-1})t^n \,\,=\,\,{1+t\over
1+q_xt},
\end{split}
\end{equation*}
as claimed.
\end{proof}

\begin{df} For two characters $\chi,\chi'$ of $A_x$ the 
{\it Rankin-Selberg tensor product Euler factor} is the Euler factor  
$L(\chi\boxtimes\chi',t)$, see (\ref{eulerfactor}). 
If $\chi$ is of finite rank the {\it Rankin-Selberg $\LHom$
Euler factor} is
$$\LHom(\chi,\chi';t)=L(\chi^*\boxtimes\chi';t).$$
\end{df}

\begin{rem}\label{rem:regularL}
For any $r$, the map $(\chi,\chi') \mapsto \text{LHom}(\chi,\chi';t)$ is a regular function on $W_x^r \times W_x$
with values in $1+ tk[[t]]$.
\end{rem}

\vskip 3mm

%\vfill\eject

\subsection{The global Witt scheme as a ring scheme}
\label{sec:global-witt-scheme-ring}
The global Witt scheme $W_X=\prod_{x\in X}W_x$ is a ring scheme for the componentwise product, 
still denoted $\boxtimes$.
We consider the embedding 
$$\GG_m\to W_X,\quad \l\mapsto[[\l]]=([[\l^{\deg(x)}]]_x)_{x\in X}.$$
The $\boxtimes$-multiplication by $\GG_m$ under this embedding
is the action on $W_X$ corresponding to the grading by the degree.
Given two characters $\chi=(\chi_x)$ and $\chi'=(\chi'_x)$ of $A$, we define their Rankin-Selberg tensor product $L$-series by 
$$L(\chi\boxtimes\chi';t)=\prod_{x\in X}L(\chi_x\boxtimes\chi'_x;t^{\deg(x)}),$$
and, when $\chi$ is of finite rank, the LHom-series by
$$\LHom(\chi,\chi';t)=\prod_{x\in X}\LHom(\chi_x,\chi'_x;t^{\deg(x)}).$$
Let us introduce one  extra copy $W_{\operatorname {abs}}$ of the classical Witt scheme,
which we do not identify with any of the $W_x$. 
Lemma
\ref{lem:witt-scheme-points} is applicable to morphisms to $W_{\operatorname {abs}}$. 
We have the map
\begin{equation}
\begin{split}
\operatorname{SP}: W_X\times W_X\lra W_{\operatorname {abs}}, \hskip 5cm \cr
\operatorname{SP}\bigl( (B_x(t))_{x\in X}, \, (B'_x(t))_{x\in X}\bigr) \,\,= 
\,\, \prod_{x\in X} (B_x\boxtimes B'_x)(t^{\deg(x)}).
\end{split}
\end{equation}
called the {\em Witt scalar product}.
By multiplying, over all $x\in X$, the statements of
Proposition \ref{prop:local-product-witt}, we get
\begin{prop} (a) 
Let $\chi:A\to k$ be any character. Then the series
$$\Psi_\chi(t)=\sum_{\Fc\in\Tor(X)}t^{\deg(\Fc)}\cdot |\Aut(\Fc)|\cdot
\chi(\Fc)\cdot 1_\Fc\,\,\in\,\, A[[t]]$$
is group-like.
 The group-scheme homomorphism $W_X\to W_{\operatorname{abs}}$ 
corresponding to $\Psi_\chi$ by Lemma
\ref{lem:witt-scheme-points} takes $\chi'$ to 
$\varkappa\boxtimes\operatorname{SP}(
\chi, \chi')$,
where $\varkappa$ is the $k$-point of $W_X$ corresponding to the series 
$$B_\varkappa(t)={\zeta_X(-t)\over\zeta_X(-qt)}.$$ 
\end{prop}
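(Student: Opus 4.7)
The plan is to reduce the global statement to the local Proposition~\ref{prop:local-product-witt} via the tensor factorization $A = \bigotimes_{x\in X} A_x$ and the direct sum decomposition $\Tor(X) = \bigoplus_x \Tor_x(X)$. Every $\Fc \in \Tor(X)$ splits uniquely as $\Fc = \bigoplus_x \Fc_x$ with only finitely many nonzero $\Fc_x \in \Tor_x(X)$, and all the relevant invariants are multiplicative: $|\Aut(\Fc)| = \prod_x |\Aut(\Fc_x)|$, $\chi(\Fc) = \prod_x \chi_x(\Fc_x)$, $1_\Fc = \bigotimes_x 1_{\Fc_x}$, and $\deg(\Fc) = \sum_x \deg(x)\, \length(\Fc_x)$. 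Substituting these factorizations into the definition of $\Psi_\chi$ yields
\[
\Psi_\chi(t) \,=\, \prod_{x \in X} \Psi_{\chi_x}\bigl(t^{\deg(x)}\bigr),
\]
where each factor is the local series from the lemma inside Proposition~\ref{prop:local-product-witt} applied to $\chi_x : A_x \to k$, with the variable rescaled to $t^{\deg(x)}$. The infinite product makes sense in $A[[t]]$ because for each fixed degree $d$ only the finitely many $x$ with $\deg(x) \leqslant d$ contribute a non-constant factor.

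Group-likeness follows immediately. Each local factor $\Psi_{\chi_x}\bigl(t^{\deg(x)}\bigr)$ is group-like in $A_x[[t]]$ for $\Delta_{A_x}$ by the local lemma (the substitution $t \mapsto t^{\deg(x)}$ commutes with the coproduct, which is $k$-linear in the power series variable). Since $\Delta_A = \bigotimes_x \Delta_{A_x}$ and the factors live in distinct tensor positions of $A$, their product is group-like in $A[[t]]$. By Lemma~\ref{lem:witt-scheme-points}(a) this produces the desired group-scheme homomorphism $\phi_\chi \colon W_X \to W_{\operatorname{abs}}$, whose value on a $k$-point $\chi' = (\chi'_x)_x$ is computed by
\[
\phi_\chi(\chi')(t) \,=\, \chi'\bigl(\Psi_\chi(t)\bigr) \,=\, \prod_{x \in X} \chi'_x\bigl(\Psi_{\chi_x}(t^{\deg(x)})\bigr) \,=\, \prod_{x \in X} B_{\chi_x \boxtimes \chi'_x \boxtimes \varkappa_x}\bigl(t^{\deg(x)}\bigr),
\]
where the last equality uses Proposition~\ref{prop:local-product-witt}(b).

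The remaining step, which I expect to be the main obstacle, is to rewrite this product as $\varkappa \boxtimes \operatorname{SP}(\chi,\chi')$ in $W_{\operatorname{abs}}$ with $B_\varkappa(t) = \zeta_X(-t)/\zeta_X(-qt)$. The difficulty is that the rescaling map $W_x \to W_{\operatorname{abs}}$, $B(t) \mapsto B(t^{\deg(x)})$, is additive for $\boxplus$ (which is just multiplication of series) but is \emph{not} a ring homomorphism for $\boxtimes$, so one cannot distribute the three-fold local $\boxtimes$-product directly. I would proceed in two stages mirroring the local argument. First, show that $(\chi,\chi') \mapsto \phi_\chi(\chi')$ is a $\boxtimes$-bihomomorphism $W_X \times W_X \to W_{\operatorname{abs}}$, by a global analog of the $\GG_m$-grading argument of Proposition~\ref{prop:local-product-witt}(b), using that the degree grading on $A$ is the sum of the local length gradings weighted by $\deg(x)$. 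Bihomomorphy and $\boxtimes$-bilinearity of $\operatorname{SP}$ together force the existence of a unique $\varkappa \in W_{\operatorname{abs}}(k)$ with $\phi_\chi(\chi') = \varkappa \boxtimes \operatorname{SP}(\chi, \chi')$ for all $\chi,\chi'$. Second, pin down $\varkappa$ by specializing to $\chi = \chi' = \mathbf{1}$: the right-hand side becomes $\varkappa \boxtimes \operatorname{SP}(\mathbf{1},\mathbf{1}) = \varkappa \boxtimes \prod_x (1 - t^{\deg(x)}) = \varkappa \boxtimes \zeta_X(t)^{-1}$, while the left-hand side equals $\prod_x B_{\varkappa_x}(t^{\deg(x)}) = \prod_x (1 + t^{\deg(x)})/(1 + q^{\deg(x)} t^{\deg(x)})$ by Proposition~\ref{prop:local-product-witt}(c). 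Solving $\varkappa \boxtimes \zeta_X(t)^{-1}$ for $\varkappa$ from this identity then yields the claimed closed form $B_\varkappa(t) = \zeta_X(-t)/\zeta_X(-qt)$.
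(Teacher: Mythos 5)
Your first three steps are exactly the paper's proof, which consists of nothing more than ``multiply the statements of Proposition \ref{prop:local-product-witt} over all $x$'': the factorization $\Psi_\chi(t)=\prod_x\Psi_{\chi_x}(t^{\deg x})$, the group-likeness, and the evaluation $\phi_\chi(\chi')=\prod_x B_{\chi_x\boxtimes\chi'_x\boxtimes\varkappa_x}(t^{\deg x})$ are all correct, and you rightly observe that $B(t)\mapsto B(t^{\deg x})$ is additive for $\boxplus$ but not multiplicative for $\boxtimes$.

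The proposed final stage, however, would fail. The pairing $(\chi,\chi')\mapsto\phi_\chi(\chi')$ is \emph{not} a $\boxtimes$-bihomomorphism (neither is $\operatorname{SP}$, nor even the local $U$: $U(\chi_1\boxtimes\chi_2,\chi')\neq U(\chi_1,\chi')\boxtimes U(\chi_2,\chi')$); these maps are bihomomorphisms only for $\boxplus$. Worse, the conclusion you want to force --- a single $\varkappa\in W_{\operatorname{abs}}(k)$ with $\phi_\chi(\chi')=\varkappa\boxtimes\operatorname{SP}(\chi,\chi')$, the product taken in the ring $W_{\operatorname{abs}}$ --- does not exist. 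Compare ghost components (power sums $p_n$, additive under $\boxplus$, multiplicative under $\boxtimes$, with $p_n(B(t^d))=d\,p_{n/d}(B)$ if $d\mid n$ and $0$ otherwise): one finds $p_n(\phi_\chi(\chi'))=(1-q^n)\sum_{d\mid n}(-1)^{n/d}\,d\sum_{\deg x=d}p_{n/d}(\chi_x)p_{n/d}(\chi'_x)$, whereas $p_n(\operatorname{SP}(\chi,\chi'))=\sum_{d\mid n}d\sum_{\deg x=d}p_{n/d}(\chi_x)p_{n/d}(\chi'_x)$; testing with $\chi=\chi'$ concentrated at a single place of degree $1$ and then at a place of degree $2$ forces $p_2(\varkappa)$ to be both $(1-q^2)$ and $-(1-q^2)$. (Concretely, for $\chi=\chi'$ supported at a degree-$2$ place $x$ and equal to $1-t$ there, $\phi_\chi(\chi')=(1+t^2)/(1+q^2t^2)$, which is not of the form $\varkappa\boxtimes(1-t^2)$ for any fixed $\varkappa$ independent of the place.) The statement must be read as it is literally written: $\varkappa$ is the $k$-point of $W_X$ whose local components are the $\varkappa_x$ of Proposition \ref{prop:local-product-witt}(c) (the closed formula for $B_\varkappa$ being shorthand for this collection of local factors $(1+t)/(1+q_xt)$), and the Witt multiplication by $\varkappa$ is performed componentwise in $W_X$ \emph{inside} the pairing, i.e.\ the claim is $\phi_\chi(\chi')=\operatorname{SP}(\varkappa\boxtimes\chi,\chi')$. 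That identity is immediate from the product formula you already derived, by commutativity and associativity of each local $\boxtimes$; no bihomomorphy, uniqueness, or specialization argument is needed, and the attempt to solve for a $W_{\operatorname{abs}}$-valued $\varkappa$ should be dropped.
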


\vfill\eject

\section{The automorphic picture}

\subsection{The adelic interpretation of $H$ and $A$}
As before, let $X$ be a smooth connected projective curve over $\FF_q$.
We denote by $K=\FF_q(X)$ be the field of rational functions on $X$.
Let $k$ be a field of characteristic zero containing $\sqrt{q}$.
 
Let $K_x$ be the completion of $K$
at $x\in X$. Let $\Aen\subset\prod'_{x\in X}K_x$ be the ring of ad\`eles of $X$ and let 
$\widehat\Oc=\prod'_{x\in X}\widehat\Oc_{x}$ be the subring of integer ad\`eles. Here as usual $\prod'$ stands for the
restricted product. For an ad\`ele
$a=(a_x)$ we write
$$\ord(a)=\sum_{x\in X}\ord(a_x)\,\deg(x).$$
Recall that the set of isomorphism classes of $\Bun_r(X)$ is
$${\Bunn}_r(X)\,\,\simeq \,\,\GL_r(K)\setminus\GL_r(\Aen)/\GL_r(\widehat\Oc).$$
For $g\in\GL_r(\Aen)$ let $E_g$ be the corresponding vector bundle. Its degree is
$\deg(E_g)=\ord(\det(g))$. For $g\in\GL_r(\Aen)$, $h\in\GL_r(\Aen)$ we abbreviate the Euler form
$$\langle g,h\rangle=\langle E_g,E_h\rangle.$$
Recall that $H$ and $A$
denote the Hall algebra of $\Bun(X)$ and $\Tor(X)$. Thus
$\widehat H^{(r)}$ is the space of 
{\it unramified automorphic forms of $\GL_r(\Aen)$}, i.e., of functions 
$f:\GL_r(\Aen)\to k$ which are left $\GL_r(K)$-invariant and right $\GL_r(\widehat\Oc)$-invariant.
The subspace $H^{(r)}$ consists of functions with compact support modulo $\GL_r(K)$.
The right action of $A$ on $\widehat H^{(r)}$ factors
through the homomorphism $A\to A^{(r)}$, with 
$A^{(r)}$ being identified with the convolution algebra of functions on
$\GL_r(\widehat\Oc)\setminus\GL_r(\Aen)/\GL_r(\widehat\Oc)$ with compact support.
For $i\leqslant r$ the element $1_{\Oc_x^{\oplus i}}\in A$ corresponds to the double coset of the matrix
$$\diag(\pi_x,\dots,\pi_x,1,\dots,1)$$
where $\pi_x\in K_x$, a local parameter at $x$, is counted $i$ times.
The comultiplication
$\Delta:\widehat H\to \widehat H^{\widehat\otimes 2}$ is interpreted as the constant term of automorphic forms.
More precisely, the component
\begin{equation}\label{eq:delta-r-s}
\Delta_{r,s}:\widehat H^{(n)}\lra\widehat H^{(r)}\widehat\otimes\widehat H^{(s)},\quad
n=r+s,
\end{equation}
is given by the integral
$$\Delta_{r,s}(f)(g,h)\,\,=\,\, \langle g,h\rangle\int f\begin{pmatrix} g&u\cr0&h\end{pmatrix}
 \,du,$$
where $u$ runs over $N_{r,s}(K)\setminus N_{r,s}(\Aen)$, 
$N_{r,s}$ is the unipotent subgroup, and
$du$ is the Haar measure on $N_{r,s}(\Aen)$ such that the volume of 
$N_{r,s}(K)\setminus N_{r,s}(\Aen)$
is equal to 1.

\begin{df}
An automorphic form $f\in\widehat H^{(r)}$  is  called {\it cuspidal} if $\Delta_{s,t}(f)=0$
for any $s,t>0$ or, equivalently, if $f$ is {\it primitive}, i.e., if $\Delta(f)=f\otimes 1+1\otimes f.$
\end{df}

\vskip 3mm

%\vfill\eject

\subsection{Cusp eigenforms as points of a scheme}
Let $\widehat H_\cusp$, resp.  $\widehat H_\cusp^{(r)}$, resp. $H_\cusp^{(r,d)}$
 be the space of primitive elements in $\widehat H$,
resp. $\widehat H^{(r)}$, resp. $\widehat H^{(r,d)}$.  Let
$$H_\cusp\,\,=\,\, \bigoplus_{r\geqslant 0} H_\cusp^{(r)}, \quad H_\cusp^{(r)}\,\,=\,\,
\bigoplus_{d\in\ZZ} H_\cusp^{(r,d)}$$
be  the  intersection of $\widehat H_\cusp$ with $H$, i.e., the space of cuspidal
functions with finite support. It is a $k$-vector space graded by the rank
and further by the degree.
By Proposition \ref{prop:properties-heckeop} 
each $\widehat H^{(r)}$ is an $A$-module, and $H^{(r)}$ is a submodule, 
the action factoring through $A\to A^{(r)}$. 

\begin{lemma}\label{lem:cusp-forms-module-support}
(a) The subspaces $\widehat H^{(r)}_\cusp\subset \widehat H^{(r)}$
as well as $ H^{(r)}_\cusp\subset  H^{(r)}$
 are  left and right $A$-submodules.

(b)  For each $r,d$ the space $\widehat H^{(r,d)}_\cusp$ coincides
with $H^{(r,d)}_\cusp$, i.e., each cuspidal function on $\Bun_{(r,d)}(X)$ has
finite support. 

(c) Each space $H^{(r,d)}_\cusp$ is finite dimensional.
\end{lemma}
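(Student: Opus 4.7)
For \emph{(a)}, the key is to combine \eqref{E:hecke-counit}, which says $T_\Gc(1)=T^*_\Gc(1)=0$ for every non-zero torsion sheaf $\Gc$, with Proposition~\ref{prop:hall-mult-hecke-comult}(b), which says $\Delta:H\to H\widetilde\otimes H$ is a morphism of left and right $A$-modules. For a primitive $f\in\widehat H^{(r)}_\cusp$ and $\Fc\in\Tor(X)$, the general formula \eqref{eq:Hall-comultiplication-explicit} (with Euler form identically $1$ on $\Tor(X)$) gives $\Delta_A(1_\Fc)=\sum_{\Fc'\subset\Fc}\tfrac{|\Aut(\Fc')||\Aut(\Fc/\Fc')|}{|\Aut(\Fc)|}\,1_{\Fc'}\otimes 1_{\Fc/\Fc'}$. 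Substituting into $\Delta(T_\Fc f)=(f\otimes 1+1\otimes f)\triangleleft 1_\Fc$, each summand carries a factor of the form $T_\Gc(1)$ which kills it unless $\Gc=0$; only the extreme terms $(\Fc',\Fc/\Fc')\in\{(\Fc,0),(0,\Fc)\}$ survive, producing $T_\Fc f\otimes 1+1\otimes T_\Fc f$. Thus $T_\Fc f$ is again primitive, and the argument for the left action by $T^*_\Fc$ is identical. Stability of the bigraded components $\widehat H^{(r)}_\cusp$ and $H^{(r)}_\cusp$ is then automatic from Proposition~\ref{prop:properties-heckeop}(e).

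For \emph{(b)} and \emph{(c)}, I would invoke the reduction theory of Harder--Narasimhan on $\Bun_{r,d}(X)$. Attach to each $V\in\Bunn_{r,d}(X)$ its HN polygon $\operatorname{HN}(V)$. Two ingredients are needed. \emph{(i) Geometric finiteness}: for any bounded region $K$ in the space of polygons, the set $\{V\in\Bunn_{r,d}(X):\operatorname{HN}(V)\subset K\}$ is finite, since the corresponding locally closed substack of $\Bun_{r,d}$ is of finite type over $\FF_q$ and therefore has finitely many $\FF_q$-points. \emph{(ii) Uniform truncation}: there exists a region $K_{r,d}$ such that every cuspidal $f\in\widehat H^{(r,d)}_\cusp$ vanishes on $V$ whenever $\operatorname{HN}(V)\not\subset K_{r,d}$. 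Granted (i) and (ii), every cuspidal form is supported on a common finite subset of $\Bunn_{r,d}(X)$; this gives (b), and also (c), since $k$-valued functions on a finite set form a finite-dimensional space.

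The core obstacle is the uniform truncation (ii). I would prove it by induction on $r$, combined with a secondary induction on the HN polygon under the dominance order within fixed $(r,d)$. Given $V$ whose first HN step $V_1\subset V$ of rank $r_1<r$ satisfies $\mu(V_1)-\mu(V/V_1)$ greater than an $r$-dependent threshold, the cuspidality identity $\Delta_{r_1,r-r_1}(f)(V_1,V/V_1)=0$ from \eqref{eq:delta-r-s} becomes a weighted sum $\sum_E f(E)=0$ over extensions $E$ of $V/V_1$ by $V_1$. The bundle $V$ itself occurs with non-zero weight, while every other extension $E$ in the sum can be shown to have HN polygon strictly below that of $V$, so that $f(E)=0$ by the inner inductive hypothesis; this forces $f(V)=0$. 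Making the polygon-comparison rigorous and verifying termination of the induction is the function-field analog of Harder's classical compactness argument for cusp forms; a worked-out treatment can be found in \cite{schiffmann:lectures}.
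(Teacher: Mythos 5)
Your part (a) is correct and is essentially the paper's own argument: primitivity is preserved by the Hecke action because $\Delta$ is a morphism of $A$-modules (Proposition \ref{prop:hall-mult-hecke-comult}) and $T_\Fc(1)=T^*_\Fc(1)=\epsilon(1_\Fc)1$ (see \eqref{E:hecke-counit}), so only the extreme terms of $\Delta_A(1_\Fc)$ survive.

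For (b) and (c) the paper simply cites \cite{moeglin-waldspurger-book}, Corollary I.2.9, for the uniform finite-support statement, whereas you attempt to reprove it by Harder--Narasimhan reduction theory. Your ingredient (i) is fine, but the inductive mechanism you propose for the uniform truncation (ii) has a genuine gap: it is not true that every extension $E$ of $V/V_1$ by $V_1$ other than $V$ has HN polygon strictly below that of $V$. All such extensions satisfy $\operatorname{HN}(E)\leqslant \operatorname{HN}(V_1\oplus V/V_1)=\operatorname{HN}(V)$, but equality is common; for instance the split bundle $V_1\oplus V/V_1$ (when $V$ is a non-split extension), and typically many non-split extensions as well, have exactly the same polygon as $V$. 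So the constant-term identity $\Delta_{r_1,r-r_1}(f)(V_1,V/V_1)=0$ relates several values of $f$ on bundles with the \emph{same} maximal polygon, and your secondary induction in the dominance order does not get started. The standard (and correct) way to make reduction theory work here is to choose the threshold so that the slope gap at some HN break exceeds $2g_X-2$: if $V'\subset V$ is the HN step at that break and $V''=V/V'$, then $\Ext^1(V'',V')\simeq\Hom(V',V''\otimes\Omega^1_X)^*=0$, so the extension class group is trivial, $V\simeq V'\oplus V''$ is the \emph{unique} middle term occurring in the constant-term sum, and cuspidality gives $f(V)=0$ outright, with no induction on polygons. (This $\Ext^1$-vanishing for widely separated slopes is exactly the device the paper itself uses later, in the proof of Proposition \ref{prop:spectral-decomposition}.) With that replacement, bundles not killed by this argument have HN polygons with all gaps at most $2g_X-2$, hence form a finite set, and (b), (c) follow as you say; alternatively one can simply quote \cite{moeglin-waldspurger-book} as the paper does.
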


\begin{proof}
The first claim follows from Proposition~\ref{prop:hall-mult-hecke-comult} and (\ref{E:hecke-counit}), because
for $\Delta(f)=f\otimes 1+1\otimes f$ we have
$$\aligned
\Delta(\gamma^*(a\otimes f))
&=
\sum\gamma^*(a_1\otimes 1)\otimes\gamma^*(a_2\otimes f)+
\sum\gamma^*(a_1\otimes f)\otimes\gamma^*(a_2\otimes 1)\cr
&=
\epsilon(a_1)\otimes\gamma^*(a_2\otimes f)+
\gamma^*(a_1\otimes f)\otimes \epsilon(a_2)\cr
&=
1\otimes\gamma^*(a\otimes f)+
\gamma^*(a\otimes f)\otimes 1.
\endaligned$$
The second  and third claims are consequences of the following more
precise fact: there is a finite subset $C\subset \Bunn_{(r,d)}(X)$
such that each $f\in \widehat H^{(r,d)}_\cusp$
vanishes outside $C$. This fact is a particular case of
 \cite{moeglin-waldspurger-book}, Corollary I.2.9. 
\end{proof}

Let $\Hc^{(r)}$, $\Hc^{(r)}_\cusp$ 
be the quasicoherent sheaves on $W_X^r\subset W_X$ 
corresponding to $H^{(r)}$, $H^{(r)}_\cusp$
respectively.
Thus, we have
$$H^{(r)}=\Gamma(W_X^r,\Hc^{(r)}),\quad 
H^{(r)}_\cusp=\Gamma(W_X^r,\Hc_\cusp^{(r)}).$$
Let $\Sigma^{(r)}\subset W_X^r$ be the scheme-theoretic support of 
$H^{(r)}_\cusp$, 
i.e., we set
$$\Sigma^{(r)}=\Spec\bigl(A^{(r)}/\Ann(H_\cusp^{(r)})\bigr),$$
where $\Ann$ denote the annihilator of a module. 
Then we have also
$$H^{(r)}_\cusp=\Gamma(\Sigma^{(r)},\Hc_\cusp^{(r)}).$$
As $H^{(r)}_\cusp=\bigoplus_{d\in\ZZ} H^{(r,d)}_\cusp$
has the grading by the degree, we see that $\Sigma^{(r)}$ 
is invariant under the $\GG_m$-action on $W_X$, and $\Hc^{(r)}_\cusp$
is a $\GG_m$-equivariant quasicoherent sheaf.

\vskip .2cm

Let $R$ be a commutative $k$-algebra. By an {\it $R$-valued cusp eigenform} 
on $\Bun_r(X)$
we mean a non zero function $f: \Bunn_r(X)\to R$ such that
\begin{itemize}
\item $f$ is primitive for $\Delta$,
\item there is a character $\chi_f\in W_X(R)$ such that $T_a(f)=\chi_f(a)\,f$ 
for all $a\in A$.
\end{itemize}

\begin{rem}\label{rem:chi-pi}  
A character of the form $\chi_f$ as above gives rise to a character 
${}^\pi\chi_f: \Pic(X)\to R^\times$ 
such that for any line bundle $L$ on $X$ we have
$$f(V\otimes L)={}^\pi\chi_f(L)\,f(V),\quad V\in\Bunn_r(X).$$
Indeed, the Hecke operator $T_{x,r}=T_{\Oc_x^{\oplus r}}$ sends a function 
$f:\Bunn_r(X)\to R$
to the function $V\mapsto f(V\otimes_{\Oc_X} \Oc_X(x)^*)$ where $\Oc_X(x)$ is the 
line bundle formed by 
functions with at most first order pole at $x$. 
\end{rem}

The following is an algebraic reformulation of the strong 
multiplicity one theorem for unramified cusp
forms on $\GL_r$ which says, see \cite{shalika}, that the joint spectrum of 
the Hecke operators 
on the space of such 
forms is simple.  

\begin{prop}
\label{prop:alg-mult-one}
%former label {prop:3.2}
(a) The sheaf $\Hc^{(r)}_\cusp$ is a free sheaf of $\Oc_{\Sigma^{(r)}}$-modules of rank 1.
 
(b) For any field extension $R$ of $k$, the $R$-valued cusp eigenforms on $\Bun_r(X)$, 
considered modulo  scalar multiples, are in bijection with 
$\Sigma^{(r)}(R)$. The bijection takes an eigenform $f$ to the character $\chi_f$.

(c) The scheme  $\Sigma^{(r)}$ is an algebraic variety over $k$.

(d) The extension of scalars $\Sigma^{(r)}\otimes_k\bar k$ 
is a union of finitely many $\GG_m$-orbits.
\end{prop}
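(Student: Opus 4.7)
The plan is to reduce the four statements to two classical inputs: Shalika's strong multiplicity one theorem for unramified cusp forms on $\GL_r$ over function fields, and the quasi-compactness of the support of cusp forms with fixed central character (the statement already used in the proof of Lemma \ref{lem:cusp-forms-module-support}(c)). Everything else should follow by formal manipulation, standard commutative algebra on reduced rings, and a short Picard-group computation.

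I would treat (b) and (a) together. For (b), an $R$-valued cusp eigenform $f$ determines the character $\chi_f:A^{(r)}\to R$. Since every element of $\Ann(H^{(r)}_\cusp)$ annihilates each $k$-valued cusp eigenform, and the $k$-valued eigenforms span $H^{(r)}_\cusp$ (by multiplicity one applied to a basis of joint eigenvectors), $\chi_f$ factors through $B:=A^{(r)}/\Ann(H^{(r)}_\cusp)$, giving a point of $\Sigma^{(r)}(R)$. Conversely, a point of $\Sigma^{(r)}(R)$ is a character of $B$, and its eigenspace in $H^{(r)}_\cusp\otimes_k R$ is one-dimensional by multiplicity one, producing $f$ up to scalar. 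For (a), the same observations show that $B$ is reduced (a nilpotent element would act as zero on every eigenform and contradict faithfulness) and that $\Hc^{(r)}_\cusp$ has one-dimensional fibres at every closed point of $\Sigma^{(r)}$, hence is a line bundle. To see that this line bundle is trivial I combine with (c) and (d): over $\bar k$ each connected component is a single $\GG_m$-orbit, hence is either a point or a form of $\GG_m/F$ for a finite subgroup $F\subset\GG_m$, and algebraic tori and points both have trivial Picard group.

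For (c) and (d), I would exploit the central character. By Remark \ref{rem:chi-pi}, each eigenform $f$ has a central character ${}^\pi\chi_f:\Pic(X)(\FF_q)\to\bar k^\times$. A direct computation shows that the $\GG_m$-action on $\Sigma^{(r)}$ coming from the grading by degree corresponds, at the level of eigenforms, to $f(V)\mapsto\lambda^{\deg V}f(V)$, which modifies ${}^\pi\chi_f(L)$ by the factor $\lambda^{r\deg L}$. The restriction ${}^\pi\chi_f|_{\Pic^0(X)(\FF_q)}$ is therefore $\GG_m$-invariant, and since $\Pic^0(X)(\FF_q)$ is a finite abelian group, only finitely many such restrictions occur. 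Within each of these finitely many classes the eigenforms share a common central character up to a $\GG_m$-twist; the space of cusp forms with a fixed central character has quasi-compact support in $\Bun_r(X)/\Pic(X)(\FF_q)$ (the same Moeglin-Waldspurger type input used in Lemma \ref{lem:cusp-forms-module-support}(c)), so it is finite-dimensional and supplies only finitely many eigenforms. This produces finitely many $\GG_m$-orbits in $\Sigma^{(r)}\otimes_k\bar k$, which is (d), and (c) is then immediate since each orbit is a point or a form of $\GG_m$.

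The main obstacle is the finiteness invoked in (c) and (d): it is arithmetic-geometric rather than formal, resting on reduction theory for $\GL_r$ over the function field $K$ and the finiteness of the class number. A secondary subtlety is the passage from triviality of the line bundle over $\bar k$ to triviality over $k$ in (a); this should be dealt with either by noting that forms of $\GG_m$ are algebraic tori and have trivial Picard group over any field, or by working component-by-component and using the $\GG_m$-equivariance of $\Hc^{(r)}_\cusp$ to descend a trivialization.
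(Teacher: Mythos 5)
There is a genuine gap in the way you handle (a) and (b), and it sits exactly where the paper introduces its key device. An $R$-valued cusp eigenform is \emph{not} an element of $H^{(r)}_\cusp\otimes_k R$: a nonzero eigenvector of the invertible, degree-shifting Hecke operator $T_{x,r}$ must have nonzero components in infinitely many degrees, so it lives only in the degreewise completion $\prod_d H^{(r,d)}_\cusp\otimes_k R$. Consequently your phrase ``its eigenspace in $H^{(r)}_\cusp\otimes_k R$ is one-dimensional by multiplicity one'' is empty as stated, and the claim ``the $k$-valued eigenforms span $H^{(r)}_\cusp$'' fails as well: for non-closed $k$ (say $k=\QQ(\sqrt q)$) there may be no $k$-valued eigenforms at all, and even over $\bar k$ eigenforms are not elements of the graded module. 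Moreover you invoke multiplicity one for eigenforms with values in an arbitrary field extension $R$ of $k$, whereas the classical theorem concerns $\CC$-valued forms; extending it to every $R$ is essentially what part (b) asserts, so the argument is close to circular. The paper resolves all of this by passing to the restricted dual $Q=\bigoplus_d(H^{(r,d)}_\cusp)^*$ and the universal cusp form, which identifies $R$-valued eigenforms of character $\chi$ with $\Hom_A(Q,R_\chi)$; part (a) then becomes cyclicity of $Q$ over $A^{(r)}$, proved by viewing $Q$ as a $\GG_m$-equivariant vector bundle over $\Spec k[T_{x,r}^{\pm1}]$ and checking cyclicity on one fibre, where one may assume $k=\CC$ and quote Shalika. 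Parts (b), (c), (d) then follow formally (eigenforms with character $\chi$ form an $R^\times$-torsor because $Q\simeq k[\Sigma^{(r)}]$). Your argument for (c), (d) via central characters is reasonable in spirit and parallels the paper's use of the fibre at $z=1$, but even there you need that every $\bar k$-point of the support comes from an actual eigenform; support points need not be associated primes, so this step again requires the dualization through $Q$, not just finite-dimensionality of a space of cusp forms.

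The second concrete failure is the Picard-group step in your route to freeness. It is false that forms of $\GG_m$ (or tori) have trivial Picard group over an arbitrary field: the norm-one torus of a quadratic extension, e.g.\ the real conic $x^2+y^2=1$, has $\Pic\cong\ZZ/2$; moreover the $k$-components of $\Sigma^{(r)}$ need not be geometrically connected, so you would be dealing with $k$-forms of finite disjoint unions of $\GG_m$'s. Your fallback via $\GG_m$-equivariance can be made to work --- because the $\GG_m$-action is defined over $k$, Galois permutes the geometric components and cannot invert the orbit direction, which kills the relevant $H^1$ of the unit group and gives $\Pic=0$ --- but this argument is not made in your sketch, and without it the passage from ``invertible sheaf'' to ``free of rank one'' is unjustified. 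The paper avoids the issue entirely: cyclicity of $Q$ is established directly (one fibre of the equivariant bundle suffices), so freeness, reducedness of $\Sigma^{(r)}$, and the orbit description in (c), (d) all come out of the same fibre computation without any descent of trivializations.
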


\begin{proof} 
 Consider the restricted dual
$$Q\,\,=\,\,\bigoplus_{d\in\ZZ} (H^{(r,d)}_\cusp)^*.$$
Its left $A$-module structure is dual to the right $A$-module structure
on $H^{(r)}_\cusp$. Let
$f_{\un}: \Bunn_r(X)\to Q$
be the map whose value at a vector bundle $V$ of degree $d$
is the functional on $H^{(r,d)}_\cusp$ given by evaluation at $V$. 
The following is then obvious. 

\begin{lemma} (a) The function $f_\un$ is cuspidal, i.e. we have 
$$\Delta(f_{\un})=f_{\un} \otimes 1 + 1 \otimes f_{\un} \in Q \otimes \widehat{H} \widehat{\otimes} \widehat{H}.$$
It commutes with the $A$-action, i.e., for each $a\in A$ and $V\in\Bunn_r(X)$ we have
$$(T_a f_\un)(V)\,=\, a\cdot (f_\un(V)),$$
where the action on the right is given by the $A$-module structure on $Q$. 

(b) For any $A$-module $M$ and any cuspidal function $f:\Bunn_r(X)\to M$
which commutes with the $A$-action as above, there is a unique
morphism of $A$-modules $\phi: Q\to M$ such that $f=\phi\circ f_\un$. 
\qed
\end{lemma}

 We call $f_\un$ the 
{\em universal cusp form}. Consider a particular case of (b) when $M$ is an
commutative $A$-algebra, i.e., a 
commutative $k$-algebra $R$ equipped with an algebra homomorphism
$\chi: A\to R$. We get:

\begin{cor}\label{cor:cusp-eigenforms}
 Let $R$ be a commutative $k$-algebra and let $\chi: A\to R$
be a character. The
set of $R$-valued cusp eigenforms  with character $\chi$
is identified with $\Hom_A(Q, R)$. \qed
\end{cor}

We now prove part (a) of Proposition \ref{prop:alg-mult-one}. This
part just means that $H^{(r)}_\cusp$ is a cyclic $A^{(r)}$-module. This
is equivalent to saying that $Q$ is a cyclic $A^{(r)}$-module. 
Choose a point $x\in X$, of degree $d_x$. Then the Hecke
operator $T_{x,r}$ is invertible in $A^{(r)}$ and the subalgebra 
in $A^{(r)}$ generated by this operator and its inverse is isomorphic to
the Laurent polynomial ring
$k[z^{\pm 1}]$. Note that tensoring with
$\Oc_X(x)$ identifies $\Bunn_{r,d}(X)$ with $\Bunn_{r, d+rd_x}(X)$. So Remark
\ref{rem:chi-pi} together with Lemma \ref{lem:cusp-forms-module-support}(c)
show that $H^{(r)}_\cusp$ and  $Q$  are  finitely generated 
$k[z^{\pm 1}]$-modules.  Let $\Qc$ be the coherent
sheaf on $\GG'_m=\Spec\, k[z^{\pm 1}]$ corresponding to $Q$. As $Q$
is a graded module over $k[z^{\pm 1}]$, the sheaf $\Qc$ is $\GG'_m$-equivariant.
This implies that $\Qc$ is a vector bundle. The algebra $A^{(r)}$
acts on $\Qc$ by bundle endomorphisms. In this situation the claim that $Q$
is cyclic over $A^{(r)}$ is equivalent to the claim that the fiber
of $\Qc$ at some point $z_0\in\GG'_m$ is a cyclic $A^{(r)}$-module. 
We take $z_0=1$ and take  $\Qc_1^*$, the dual space to the fiber. 
To prove that $\Qc_1^*$ is a cyclic $A^{(r)}$-module, it is enough to assume
that $k=\CC$, the assumption we make until the end of the proof of (a). 
Similarly to Corollary \ref{cor:cusp-eigenforms},
we see that  
$\Qc_1^*=\Hom_{\CC[z^{\pm 1}]}(Q, \CC)$ is identified with the 
(finite-dimensional) space of cusp forms
$f: \Bunn_r(X)\to \CC$ such that $f(V\otimes\Oc_X(x))=f(V)$
for each vector bundle $V$. In this case the classical form of the
multiplicity one theorem,  see \cite{shalika}, Thm. 5.5,
implies that $\Qc_1^*$ is a direct
sum of 1-dimensional invariant subspaces on which $A^{(r)}$ acts
by distinct characters. This, in turn,  implies that $\Qc_1^*$ is cyclic,
establishing part (a).

Let us now prove part (b). Suppose $R/k$ is a field extension and we have  
a character 
$\chi\in\Sigma^{(r)}(R)$. It makes $R$ into an $A^{(r)}$-algebra,
denote it $R_\chi$. In particular, $R_\chi$ is an $A^{(r)}$-module.
By Corollary \ref{cor:cusp-eigenforms}, nonzero 
cusp eigenforms with this character are the same as nonzero
morphisms of $A^{(r)}$-modules
$Q\to R_\chi$. By the proof of (a) above
$Q$ is cyclic, so $Q\simeq k[\Sigma^{(r)}]$ as an $A$-module
by definition of $\Sigma^{(r)}$.
Now, nonzero morphisms of $A^{(r)}$-modules
$k[\Sigma^{(r)}]\to R_\chi$ 
form a torsor over $R^\times$.

As for parts (c) and (d), 
it is enough to establish them for $k=\CC$. In this case
they follow from the description of $\Qc$ as a $\GG'_m$-equivariant
vector bundle on $\GG'_m$ with $A^{(r)}$-action as given in the proof of (a):
the fiber of $\Qc$ over the point $z=1$, and therefore over
any other point, splits into a direct sum of $A^{(r)}$-eigenspaces with distinct
characters. 
 
Proposition \ref{prop:alg-mult-one} is proved. 
\end{proof}

%\vfill\eject

%\vskip 3mm

\subsection{Rankin-Selberg L-functions}
\label{rankin-selberg}
The disjoint union $\Sigma=\coprod_r\Sigma^{(r)}$ of the 
algebraic varieties $\Sigma^{(r)}$ is a scheme
of the kind considered in Section \ref{sec:disjoint}. Since $\Sigma^{(r)}$ is closed in 
$W_X^r \subset W_X$, we have a morphism of schemes
\begin{equation}
\label{eq:embedding-sigma}
\alpha: \Sigma\lra W_X. 
\end{equation}
Proposition \ref{prop:alg-mult-one} implies that this
morphism is a {\it categorical monomorphism}, i.e., 
an injection on $R$-points for any commutative $k$-algebra $R$. 
For $\chi\in \Sigma^{(r)}(R)$, $\chi'\in\Sigma^{(s)}(R)$
we have the Rankin-Selberg $L$-series, see Section \ref{sec:global-witt-scheme-ring},
$$\LHom(\chi,\chi';t)=L(\chi^*\boxtimes\chi';t)\in 1+tR[[t]].$$
If $R$ is a field, then $\chi$, $\chi'$ correspond to some cusp eigenforms $f$, $g$.
Therefore the properties of Rankin-Selberg $L$-functions give the following.

\begin{prop}\label{prop:rankin-selberg}
Let $R$ be a field, and $\chi\in\Sigma^{(r)}(R)$,  $\chi'\in\Sigma^{(s)}(R)$ as above. 

(a) The series $\LHom(\chi,\chi';t)$ 
represents a rational function in $t$ satisfying the functional equation
$$\gathered
\LHom(\chi',\chi; 1/qt)=\eps_{\chi,\chi'}\,(q^{1/2}t)^{2(1-g_X)rs}
\LHom(\chi,\chi'; t),\\
\eps_{\chi,\chi'}={}^\pi(\chi^*\boxtimes\chi')(\Omega^1_X).
\endgathered$$

(b) If  $r\neq s$,
then $\LHom(\chi,\chi';t)$
is a polynomial of degree $2(g_X-1)rs$.  
%If $\chi$, $\chi'$ lie in the same $R^\times$-orbit, then
%$$\LHom(\chi,\chi';t)={P_{\chi,\chi'}(t)\over (1-t)(1-qt)}$$
%where $P_{\chi,\chi'}$ is a polynomial of degree $2(g_X-1)r^2+2$.
\end{prop}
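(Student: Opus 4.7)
The plan is to identify $\LHom(\chi,\chi';t)$ with a classical Rankin-Selberg $L$-function and then import the relevant analytic properties from the theory of Jacquet-Piatetski-Shapiro-Shalika, as adapted to the function field setting (see, e.g., \cite{moeglin-waldspurger-book}).

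First, I would make the dictionary precise. By Proposition \ref{prop:alg-mult-one}(b), $\chi$ and $\chi'$ correspond to nonzero $R$-valued unramified cusp eigenforms $f$ and $g$ on $\GL_r(\Aen)$ and $\GL_s(\Aen)$, and the local characters $\chi_x,\chi'_x$ at a place $x$ are precisely those encoding the Satake parameters of $f,g$ at $x$. Unpacking the definitions of Section \ref{sec:global-witt-scheme-ring}, one checks that the local factor $L(\chi^*_x\boxtimes\chi'_x;t^{\deg x})$ equals the standard unramified Rankin-Selberg factor $\prod_{i,j}(1-\alpha_i(f_x)^{-1}\beta_j(g_x)t^{\deg x})^{-1}$, so that $\LHom(\chi,\chi';t)$ coincides with the global Rankin-Selberg $L$-function $L(s,\tilde f\times g)$ under the substitution $t=q^{-s}$. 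The involution $\chi\mapsto\chi^*$ realizes the passage to the contragredient cusp form.

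For part (a), I would then quote the Jacquet-Piatetski-Shapiro-Shalika functional equation
$$L(s,\tilde f\times g)\,\,=\,\,\varepsilon(s,\tilde f\times g)\,L(1-s,f\times\tilde g),$$
and translate it back to the variable $t$. In the unramified function field case the $\varepsilon$-factor is a monomial of the form $c\cdot q^{(2s-1)(g_X-1)rs}$, where the constant $c$ is the central character of $\tilde f\times g$ evaluated on the canonical divisor; under our dictionary this constant is exactly ${}^\pi(\chi^*\boxtimes\chi')(\Omega^1_X)$. Substituting $q^{-s}$ for $t$ then gives the stated identity with monomial factor $(q^{1/2}t)^{2(1-g_X)rs}$.

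For part (b), I would invoke the theorem that for $r\neq s$ the Rankin-Selberg $L$-function $L(s,\tilde f\times g)$ is entire (Jacquet-Shalika; see also \cite{moeglin-waldspurger-book}): in contrast with the case $r=s$, where a pole can arise only when $g$ is equivalent to $f$, cuspidal representations of different ranks cannot match, and the Rankin-Selberg integral representation produces a function with no poles. Combined with the Euler product, which shows $\LHom(\chi,\chi';t)\in 1+tR[[t]]$, entirety forces this series to be polynomial; its degree is then pinned down by the functional equation of (a), since matching leading and trailing coefficients in $P(1/qt)=\varepsilon_{\chi,\chi'}(q^{1/2}t)^{2(1-g_X)rs}P(t)$ for a polynomial $P$ with $P(0)=1$ gives $\deg P = 2(g_X-1)rs$. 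The main obstacle is the entirety statement: it is the only genuinely deep analytic input, and everything else reduces to unramified local harmonic analysis and formal manipulation of the functional equation.
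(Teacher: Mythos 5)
Your overall strategy --- setting up the dictionary between $\LHom(\chi,\chi';t)$ and a classical unramified Rankin--Selberg $L$-function (with $t=q^{-s}$, and $\chi\mapsto\chi^*$ realizing the contragredient) and then importing the functional equation and the absence of poles for $r\neq s$ --- is the same as the paper's, which reduces to $R=\CC$ and quotes the function-field summary of Rankin--Selberg theory in Appendice B of \cite{lafforgue} (Th\'eor\`eme B9 for rationality, the functional equation, and polynomiality when $r\neq s$; Lemme B1 together with Deligne's recipe for the additive character attached to a rational differential for identifying the epsilon constant with ${}^\pi(\chi^*\boxtimes\chi')(\Omega^1_X)$). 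Your identification of the $\varepsilon$-factor as the central character of $\tilde f\times g$ evaluated at the canonical class, with monomial part translating to $(q^{1/2}t)^{2(1-g_X)rs}$, is correct in content, though you assert it rather than derive it; the paper needs the local Tate-factor computation and the choice $\psi(a)=\psi_0\bigl(\sum_x \tr\Res_x(a_x\omega)\bigr)$ to pin it down (and one must keep track of whether the convention places $\varepsilon$ or $\varepsilon^{-1}$ on the quoted side).

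There is, however, a genuine gap at precisely the point the paper is careful about: rationality in $t$. Part (a) claims the series represents a \emph{rational} function, and your argument never establishes this; the results of \cite{JPS:rankin-selberg} and \cite{jacquet-shalika:rankin-selberg} that you invoke give meromorphic continuation in $s$ and are not stated in the rational-in-$q^{-s}$ form needed here --- this is exactly why the paper cites Lafforgue's Appendice B instead of those papers directly. The same gap propagates into (b): the inference ``entirety plus the Euler product forces the series to be a polynomial'' is not valid as stated, since an entire function of $t$ whose Taylor series lies in $1+tR[[t]]$ need not be a polynomial. You need either the rationality statement (then no poles on $\CC^\times$ together with regularity at $t=0$ gives a polynomial; the paper simply quotes part (iii) of Th\'eor\`eme B9, which asserts polynomiality for $r\neq s$ outright), or a supplementary Liouville-type argument in which the functional equation controls the growth of $\LHom(\chi,\chi';t)$ as $t\to\infty$. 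Once polynomiality is in hand, your degree computation from the functional equation is fine and matches the paper's (note only that the two sides of the functional equation involve the polynomials attached to $(\chi',\chi)$ and to $(\chi,\chi')$, not literally the same $P$; comparing extreme terms still gives degree $2(g_X-1)rs$ for each).
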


\begin{proof} It is enough to assume that $R=\CC$. In this case the
proof  is a matter of comparing our conventions and notations
to those of the papers \cite{JPS:rankin-selberg} \cite{jacquet-shalika:rankin-selberg}
dealing with the general theory of
Rankin-Selberg convolutions. However, these papers
do not specifically emphasize the case of a function field and  rational,
rather than just meromorphic, nature of L-functions. A summary of this case,
based, among other sources, on the the preprint \cite{PS-maryland}, can
be found  in \cite{lafforgue}, Appendice B.

More precisely, Th\'eor\`eme B9 of \cite{lafforgue}
implies the rationality in part (a) as well as the general form of the functional
equation, with a monomial $\varepsilon(\chi, \chi'; t)$ determined by $\chi$
and $\chi'$. The identification of this monomial with 
$\eps_{\chi,\chi'}\,(q^{1/2}t)^{2(1-g_X)rs}$ is implied by local calculations
(Lemme B1 of \cite{lafforgue}) which realize $\varepsilon(\chi, \chi'; t)$
as a product of $rs$ factors, each determined by one ``Hecke eigenvalue"
for $\chi$ and $\chi'$. Each of these factors, in turn, is
identified with the local epsilon-factor of the Tate theory corresponding
to an unramified character of $\Aen^*$ and  some chosen
 additive character $\psi: \Aen/K\to \CC^*$. The identification 
 $\eps_{\chi, \chi'}={}^\pi(\chi^*\boxtimes\chi')(\Omega^1_X)$ is obtained,
 as in \cite{deligne},
by taking $\psi(a) = \psi_0\biggl(\sum_{x\in X} \tr_{\FF_{q_x}/\FF_q}\Res_x(a_x\omega)\biggr)$,
where $\omega$ is a rational section of $\Omega^1_X$, and $\psi_0:\FF_q\to \CC^*$
is a nontrivial character. Further, the fact that $\LHom(\chi,\chi'; t)$
is a polynomial for  $r\neq s$, follows from part (iii) of \cite{lafforgue},
Th\'eor\`eme B9. 
The exact value of the degree of this polynomial is found
from the functional equation in (a). 
\end{proof}

From Remark~\ref{rem:regularL}, we deduce

\begin{cor} The correspondence
$$(\chi,\chi')\mapsto \LHom(\chi,\chi'):=\LHom(\chi,\chi';1)$$
descends to a rational function on 
$\Sigma^{(r)}\times\Sigma^{(s)}$ defined over $k$.
\end{cor}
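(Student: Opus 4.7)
My plan is to combine three ingredients: the local regularity of Rankin--Selberg factors (Remark~\ref{rem:regularL}), the pointwise bound on poles and degree from Proposition~\ref{prop:rankin-selberg}, and the reducedness of the finite-type $k$-variety $\Sigma^{(r)}\times\Sigma^{(s)}$. The overall strategy is to promote the pointwise-defined power series $\LHom(\chi,\chi';t)$ to an actual element of a power-series ring over $k[\Sigma^{(r)}\times\Sigma^{(s)}]$, establish its rationality in $t$ with coefficients regular on the scheme, and specialize.

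\textbf{Step 1 (coefficients are regular).} By Remark~\ref{rem:regularL} each local factor $\LHom(\chi_x,\chi'_x;t)$ lies in $1+t\cdot k[W_x^r\times W_x^s][[t]]$. The global product $\LHom(\chi,\chi';t)=\prod_x\LHom(\chi_x,\chi'_x;t^{\deg(x)})$ converges termwise because only points $x$ with $\deg(x)\leqslant n$ contribute to the coefficient of $t^n$. Pulling back under the closed immersion $\Sigma^{(r)}\times\Sigma^{(s)}\hookrightarrow W_X^r\times W_X^s$ yields
\[
\LHom(\chi,\chi';t)\,\,\in\,\, 1+t\cdot k[\Sigma^{(r)}\times\Sigma^{(s)}][[t]].
\]

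\textbf{Step 2 (case $r\neq s$).} By Proposition~\ref{prop:rankin-selberg}(b), at each $\bar k$-point of $\Sigma^{(r)}\times\Sigma^{(s)}$, the series $\LHom(\chi,\chi';t)$ is a polynomial of degree $2(g_X-1)rs$. Therefore, for $n>2(g_X-1)rs$, the regular function $[t^n]\LHom$ vanishes at every closed point of the reduced $k$-variety $\Sigma^{(r)}\times\Sigma^{(s)}$, hence vanishes identically. Thus $\LHom(\chi,\chi';t)\in k[\Sigma^{(r)}\times\Sigma^{(s)}][t]$ is a polynomial, and evaluation at $t=1$ produces a regular (in particular rational) function on $\Sigma^{(r)}\times\Sigma^{(s)}$ defined over $k$.

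\textbf{Step 3 (case $r=s$).} Here $\LHom$ may have poles at $t=1$ and $t=1/q$. By the classical theory of Rankin--Selberg L-functions, as summarized in \cite{lafforgue}, Appendice~B (and originally \cite{JPS:rankin-selberg}), the orders of these poles are bounded uniformly by some $N=N(r)$ (corresponding to the fact that a pole occurs only when $\chi^*\boxtimes\chi'$ contains a ``trivial'' twist component). Combining this with the functional equation in Proposition~\ref{prop:rankin-selberg}(a) gives a pointwise bound on the degree of $(1-t)^N(1-qt)^N\LHom(\chi,\chi';t)$. Applying the same reducedness argument as in Step~2,
\[
(1-t)^N(1-qt)^N\LHom(\chi,\chi';t)\,\,\in\,\,k[\Sigma^{(r)}\times\Sigma^{(r)}][t].
\]
This exhibits $\LHom$ as a rational function on $\Sigma^{(r)}\times\Sigma^{(r)}\times\mathbb{A}^1_t$. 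The pole locus of its restriction at $t=1$ is a proper closed subvariety (generic $(\chi,\chi')$ are not twist-equivalent, so $\LHom$ is regular at $t=1$ there), so specialization at $t=1$ yields a well-defined rational function on $\Sigma^{(r)}\times\Sigma^{(r)}$ defined over $k$.

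The main obstacle will be Step~3, specifically extracting from the classical literature the uniform upper bound $N(r)$ on the order of pole of $\LHom(\chi,\chi';t)$ at $t=1$ and $t=1/q$ in the function-field setting, together with the non-emptiness of the pole-free locus. Everything else is a standard ``reducedness plus pointwise rationality'' argument, and the $k$-rationality is automatic since the whole construction, starting from the local factors, takes place over $k$.
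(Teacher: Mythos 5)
Your Steps 1--2 are sound and are essentially the paper's own (very terse) argument: regularity of the coefficients comes from Remark~\ref{rem:regularL} pulled back along $\Sigma^{(r)}\times\Sigma^{(s)}\hookrightarrow W_X^r\times W_X^s$, and for $r\neq s$ the pointwise polynomiality of fixed degree from Proposition~\ref{prop:rankin-selberg}(b) plus reducedness of the variety $\Sigma^{(r)}\times\Sigma^{(s)}$ does give a polynomial with regular coefficients, hence a regular value at $t=1$.

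Step 3, however, contains a genuine error: the poles of $\LHom(\chi,\chi';t)$ in $t$ are \emph{not} located at the fixed points $t=1$ and $t=1/q$; they sit at $t=(\epsilon\lambda)^{-1}$ and $t=(q\epsilon\lambda)^{-1}$, where $\lambda$ is the twist ratio of the pair (i.e.\ $\chi'=\lambda^{\deg}\chi$) and $\epsilon$ runs over certain roots of unity. Since each component of $\Sigma^{(r)}\otimes_k\bar k$ is a single $\GG_m$-orbit (Proposition~\ref{prop:alg-mult-one}(d)), on a component $S\times S$ \emph{every} pair is twist-equivalent and the pole locations move with the point. Concretely, already for $r=s=1$ and $\chi'=\lambda^{\deg}\chi$ one has $\LHom(\chi,\chi';t)=\zeta_X(\lambda t)$, so $(1-t)^N(1-qt)^N\LHom(\chi,\chi';t)$ is not a polynomial in $t$ for $\lambda\neq 1$; its power series has infinitely many nonzero coefficients at such points, so your coefficient-vanishing/reducedness argument cannot be run and the asserted membership $(1-t)^N(1-qt)^N\LHom\in k[\Sigma^{(r)}\times\Sigma^{(r)}][t]$ is false. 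Two repairs are possible: (i) clear denominators with a \emph{moving} factor built from the regular invertible twist function on each component (this is in effect what the paper does later with $\widetilde\lambda$ and Remark~\ref{rem:on-singularities-LHom}); or (ii) avoid locating the poles altogether and use a Kronecker/Hankel-determinant criterion: Proposition~\ref{prop:rankin-selberg} gives, pointwise, rationality in $t$ with uniformly bounded numerator and denominator degrees, so the corresponding Hankel determinants (which are regular functions by your Step 1) vanish at all closed points, hence identically by reducedness, and the series is rational over $k(\Sigma^{(r)}\times\Sigma^{(s)})$; one then notes that $t=1$ lies in the polar locus only over the proper closed ``$\epsilon\lambda=1$'' locus, so specialization at $t=1$ is a well-defined rational function over $k$.
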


\noindent 
By combining these functions for all $r,s$, 
we get a rational function $\LHom\in k(\Sigma\times\Sigma)$.
The dependence on $t$ can be recovered using the $\GG_m$-action 
in either argument
$$\LHom(t^{-\deg}\chi,\chi')=\LHom(\chi,t^\deg\chi')=\LHom(\chi,\chi';t).$$

We introduce a rational function $c\in k(\Sigma\times\Sigma)$ by
\begin{equation}
\label{eq:rankin-selberg-c}
c(\chi,\chi')=q^{rs(1-g_X)}{\LHom(\chi,\chi')\over \LHom(q^\deg\chi,\chi')}.
\end{equation}
The functional equation for the $\LHom$-functions 
implies that $c$ is antisymmetric
$$c(\chi',\chi)=c(\chi,\chi')^{-1}.$$

\begin{df} A {\it theta characteristic} on $X$ is a line bundle $\Theta$ 
(defined over $\FF_q$) such that $\Theta^{\otimes 2}=\Omega^1_X$.
\end{df}

\noindent 
If the curve $X$ has a theta characteristic then
$c$ has a coboundary representation
\begin{equation}
\label{lambda}
c(\chi,\chi')=\lambda(\chi,\chi')\,\lambda(\chi',\chi)^{-1},
\quad
\lambda(\chi,\chi')=\theta_{\chi,\chi'}\LHom(\chi,\chi'),
\end{equation}
where $\theta_{\chi,\chi'}$ is the value of ${}^\pi(\chi^*\boxtimes\chi')$ 
on the class of $\Theta$ in $\Pic(X)$.

One can obtain other coboundary representations by multiplying $\lambda$
with rational functions on $\Sigma\times\Sigma$, symmetric under permutation.
We will use one such particular representation.
Note that the function
\begin{equation}
f(t) = t^{-1}(1-qt)(1-q^{-1}t)
\end{equation}
satisfies $f(t^{-1})=f(t)$. Define now a rational function $\tb\in k(\Sigma\times\Sigma)^*$
by
\begin{equation}\label{eq:tb-automorphic}
\tb(\chi, \chi') = 
\begin{cases}
1, \quad \text{if}\quad \chi, \chi'\text{ lie in different $\GG_m$-orbits};\cr
(\chi':\chi)\in\GG_m,\text{ if $\chi, \chi'$ lie in the same $\GG_m$-orbit}.
\end{cases}
\end{equation}
Put
\begin{equation}\label{eq:widetilde-lambda}
\widetilde\lambda(\chi, \chi') \,\,=\,\, f\bigl(\tb(\chi, \chi')\bigr) \cdot\lambda(\chi,\chi'). 
\end{equation}
Then
\[
c(\chi, \chi')\,\,=\,\,\widetilde\lambda(\chi,\chi')\widetilde\lambda(\chi',\chi)^{-1},
\]
but as we will see later, $\widetilde\lambda$ has better singularity behavior on $\Sigma\times\Sigma$. 

\vskip 3mm

%\vfill\eject

\subsection{The main theorem.}
Recall the scheme
$\Sigma$
and the rational function $c\in k(\Sigma\times\Sigma)$
in (\ref{eq:rankin-selberg-c}).
We can now formulate our main result.

\begin{thm}\label{thm:main}
 The $k$-algebra $H$ is isomorphic to the shuffle $k$-algebra 
$Sh(\Sigma,c)$. If $X$ has a theta-characteristic, 
then $H$ is isomorphic to the symmetric shuffle 
$k$-algebra $SSh(\Sigma,\lambda)$.
\end{thm}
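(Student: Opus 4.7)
The plan is to produce an explicit embedding $\Phi: H \to Sh(\Sigma, c)$ built from the iterated constant term, and then show it is an isomorphism. By Proposition \ref{prop:alg-mult-one}(a), the cuspidal part $H^{(r)}_\cusp$ is a free $\Oc_{\Sigma^{(r)}}$-module of rank one, so a choice of section identifies $H^{(r)}_\cusp$ with $k_\qcom[\Sigma^{(r)}]$ as an $A^{(r)}$-module. For each ordered partition $(r_1, \ldots, r_n)$ with $\sum r_i = r$, I would compose the iterated comultiplication \eqref{2.38} with the projection onto the cuspidal summands in each tensor factor, then use the above identification to land in rational functions on $\Sigma^{(r_1)} \times \cdots \times \Sigma^{(r_n)}$. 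Assembling these components over all $n$ and all such partitions yields a map $\Phi: H \to \bigoplus_n k_\qcom(\Sigma^n)$ whose image will turn out to lie in $Sh(\Sigma, c)$.

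The core step is to verify that $\Phi$ is an algebra homomorphism into the shuffle algebra. I would compute $\Delta^{(n-1)}(f * g)$ using the bialgebra compatibility from Proposition \ref{prop:def-coprod-completion}(c) (which extends Green's Theorem \ref{thm:green} to the coherent setting); the result is a sum over shuffles of ranks, weighted by Euler factors $\langle E, F\rangle$ coming from the twisted multiplication \eqref{eq:twisted-product-Hall}. After projecting onto cuspidal tensor components, these Euler factors must be recognized via the general theory of Section \ref{rankin-selberg} as the local factors of Rankin-Selberg L-functions, so that the sum becomes precisely the shuffle product on $Sh(\Sigma, c)$ with braiding
\[
c(\chi, \chi') = q^{rs(1-g_X)} \frac{\LHom(\chi, \chi')}{\LHom(q^{\deg}\chi, \chi')}.
\]
The antisymmetry $c(\chi', \chi) = c(\chi, \chi')^{-1}$ is forced by the Rankin-Selberg functional equation of Proposition \ref{prop:rankin-selberg}(a), consistently with the coboundary formalism of Section \ref{sec:coboundary}.

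Next I would establish injectivity and surjectivity onto the shuffle subalgebra. For injectivity, invoke the spectral decomposition of unramified automorphic forms: by Langlands theory on function fields (see \cite{moeglin-waldspurger-book}), every element of $H^{(r)}$ is determined by its iterated constant terms along all standard parabolics, which is exactly the data recorded by the components $\Phi_{(r_1, \ldots, r_n)}$. For surjectivity, observe that $\Phi$ restricted to rank-$r$ cusp forms (the $n = 1$ case) is an isomorphism onto $k_\qcom[\Sigma^{(r)}]$ by Proposition \ref{prop:alg-mult-one}. Since $\Phi$ is an algebra map, its image then contains the subalgebra generated by $k_\qcom[\Sigma]$, which is $Sh(\Sigma, c)$ by definition; the reverse inclusion follows from the pseudo-Eisenstein series construction, which expresses every $f \in H^{(r)}$ as a Hall-algebra product of cusp forms of smaller ranks.

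The second assertion in the theorem is then a formal consequence of Proposition \ref{prop: symmetric-shuffle-algebra}: the coboundary representation $c(\chi, \chi') = \widetilde\lambda(\chi, \chi')\,\widetilde\lambda(\chi', \chi)^{-1}$ given by \eqref{eq:widetilde-lambda}, available whenever $X$ carries a theta-characteristic, converts $Sh(\Sigma, c)$ into $SSh(\Sigma, \widetilde\lambda)$. The main obstacle in the whole argument is the computation in the second paragraph: one must evaluate the cuspidal projection of the constant term of a Hall product and recognize the emerging combination of local Euler factors as the global Rankin-Selberg function $c$. This is precisely where the Maass-Selberg computation of inner products of pseudo-Eisenstein series, alluded to in the introduction, enters and performs the key work.
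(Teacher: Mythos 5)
Your overall architecture (embed $H$ via the cuspidal projection of the iterated constant term, identify $H_\cusp^{(r)}$ with $k_\qcom[\Sigma^{(r)}]$ by multiplicity one, check the algebra property, then get surjectivity from the fact that $H_\cusp$ generates $H$, and deduce the $SSh$ statement from the coboundary formalism) is the same as the paper's, which uses $\omega_n=p_{1,\dots,1}\circ\Delta^{(n-1)}$. But the core step, as you describe it, rests on a wrong mechanism. The Euler-form twist appearing in Green's compatibility (Theorem \ref{thm:green}, Proposition \ref{prop:def-coprod-completion}) cannot be ``recognized as local Rankin--Selberg factors'': by \eqref{eq:euler-form-K(X)} the Euler form on $\Coh(X)$ factors through rank and degree and contributes only powers of $q^{1/2}$ (these give exactly the prefactor $q^{rs(1-g_X)}$ in $c$, nothing more). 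The $\LHom$-ratios have an entirely different origin: they arise from the interaction of the cuspidal generators with the \emph{torsion} part of the coproduct in $H_\coh$. Concretely, $\Delta_\coh(E_f(t))=1\otimes E_f(t)+E_f(t)\otimes\Psi_f(t)$ with $\Psi_f(t)$ a series of torsion elements, and the whole content of the computation is the commutation relation $E_g(t)*\Psi_f(t')=\frac{\LHom(f,g,t'/t)}{\LHom(f,g,t'/qt)}\Psi_f(t')*E_g(t)$ (Lemma \ref{lem:psif-Eis}), equivalently Langlands' constant term formula \eqref{eq:constant-term-eisenstein}. Note also that the map $\Delta^{(n-1)}=p_n\circ\Delta^{(n-1)}_\coh$ of \eqref{2.38} is \emph{not} an algebra homomorphism, so one cannot simply invoke the bialgebra property and project to bundles; this is precisely why the extra unramified Rankin--Selberg computation (or the intertwiner calculus of Lemma \ref{prop:M-on-eisenstein}) is indispensable. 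Your proposal contains no mechanism that actually produces the L-values, so the identification of the image multiplication with the shuffle product for the function $c$ of \eqref{eq:rankin-selberg-c} is a genuine gap.

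Two further points. First, the Maass--Selberg relations play no role here: they compute scalar products of pseudo-Eisenstein series and enter only in the remark about orthogonal polynomials; the ``key work'' is done by \eqref{eq:constant-term-eisenstein} together with the rationality and functional equation of $\LHom$ (Proposition \ref{prop:rankin-selberg}), and one still needs a separate argument (carried out in Theorem \ref{thm:hall-as-R-matrix}(a), using that $\Delta$ is a morphism of $A$-modules and that the relevant ring extensions are integral) to see that the constant term, a priori only a formal series supported on infinitely many bundles, is a \emph{rational} section, so that $\Phi$ really lands in $\bigoplus_n k_\qcom(\Sigma^n)$. Second, your injectivity argument (``an automorphic form is determined by its constant terms along all parabolics'') does not immediately show that the purely cuspidal component $p_{1,\dots,1}\circ\Delta^{(n-1)}$ alone is faithful on $H_n$; the paper closes this by a short positivity argument: pairing $\Delta^{(n-1)}(F)$ against a sum of pure tensors of cusp forms and using adjointness gives $(F,F)_\Herm>0$ for the Hermitian orbifold product \eqref{hermitianproduct}. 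Your surjectivity argument and the deduction of the $SSh(\Sigma,\lambda)$ statement from the coboundary representation \eqref{lambda} and Proposition \ref{prop: symmetric-shuffle-algebra} are fine.
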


The proof will be given in Section \ref{sec:spectral-and-proof}.
We now reformulate Theorem \ref{thm:main} by using the Langlands correspondence
for $GL_r$ over function fields as established by L. Lafforgue 
\cite{lafforgue}. This
correspondence uses $l$-adic local systems, so we fix a prime $l$ not dividing $q$.

We denote by $\LS(X)$ the
category of lisse sheaves of
$\overline{\QQ}_l$-vector spaces on (the \'etale topology of) $X$, see \cite{milne}.
 We refer to objects of this category as {\em local systems} on $X$. 
 Let $\LS^{(r)}(X)\subset \LS(X)$ be the subcategory of 
 local systems of rank $r$.
For such a local system $\Lc$ and a point $x\in X$ we denote by
$\Fr(x, \Lc): \Lc_x\to\Lc_x$ the action of the Frobenius
$\Fr_x\in\Gal(\overline{\FF}_{q_x}/\FF_{q_x})$ on the stalk of $\Lc$ at $x$.
The L-function of $\Lc$ is defined by the product
\begin{equation}\label{eq:L-function}
L(\Lc,t)\,\,=\prod_{x\in X} {1\over\det(1-\Fr(x,\Lc)\cdot t^{\deg(x)})}
\,\,\in\,\,\overline{\QQ}_l[[t]].
\end{equation}
It is known that $L(\Lc, t)$ is a rational function satisfying the
functional equation 
\begin{equation}\label{E:functional-equation1}
 L(\Lc^\vee, 1/qt) \,=\,\epsilon_\Lc (q^{1/2}t)^{(2-2g_X)r} L(\Lc, t),
\end{equation}
where $\Lc^\vee$ is the dual local system, and
$$\epsilon_\Lc\cdot  q^{(1-g_X)r} \,\,=\,\,\det\bigl(\Fr: H^\bullet(X\otimes\overline\FF_q, \Lc)
\to H^\bullet(X\otimes\overline\FF_q, \Lc)\bigr).$$
It was shown by Deligne \cite{deligne} that 
$$\epsilon_\Lc \,=\,\prod_{x\in X} \det(\Fr(x,\Lc))^{c_x},$$
where $\sum c_x\cdot x$ is any divisor representing $\Omega^1_X\in\Pic(X)$. 
It is also known that if $\Lc$ is irreducible, then $L(\Lc, t)$
is a polynomial.

Let $D^{(r)}_0$ be the set of isomorphism classes of irreducible local systems
$\Lc\in\LS^{(r)}(X)$ such that $\det(\Lc)$ has finite order, i.e., 
$\det(\Lc)^{\otimes d}$ is a trivial local system for some $d\geqslant 1$.
Let also $\Sigma^{(r)}_0(\overline{\QQ}_l)\subset\Sigma^{(r)}(\overline\QQ_l)$
consist of those automorphic characters $\chi: A\to\overline\QQ_l$ for which
the corresponding character ${}^\pi\chi: \Pic(X)\to\overline\QQ_l^{\, \times}$ is
of finite order. Lafforgue's result is as follows (\cite{lafforgue}, Th\'eor\`eme VI.9).

\begin{thm}\label{thm:lafforgue}
 There exist bijections
$$\Sigma^{(r)}_0(\overline\QQ_l)\lra D^{(r)}_0, \quad \chi\mapsto \Lc_\chi$$
for each $r\geqslant 1$ such that for any $\chi\in \Sigma^{(r)}_0(\overline\QQ_l)$,
$\chi'\in\Sigma^{(s)}_0(\overline\QQ_l)$ we have
$$\LHom (\chi', \chi, t)\,\,=\,\,L(\underline{\Hom}(\Lc_{\chi'}, \Lc_{\chi}), t).$$\qed
\end{thm}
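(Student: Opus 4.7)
The statement is Lafforgue's theorem, whose full proof occupies a monograph, but I can outline the strategy I would follow. The basic idea is to realize both sides as pieces of the cohomology of an auxiliary moduli space.

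My first step would be to introduce Drinfeld's moduli stacks $\operatorname{Cht}^{(r)}$ of rank $r$ shtukas on $X$ with varying level structure, together with their natural morphisms (structure map to $X\times X$ via the ``zero'' and ``pole'' of the Frobenius modification, and a Hecke-correspondence action of $A$ obtained from modifying the underlying bundle at an auxiliary point). The $\ell$-adic intersection cohomology $\mathcal{H}=\mathrm{IH}^*(\operatorname{Cht}^{(r)}\otimes\overline{\FF_q},\overline{\QQ}_\ell)$ carries commuting actions of $A$ (via Hecke correspondences) and of $\pi_1(X\setminus\{\mathrm{discriminant}\})^{\times 2}$ (via the base map). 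The aim is then to show that every isotypic component for a cuspidal character $\chi\in \Sigma^{(r)}_0(\overline{\QQ}_\ell)$ carries, after cutting by the determinant condition, an irreducible $r$-dimensional Galois representation, and vice versa.

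The second step is a trace-formula comparison. Applying Grothendieck--Lefschetz to the correspondence ``Frobenius at $x$ composed with a Hecke operator'' computes the trace on $\mathcal H$ as a sum over fixed points, which Drinfeld and Lafforgue identify with an orbital-integral side of the Arthur--Selberg trace formula for $\GL_r$ over $K$. Matching the spectral side of that trace formula (which is indexed by cuspidal representations, hence by points of $\Sigma^{(r)}_0$, together with residual contributions that must be shown to cancel after restricting to the cuspidal isotypic part) produces, for each $\chi$, a virtual $\overline\QQ_\ell$-representation $\Lc_\chi$ of $\pi_1$ whose Frobenius traces satisfy
\[
\tr\bigl(\Fr(x,\Lc_\chi)^n\bigr)\,=\,\chi(h_{x,n})\quad (x\notin\text{bad locus},\,n\ge 1),
\]
where $h_{x,n}$ is the Hecke element corresponding to $\diag(\pi_x^n,1,\dots,1)$. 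Standard positivity/purity arguments (Deligne's Weil II, together with the cuspidal assumption to rule out Eisenstein contributions) upgrade $\Lc_\chi$ to an honest irreducible lisse sheaf, and the determinant condition $\det(\Lc_\chi)$ of finite order corresponds to ${}^\pi\chi$ being of finite order.

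The third step is the inverse direction: starting from $\Lc\in D^{(r)}_0$, I would invoke the converse theorem of Cogdell--Piatetski-Shapiro (as adapted to function fields by Lafforgue), which reconstructs an automorphic representation from a local system provided one controls all Rankin--Selberg twists $L(\Lc\otimes\Lc',t)$ by local systems $\Lc'$ of rank $<r$, checked inductively. Combined with step two this gives the bijection $\chi\leftrightarrow\Lc_\chi$. Finally, the L-function identity is deduced at unramified places by comparing Euler factors: at each $x\in X$ the matching of Frobenius eigenvalues with Hecke eigenvalues yields
\[
L(\underline{\Hom}(\Lc_{\chi'},\Lc_\chi);t)\,=\,\LHom(\chi',\chi;t)
\]
factor by factor via the definition \eqref{eulerfactor} and the ring-scheme multiplication $\boxtimes$ on $W_X$.

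The main obstacle, by far, is the second step: the construction and smoothness/compactification of $\operatorname{Cht}^{(r)}$ (for $r\geq 3$ these are very singular), and the cohomological analysis needed to separate the cuspidal part and identify its decomposition. This is precisely the technical heart of \cite{lafforgue} and is not something one would carry out from scratch; in the context of the present paper, the statement is simply quoted and then applied.
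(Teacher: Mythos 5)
The paper offers no proof of this statement at all: it is quoted, with an immediate \qed, as Th\'eor\`eme VI.9 of \cite{lafforgue}, so there is no internal argument against which to measure your proposal. Your outline is a fair coarse description of Lafforgue's actual strategy --- Drinfeld's stacks of shtukas and their cohomology with commuting Hecke and Galois actions, the Grothendieck--Lefschetz versus Arthur--Selberg trace comparison, and the inductive use of the converse theorem with Rankin--Selberg twists to go back from local systems to automorphic forms --- and your last remark about the $\LHom$ identity being checked Euler factor by Euler factor at each closed point (everything here being unramified, so it reduces to matching Hecke eigenvalues with Frobenius eigenvalues under $\boxtimes$) is exactly how the compatibility in the displayed formula is obtained. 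But, as you acknowledge yourself, this is a sketch of the cited monograph rather than a proof; that is precisely the level at which the present paper treats the statement, namely as an external input to be applied (in Corollary \ref{cor-of-lafforgue}), not re-proved.
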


In particular, by taking $\chi'$ to be the point $\1$ of the Witt scheme, we have
$L(\chi,t)\,=\,L(\Lc_{\chi}, t)$
for each $r$ and each $\chi\in\Sigma^{(r)}_0(\overline\QQ_l)$. 

\begin{prop}\label{prop:D-0-intersects}
The subset $\Sigma^{(r)}_0(\overline\QQ_l)\subset \Sigma^{(r)}(\overline\QQ_l)$
intersects any orbit of the $\GG_m$-action.
\end{prop}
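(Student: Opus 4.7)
The strategy is to explicitly control how the $\GG_m$-action modifies the associated Hecke character $^\pi\chi$ on $\Pic(X)$, and then to use the finiteness of $\Pic^0(X)$ in order to reduce the problem to solving one equation in $\overline\QQ_l^\times$.

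First I would compute the effect of the $\GG_m$-action on $^\pi\chi$. Starting from the defining formula $(\l^{\deg}\chi)(1_\Fc)=\l^{\deg(\Fc)}\chi(1_\Fc)$ together with the identity ${}^\pi\chi(\Oc_X(-x))=\chi(1_{\Oc_x^{\oplus r}})$ from Remark~\ref{rem:chi-pi} (which comes from the definition of the Hecke operator $T_{x,r}$), a direct check on the generators $\Oc_X(\pm x)$ of $\Pic(X)$ should yield
$${}^\pi(\l^{\deg}\chi)(L)\,\,=\,\,\l^{-r\deg(L)}\cdot{}^\pi\chi(L),\qquad L\in\Pic(X).$$
In particular the new character agrees with $^\pi\chi$ on $\Pic^0(X)$ and is rescaled by $\l^{-r}$ on any fixed line bundle $L_1$ of degree~$1$.

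Next I would invoke two classical facts about the Picard group: that $\Pic^0(X)=\operatorname{Jac}(X)(\FF_q)$ is a \emph{finite} group (as $\FF_q$-points of an abelian variety over a finite field), and that $\Pic(X)$ is generated by $\Pic^0(X)$ together with $L_1$ (using F.~K.~Schmidt's theorem on the existence of a degree~$1$ divisor on a smooth projective curve over $\FF_q$). Any character of a finite group takes values in roots of unity, so $^\pi\chi|_{\Pic^0(X)}$ is automatically of finite order; consequently ${}^\pi\chi$ has finite order on all of $\Pic(X)$ if and only if its value $^\pi\chi(L_1)$ is a root of unity.

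With these two ingredients in place, the argument concludes in one step: given $\chi\in\Sigma^{(r)}(\overline\QQ_l)$, I would set $a={}^\pi\chi(L_1)\in\overline\QQ_l^\times$ and choose any $r$-th root $\l\in\overline\QQ_l^\times$ of $a$. Then ${}^\pi(\l^{\deg}\chi)(L_1)=\l^{-r}\cdot a=1$, so the new character factors through the finite group $\Pic^0(X)$ and therefore has finite order; thus $\l^{\deg}\chi\in\Sigma^{(r)}_0(\overline\QQ_l)$, and it lies in the $\GG_m$-orbit of $\chi$ by construction. No serious obstacle is expected here; the entire proof rests only on the finiteness of $\Pic^0(X)$, the existence of a degree~$1$ line bundle, and the algebraic closedness of $\overline\QQ_l$.
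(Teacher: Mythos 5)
Your proof is correct and follows essentially the same route as the paper: finite order of a character of $\Pic(X)$ is detected by its value on a single line bundle of nonzero degree (the paper fixes an arbitrary $L$ with $\deg(L)\neq 0$ and uses finiteness of $\Pic(X)/L^{\ZZ}$, while you fix a degree-one bundle via F.~K.~Schmidt and use finiteness of $\Pic^0(X)$, which amounts to the same thing), and one then adjusts that value to a root of unity by solving a single equation in $\overline\QQ_l^{\,\times}$ using the $\GG_m$-action. Your explicit computation of the twist ${}^\pi(\l^{\deg}\chi)(L)=\l^{-r\deg(L)}\,{}^\pi\chi(L)$ is a nice precision (the paper's choice of $\l$ omits the factor $r$ in the exponent, which is harmless since only the existence of some $\l$ matters), but the argument is the same.
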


\begin{proof}
Indeed, fix a line bundle $L$ on $X$ of
non-zero degree. Then $\Pic(X)/L^\ZZ$ is a finite abelian group. Therefore
a homomorphism $\phi: \Pic(X)\to\overline\QQ_l^{\, \times}$ has a finite order if and 
only if $\phi(L)$ is a root of unity. So given $\chi\in \Sigma^{(r)}(\overline\QQ_l)$,
taking $\lambda = {}\pi\chi(L)^{-1/deg(L)}$, we have $(\lambda^{\deg}\cdot \chi)^\pi(L) =1$,
and therefore $\lambda^\deg\cdot\chi\in \Sigma^{(r)}_0(\overline\QQ_l)$. \end{proof}

\vskip .2cm

We now extend $D_0^{(r)}$ to a variety over $\overline\QQ_l$ similar to $\Sigma^{(r)}$,
which is a disjoint union of $\GG_m$-orbits. For this, choose a point
$x\in X$ and recall that $\LS^{(r)}(X)$
is equivalent to the category of continuous representations $\pi_1^{\et}(X,x)
\to GL_r(\overline\QQ_l)$, where $\pi^{\et}_1(X,x)$ is the \'etale fundamental group 
 of $X$ with base point $x$.We use the same notation for the corresponding 
 objects of the two categories. Recall further that we have a surjective 
 homomorphism
$$\pi^{\et}_1(X,x)\buildrel d\over \lra \Gal(\overline\FF_q/\FF_q)\,=\,\widehat\ZZ.$$
and the {\em unramified Weil group} of $X$ is defined to be
$$\pi_1^{\Weil}(X,x)\,\,=\,\,d^{-1}(\ZZ)\, \subset 
 \,
\pi_1^{\et}(X,x).$$
  Note that for each $x\in X$ we have a well-defined conjugacy class
$\{\Fr_x\}\subset\pi_1^{\Weil}(X,x)$, with $d(\{\Fr_x\})= \deg(x)$. So for
any $r$-dimensional representation $V$ of $\pi_1^{\Weil}(X,x)$ over
$\overline\QQ_l$ we define the L-series
$L(V, t)$ in the same way as in \eqref{eq:L-function}. 

Unlike local systems, representations of $\pi_1^{\Weil}(X,x)$ admit
{\em fractional Tate twists}. That is, for each such representation 
$(V, \rho_V)$ with
$\rho_V:\pi_1^{\Weil}(X,x)\to GL(V)$ 
and each $\lambda\in\overline\QQ_l^{\, \times}$ we have a new representation
$V(\lambda)$,
identified with $V$ as a vector space,  with $\rho_{V(\lambda)}$ taking $g$ into $\rho_V(g)\cdot\lambda^{d(g)}$.
It follows that
\begin{equation}\label{eq:L-twist}
L(V(\lambda), t) = L(V, \lambda t).
\end{equation}
Let now $D^{(r)}$ be the set of isomorphism classes of irreducible
$r$-dimensional representations of $\pi_1^{\Weil}(X,x)$ over $\overline\QQ_l$
which are of the form $V(\lambda)$ where $V\in D_0^{(r)}$ and
$\lambda\in\overline\QQ_l^{\,\times}$. It follows from 
Theorem \ref{thm:lafforgue} and Proposition \ref{prop:D-0-intersects}
that $D^{(r)}$ is a finite union of orbits of the 
$\overline\QQ_l^{\,\times}$-action
by fractional Tate twists. We consider $D^{(r)}$ as an algebraic
variety over $\overline\QQ_l$, the corresponding disjoint union of copies of $\GG_m$.
The properties of L-functions of \'etale local systems together with 
\eqref{eq:L-twist} imply that the correspondence
$$(V,W)\,\mapsto\, L(\underline{\Hom}(V,W))\,:=\,
L(\underline{\Hom}(V,W),1)$$
define a rational functon $L\underline{\Hom}$ on 
$D^{(r)}\times D^{(s)}$ for any $r,s\geqslant 1$.
 
We set $D=\coprod_{r\geqslant 1} D^{(r)}$. This is a scheme of the type 
considered in Section \ref{sec:disjoint}, and we have a rational function
$L\underline{\Hom}\in \overline\QQ_l(D\times D)^\times$. 

\begin{rem}\label{rem:on-singularities-LHom}
The function $L\underline{\Hom}$ is regular everywhere except for the
first order poles on the diagonal and the $q$-shifted diagonal
\[
\Delta \,=\,\{(\chi, \chi)\}, \quad \Delta_q\,=\,\{ (\chi, q^{-\deg}\cdot\chi)\} \quad\subset\quad D\times D.
\]
This follows from the fact that $L(\Lc,t)$ is a polynomial for an irreducible local system $\Lc$
not of the form $\overline{\underline\QQ_l}(\lambda)$, while 
\[
L(\overline{\underline\QQ_l}, t) \,\,=\,\,\zeta_X(t)\,\,=\,\,{ P(t)\over (1-t)(1-qt)}, 
\]
with $P(t)$ a polynomial of degree $2g_X$.
\end{rem}

Lafforgue's theorem
can be reformulated as follows.

\begin{thm} Let $k$ be a subfield of $\overline\QQ_l$
containing $\sqrt{q}$. There is an isomorphism 
$\Sigma\otimes_k\overline\QQ_l\to D$
of schemes over $\overline\QQ_l$
commuting with the $\GG_m$-action and sending the rational function
$\LHom\in  k(\Sigma\times\Sigma)$ to $L\underline\Hom$.  \qed
\end{thm}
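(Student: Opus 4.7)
The plan is to lift Lafforgue's bijection (Theorem \ref{thm:lafforgue}), which a priori identifies only the $\overline\QQ_l$-points of $\Sigma_0$ and $D_0$, to a $\GG_m$-equivariant isomorphism $\Phi\colon \Sigma\otimes_k\overline\QQ_l\to D$ of schemes, by extending from the distinguished loci via the $\GG_m$-actions on each side, and then to check that $\LHom$ and $L\underline\Hom$ match by $\GG_m$-equivariance.

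\textbf{Construction and well-definedness.} Fix $r\geqslant 1$ and a $\overline\QQ_l$-point $\chi\in\Sigma^{(r)}$. By Proposition \ref{prop:D-0-intersects} we can write $\chi=\mu^{\deg}\chi_0$ with $\chi_0\in\Sigma_0^{(r)}(\overline\QQ_l)$ and $\mu\in\overline\QQ_l^{\,\times}$, and set $\Phi(\chi)=\Lc_{\chi_0}(\mu)\in D^{(r)}(\overline\QQ_l)$. If $\mu_1^{\deg}\chi_1=\mu_2^{\deg}\chi_2$ with $\chi_i\in\Sigma_0^{(r)}$ and $\nu:=\mu_1/\mu_2$, then $\chi_2=\nu^{\deg}\chi_1$; since both ${}^\pi\chi_i$ have finite order, $\nu$ itself must be a root of unity. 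Well-definedness therefore reduces to showing that Lafforgue's bijection is equivariant for roots of unity, i.e.\ that $\Lc_{\nu^{\deg}\chi_1}\simeq\Lc_{\chi_1}(\nu)$. This follows from the defining matching of unramified Hecke eigenvalues with geometric Frobenius eigenvalues: the Hecke eigenvalues of $\nu^{\deg}\chi_1$ at a closed point $x$ are those of $\chi_1$ scaled by $\nu^{\deg(x)}$, which is exactly the scaling that the Frobenius eigenvalues at $x$ undergo under the Tate twist by $\nu$.

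\textbf{From bijection of points to isomorphism of schemes, and matching of $L$-functions.} By Proposition \ref{prop:alg-mult-one}(d), $\Sigma^{(r)}\otimes_k\overline\QQ_l$ is a finite disjoint union of $\GG_m$-orbits; so is $D^{(r)}$ by its very construction. The map $\Phi$ is $\GG_m$-equivariant and bijective on $\overline\QQ_l$-points; the orbit stabilizers match because the Tate-twist compatibility from the previous step identifies the subgroup of $\mu\in\overline\QQ_l^{\,\times}$ fixing $\chi_0\in\Sigma_0$ with the subgroup fixing $\Lc_{\chi_0}$. Both source and target are thus smooth reduced one-dimensional schemes that are finite disjoint unions of copies of $\GG_m$ modulo matching finite stabilizers, so $\Phi$ is an isomorphism of $\overline\QQ_l$-schemes. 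The identity $\LHom(\chi',\chi;1)=L(\underline\Hom(\Lc_{\chi'},\Lc_\chi),1)$ on $\Sigma_0\times\Sigma_0$ is immediate from Theorem \ref{thm:lafforgue}; it extends to all of $\Sigma\times\Sigma$ by $\GG_m$-equivariance of both sides: on the automorphic side, $\LHom(\mu^{\deg}\chi',\chi)=\LHom(\chi',\chi;\mu^{-1})$ by the scaling formula of Section \ref{rankin-selberg}, and on the Galois side the analogous scaling is \eqref{eq:L-twist}.

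\textbf{Main obstacle.} The essential non-formal input beyond Theorem \ref{thm:lafforgue} as recalled is the Tate-twist equivariance $\Lc_{\nu^{\deg}\chi}\simeq \Lc_\chi(\nu)$ used in the well-definedness step: the theorem as stated concerns only $\Sigma_0$ and $D_0$, and this compatibility is not literally part of its statement. It is, however, implicit in Lafforgue's construction, which characterizes the correspondence by matching local Hecke operators with local Frobenius conjugacy classes at all unramified places. Once this compatibility is granted, the rest of the argument is organizational: tracking $\GG_m$-orbits and using equivariance to transport all relevant data from the finite-order locus to the full scheme.
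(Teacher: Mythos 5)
Your proposal is correct and is essentially the argument the paper intends: the theorem is stated with no separate proof precisely because it is meant to follow from Theorem \ref{thm:lafforgue} together with Proposition \ref{prop:D-0-intersects}, the construction of $D^{(r)}$ as a finite union of $\GG_m$-orbits, the scaling identities $\LHom(\chi,\chi';t)=\LHom(\chi,t^{\deg}\chi')$ and \eqref{eq:L-twist}, and Proposition \ref{prop:alg-mult-one}(c),(d) — exactly the ingredients you assemble. Your flag about the Tate-twist equivariance $\Lc_{\nu^{\deg}\chi}\simeq\Lc_{\chi}(\nu)$ is apt: it is the one input not literally contained in the quoted statement, but it is part of Lafforgue's Th\'eor\`eme VI.9 (matching of unramified Hecke eigenvalues with Frobenius eigenvalues), and it could alternatively be extracted from the stated $\LHom=L\underline\Hom$ identity itself, so there is no gap.
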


Defining a rational function
$$\delta\in\overline\QQ_l(D\times D), \quad \delta(V,W) \,=\,
q^{(1-g_X)rs}{ L\underline\Hom(V,W)\over L\underline\Hom(V(q),W)},$$
we see that 
\begin{equation}\label{E:functional-equation2}
 \delta(V,W) =\delta(W, V)^{-1}.
\end{equation}
If $X$ has a theta-characteristic
$\Theta$, then
$$\delta(V,W) = \mu(V,W) \mu(W, V)^{-1},$$
where
$$\mu(V,W) \,\,=\,\,q^{(1-g_X)rs} \eta_{\underline\Hom(V,W)}(\Theta)
L\Hom(V,W)$$
and $\eta_{\underline\Hom(V,W)}$ is the character of $\Pic(X)$ corresponding to
the determinant of the local system $\underline\Hom(V,W)$. 
 Define further the rational function $\tb\in k(D\times D)$
simularly to \eqref{eq:tb-automorphic} by
\begin{equation}\label{eq:tb-galois}
\tb(V, W) = 
\begin{cases}
1, \quad \text{if}\quad V, W\text{ lie in different $\GG_m$-orbits};\cr
\lambda \in\GG_m,\text{ if $W=V(\lambda) $ lie in the same $\GG_m$-orbit},
\end{cases}
\end{equation} and put
\[
\widetilde\mu(V,W) \,\,=\,\, f\bigl(\tb(V,W)\bigr)\cdot\mu(V,W).
\]
By Remark \ref{rem:on-singularities-LHom}, the function $\widetilde\mu$ is regular on $D\times D$
except for first order pole on each component of the diagonal. 

Lafforgue's theorem implies a purely Galois-theoretical realization of the
Hall algebra of vector bundles.

\begin{cor}\label{cor-of-lafforgue}
Let $k=\overline\QQ_l$. The Hall algebra $H$ 
is isomorphic to the shuffle algebra
$Sh_{\overline\QQ_l}(D,\delta)$. If $X$ has a theta-characteristic, then
$H$ is isomorphic to the symmetric shuffle algebra 
${SSh}_{\overline\QQ_l} (D,\widetilde\mu)$.\qed
\end{cor}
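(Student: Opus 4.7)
The plan is to deduce the corollary from Theorem~\ref{thm:main} by transporting everything along the scheme isomorphism $\Sigma\otimes_k\overline\QQ_l\cong D$ furnished by Lafforgue's theorem (the theorem immediately preceding the corollary). Specializing Theorem~\ref{thm:main} to the base field $k=\overline\QQ_l$ we already have an isomorphism of $\overline\QQ_l$-algebras
\[
H \,\cong\, Sh_{\overline\QQ_l}(\Sigma,c),
\]
where $c$ is the rational function on $\Sigma\times\Sigma$ defined by \eqref{eq:rankin-selberg-c} in terms of $\LHom$. Since the formation of the shuffle algebra $Sh(\Sigma,c)$ depends only on the data $(\Sigma,c)$ as a disjoint union scheme equipped with a rational bihomomorphism (see Sections~\ref{sec:disjoint}--\ref{sec:generalizedshuffle}), any isomorphism of such data induces an isomorphism of shuffle algebras.

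The key step is then to check that the Lafforgue isomorphism is compatible with the rational function $c$ on the automorphic side and the rational function $\delta$ on the Galois side. By the Lafforgue theorem as restated above, the isomorphism $\Sigma\otimes_k\overline\QQ_l\to D$ is $\GG_m$-equivariant and sends $\LHom$ to $L\underline\Hom$. Applying it to the two arguments of the defining formula
\[
c(\chi,\chi')\,=\,q^{rs(1-g_X)}\,\frac{\LHom(\chi,\chi')}{\LHom(q^{\deg}\chi,\chi')}
\]
and comparing with
\[
\delta(V,W)\,=\,q^{(1-g_X)rs}\,\frac{L\underline\Hom(V,W)}{L\underline\Hom(V(q),W)},
\]
we conclude that $c$ maps to $\delta$, since the $\GG_m$-twist $q^{\deg}\chi$ on the automorphic side corresponds to the fractional Tate twist $V\mapsto V(q)$ on the Galois side (both act by $t\mapsto qt$ on the variable of the $L$-series, see \eqref{eq:L-twist}). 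Consequently $Sh_{\overline\QQ_l}(\Sigma,c)\cong Sh_{\overline\QQ_l}(D,\delta)$, which yields the first half of the corollary.

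For the second assertion, assume $X$ admits a theta-characteristic $\Theta$. By the same principle, the coboundary decomposition \eqref{lambda} on the automorphic side
\[
c(\chi,\chi')\,=\,\lambda(\chi,\chi')\,\lambda(\chi',\chi)^{-1},\quad \lambda(\chi,\chi')\,=\,\theta_{\chi,\chi'}\,\LHom(\chi,\chi'),
\]
is transported by the Lafforgue isomorphism to the analogous decomposition $\delta(V,W)=\mu(V,W)\mu(W,V)^{-1}$ on the Galois side, because $\theta_{\chi,\chi'}$ is the value of ${}^\pi(\chi^*\boxtimes\chi')$ on the class of $\Theta$, and this value corresponds, under the Langlands dictionary, to $\eta_{\underline\Hom(V,W)}(\Theta)$. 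The auxiliary rational function $\tb$ in \eqref{eq:tb-automorphic} is defined purely in terms of the $\GG_m$-orbit structure, so it maps to the function $\tb$ in \eqref{eq:tb-galois}, and the modified coboundary $\widetilde\lambda=f(\tb)\cdot\lambda$ maps to $\widetilde\mu=f(\tb)\cdot\mu$. Applying the isomorphism $Sh(\Sigma,c)\cong SSh(\Sigma,\widetilde\lambda)$ (the symmetric-shuffle reformulation from Theorem~\ref{thm:main}) and then transporting via Lafforgue gives $H\cong SSh_{\overline\QQ_l}(D,\widetilde\mu)$.

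The only substantive point to verify is the compatibility of the twist operations (the $\GG_m$-action on $\Sigma$ versus the fractional Tate twist on $D$) and of the epsilon-factors ($\theta_{\chi,\chi'}$ versus $\eta_{\underline\Hom(V,W)}(\Theta)$); both are built into the statement of Lafforgue's theorem together with Deligne's formula for $\epsilon_\Lc$ recalled just after \eqref{E:functional-equation1}, so no new calculation is required. I do not expect a real obstacle; the corollary is essentially a dictionary translation of Theorem~\ref{thm:main} via the Langlands correspondence.
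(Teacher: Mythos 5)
Your proposal is correct and is exactly the route the paper takes: the corollary is stated with \qed precisely because it is the immediate translation of Theorem \ref{thm:main} through the reformulated Lafforgue isomorphism $\Sigma\otimes_k\overline\QQ_l\to D$, which sends $c$ to $\delta$ and $\widetilde\lambda$ to $\widetilde\mu$ (the matching of $\theta_{\chi,\chi'}$ with $\eta_{\underline\Hom(V,W)}(\Theta)$ being forced by the matching of the full $L$-series, local factor by local factor, together with $\GG_m$-equivariance). No gap.
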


Transferring the information about singularities of $L\underline\Hom$
back into the automorphic situation (where the purely automorphic Proposition 
\ref{prop:rankin-selberg} gives a weaker statement), we conclude that:

\begin{cor}\label{cor:regularity-of-embedding}
 Suppose that $X$ has a theta-characteristic. 
The function $\widetilde\lambda\in k(\Sigma\times\Sigma)^*$ defined in
\eqref{eq:widetilde-lambda}, is regular on $\Sigma\times\Sigma$ except for
the first order pole at each component of the diagonal. Therefore, by
Theorem \ref{thm:main} and 
Proposition \ref{prop:symm-shuffle-regular}, we have an embedding
\[
H \,\,\simeq\,\, SSh(\Sigma,\widetilde\lambda) \,\,\subset\,\,\bigoplus_{n\geq 0} 
k_\qcom[\Sigma^n]^{S_n}
\]
 of $H$ into the space of regular symmetric functions on $\Sigma$. The image of $H$
 is the direct sum, over $n\geq 0$, of ideals in the rings $k_\qcom[\Sigma^n]^{S_n}$.
\end{cor}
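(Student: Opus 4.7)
The plan is to verify the regularity claim about $\widetilde\lambda$ and then invoke Proposition \ref{prop:symm-shuffle-regular}, regarding the identification with $H$ as following from Theorem \ref{thm:main} once we check that $\widetilde\lambda$ is a coboundary representation of $c$. The genuinely substantive step is the singularity analysis of $\widetilde\lambda$; everything else is essentially bookkeeping.

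First I would pin down the singularities of the untilded $\lambda(\chi,\chi')=\theta_{\chi,\chi'}\LHom(\chi,\chi')$. Since $\theta_{\chi,\chi'}$ is the value of the character ${}^\pi(\chi^*\boxtimes\chi')$ of $\Pic(X)$ on the theta-characteristic, it depends regularly on $(\chi,\chi')$ on every component of $\Sigma\times\Sigma$, so the singularities of $\lambda$ coincide with those of $\LHom$. I would then pass to the Galois side via Lafforgue's theorem: the isomorphism $\Sigma\otimes_k\overline{\QQ}_l\simeq D$ carries $\LHom$ to $L\underline{\Hom}$ on $D\times D$, and by Remark \ref{rem:on-singularities-LHom} the latter is regular off the diagonal $\Delta$ and the $q$-shifted diagonal $\Delta_q$ and has at worst first-order poles on each. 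Since regularity of a rational function descends along the faithfully flat extension $k\subset\overline{\QQ}_l$, the same bound on the singular locus and pole order holds for $\lambda\in k(\Sigma\times\Sigma)$.

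Next I would analyse the correction factor $f(\tb)$ with $f(t)=t^{-1}(1-qt)(1-q^{-1}t)$. The function $\tb$ is regular on each component of $\Sigma\times\Sigma$ (identically $1$ on cross-orbit components, and the $\GG_m$-ratio on same-orbit components), so $f(\tb)$ is regular throughout $\Sigma\times\Sigma$. On $\Delta$ we have $\tb=1$, hence $f(\tb)=(1-q)(1-q^{-1})$, a nonzero constant since $\operatorname{char}(k)=0$; the first-order pole of $\lambda$ on $\Delta$ therefore survives in $\widetilde\lambda=f(\tb)\lambda$. On $\Delta_q$, by construction $\chi'=q^{-\deg}\chi$, so $\tb=q^{-1}$ and
\[
f(q^{-1}) \,=\, q\cdot(1-q\cdot q^{-1})\cdot(1-q^{-1}\cdot q^{-1}) \,=\, 0,
\]
a simple zero which cancels exactly the first-order pole of $\lambda$ on $\Delta_q$. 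Thus $\widetilde\lambda$ is regular on $\Sigma\times\Sigma$ away from $\Delta$ and has at most first-order poles on the components of $\Delta$, as asserted.

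To conclude, a direct expansion shows $f(t^{-1})=f(t)$, and $\tb(\chi',\chi)=\tb(\chi,\chi')^{-1}$, so $f(\tb)$ is symmetric; consequently $\widetilde\lambda(\chi,\chi')\widetilde\lambda(\chi',\chi)^{-1}=\lambda(\chi,\chi')\lambda(\chi',\chi)^{-1}=c$, i.e.\ $\widetilde\lambda$ is a coboundary representation of $c$. By Theorem \ref{thm:main} together with Proposition \ref{prop: symmetric-shuffle-algebra} this yields $H\simeq SSh(\Sigma,\widetilde\lambda)$. Each component of $\Sigma$ is a smooth curve (a twisted form of $\GG_m$, cf.\ Proposition \ref{prop:alg-mult-one}(d)), so Proposition \ref{prop:symm-shuffle-regular} applied to $\widetilde\lambda$ delivers both the embedding into $\bigoplus_{n\geqslant 0} k_\qcom[\Sigma^n]^{S_n}$ and the ideal property of the image in each degree. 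The main obstacle is, as expected, the passage to the Galois side: the purely automorphic Proposition \ref{prop:rankin-selberg} records rationality and a functional equation but does not locate the poles, so one genuinely needs Lafforgue's theorem to reduce the pole analysis to the elementary fact that among irreducible $\overline{\QQ}_l$-local systems on $X$ only the Tate twists of the trivial one have nonpolynomial $L$-function.
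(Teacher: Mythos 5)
Your proposal is correct and follows essentially the same route as the paper: transfer the pole information for $\LHom$ to the Galois side via Lafforgue's theorem (Remark \ref{rem:on-singularities-LHom}), check that the symmetric factor $f(\tb)$ cancels the first-order pole along $\Delta_q$ while leaving the diagonal pole, and then combine Theorem \ref{thm:main} (via the coboundary property of $\widetilde\lambda$) with Proposition \ref{prop:symm-shuffle-regular}. Your write-up merely makes explicit some steps (regularity of $\theta_{\chi,\chi'}$, descent of regularity from $\overline\QQ_l$ to $k$, the computation $f(q^{-1})=0$) that the paper leaves implicit.
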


\begin{rem} The orbifold scalar product on $H$ corresponds, in this embedding,
to the scalar product of pseudo-Eisenstein series which is classically found by
the ``Maass-Selberg relations", cf. \cite[\S II.2.1]{moeglin-waldspurger-book}. In our language
 they correspond to the $L_2$-scalar
product on $\bigoplus_{n\geq 0} 
k_\qcom[\Sigma^n]^{S_n}$,
corresponding to a  measure on some real locus of $\Sym(\Sigma)$
with the weight of the measure found as a product of the $\LHom$-functions, similarly to
the scalar products on symmetric polynomials considered in
\cite{macdonald2}. On the other hand, $H$ has a natural orthogonal basis,
formed by vector bundles themselves. This means that we 
represent vector bundles by orthogonal polynomials.  
  
\end{rem}

\vskip .3mm

%\vfill\eject

\subsection{Example : elliptic curves.}
In this subsection we assume that $X$ is an elliptic curve, i.e., $g_X=1$.
In this case the Hall algebra was studied extensively, see 
\cite{burban-schiffmann:elliptic-I, schiffmann:elliptic-II, schiffmann:drinfeld,
schiffmann-vasserot:macdonald, schiffmann-vasserot:hilbert}. 
These papers emphasized, in particular, the so-called {\em spherical subalgebra}
$H_{\operatorname{sph}}\subset H$ generated by the characteristic functions
of $\Pic^d(X)$, $d\in\ZZ$.
Our results imply that $H$
splits into an infinite tensor product of 
simpler pairwise commuting $k$-algebras, one of which is 
$H_{\operatorname{sph}}$. This is based on the following fact.

\begin{prop}\label{prop:L=1}
Let $\Lc$ be an irreducible local system of $\bar\QQ_l$-vector
spaces on $X$ such that
$\Lc(\l)$ is nontrivial for any $\lambda\in\bar\QQ_l$.
Then $L(\Lc,t)=1$ identically.
\end{prop}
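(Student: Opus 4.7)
The plan is to compute $L(\Lc,t)$ via the Grothendieck--Lefschetz trace formula and show that each cohomology group of $X\otimes\overline{\mathbb{F}}_q$ with coefficients in $\Lc$ vanishes. Recall that for a lisse $\overline{\mathbb{Q}}_l$-sheaf $\Lc$ on a smooth projective curve $X$ of genus $g_X$, one has
\[
L(\Lc,t)\,\,=\,\,\prod_{i=0}^{2}\det\bigl(1-\Fr\cdot t\mid H^i(X\otimes\overline{\mathbb{F}}_q,\Lc)\bigr)^{(-1)^{i+1}},
\]
together with the Euler--Poincar\'e formula $\chi(X\otimes\overline{\mathbb{F}}_q,\Lc)=(2-2g_X)\cdot\rk(\Lc)$, which vanishes precisely when $g_X=1$. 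So once I show $H^0$ and $H^2$ vanish, the Euler characteristic forces $H^1=0$ as well, giving $L(\Lc,t)=1$.

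First I would handle $H^0$. Writing $\Lc$ as a continuous representation $\rho:\pi_1^{\et}(X,x)\to GL(V)$, the group $H^0(X\otimes\overline{\mathbb{F}}_q,\Lc)$ is the subspace $V^{\pi_1^{\geom}}$ of invariants under the geometric fundamental group. Since $\pi_1^{\geom}$ is normal in $\pi_1^{\et}(X,x)$, this subspace is stable under all of $\pi_1^{\et}(X,x)$. Irreducibility of $\Lc$ therefore forces $V^{\pi_1^{\geom}}$ to be either $0$ or all of $V$. In the latter case $\rho$ factors through $\pi_1^{\et}(X,x)/\pi_1^{\geom}=\widehat{\mathbb{Z}}$, so irreducibility over $\overline{\mathbb{Q}}_l$ implies $\rk(\Lc)=1$ and $\Lc=\overline{\underline{\mathbb{Q}}_l}(\mu)$ for some Frobenius eigenvalue $\mu$. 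But then $\Lc(\mu^{-1})$ is trivial, contradicting the hypothesis. Hence $H^0(X\otimes\overline{\mathbb{F}}_q,\Lc)=0$.

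Next I would deduce $H^2=0$ by Poincar\'e duality, which gives $H^2(X\otimes\overline{\mathbb{F}}_q,\Lc)\cong H^0(X\otimes\overline{\mathbb{F}}_q,\Lc^\vee(1))^\vee$. The dual local system $\Lc^\vee(1)$ is again irreducible, and if some twist $\Lc^\vee(1)(\nu)$ were trivial then so would $\Lc(\nu^{-1} q)$ be (up to reindexing the twist), again contradicting the hypothesis. So the argument for $H^0$ applies verbatim to $\Lc^\vee(1)$, yielding $H^2=0$. Combining with $\chi=0$ for $g_X=1$ gives $H^1=0$, and the trace formula then reads $L(\Lc,t)=1$.

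The only genuine subtlety, and the point I would be most careful about, is the step ruling out $\Lc^\vee(1)$ being a trivializable twist of the constant sheaf: one must check that the set of $\lambda$'s for which $\Lc(\lambda)$ is trivial and the analogous set for $\Lc^\vee(1)$ are exchanged by the involution $\lambda\mapsto q/\lambda$, so that the hypothesis passes from $\Lc$ to $\Lc^\vee(1)$. Apart from this bookkeeping with Tate twists, the argument is a direct application of cohomology vanishing plus the vanishing Euler characteristic in genus one.
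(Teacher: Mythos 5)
Your argument is correct, but it runs along a different line than the paper's. The paper exploits the genus-one hypothesis through the geometric fundamental group: since $\pi_1^{\et}(X\otimes\overline\FF_q)\simeq\widehat\ZZ\oplus\widehat\ZZ$ is abelian, the pullback $\overline\Lc$ decomposes as $\bigoplus_{i=0}^{r-1}(\Fr^i)^*\Nc$ with $\Nc$ of rank one (see \eqref{eq:decomposition-bar-L}), and Lemma \ref{lem:H=0} then kills \emph{all} cohomology of each nontrivial rank-one piece, the case $r=1$ with $\Nc$ trivial being excluded by the hypothesis. You avoid any decomposition: arithmetic irreducibility alone forces $H^0(\overline X,\Lc)=0$ (an invariant subspace is a $\pi_1^{\et}$-subrepresentation, and if it were everything, $\Lc$ would factor through $\widehat\ZZ$ and be a fractional Tate twist of the trivial system), Poincar\'e duality transfers this to $H^2$ via $\Lc^\vee(1)$ (your bookkeeping of twists is fine, since the dual of a twist of the trivial rank-one system is again such a twist), and only then does genus one enter, through $\chi(\overline X,\Lc)=(2-2g_X)\rk(\Lc)=0$, which forces $H^1=0$ as well. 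Your route is more elementary in that the $H^0$ and $H^2$ vanishing is valid for any genus and the elliptic hypothesis is isolated in a single Euler-characteristic count; the paper's route is heavier here but the decomposition \eqref{eq:decomposition-bar-L} it establishes is precisely what is reused in Corollary \ref{cor:LHom(L,M)} to compute $L(\underline\Hom(\Lc,\Mc),t)$ explicitly, where mere vanishing of the outer cohomology groups of $\Lc$ would not suffice. So your proof is a valid and somewhat shorter substitute for this proposition taken in isolation, at the cost of not producing the structural information needed immediately afterwards.
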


\begin{proof} Denote $\overline X = X\otimes \overline\FF_q$.
The following is a direct consequence of the isomorphism 
$\pi_1^{\et}(\overline X)=\widehat\ZZ\oplus\widehat\ZZ$.

\begin{lemma}\label{lem:H=0}
Let $\Nc$ be an irreducible local system of 
$\bar\QQ_l$-vector spaces on $\overline X$ which is nontrivial.
Then $\rk(\Nc)=1$ and $H^i(\overline X,\Nc)=0$ for all $i$. \qed
\end{lemma}

The lemma implies, in particular, that $\Ext^1(\Nc, \Nc')=0$, 
if $\Nc, \Nc'$ are two
non-isomorphic irreducible local systems on $\overline X$.
As before, let $\Lc$ be irreducible, and let $r=\rk(\Lc)$. 
Denoting by $\overline\Lc$ the local
system on $\overline X$ pulled back from $\Lc$, we then conclude
that
\begin{equation}\label{eq:decomposition-bar-L}
\overline\Lc\,\,\simeq \,\,\bigoplus_{i=0}^{r-1} (\Fr^i)^*\Nc,
\end{equation}
where $\Nc$ is a 1-dimensional local system on $\overline X$ such that
$(\Fr^r)^*\Nc\simeq\Nc$, and $r$ is the minimal number with this property.
If $r>1$, this implies that $H^i(\overline X,\overline\Lc)=0$ for all $i$,
and so $L(\Lc,t)=1$ by the cohomological interpretation of L-functions.
If $r=1$, this means that $\overline\Lc=\Nc$ is such that $\Fr^*(\Nc)\simeq\Nc$.
If $\Nc$ is nontrivial, we conclude as before. If $\Nc = \underline{\overline\QQ_l}$
is trivial, then the descent of $\Nc$ to a local system $\Lc$ on $X$ is given by
a homomorphism $\phi:\widehat\ZZ=\Gal(\overline\FF_q/\FF_q)\to
\overline\QQ_l^{\, \times}$.
If $\lambda$ is the image of 1 under $\phi$, then 
$\Lc =  \underline{\overline\QQ_l}(\lambda)$ is a Tate twist of the trivial local system.
Proposition \ref{prop:L=1} is proved. 
\end{proof}

\begin{cor}\label{cor:LHom(L,M)}
Let $\Lc,\Mc$ be two irreducible local systems on $X$. If $\Mc$ is
not isomorphic to $\Lc(\lambda)$ for any $\lambda\in\overline\QQ_l^{\, \times}$,
then
$L(\underline{\Hom}(\Lc,\Mc),t)=1$ identically. If $\Mc\simeq \Lc(\l)$, then
$$L\bigl(\underline\Hom(\Lc,\Mc),t\bigr) \,\,=\,\,
\prod_{\epsilon\in\sqrt[n]{1}} \zeta_X(\epsilon \lambda t), \quad r=\rk(\Lc).
$$
\end{cor}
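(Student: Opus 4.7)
The plan is to pull back $\underline{\Hom}(\Lc,\Mc)=\Lc^\vee\otimes\Mc$ to $\overline X$, decompose it into characters of $\pi_1^{\et}(\overline X)\simeq\widehat\ZZ^2$, and use Lemma \ref{lem:H=0} to isolate the trivial-character contributions. Applying \eqref{eq:decomposition-bar-L} to both $\Lc$ and $\Mc$, I write $\overline\Lc=\bigoplus_{i=0}^{r-1}(\Fr^i)^*\phi$ and $\overline\Mc=\bigoplus_{j=0}^{s-1}(\Fr^j)^*\psi$, where $\phi,\psi$ are $1$-dimensional characters of $\pi_1^{\et}(\overline X)$ with Frobenius orbits of sizes $r=\rk(\Lc)$ and $s=\rk(\Mc)$. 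Then
\[
\overline{\Lc^\vee\otimes\Mc}\,\,=\,\,\bigoplus_{i,j}(\Fr^i)^*\phi^{-1}\otimes(\Fr^j)^*\psi,
\]
and a summand in this decomposition is the trivial character exactly when $\psi$ lies in the Frobenius orbit of $\phi$, i.e.\ exactly when $\overline\Lc\simeq\overline\Mc$.

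If $\overline\Lc\not\simeq\overline\Mc$, every summand above is a nontrivial character, so by Lemma \ref{lem:H=0} all groups $H^\bullet(\overline X,\overline{\Lc^\vee\otimes\Mc})$ vanish, and the cohomological expression for $L$-functions yields $L(\underline{\Hom}(\Lc,\Mc),t)=1$. To deduce the first assertion of the corollary, I will establish the equivalence $\overline\Lc\simeq\overline\Mc\Leftrightarrow\Mc\simeq\Lc(\lambda)$ for some $\lambda\in\overline{\QQ}_l^\times$; the direction $\Leftarrow$ is immediate since fractional Tate twists do not alter the restriction to $\overline X$. For the converse, identify $\overline\Lc=\overline\Mc=V$ and set $\tau:=\rho_\Mc(\Fr)\rho_\Lc(\Fr)^{-1}$; a direct computation shows that $\tau$ commutes with the $\pi_1^{\et}(\overline X)$-action, hence lies in $\End_{\pi_1^{\et}(\overline X)}(V)\simeq\overline{\QQ}_l^r$, the algebra of diagonal matrices in the character basis (since $V$ is a multiplicity-free sum of $r$ distinct characters). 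A diagonal isomorphism $\diag(a_0,\dots,a_{r-1})$ of $V$ between the two $\pi_1^{\et}(X)$-structures then yields $\tau_j=a_j/a_{j-1}$, so the isomorphism class of $\Mc$ depends only on $\prod_j\tau_j\in\overline{\QQ}_l^\times$. Since Tate twists $\Lc(\lambda)$ correspond to $\tau=\lambda\cdot\id$ with $\prod_j\tau_j=\lambda^r$, and every element of $\overline{\QQ}_l^\times$ admits an $r$-th root, they exhaust all such $\Mc$.

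For the second assertion, assume $\Mc\simeq\Lc(\lambda)$; then $\underline{\Hom}(\Lc,\Mc)\simeq\underline{\End}(\Lc)(\lambda)$, and \eqref{eq:L-twist} reduces the computation to $L(\underline{\End}(\Lc),t)$. Among the $r^2$ summands of $\overline{\underline{\End}(\Lc)}=\bigoplus_{i,j=0}^{r-1}(\Fr^i)^*\phi^{-1}\otimes(\Fr^j)^*\phi$, exactly $r$ are trivial (those with $i=j$) and the remaining $r(r-1)$ are nontrivial; the latter contribute nothing to cohomology by Lemma \ref{lem:H=0}. The subspace $V\subset\underline{\End}(\Lc)$ of $\pi_1^{\et}(\overline X)$-invariants is therefore $r$-dimensional and coincides with $\End_{\pi_1^{\et}(\overline X)}(\overline\Lc)\simeq\overline{\QQ}_l^r$; Frobenius acts on $V$ by cyclically permuting the $r$ diagonal entries (inherited from its cyclic action on the characters $(\Fr^i)^*\phi$), so its eigenvalues on $V$ are precisely the $r$-th roots of unity. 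As a $\pi_1^{\et}(X)$-representation, $V\simeq\bigoplus_{\epsilon^r=1}\overline{\QQ}_l(\epsilon)$; the quotient $\underline{\End}(\Lc)/V$ has vanishing cohomology on $\overline X$ by the same application of Lemma \ref{lem:H=0}, so multiplicativity of $L$ in short exact sequences gives
\[
L(\underline{\End}(\Lc),t)\,\,=\,\,L(V,t)\,\,=\,\,\prod_{\epsilon^r=1}\zeta_X(\epsilon t),
\]
and substituting $\lambda t$ for $t$ yields $L(\underline{\Hom}(\Lc,\Mc),t)=\prod_{\epsilon^r=1}\zeta_X(\epsilon\lambda t)$ as claimed.

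The main obstacle is the equivalence $\overline\Lc\simeq\overline\Mc\Leftrightarrow\Mc\simeq\Lc(\lambda)$ sketched in the second paragraph; its delicacy lies in analyzing how many pairwise non-isomorphic irreducible local systems on $X$ share a prescribed restriction to $\overline X$, and showing that the resulting moduli of ``Frobenius liftings'' is precisely the $r$-fold cover $\lambda\mapsto\lambda^r$ of $\overline{\QQ}_l^\times$ parametrizing Tate twists. Once this is in place, everything else is direct book-keeping with the character decomposition on $\overline X$ and Lemma \ref{lem:H=0} to kill all nontrivial-character contributions.
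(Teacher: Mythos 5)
Your argument is correct and follows essentially the same route as the paper: decompose $\overline\Lc$, $\overline\Mc$ into Frobenius orbits of characters as in \eqref{eq:decomposition-bar-L}, kill the nontrivial-character summands by Lemma \ref{lem:H=0}, and identify the geometric invariants of $\underline\End(\Lc)$ with the regular representation of $\ZZ/r$ over $\widehat\ZZ$, giving the product of shifted zeta factors. The only difference is that you spell out the step the paper leaves implicit, namely that $\overline\Lc\simeq\overline\Mc$ forces $\Mc\simeq\Lc(\lambda)$ (classifying Frobenius lifts of a multiplicity-free geometric representation by the product of the diagonal entries of $\tau$), and this filling-in is done correctly.
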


\begin{proof} The first statement follows by decomposing $\overline\Lc$ and
$\overline \Mc$ and in \eqref{eq:decomposition-bar-L}, and then applying
Lemma \ref{lem:H=0}. To prove the second statement, it is enough to assume
$\l=1$, i.e., $\Mc=\Lc$. Then, in the notation of 
\eqref{eq:decomposition-bar-L}, we have
\begin{equation}\label{eq:decomposition-End-L}
\underline\Hom(\Lc,\Lc) \,\,\simeq\,\,  \overline\Pc\,\oplus\, \bigoplus_{i=0}^{r-1}
 \underline\End((\Fr^i)^*\Nc) ,
\end{equation}
where $\overline\Pc$ is a local system without any cohomology which therefore
does not contribute to the L-function. Now, each summand $\underline\End((\Fr^i)^*\Nc)$
is a trivial 1-dimensional local system, but these summands are permuted by
the Frobenius in a cyclic way. So, as a module over
$\Gal(\overline\FF_q/\FF_q)=\widehat\ZZ$, the second summand in
\eqref{eq:decomposition-End-L} is identified with the tensor product
of the trivial local system $\underline{\overline\QQ_l}$ and
of  $\overline\QQ_l[\ZZ/r]$, the group algebra of the cyclic group $\ZZ/r$. 
Denoting by $\overline\QQ_l(\epsilon)$ the 1-dimensional representation
of $\widehat\ZZ$, in which $1$ acts by $\epsilon$, we have an isomorphism
of $\overline\QQ_l[\widehat\ZZ]$-modules
$$\overline\QQ_l[\ZZ/r]\,\,=\,\,\bigoplus_{\epsilon\in\sqrt[r]{1}}
\overline\QQ_l(\epsilon),$$
and therefore we conclude that
$$\underline\Hom(\Lc,\Lc) \,\,\simeq\,\,\Pc \,\oplus\,
\bigoplus_{\epsilon\in\sqrt[r]{1}} \underline{\overline\QQ_l}(\epsilon),$$
where $\underline{\overline\QQ_l}(\epsilon)$ is the Tate twist of the trivial
local system by $\epsilon$, and $L(\Pc,t)=1$. This implies the  corollary.
\end{proof}
 
\vskip .2cm
 
For $r\geqslant 1$ we introduce the rational function
of two variables $t,s\in\GG_m$ 
$$c_{X,r}(t,s)\,\,=\,\,\prod_{\epsilon\in\sqrt[r]{1}}
{\zeta_X(\epsilon t/s)\over \zeta_X(\epsilon qt/s)}.
$$
 
\begin{thm}\label{thm:tensor-product}
 Let $k$ be an algebraically closed field of characteristic 0.
Then the algebra $H$ is isomorphic to the infinite tensor product
$$H\,\,\simeq\,\,\bigotimes_{S\in\pi_0(\Sigma)}Sh(\GG_m, c_{X,r(S)}),$$
where $S$ runs over connected components of $\Sigma$ and $r(S)$
is the rank of (the cusp forms corresponding to points of)  $S$. 
\end{thm}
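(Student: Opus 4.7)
The plan is to invoke Theorem \ref{thm:main} to reduce the statement to an isomorphism $H \cong Sh(\Sigma,c)$, and then to decompose this shuffle algebra as an infinite tensor product indexed by $\pi_0(\Sigma)$. Since $k$ is algebraically closed of characteristic zero, Proposition \ref{prop:alg-mult-one}(d) ensures that each connected component $S$ of $\Sigma$ is a single $\GG_m$-orbit, so $S \simeq \GG_m$ as a scheme over $k$.

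The first key step is to establish that $c(\chi,\chi')=1$ whenever $\chi \in S$ and $\chi' \in S'$ with $S \neq S'$. Embedding $k \hookrightarrow \overline\QQ_l$ and using Lafforgue's bijection (Theorem \ref{thm:lafforgue}), the hypothesis that $\chi,\chi'$ lie in distinct $\GG_m$-orbits translates into the Galois-theoretic statement that the associated irreducible local systems $\Lc_\chi, \Lc_{\chi'}$ are not related by any Tate twist. Corollary \ref{cor:LHom(L,M)} --- whose proof relied only on the very special structure of $\pi_1^{\et}(\overline X)$ for an elliptic curve --- then forces the Rankin-Selberg L-function $L(\underline{\Hom}(\Lc_\chi,\Lc_{\chi'}),t)$ to be identically $1$. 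Unwinding, $\LHom(\chi,\chi';t) \equiv 1$, and substituting this together with $g_X = 1$ into the defining formula \eqref{eq:rankin-selberg-c} for $c$ gives $c(\chi,\chi') = 1$.

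Granted this vanishing, the tensor product decomposition follows formally from the construction in Section \ref{sec:shuffle}. Writing $\Pc = \Oc_\Sigma = \bigoplus_S \Oc_S$ as an object of $\QCoh(\Sym(\Sigma))$, the rational braiding $R^c_{\Pc_S,\Pc_{S'}}$ coincides with the ordinary symmetry $R_{\Pc_S,\Pc_{S'}}$ whenever $S \neq S'$. Therefore the subalgebras $Sh(S,c|_{S\times S}) \subset Sh(\Sigma,c)$ generated by $k[S]$ commute pairwise in the usual sense, and the canonical map
\[
\bigotimes_{S \in \pi_0(\Sigma)} Sh\bigl(S,\,c|_{S\times S}\bigr) \,\,\lra\,\, Sh(\Sigma,c)
\]
is an isomorphism of $k$-algebras. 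To finish, one identifies each factor with $Sh(\GG_m, c_{X,r(S)})$ by fixing a basepoint $\chi_0 \in S$ and parametrizing $S \simeq \GG_m$ via $s \mapsto s^{\deg}\chi_0$. For $\chi = s^{\deg}\chi_0$ and $\chi' = t^{\deg}\chi_0$, Lafforgue's theorem combined with Corollary \ref{cor:LHom(L,M)} applied to $\Mc \simeq \Lc_{\chi_0}(\lambda)$ (with $r = r(S)$) yields
\[
\LHom(\chi,\chi';u) \,=\, \prod_{\epsilon \in \sqrt[r]{1}} \zeta_X\!\bigl(\epsilon\, tu/s\bigr),
\]
and substituting $u = 1$ and $u = q^{-1}$ into $c(\chi,\chi') = \LHom(\chi,\chi')/\LHom(q^{\deg}\chi,\chi')$ gives an explicit form for $c|_{S \times S}$ on $\GG_m \times \GG_m$.

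The principal technical step is this final explicit identification: matching the resulting expression with the prescribed $c_{X,r}(t,s) = \prod_{\epsilon \in \sqrt[r]{1}} \zeta_X(\epsilon t/s)/\zeta_X(\epsilon q t/s)$ requires repeated use of the functional equation $\zeta_X(v) = \zeta_X(1/(qv))$ specific to an elliptic curve, together with the bijection $\epsilon \mapsto \epsilon^{-1}$ on $\sqrt[r]{1}$ used to re-index the product. Everything else in the argument --- the reduction to shuffle algebras via Theorem \ref{thm:main}, the vanishing of cross-component braidings coming from the Galois side, and the general tensor-product decomposition of a shuffle algebra whose cross-component braidings are trivial --- is either a direct invocation of an earlier result or a formal consequence of the definitions in Section \ref{sec:shuffle}.
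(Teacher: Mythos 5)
Your argument is essentially the paper's own proof: the paper passes through Corollary \ref{cor-of-lafforgue} to realize $H$ as $Sh(D,\delta)$ and then invokes Corollary \ref{cor:LHom(L,M)} to trivialize the cross-component braidings and compute the diagonal ones, which is exactly your route via Theorem \ref{thm:main} together with Lafforgue's dictionary, and the substance is correct. One caveat on what you call the principal technical step: the direct computation gives the diagonal braiding $\prod_{\epsilon}\zeta_X(\epsilon t/s)/\zeta_X(\epsilon t/(qs))$, and the functional equation $\zeta_X(v)=\zeta_X(1/(qv))$ turns the denominator into $\prod_{\epsilon}\zeta_X(\epsilon s/t)$ rather than into the displayed $\prod_{\epsilon}\zeta_X(\epsilon qt/s)$ --- the two differ by rescaling the ratio $t/s$ by a power of $q$, which no identification $S\simeq\GG_m$ (acting simultaneously on both arguments) can absorb, so this should be viewed as a slip in the stated formula for $c_{X,r}$ (present in the paper's own terse identification as well) and not something your functional-equation plus $\epsilon\mapsto\epsilon^{-1}$ manipulation actually repairs.
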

  
\begin{proof}
It is enough to take $k=\overline\QQ_l$. By Corollary \ref{cor-of-lafforgue},
we have that $H=Sh(D,\delta)$, with $\pi_0(D)$ identified with
$\pi_0(\Sigma)$. Further, if $S, S'$ are different components of $D$, then
by Corollary \ref{cor:LHom(L,M)}, the restriction of $\delta$ to $S\times S'$
is identically equal to 1. The restriction of $\delta$ to each $S\times S$
is identified with the function $c_{X,r(S)}$ on $\GG_m\times\GG_m$,
again  by  Corollary \ref{cor:LHom(L,M)}. This implies our statement.
\end{proof}

\begin{ex} If $r(s)=1$, then $Sh(\GG_m, c_{X,r(S)})$ is isomorphic to
the spherical Hall algebra $H_{\operatorname{sph}}$ of 
\cite{burban-schiffmann:elliptic-I}, as shown in
\cite{schiffmann-vasserot:macdonald}. The components of
$\pi_0(\Sigma)$ with $r(S)=1$ correspond to characters of $\Pic(X)$
modulo tensoring with characters of the form $\lambda^\deg$. So their
number is equal to  $|\Pic^0(X)|$.
The tensor product
decomposition of Theorem \ref{thm:tensor-product} contains
therefore $|\Pic^0(X)|$ commuting copies of $H_{\operatorname{sph}}$.
The subalgebra generated by these copies is a particular case of the 
{\em principal Hall 
algebra} defined in \cite{schiffmann-vasserot:highergenus}
for curves of any genus. 

 \end{ex}

\vfill\eject

\section{The spectral decomposition of $H$ and the proof of Theorem
\ref{thm:main}.}\label{sec:spectral-and-proof}

\subsection{Spectral decomposition in geometric terms.}
\label{sec:spectral}

Let us recall the relation between Hall products and
Eisenstein series observed in \cite{K}.  Let $f\in\Sigma^{(r)}(k)$
be a cusp eigenform. By Lemma \ref{lem:cusp-forms-module-support}(b),
the restriction of $f$ to each $\Bunn_{r,d}(X)$ has finite support.
Let
\begin{equation}\label{eq:elements-E-f-d}
E_{f,d}\,\,=\,\,\sum_{V\in\Bunn_{r,d}(X)} f(V)\cdot 1_V\,\,\,\in\,\, H^{(r,d)}
\end{equation}
be the corresponding element of $H$, and
\begin{equation}\label{eq:series-E-f-t}
E_f(t)\,\,=\,\,\sum_{d\in\ZZ} E_{f,d} t^d\,\,\,\in\,\,\,H^{(r)}[[t, t^{-1}]]
\end{equation}
be the formal generating series of the $E_{f,d}$. Let ${\bf f}=(f_1, ..., f_m)$,
$f_i\in\Sigma^{(r_i)}(k)$ be a sequence of cusp eigenforms, and $r=\sum r_i$.
The Hall product
\begin{equation}\label{eq:eisenstein-product}
E_{\bf f}({\bf t}) \,\,=\,\,E_{f_1}(t_1)* \cdots * E_{f_m}(t_m)\,\,\in\,\,
H^{(r)}[[t_1^{\pm 1}, ..., t_m^{\pm 1}]]
\end{equation}
considered as a function on $\Bunn_r(X)$,
  is the Eisenstein series corresponding to
 $f_1, ..., f_m$. In particular,  its value at any rank $r$
 bundle $V$,  is a series in the $t_i^{\pm 1}$. It is known that this series is the expansion of
 a rational function in the region $|t_1|\gg \cdots\gg |t_m|$,
 and that this function (or rather the collection of these functions for all $V$) satisfies a functional equation (recalled in
 \ref{functional-equation-eisenstein} below). If $k=\CC$, by a {\em pseudo-Eisenstein series}
 one  means a contour integral of the form
  $$\int_{|t_i|=\eps_i } E_{\bf f}({\bf t})\phi({\bf t}) d^*{\bf t}$$
 where $\phi$ is a Laurent polynomial and $|\eps_1|\gg \cdots\gg |\eps_m|$,
 see \cite{moeglin-waldspurger-book}, \S II.1.10-11. Thus pseudo-Eisenstein series
 are finite linear combinations of the products $E_{f_1, d_1}*\cdots * E_{f_m, d_m}$. 

\vskip .2cm

Now, the algebraic version of the spectral decomposition theorem for
unramified automorphic forms on $GL_n(\Aen)$ can be formulated
as follows.

\begin{prop}\label{prop:spectral-decomposition}
The subspace $H_{\cusp}\subset H$ generates $H$
as an algebra.
\end{prop}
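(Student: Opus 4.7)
The plan is to argue by induction on the rank $r$. The case $r=1$ is immediate: if $f\in H^{(1)}$, then the only components of $\Delta(f)$ for ranks summing to $1$ are $\Delta_{0,1}(f)=1\otimes f$ and $\Delta_{1,0}(f)=f\otimes 1$, since no pair $(r',r'')$ of \emph{positive} integers satisfies $r'+r''=1$. Hence every element of $H^{(1)}$ is primitive and so $H^{(1)}=H^{(1)}_{\cusp}\subset H_{\cusp}$.

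For the inductive step, suppose $H^{(r')}\subset H':=\la H_{\cusp}\ra$ for every $r'<r$, and set
\[
H^{(r,d)}_{\Eis}\,\,=\,\, \mathrm{span}_k\bigl\{\,h_1*h_2\,:\, h_i\in H^{(r_i,d_i)},\; r_i>0,\; r_1+r_2=r,\; d_1+d_2=d\,\bigr\}.
\]
The inductive hypothesis gives $H^{(r,d)}_{\Eis}\subset H'$, so the induction reduces to the equality
\begin{equation}
H^{(r,d)} \,\,=\,\, H^{(r,d)}_{\cusp} + H^{(r,d)}_{\Eis}.
\end{equation}
The main tool here is the adjointness $(f,\,g_1*g_2)=(\Delta(f),\,g_1\otimes g_2)$ from \eqref{eq:Hall-comultiplication}, which identifies the orthogonal complement of $H^{(r,d)}_{\Eis}$ inside $H^{(r,d)}$ (with respect to the orbifold pairing) with the cuspidal subspace $H^{(r,d)}_{\cusp}$: the vanishing of $(\Delta_{r',r''}(f), g_1\otimes g_2)$ for all $g_i\in H^{(r_i)}$ with $r_i>0$ is exactly the cuspidality condition on $f$, and Lemma \ref{lem:cusp-forms-module-support}(b) identifies the completed cuspidal part with $H^{(r,d)}_{\cusp}$ itself.

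To promote this orthogonality into the sum decomposition, I would pass to $k=\CC$ (harmless, since the relation concerns $\QQ(\sqrt q)$-rational linear subspaces) and use the positive-definite Hermitian pairing \eqref{hermitianproduct}. Given $f\in H^{(r,d)}$, one builds a candidate Eisenstein part $f_{\Eis}\in H^{(r,d)}_{\Eis}$ by the Langlands-Arthur truncation procedure: the constant terms $\Delta_{r_1,r_2}(f)\in H^{(r_1)}\widetilde\otimes H^{(r_2)}$ are, by the inductive hypothesis, finite sums of tensor products of Hall products of cusp forms, and the correct $f_{\Eis}$ is determined by inverting the Maass-Selberg matrix relating the constant terms of pseudo-Eisenstein series to their underlying cuspidal data. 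The residual $f - f_{\Eis}$ is then orthogonal to $H^{(r,d)}_{\Eis}$, hence cuspidal by the preceding paragraph.

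The main obstacle will be justifying the truncation/inversion step rigorously, which is the algebraic essence of the classical spectral decomposition of compactly supported unramified automorphic forms on $GL_r$ over the function field $\FF_q(X)$. I would dispatch this step by appealing directly to \cite[Ch.~II, \S 2]{moeglin-waldspurger-book}, where the decomposition is established in the adelic setting for any reductive group; specializing to $GL_r$ over $K=\FF_q(X)$ and to unramified, compactly supported forms yields exactly the sum decomposition above and completes the induction. As an alternative, the case $r=2$ admits a hands-on treatment by expanding $\Delta(h_1*h_2)$ via Green's formula (Theorem \ref{thm:green}) to compute the Maass-Selberg matrix explicitly, and the higher-rank cases then follow by iterating the same strategy within the tower of coproducts $\Delta^{(n-1)}$ of Proposition \ref{prop:def-coprod-completion}.
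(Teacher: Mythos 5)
Your setup (induction on rank, passage to $k=\CC$, and the identification via adjunction of the orthogonal complement of the span of products of lower-rank elements with $H^{(r,d)}_{\cusp}$) agrees with the first half of the paper's argument. The gap is in the step where you ``promote this orthogonality into the sum decomposition.'' Since $H^{(r,d)}$ is an infinite-dimensional pre-Hilbert space, the vanishing of the orthogonal complement of $H'{}^{(r,d)}:=H^{(r,d)}_{\cusp}+H^{(r,d)}_{\Eis}$ only shows that this subspace is \emph{dense}; it does not give equality, and equality is exactly what has to be proved. Your proposed remedy --- constructing $f_{\Eis}$ by ``inverting the Maass--Selberg matrix'' and then citing \cite[Ch.~II]{moeglin-waldspurger-book} --- does not close this gap: the spectral decomposition there is a statement about the Hilbert-space completion, namely that $L^2$ splits into the \emph{closures} of the spaces spanned by pseudo-Eisenstein series attached to the various cuspidal data. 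For a finitely supported $f$, the cuspidal projection is indeed finitely supported (by the uniform finite-support property of cusp forms, Lemma \ref{lem:cusp-forms-module-support}), so $f-f_{\cusp}$ is finitely supported and lies in the closure of the Eisenstein span; but membership in the closure does not give membership in the algebraic span $H^{(r,d)}_{\Eis}$, which is what the induction needs. Moreover, inverting the Maass--Selberg Gram matrix introduces rational, not Laurent-polynomial, dependence on the spectral parameters, so the ``projection'' it produces is not a priori a pseudo-Eisenstein series built from compactly supported (Laurent-polynomial) data, i.e.\ not a priori a finite combination of Hall products. A secondary imprecision feeds into this: the constant terms $\Delta_{r_1,r_2}(f)$ live in the completed tensor product $H^{(r_1)}\widetilde\otimes H^{(r_2)}$ and generically have infinite support, so they are not ``finite sums of tensor products of Hall products of cusp forms,'' and the inductive hypothesis cannot be applied to them in the way you suggest.

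The paper closes exactly this density-versus-equality gap by an elementary geometric argument rather than by spectral theory: density implies that $H'{}^{(r)}$ surjects onto $\CC^F$ for every finite subset $F\subset\Bunn_{r,d}(X)$; reduction theory (Stuhler) provides finite sets $F_\lambda$ of bundles whose Harder--Narasimhan slopes drop by at most $\lambda$ at each step, so any $f$ can be corrected by an element of $H'{}^{(r)}$ to be supported on bundles with a slope drop larger than $\lambda$; and for $\lambda$ large such a bundle splits as $V\simeq V'\oplus V''$ with $\Ext^1(V'',V')=0$, whence $1_V$ is a nonzero multiple of $1_{V'}*1_{V''}$ and lies in $H'$ by the rank induction. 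If you want to keep your route, you must supply an argument of comparable strength in place of the appeal to \cite{moeglin-waldspurger-book} --- for instance, a proof that the Eisenstein projection of a finitely supported unramified form is itself a finite combination of Hall products --- and that is essentially the content being proved, not something you can quote.
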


\begin{proof} This was shown in \cite{K}, Thm. 3.8.4. For convenience of
the reader, we provide the argument here. It is enough to assume that $k=\CC$. Let
$H' \subseteq H$ be the subalgebra generated by $H_{\cusp}$. We will prove by induction on the rank $r$
that ${H'}^{(r)}=H^{(r)}$. This is obvious for $r=1$ since $H^{(1)}_{\cusp}=H^{(1)}$. Let $r >1$ and let us assume that
${H'}^{(s)}=H^{(s)}$ for any $s <r$. Let $d \in \ZZ$ and let $x \in H^{(r,d)}$ be orthogonal to $H'$ with respect
to the Hermitian orbifold pairing (\ref{hermitianproduct}). 
For any $(r_1,d_1),$ $(r_2,d_2)$ with $r_1,r_2 \geqslant 1$ such that 
$r=r_1+r_2,$ $d=d_1+d_2$
we have $$\Delta_{(r_1,d_1),(r_2,d_2)}(x) \in ({H'}^{(r_1)} 
\otimes {H'}^{(r_2)})^{\perp}$$ by the Hopf property of the
pairing. Since ${H'}^{(r_1)}=H^{(r_1)},$ ${H'}^{(r_2)}=H^{(r_2)}$, 
and since the pairing is nondegenerate, it follows that $\Delta_{r_1,r_2}(x)=0$
for all $r_1, r_2 \geqslant 1$. But then $x \in H^{(r)}_{\cusp} \subset H'$. Since the pairing is positive definite we conclude that $x=0$.
We have shown that $({H'}^{(r)})^{\perp}=\{0\}$. This implies that ${H'}^{(r)}$ is dense in $H^{(r)}$ as $H^{(r)}$, equipped with
the pairing (\ref{hermitianproduct}), is a pre-Hilbert space.
This means that for any finite set $F\subset \Bunn_{r,d}(X)$ the coordinate projection
 ${H'}^{(r)}\to\CC^F$ is surjective. The reduction theory, see \cite{stuhler}, shows that for every $\lambda\in\ZZ_+$
we have a finite subset $F_\lambda\subset\Bunn_{r,d}(X)$ formed by bundles $V$
whose Harder-Narasimhan filtration has quotients with slopes decreasing,
at each step, 
by at most $\lambda$. So we can represent any given $f\in H^{(r,d)}$
as $f=\psi_\lambda + \chi_\lambda$, where $\psi_\lambda\in {H'}^{(r)}$,
and $\chi_\lambda$ is supported outside $F_\lambda$. Finally, for large enough
$\lambda$ (depending on the genus $g$ of $X$), a layer $V'$ of the Harder-Narasimhan filtration
of $V\in\Bunn_{r,d}(X)$ with slope decreasing by more than $\lambda$, gives
a short exact sequence 
$$0\to V'\lra V\lra V''\to 0$$
with $\Ext^1(V'',V')=0$. This implies that $V\simeq V'\oplus V''$
and, moreover, $1_V$ is a multiple of $1_{V'}*1_{V''}$.
This means we have $\chi_\lambda\in {H'}^{(r)}$
by induction on the rank of the bundles. 
\end{proof}

\vskip .2cm

Recall from Section \ref{rankin-selberg} the scheme $\Sigma = \bigsqcup_r \Sigma^{(r)}$, with
$$\Sigma^{(r)}\, =\, \Supp_{A^{(r)}}(H^{(r)}_{\cusp})\,\subset\, W_X^r\,\subset\, W_X,$$
and the
monomorphism $\alpha: \Sigma\to W_X$ formed by closed embeddings
of  $\Sigma^{(r)}$ into $W_X^r$.
Next, recall that the coproduct on $A$ yields an abelian group structure
$\boxplus$ on $W_X=\Spec(A)$.  Iterated addition in
this group structure, together
with the morphism  $\alpha$, yields  morphisms
\begin{equation}\label{eq:alpha-r-i}
\alpha_{r_1, ..., r_n}: 
\Sigma^{(r_1)}\times ... \times\Sigma^{(r_n)}
\lra W^{r}_X, \quad r=r_1+\cdots r_n,
\end{equation}
which gives rise to a morphism 
\begin{equation}
\mathfrak{a}: \, \Sym(\Sigma)\lra W_X. 
\end{equation}
Let $\Pi$ be the set of maps $\nu: \ZZ_{\geqslant 1}\to\ZZ_{\geqslant 0}$
such that $\nu(s)=0$ for $s\gg 0$. 
For $\nu\in\Pi$ we denote
\begin{equation}
|\nu| \,=\,\sum_{s\geqslant 1} \nu(s), \quad 
\wt(\nu) \,=\, \sum_{s\geqslant 1} s\nu(s),\quad 
\Sym^\nu(\Sigma):=\prod_{s\geqslant 1}\Sym^{\nu(s)}(\Sigma^{(s)}).
\end{equation}
 Then for $r\geqslant 0$
we have the disjoint union decomposition 
$$
\Sym^m(\Sigma)\,\,=\,\,
\coprod_{|\nu|=m}\Sym^\nu(\Sigma),$$
and the restriction of $\aen$ to $\Sym^\nu(\Sigma)$ defines
a morphism 
$$\aen_\nu: \Sym^\nu(\Sigma)\to W_X^{\wt(\nu)}. $$

\begin{prop}\label{prop:embedding-sym-sigma}
The morphism $\aen$ is injective on $\overline k$-points.
\end{prop}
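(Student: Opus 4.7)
The plan is to translate the equality $\aen(\underline\chi)=\aen(\underline{\chi'})$ into an identity of Rankin--Selberg $L$-functions and then extract individual cusp eigenforms via pole-order considerations. Unpacking definitions, a $\bar k$-point of $\Sym(\Sigma)$ is a finite multiset $\{\chi_i\}_{i=1}^N$ of cusp eigenforms with $\chi_i\in\Sigma^{(r_i)}(\bar k)$, and $\aen$ sends it to the Witt sum $\chi_1\boxplus\cdots\boxplus\chi_N\in W_X(\bar k)$. Since $\boxplus$ on $W_X$ corresponds to multiplication of the local polynomials $B_{\chi,x}(t)$, the associated $L$-series multiply as $L(\chi\boxplus\chi';t)=L(\chi;t)L(\chi';t)$; combined with the bilinearity of $\boxtimes$ over $\boxplus$ (Proposition~\ref{prop:local-product-witt}), this yields, for any test cusp eigenform $\psi$ of finite rank,
\[
\LHom\bigl(\psi,\,\chi_1\boxplus\cdots\boxplus\chi_N;\,t\bigr)\;=\;\prod_{i=1}^N \LHom(\psi,\chi_i;t).
\]

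Assuming $\aen(\{\chi_i\})=\aen(\{\chi'_j\})$, this gives the identity $\prod_i \LHom(\psi,\chi_i;t)=\prod_j \LHom(\psi,\chi'_j;t)$ for every test $\psi$. Fixing $\chi_1$ from the left multiset, I would successively specialize $\psi=q^{-n\deg}\chi_1$ for $n\geqslant 0$ and count pole orders at $t=1$. Via Lafforgue's theorem (Theorem~\ref{thm:lafforgue}) the pole structure encoded in Remark~\ref{rem:on-singularities-LHom} transfers to the automorphic side, so $\LHom(\psi,\phi)$ on $\Sigma\times\Sigma$ has at worst simple poles, occurring only on the diagonal $\Delta$ and on the $q$-shifted diagonal $\Delta_q$. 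Writing
\[
c_n:=\#\{i:\chi_i=q^{-n\deg}\chi_1\},\qquad c'_n:=\#\{j:\chi'_j=q^{-n\deg}\chi_1\},
\]
equality of pole orders at $t=1$ yields $c_n+c_{n+1}=c'_n+c'_{n+1}$ for every $n\geqslant 0$.

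To finish, I would observe that because $\chi_1$ corresponds under Lafforgue to an irreducible local system of positive rank $r_1$, its $\GG_m$-stabilizer (acting by Tate twist) is contained in $\mu_{r_1}$ and in particular finite; since $q\in\bar k$ is not a root of unity in characteristic zero, the points $q^{-n\deg}\chi_1$ for $n\geqslant 0$ are pairwise distinct points of $\Sigma$. Both multisets being finite, $c_n=c'_n=0$ for $n\gg 0$, and a backward induction on $n$ applied to the recursion $c_n+c_{n+1}=c'_n+c'_{n+1}$ then forces $c_n=c'_n$ for every $n$; in particular $c_0=c'_0$, so $\chi_1$ occurs with the same multiplicity in $\{\chi_i\}$ and $\{\chi'_j\}$. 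Since $\chi_1$ was arbitrary, $\{\chi_i\}=\{\chi'_j\}$.

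The main obstacle will be securing the precise pole structure of $\LHom$ encoded in Remark~\ref{rem:on-singularities-LHom}, since the purely automorphic Proposition~\ref{prop:rankin-selberg} handles only the unequal-rank case. As a purely Galois-theoretic shortcut one could bypass the backward induction altogether: Lafforgue identifies each $\chi_i$ with an irreducible $\ell$-adic local system $\Lc_{\chi_i}$, the equality of Witt sums translates into equality of Frobenius characteristic polynomials of $\bigoplus_i\Lc_{\chi_i}$ and $\bigoplus_j\Lc_{\chi'_j}$ at every closed point of $X$, Chebotarev density forces an isomorphism of the underlying semisimple representations, and Krull--Schmidt yields the identification of multisets of irreducible summands.
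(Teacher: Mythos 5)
The paper's own proof is a reduction plus a citation: after passing to $k=\CC$ the statement is exactly the $\ZZ$-linear independence of cusp eigenforms inside the group $W_X(\CC)$, which is quoted from Jacquet--Shalika (\cite{jacquet-shalika:rankin-selberg}, Thm.\ 4.2). Your main argument tries to reprove that result by Rankin--Selberg pole counting, and it has a genuine gap at exactly the point that makes the Jacquet--Shalika theorem nontrivial. From $\prod_i\LHom(\psi,\chi_i;t)=\prod_j\LHom(\psi,\chi'_j;t)$ you can only compare the total order $\ord_{t=1}$ of the two sides, in which zeros of the factors count with positive sign and poles with negative sign. Remark \ref{rem:on-singularities-LHom} controls the \emph{poles} of $\LHom$ but says nothing about its \emph{zeros}, and zeros at $t=1$ genuinely occur for points of $\Sigma(\bar k)$: writing $\Lc_\psi=V(\lambda)$, $\Lc_{\chi_i}=W(\mu)$ with $V,W$ pure of weight $0$, the factor $\LHom(\psi,\chi_i;t)$ vanishes at $t=1$ exactly when $\lambda/\mu$ equals a Frobenius eigenvalue $\beta$ on $H^1(X\otimes\overline\FF_q,\underline{\Hom}(V,W))$; these $\beta$ are Weil numbers of weight $1$, and since arbitrary $\GG_m$-twists are allowed points of $\Sigma(\bar k)$ one can arrange $\mu=\lambda/\beta$ (already for $V=W$ trivial, $\LHom(\chi_0,\mu^{\deg}\chi_0;t)=\zeta_X(\mu t)$ vanishes at $t=1$ whenever $\mu$ is a root of the numerator of $\zeta_X$). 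Hence what you actually get is $c_n+c_{n+1}-Z_n=c'_n+c'_{n+1}-Z'_n$ with unknown, multiset-dependent zero contributions $Z_n,Z'_n$, and the backward induction collapses. This is precisely the difficulty Jacquet--Shalika dispose of by normalizing each constituent to a unitary twist, ordering the twist parameters, and invoking their non-vanishing theorem for $L(\pi\times\pi')$ on the edge line; some such input (non-vanishing for unitary pairs plus an extremality argument in the twist parameter, or on the Galois side a purity/weight argument separating the twists) must be added before your pole count is valid.

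Your Galois-theoretic ``shortcut'' is much closer to a complete proof and is a genuinely different route from the paper's (which does not use Lafforgue here, though doing so is not circular): equality of the Witt sums is literally equality of Frobenius characteristic polynomials of $\bigoplus_i\Lc_{\chi_i}$ and $\bigoplus_j\Lc_{\chi'_j}$ at every closed point, and uniqueness of the irreducible constituents then gives the multiset equality via the injectivity in Theorem \ref{thm:lafforgue}. The one missing point is that Chebotarev density applies to the profinite group $\pi_1^{\et}(X)$, whereas the $\Lc_{\chi_i}$ are representations of $\pi_1^{\Weil}(X,x)$ involving arbitrary fractional Tate twists $V(\lambda)$, which do not extend continuously to $\pi_1^{\et}(X)$ unless $\lambda$ is an $l$-adic unit; to conclude that semisimple Weil representations with equal local characteristic polynomials are isomorphic you need the standard supplement (e.g.\ Deligne's Weil II structure theorem writing each irreducible Weil sheaf as an \'etale sheaf twisted by some $\lambda$, and then an argument separating the twists). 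With that added, Brauer--Nesbitt and Jordan--H\"older do finish the proof, giving an argument of the same strength as the paper's citation but with Lafforgue rather than Jacquet--Shalika as the external input.
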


\begin{proof}  
It is enough to assume that
$k=\CC$. Our statement means that any finite set of 
$\CC$-points
$\sigma_i\in \Sigma^{(r_i)}\subset W_X$ is linearly independent
over $\ZZ$ in the group $W_X(\CC)$. But such independence follows from
the result of  Jacquet and Shalika 
(\cite{jacquet-shalika:rankin-selberg}, Thm. 4.2). 
\end{proof}

 \vskip .3cm
 
Next, we prove that $\aen_\nu$ is a closed morphism, i.e.,
it takes Zariski closed sets into Zariski closed sets.
 Indeed, each $\aen_\nu$ is obtained by descent  of some
$p_{r_1, ..., r_n}$ under the finite morphism
$\prod_i \Sigma^{(r_i)}\to \Sym^\nu(\Sigma)$. So it is
enough to show that each $p_{i_1, ..., i_r}$ is closed.
Let $\Delta_{r_1,\dots,r_n}$ denote the degree $(r_1,\dots,r_n)$ component
$A^{(r)}\to \bigotimes_i A^{(r_i)}$
of the iterated comultiplication map
$\Delta^{(n-1)}.$

\begin{prop}\label{prop:integral}
For any $r_1, ..., r_n$ the Witt addition morphism
$$\operatorname {wadd}_X: \prod_i W_X^{r_i}\lra W_X^{r}, \quad r=r_1+...+r_n,$$
is an integral morphism, i.e., the map
$\Delta_{r_1,\dots,r_n}$
makes $\bigotimes_i A^{(r_i)}$ into an integral ring extension of $A^{(r)}$.
\end{prop}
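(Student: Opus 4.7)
The plan is to reduce to a local calculation at each closed point $x\in X$, exhibiting the local extension as a finite free extension via symmetric-function theory, and then to globalize using the uniformity of the resulting integrality relations.

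At each point $x\in X$ I interpret the rings in terms of the roots of the characters: writing $B_\chi(t)=\prod_{j=1}^r(1-\lambda_j t)$, the ring $A_x^{(r)}$ is identified with $k[\lambda_1,\ldots,\lambda_r]^{S_r}[(\lambda_1\cdots\lambda_r)^{-1}]$ (sending $b_{x,k}$ to the elementary symmetric function $e_k(\lambda)$), and $\bigotimes_{i=1}^n A_x^{(r_i)}$ with $k[\lambda_1,\ldots,\lambda_r]^{\prod_i S_{r_i}}[(\lambda_1\cdots\lambda_r)^{-1}]$. Under these identifications the local map $\Delta_x$ becomes the inclusion of invariants under $\prod_i S_{r_i}\subset S_r$, which by classical symmetric-function theory is a finite free extension of rank $d=r!/(r_1!\cdots r_n!)=[S_r:\prod_i S_{r_i}]$. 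A basis $\{f_1,\ldots,f_d\}$, chosen via coset representatives in the coinvariant algebra, consists of polynomial expressions in the $b_{x,k}^{(i)}$ whose shape is independent of $x$, and the multiplication structure constants are likewise universal polynomials in the $b_{x,k}$.

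To globalize, I exploit this uniformity. Any $\xi\in\bigotimes_i A^{(r_i)}$ is a finite sum of pure tensors $\sum_j a_j^{(1)}\otimes\cdots\otimes a_j^{(n)}$. At each $x$ the projection $\xi_x$ satisfies its Cayley--Hamilton characteristic polynomial $P_x(T)\in A_x^{(r)}[T]$ of degree $d$; because the coefficients are given by the universal formulas applied to the components $(a_j^{(i)})_x$, the tuple $(P_x)_x$ assembles into a single monic polynomial $P(T)\in A^{(r)}[T]=\prod_x A_x^{(r)}[T]$ of degree $d$.

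The central technical obstacle will be verifying the identity $P(\xi)=0$ in $\bigotimes_i A^{(r_i)}$ itself, rather than only in the product $\prod_x\bigotimes_i A_x^{(r_i)}$: this natural ``diagonal'' map is known not to be injective, its kernel containing off-diagonal idempotent tensors such as $e_{x_1}^{(1)}\otimes e_{x_2}^{(2)}$ with $x_1\ne x_2$. I expect to resolve this by reinterpreting the Cayley--Hamilton identity as a formal identity in a polynomial ring over $k$ in free letters standing for the matrix entries of multiplication by $\xi$; as a universal identity it must specialize correctly to any realization, including the global one in $\bigotimes_i A^{(r_i)}$. Together with the trivial integrality of the kernel idempotents (each annihilated by $T^2-T$) and the standard closure of integral elements under addition and multiplication, this will produce a monic integral relation for every $\xi\in\bigotimes_i A^{(r_i)}$.
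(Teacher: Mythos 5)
Your local analysis and the first globalization step are sound, and they are essentially the paper's own argument made precise: the paper merely notes that each local morphism $\prod_i W_x^{r_i}\to W_x^{r}$ is multiplication of polynomials, hence finite, and then passes to the product over $x\in X$. Your observation that the local extensions are finite \emph{free of the same rank} $d=r!/(r_1!\cdots r_n!)$, with basis and structure constants given by formulas independent of $x$, is exactly what legitimizes that passage: an arbitrary product of finite extensions need not be integral without a uniform bound (consider $\prod_n k\subset \prod_n k[t]/(t^n)$ and the element $(t)_n$), so your assembled degree-$d$ Cayley--Hamilton relation already gives a complete, and in fact more careful, proof of integrality for the ring $\prod_x\bigl(\bigotimes_i A_x^{(r_i)}\bigr)$, which is the ring the paper's componentwise proof implicitly works with.

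The genuine gap is the last step, which you flag yourself, and the proposed repair does not close it. Cayley--Hamilton ``as a universal identity in the matrix entries'' can only be invoked after multiplication by $\xi$ has been exhibited as a $d\times d$ matrix over $A^{(r)}$ acting on a faithful finitely generated module containing $1$; producing such a presentation inside $\bigotimes_i A^{(r_i)}$ is precisely the assertion that $\xi$ can be expanded in the universal basis with coefficients in $A^{(r)}$, and that is obstructed by the very kernel $N$ of $\bigotimes_i\prod_x\to\prod_x\bigotimes_i$ you mention. Indeed the expected identity $P(\xi)=0$ is simply false in general: take $b=(\beta_x)_x$ a sequence of scalars assuming infinitely many distinct values, regarded in each $A^{(r_i)}$, and $\xi=1\otimes b-b\otimes 1$. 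Every local component of $\xi$ vanishes, so your assembled characteristic polynomial is $T^d$; yet $\xi$ lies in the polynomial subring $k[b]\otimes k[b]\cong k[T_1,T_2]$, where it is $T_2-T_1$, so $\xi^d\neq 0$. The fallback remarks do not rescue this: from $P(\xi)\in N$, integrality of some idempotents of $N$, and the fact that integral elements form a subring, no monic relation for $\xi$ follows --- one would need $N$ to be nil (it is not) or a separate proof that the discrepancy between $\xi$ and its ``universal-basis approximation'' is integral, which the sketch does not supply. Note finally that this difficulty is an artifact of reading the target literally as the algebraic tensor product of the full products $\prod_x A_x^{(r_i)}$, a reading under which even $\Delta_{r_1,\dots,r_n}$ is problematic (for $r_1=r_2=1$, the image of $(b_{x,1}^{m_x})_x$ with $m_x\to\infty$ would have unbounded local tensor rank); the paper's proof never meets this issue because it argues pointwise in $x$, and your argument is complete once you do the same.
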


The proposition implies that $\aen_\nu$ is closed, since any integral morphism
is closed, see e.g., \cite[Proposition 6.1.10]{EGAII}, and
since $\prod_i \Sigma^{(r_i)}$
is a closed subset in the source of $\operatorname{wadd}_X$.

\begin{proof}[Proof of the proposition] Note that $\operatorname{wadd}_X$  
is the Cartesian product, over all 
$x\in X$, of similar morphisms 
$\operatorname {wadd}_x:\,\, \prod_i W_x^{r_i}\to W_x^{r}$.
Each $W_x^{r_i}$ is the scheme of polynomials with
constant term 1 and degree exactly $r_i$, and $\operatorname {wadd}_x$
is given by multiplication of polynomials. So each 
$\operatorname {wadd}_x$ is a finite morphism, and 
$\operatorname {wadd}_X$, being a projective limit of finite
morphisms, is integral.  
\end{proof}
\vskip .2cm

The fact that each $\aen_\nu$ is closed together with 
Proposition \ref{prop:embedding-sym-sigma}
means that
for any component $S\subset \Sym(\Sigma)$ the restriction
$\aen|_S: S\to \aen(S)$ is a regular birational morphism,  bijective on $\overline k$-points.
In particular, the morphism $\aen_\nu$ is a regular birational homomorphism from
$\Sym^\nu(\Sigma)$ to a closed subscheme  $\frak S^\nu$ in $W^{\wt(\nu)}_X$.
We write $$\frak S^r\,\,=\,\,
\coprod_{|\nu|=r}\frak S^\nu,\qquad \frak S\,\,=\,\,
\coprod_{r\geqslant 0}\frak S^r.$$

First, let us quote a few simple properties of the scheme $\frak S$ (the proof is left to the reader).

\begin{prop} We have

(a) $\frak S^1=\Sigma$,

(b) the subscheme $\frak S\subset W_X$ is a sub-semigroup of $W_X,$

(c) the map $\aen$ factors to a morphism of semigroups $\Sym(\Sigma)\to\frak S.$
\end{prop}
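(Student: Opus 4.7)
The plan is to verify all three claims by direct unwinding of the definitions. The nontrivial input (closedness of each restriction $\aen_\nu:\Sym^\nu(\Sigma)\to W_X^{\wt(\nu)}$) has already been provided by Proposition~\ref{prop:integral}, which guarantees that $\frak S^\nu$ is a genuine closed subscheme of $W_X^{\wt(\nu)}$ rather than merely a constructible subset; this is what is needed to make the assertions scheme-theoretic. The main (really only) obstacle would have been this closedness, but it is already behind us.

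For (a), I would note that the only $\nu\in\Pi$ with $|\nu|=1$ are the indicators $\delta_r$ (with $\nu(r)=1$ and $\nu(s)=0$ for $s\neq r$), for which $\Sym^{\delta_r}(\Sigma)=\Sigma^{(r)}$ and $\aen_{\delta_r}$ reduces to the closed embedding $\Sigma^{(r)}\hookrightarrow W_X^r$ that is the restriction of $\alpha$ from \eqref{eq:embedding-sigma}. Hence $\frak S^{\delta_r}\simeq\Sigma^{(r)}$, and taking the disjoint union over $r\geqslant 1$ gives $\frak S^1=\Sigma$.

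For (b) and (c), I would begin with the observation that $\aen$ was constructed so as to intertwine the multiplication $p$ on $\Sym(\Sigma)$ with the Witt addition $\boxplus$ on $W_X$: on a pair of tuples $\bigl((\sigma_i)_i,(\tau_j)_j\bigr)$ representing a point of $\Sym^\nu(\Sigma)\times\Sym^\mu(\Sigma)$, both $\aen_{\nu+\mu}\circ p$ and $\boxplus\circ(\aen_\nu\times\aen_\mu)$ return the single iterated Witt sum of all entries. This yields the commutative diagram
\[
\xymatrix{
\Sym(\Sigma)\times\Sym(\Sigma)\ar[r]^-{p}\ar[d]_{\aen\times\aen}&\Sym(\Sigma)\ar[d]^{\aen}\\
W_X\times W_X\ar[r]^-{\boxplus}&W_X.
}
\]
From this (c) is immediate: the factorization $\aen':\Sym(\Sigma)\to\frak S$ through the scheme-theoretic image is a morphism of commutative monoid schemes (the unit is also preserved, since $\Sym^0(\Sigma)=\Spec(k)$ maps to $\mathbf{0}\in W_X$). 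For (b), given points $a\in\frak S^\nu$ and $b\in\frak S^\mu$, choose lifts $\tilde a,\tilde b$ along the surjective closed morphisms $\aen_\nu,\aen_\mu$; the diagram then delivers $a\boxplus b=\aen_{\nu+\mu}(p(\tilde a,\tilde b))\in\frak S^{\nu+\mu}$. Thus $\frak S=\coprod_\nu\frak S^\nu$, being a disjoint union of closed subschemes of the graded pieces $W_X^r$ that is stable under $\boxplus$ in a grading-compatible way, is a closed sub-semigroup of $W_X$.
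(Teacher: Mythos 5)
Your proof is correct and is essentially the argument the paper intends — the proposition's proof is explicitly left to the reader, and your unwinding uses exactly the ingredients already set up (the construction of $\aen$ via iterated Witt addition, closedness of each $\aen_\nu$ via Proposition \ref{prop:integral}, and injectivity on $\overline{k}$-points), so there is nothing substantive to add. Two cosmetic remarks: the semigroup structure on $\frak S$ (part (b)) should logically precede the claim that the factorization $\Sym(\Sigma)\to\frak S$ is a morphism of semigroups (part (c)), and the passage from your point-level lifting argument to the scheme-theoretic factorization of $\boxplus|_{\frak S^\nu\times\frak S^\mu}$ through the closed subscheme $\frak S^{\nu+\mu}$ implicitly uses that $\frak S^\nu\times\frak S^\mu$ is reduced (which holds here, since each $\frak S^\nu$ is the image of a reduced scheme and we are in characteristic zero), or equivalently that $\aen_\nu\times\aen_\mu$ is scheme-theoretically dominant onto $\frak S^\nu\times\frak S^\mu$.
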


Next, we prove the following.

\begin{prop}\label{prop:inclusion-supp-H}
We have an equality of schemes
$$\Supp_{A^{(r)}}(H^{(r)})=\frak S^r.$$
\end{prop}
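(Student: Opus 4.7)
The plan is to establish both inclusions between the ideals $I(\frak S^r)$ and $\Ann_{A^{(r)}}(H^{(r)})$, using as the main engine the compatibility between the Hall product and the Hecke coproduct (Proposition \ref{prop:hall-mult-hecke-comult}(a)) together with the identification, built into the Witt-scheme formalism, of the coproduct on $A$ with the Witt addition $\boxplus$ on $W_X=\Spec(A)$.

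For the inclusion $I(\frak S^r) \subseteq \Ann_{A^{(r)}}(H^{(r)})$, I would invoke the spectral decomposition (Proposition \ref{prop:spectral-decomposition}) to write every element of $H^{(r)}$ as a $k$-linear combination of Hall products $f_1 * \cdots * f_n$ with $f_i \in H^{(r_i)}_\cusp$ and $\sum r_i = r$. The iterated form of Proposition \ref{prop:hall-mult-hecke-comult}(a) shows that the $A^{(r)}$-action on such a product factors through
$$A^{(r)} \xrightarrow{\Delta^{(n-1)}} A^{(r_1)} \otimes \cdots \otimes A^{(r_n)} \twoheadrightarrow k[\Sigma^{(r_1)}] \otimes \cdots \otimes k[\Sigma^{(r_n)}],$$
and this composition is the pullback map along $\Sigma^{(r_1)} \times \cdots \times \Sigma^{(r_n)} \to W_X^r$, whose scheme-theoretic image is a component of $\frak S^r$. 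Consequently any $a \in I(\frak S^r)$ kills $f_1 * \cdots * f_n$, hence all of $H^{(r)}$.

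For the reverse inclusion, suppose $a \in A^{(r)}$ annihilates $H^{(r)}$; I must show that $a$ has zero image in $k[\Sigma^{(r_1)}] \otimes \cdots \otimes k[\Sigma^{(r_n)}]$ for every decomposition $r = r_1 + \cdots + r_n$. If this image were a nonzero regular function $\phi$, I would pick $(\chi_1, \ldots, \chi_n) \in \prod_i \Sigma^{(r_i)}(\bar k)$ with $\phi(\chi_1, \ldots, \chi_n) \neq 0$ and cusp eigenforms $f_i$ over $\bar k$ with character $\chi_i$ (Proposition \ref{prop:alg-mult-one}(b)). By the previous paragraph $a$ acts on $f_1 * \cdots * f_n$ as the scalar $\phi(\chi_1, \ldots, \chi_n) \neq 0$, contradicting the hypothesis \emph{provided} the Hall product is itself nonzero. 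The crux of the proof is therefore the non-vanishing of $f_1 * \cdots * f_n$ on a Zariski-dense subset of $\prod_i \Sigma^{(r_i)}$.

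For this non-vanishing I would use the iterated constant term: by Proposition \ref{prop:def-coprod-completion}(c), $\Delta^{(n-1)}$ is an algebra homomorphism $H \to \widetilde H^{\widetilde\otimes n}$ for the twisted multiplication. Since each cusp form $f_i$ is primitive, $\Delta^{(n-1)}(f_i) = \sum_{j=1}^n 1^{\otimes(j-1)} \otimes f_i \otimes 1^{\otimes(n-j)}$, so expanding the product yields
$$\Delta^{(n-1)}(f_1 * \cdots * f_n) \,=\, \sum_{\sigma \in S_n} c_\sigma \cdot f_{\sigma(1)} \otimes \cdots \otimes f_{\sigma(n)}$$
with nonzero Cartan-form coefficients $c_\sigma$. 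When $(\chi_1, \ldots, \chi_n)$ lies in general position (e.g.\ the $\chi_i$ lie in distinct $\GG_m$-orbits of $\Sigma$), the tensors indexed by different $\sigma$ are $A^{\otimes n}$-eigenvectors with pairwise distinct characters, hence linearly independent, so the sum is nonzero and consequently $f_1 * \cdots * f_n \neq 0$. This generic non-vanishing --- ensuring that the permutation sum produced by the constant term cannot collapse --- is the main obstacle, and it will require careful bookkeeping of the Cartan scalars and of the $A^{\otimes n}$-eigencharacters of the expanded terms.
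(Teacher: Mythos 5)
Your overall strategy (spectral decomposition, Hecke-equivariance of the Hall product via $\Delta:A\to A\otimes A$, reduction to the action of $a$ on products of cusp eigenforms, and then a non-vanishing statement for $f_1*\cdots*f_n$) is the same as the paper's, and your treatment of the inclusion $\Supp_{A^{(r)}}(H^{(r)})\subseteq\frak S^r$ is fine. The gap is in your crux step, the non-vanishing of $f_1*\cdots*f_n$. You compute $\Delta^{(n-1)}(f_1*\cdots*f_n)$ by combining two properties that never hold for the same coproduct: the cusp forms $f_i$ are primitive only for the bundle-truncated coproduct $\Delta=p_n\circ\Delta^{(n-1)}_\coh$ of \eqref{2.38}, which the paper explicitly notes is \emph{not} an algebra homomorphism, whereas the multiplicative coproduct of Proposition \ref{prop:def-coprod-completion}(c) is $\Delta_\coh$ on $\widetilde H_\coh$, for which a cusp eigenform is not primitive: $\Delta_\coh(E_f(t))=1\otimes E_f(t)+E_f(t)\otimes\Psi_f(t)$ with $\Psi_f(t)$ a series of torsion sheaves. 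Consequently your formula with ``nonzero Cartan-form coefficients $c_\sigma$'' is not correct: the actual coefficients arise from commuting the $\Psi_f$-factors past the $E_g$-factors (Lemma \ref{lem:psif-Eis}) and are ratios of Rankin--Selberg $\LHom$-functions, i.e.\ exactly Langlands' constant term formula \eqref{eq:constant-term-eisenstein}. That formula is the substantive automorphic input here; it cannot be recovered by ``standard bialgebra bookkeeping'' applied to $\Delta$, so as written your proof of generic non-vanishing does not go through.

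The argument is repairable, but only by importing that input in some form. The paper itself avoids the issue: having reduced (as you do) to showing that $T_a$ with $\phi_a\not\equiv 0$ cannot annihilate $E_{f_1}(t_1)*\cdots*E_{f_n}(t_n)$, it simply invokes the known fact that this cuspidal Eisenstein series is a nonzero rational function. Alternatively, one can make your constant-term route work by using $\Delta_\coh$ honestly: since $\Psi_f(t)=1+O(t)$, the multidegree-$(r_1,\dots,r_n)$ component of the constant term equals $E_{f_1}(t_1)\otimes\cdots\otimes E_{f_n}(t_n)$ plus higher-order corrections in the $t_i$, which already forces non-vanishing without computing the $\LHom$-ratios. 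Two smaller points: your genericity criterion ``the $\chi_i$ lie in distinct $\GG_m$-orbits'' is unavailable on components of $\prod_i\Sigma^{(r_i)}$ where two factors are the same component of $\Sigma$ (you only need the characters pairwise distinct, which does hold generically there); and the passage from ``nonzero regular function $\phi$'' to ``nonzero at a $\bar k$-point'' uses that the $\Sigma^{(r_i)}$ are reduced, which should be said explicitly (it follows from Proposition \ref{prop:alg-mult-one}).
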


\begin{proof} 
By Proposition \ref{prop:spectral-decomposition}
it is enough to show
that for $r_1,\dots, r_n$ as in \eqref{eq:alpha-r-i} we have
\begin{equation}\label{eq:support1}
\Supp_{A^{(r)}}\Bigl( H_\cusp^{(r_1)}* \cdots * H_\cusp^{(r_n)}\Bigr)
\,\,\,=\,\,\, \alpha_{r_1, ..., r_n}\Bigl(\Sigma^{(r_1)}\times \cdots
\times\Sigma^{(r_n)}\Bigr).
\end{equation}
By  Proposition \ref{prop:hall-mult-hecke-comult},
the Hall multiplication gives 
 a surjective $A^{(r)}$-module homomorphism
$$H_\cusp^{(r_1)}\otimes \cdots \otimes H_\cusp^{(r_n)}\lra
H_\cusp^{(r_1)}* \cdots * H_\cusp^{(r_n)},$$
where the source is made into a $A^{(r)}$-module
via the map $\Delta_{r_1,\dots,r_n}:A^{(r)}\to \bigotimes_i A^{(r_i)}$
in Proposition \ref{prop:integral} and the target is
regarded as a $A^{(r)}$-submodule of $H^{(r)}$.
Thus we have
\begin{equation}\label{eq:support2}
\begin{gathered}
\Supp_{A^{(r)}}\Bigl( H_\cusp^{(r_1)}* \cdots * H_\cusp^{(r_n)}\Bigr)
\,\,\subset \,\,
\Supp_{A^{(r)}}\Bigl( H_\cusp^{(r_1)}\otimes \cdots \otimes H_\cusp^{(r_n)}\Bigr)\,\,=\cr
=\,\, {\operatorname{wadd}}_X\biggl(\Supp_{A^{(r_1)}\otimes \cdots A^{(r_n)}}\Bigl( H_\cusp^{(r_1)}\otimes \cdots \otimes H_\cusp^{(r_n)}\Bigr)
\biggr),
\end{gathered}
\end{equation}
the last equality following from the fact that the ring
embedding  
$\Delta_{r_1,\dots,r_n}$ corresponds to   the integral
morphism of schemes
$\operatorname {wadd}_X$ from Proposition  \ref{prop:integral}.
Since the right hand sides in \eqref{eq:support1} and
\eqref{eq:support2} coincide,
this proves  the inclusion $\subset$ in \eqref{eq:support1}.

We now prove the reverse inclusion in  \eqref{eq:support1}.
For this we can assume that $k=\CC$.
For a cusp eigenform $f\in\Sigma^{(r)}(\CC)$
let $H_f\subset H_\cusp$ be the span of the $E_{f,d}$, $d\in\ZZ$. 
It is enough to prove that
for any sequence of cusp eigenforms $f_i\in\Sigma^{(r_i)}(\CC)$, $i=1, ..., n$,
we have the exact equality
\begin{equation}\label{eq:support3}
\supp_{A}\bigl(H_{f_1}*\cdots * H_{f_n}\bigr)\,\,\,=\,\,\,
\alpha_{r_1, \cdots r_n}(S_1\times \cdots \times S_n),
\end{equation}
where $S_i$ be the $\GG_m$-orbit of $f_i$ in $W_X$.  
In other words, we have to prove that for any $a\in A$ not
vanishing identically on the RHS of \eqref{eq:support3},
the Hecke operator $T_a$ cannot annihilate each
$E_{f_1, d_1}*\cdots * E_{f_n,d_n}$, i.e., cannot annihilate the
series $E_{f_1}(t_1)*\cdots *E_{f_n}(t_n)$ identically. 
On the other hand, denoting by $\chi_i\in W_X(\CC)$ the character
of $A$ corresponding to the eigenform $f_i$, we see from 
Proposition \ref{prop:hall-mult-hecke-comult} that 
\begin{equation}\label{eq:support4}
T_a \bigl(E_{f_1}(t_1)*\cdots * E_{f_n}(t_n)\bigr)\,\,=\,\,\phi_a(t_1, \ldots, t_n)
E_{f_1}(t_1)*\cdots * E_{f_n}(t_n),
\end{equation}
where the Laurent polynomial $\phi_a(t_1, \ldots, t_n)$ is defined 
as the value of $a$ at the $\CC[t_1^{\pm 1}, \cdots t_n^{\pm 1}]$-point
$(\chi_1t_1^{-\deg})\boxplus \cdots \boxplus (\chi_nt_n^{-\deg})$ of $W_X$. 
Our assumption on $a$ means that $\phi_a$ is not identically zero.
Now, identical vanishing of \eqref{eq:support4} is not
possible because the Eisenstein series represents a nonzero
rational function in $t_1, \ldots, t_n$. Proposition \ref{prop:inclusion-supp-H}
is proved.
\end{proof}

\vskip .2cm

Let $\Hc, \Hc^{(r)}$ be the quasicoherent sheaves on $W_X$ corresponding to
the $A$-modules $H, H^{(r)}$, so that $\Hc=\bigoplus_{r\geqslant 0}\Hc^{(r)}$. 
The above proposition implies that each $\Hc^{(r)}$ is supported on
a finite union of (Zariski closed) components of $\frak S$. Therefore
we can regard $\Hc^{(r)}$ and $\Hc$ as quasicoherent sheaves on
$\frak S$.

 For any connected component $S\subset\frak S$ let $\Hc_S$ be
 the induced sheaf on $S$, and $H(S):=\Gamma(S, \Hc_S)$. 
 Note that Propositions \ref{prop:alg-mult-one} and  
 \ref{prop:spectral-decomposition} imply that $\Hc_S$ is a coherent,
 not just  quasicoherent, sheaf on the algebraic variety $S$. 
 By definition, we have an isomorphism of vector spaces
\begin{equation} \label{eq:component-decomposition-H}
H \,\,=\,\,\Gamma_\qcom(\frak S, \Hc)\,\, =\,\,  \bigoplus_{S\in\pi_0(\frak S)} H(S),
\end{equation}
while $H^{(r)}$ is a similar, finite, sum taken over $S\subset\frak S^r$. 

\begin{prop}\label{prop:decompsupport}
 The decomposition \eqref{eq:component-decomposition-H}
 is a multiplicative grading, i.e.,
 $$\label{eq:S-grading-of-H}
 H(S)*H(S') \,\subset H(p(S\times S'))
 $$
 where $p: \frak S\times\frak S
 \to\frak S$ is the addition morphism. 
 \end{prop}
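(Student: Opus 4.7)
The plan is to deduce this multiplicative grading from Proposition~\ref{prop:hall-mult-hecke-comult}(a), which asserts that the Hall multiplication $\mu\colon H\otimes H \to H$ is a morphism of $A$-modules, where $A$ acts on the tensor product via the comultiplication $\Delta\colon A \to A\otimes A$. Geometrically, $\Delta$ corresponds to the Witt addition $\boxplus\colon W_X\times W_X \to W_X$, whose restriction to the sub-semigroup $\frak S$ is the map $p$.

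First I would reformulate $\mu$ in sheaf-theoretic terms on $W_X$. The tensor product $H\otimes H$ is naturally an $A\otimes A$-module, i.e.\ the sheaf $\Hc\boxtimes\Hc$ on $W_X\times W_X$, which is supported on $\frak S\times\frak S$. Regarding it as an $A$-module via $\Delta$ corresponds to the pushforward $\boxplus_*(\Hc\boxtimes\Hc)$, whose support on $W_X$ is contained in $\boxplus(\frak S\times\frak S)\subset\frak S$. Proposition~\ref{prop:hall-mult-hecke-comult}(a) then translates into a morphism of $\Oc_{W_X}$-modules
\[
  \boxplus_*(\Hc\boxtimes\Hc)\lra \Hc.
\]
Since morphisms of quasicoherent sheaves cannot enlarge supports, the image of $H(S)\otimes H(S')$ under $\mu$ is supported on $\boxplus(S\times S')=p(S\times S')$.

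It remains to verify that $p(S\times S')$ lies inside a single connected component $S''$ of $\frak S$, so that $\mu(H(S)\otimes H(S'))\subset H(p(S\times S'))=H(S'')$. For this I would use the identification $\pi_0(\Sym(\Sigma))\simeq\pi_0(\frak S)$ induced by $\aen$. Each component of $\Sigma$ is irreducible (an algebraic variety by Proposition~\ref{prop:alg-mult-one}(c)), so by the component decomposition of Section~\ref{section:symmetric-powers} each component of $\Sym(\Sigma)$ is a product $\prod_{S_0\in\pi_0(\Sigma)}\Sym^{\lambda(S_0)}(S_0)$, hence irreducible. By Proposition~\ref{prop:embedding-sym-sigma} and the fact that each $\aen_\nu$ is closed (a consequence of Proposition~\ref{prop:integral}), the morphism $\aen$ sends each component of $\Sym(\Sigma)$ birationally onto a closed irreducible subscheme of $\frak S$, and distinct components to distinct subschemes. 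Since $\aen$ is a morphism of commutative monoids and the monoid structure on $\Sym(\Sigma)$ sends a pair of components to a single component, the same holds for $p$ on $\frak S$, yielding the claim.

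The only real work is in the last paragraph: one must keep careful bookkeeping between the combinatorial monoid structure on $\pi_0(\Sym(\Sigma))$ (addition of multi-indices $\lambda$) and the Witt addition restricted to $\frak S$, and verify that no components are collapsed or split. Once this is in place, the proposition follows directly from the sheaf-theoretic support argument, whose essential content is entirely captured by the compatibility in Proposition~\ref{prop:hall-mult-hecke-comult}.
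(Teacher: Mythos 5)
Your argument is correct and is essentially the paper's own proof: Proposition~\ref{prop:hall-mult-hecke-comult}(a) makes $H(S)*H(S')$ an $A$-module quotient of $H(S)\otimes H(S')$, whose support is then the image $p(S\times S')$ by the integrality of the Witt addition (Proposition~\ref{prop:integral}), exactly as in the proof of Proposition~\ref{prop:inclusion-supp-H}. Only note that the claim in your second paragraph that the pushforward is supported on the image itself (rather than its closure) already uses that integrality/closedness, which you invoke only later, and that the component bookkeeping of your final paragraph is what the paper records as the unproved ``simple properties'' of $\frak S$ (it is a sub-semigroup and $\aen$ is a semigroup morphism).
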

 
 \begin{proof} Suppose $H(S)\subset H^{(r)}$ and $H(S')\subset H^{(r')}$.
 Then, similarly to
  the proof of Proposition \ref{prop:inclusion-supp-H},
 we have
 $$\Supp_{A^{(r+r')}}\bigl(H(S)*H(S')\bigr)\,\,\subset\,\,
 \Supp_{A^{(r+r')}}\bigl(H(S)\otimes H(S')\bigr) \,\,=\,\,p(S\times S'),
  $$
  as $p$ corresponds to the morphism $\Delta_{r,r'}:
  A^{(r+r')}\to A^{(r)}\otimes A^{(r')}$ which is an integral extension
  of rings.
\end{proof}

% \begin{prop} The decomposition \eqref{eq:S-grading-of-H}
% is orthogonal with respect to the orbifold scalar product
% \eqref{scalarproduct} on $H$. If $k=\CC$, it is also 
% orthogonal with respect to the Hermitian product
% \eqref{hermitianproduct}.
%  \end{prop}
  
%  \begin{proof} Follows from orthogonality of cusp forms with
%  different characters

\vskip .2cm
 
Now, recall (see Section \ref{section:symmetric-powers})
that the category $\QCoh(\frak S)$ has a monoidal
structure $\*$.
The following is then straightforward.
 
\begin{prop} The Hall multiplication on $H$ localizes to a morphism
$\mu: \Hc\*\Hc\to\Hc$ which makes $\Hc$ into an associative algebra
in $(\QCoh(\frak S), \*)$. \qed
\end{prop}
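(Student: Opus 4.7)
The plan is to derive both claims---existence of $\mu$ as a morphism of $\Oc_\frak S$-modules and its associativity---from the corresponding statements for global sections, using that the addition $p:\frak S\times\frak S\to\frak S$ is an affine morphism. Indeed, $p$ is the restriction of Witt addition on $W_X$ to the subsemigroup $\frak S$, and Proposition \ref{prop:integral} (applied componentwise to the pieces $\frak S^\nu$) makes this restriction integral, hence in particular affine. It follows that
$$\Hc\*\Hc\,=\,p_*(\Hc\boxtimes\Hc)$$
has $\Gamma_\qcom$ equal to $H\otimes_k H$, equipped with the $\Oc_\frak S$-module structure coming from the ring map $p^\sharp$, which is the restriction to $\frak S$ of the Hopf comultiplication $\Delta:A\to A\otimes A$.

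Giving a morphism $\Hc\*\Hc\to\Hc$ in $\QCoh(\frak S)$ is therefore tantamount to giving a $k$-linear map $H\otimes_k H\to H$ that intertwines the $A$-action on $H$ with the $A$-action on $H\otimes H$ induced by $\Delta$. This is exactly the content of Proposition \ref{prop:hall-mult-hecke-comult}(a), which states that the Hall multiplication is a morphism of left $A$-modules with respect to precisely this twisted action. The Hall product therefore descends to a well-defined morphism $\mu:\Hc\*\Hc\to\Hc$ in the monoidal category.

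For associativity in $(\QCoh(\frak S),\*)$, the same affine-pushforward argument applied to the iterated addition morphism $\frak S^{\times 3}\to\frak S$ identifies $\Gamma_\qcom(\Hc^{\*3})$ with $H^{\otimes 3}$; the two bracketed composites $\Hc^{\*3}\to\Hc$ then correspond, on global sections, to the two iterated Hall products on $H^{\otimes 3}$, and these agree by the classical associativity of the Hall algebra. Since the authors label the statement ``straightforward'', I expect no serious obstruction. The only delicate point to verify is that the $A$-module structure on $H\otimes H$ used in Proposition \ref{prop:hall-mult-hecke-comult}(a) is literally the one obtained geometrically from the pushforward along the addition on $\frak S\subset W_X$. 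This holds tautologically, because the semigroup law on $\frak S$ is inherited from the group-scheme structure on $W_X=\Spec(A)$ whose coproduct is $\Delta$.
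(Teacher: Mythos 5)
Your proof is correct and is essentially the (omitted) argument the paper intends: the proposition is stated with no proof, immediately after the paper has established exactly the ingredients you invoke (integrality of the Witt addition, hence affineness of $p$ on $\frak S$, and $A$-linearity of the Hall product via $\Delta$), so your global-sections argument matches the intended route. The only points worth spelling out are that the identification $\Gamma_\qcom(\frak S,\Hc\*\Hc)=H\otimes_k H$ uses that only finitely many pairs of components of $\frak S$ add up to a given component, and that upgrading $A$-linearity to a morphism in $\QCoh(\frak S)$ requires the product to respect the component decomposition, i.e.\ $H(S)*H(S')\subset H(p(S\times S'))$ — this is Proposition \ref{prop:decompsupport}, or alternatively follows from the components of $\frak S$ being pairwise disjoint closed subschemes of the affine schemes $W_X^r$, so that $A$-linearity alone forces it.
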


\vskip3mm

\subsection{Rational completion of $H^{\otimes n}$.}
We call the {\em rational completion} of $H$ the vector space
$$H_{\rat} \,\,=\,\,\bigoplus_{S\in\pi_0(\frak S)} k(S)\otimes_{k[S]} H(S)
\,\,=\,\,\Gamma_\qcom(\frak S, \Hc_{\rat}).$$
Here $\Hc_\rat$ is defined as in Section \ref{sec:rational-sections}.
The space $H_\rat$ is an associative algebra, but it is not
a priori identified with any space of functions on ${\Bun(X)}$. We will prove later (see Corollary~\ref{C:torsionfree}) that
the canonical map $H \to H_{\rat}$ is an embedding.
More generally, for $n\geqslant 1$ we denote
$$\begin{gathered} H^{\otimes n}_\rat\,\,=\,\,\Gamma_\qcom(\frak S, (\Hc^{\*n})_\rat)
\,\,=\cr
=\,\,\bigoplus_{S_1, ..., S_n\in\pi_0(\frak S)} 
k(S_1\times \cdots \times S_n)
\otimes_{k[S_1\times\cdots\times S_n]} H(S_1)\otimes \cdots H(S_n).
\end{gathered} $$
The twisted multiplication in $H^{\otimes n}$, see 
\eqref{eq:twisted-product-Hcoh}, gives
$H^{\otimes n}_\rat$ a structure of an associative algebra.

Recall the completions $\widetilde{H}, H^{\widetilde\otimes n}$ of $H, H^{\otimes n}$ defined in Section~2.4. Fix connected 
components $S_1, \ldots, S_n$ of $\frak S$ of weights $r_i=wt(S_i)$ and set
$$\widetilde{H}_{S_1, \ldots, S_n}=\bigoplus_{d \in \ZZ} \widetilde{H}_{S_1, \ldots, S_n}^{(r,d)}$$
where $r=\sum r_i$ and
$$\widetilde{H}_{S_1, \ldots, S_n}^{(r,d)}=\prod_{d_1+ \ldots + d_n=d} H(S_1)^{(r_1,d_1)} \otimes \cdots \otimes H(S_n)^{(r_n,d_n)}.$$
We have $\widetilde{H}_{S_1, \ldots, S_n} \subset H^{\widetilde{\otimes} n}$. The algebra $k[S_1 \times \cdots \times S_n]$ acts on
$\widetilde{H}_{S_1, \ldots, S_n}$, preserving $H(S_1) \otimes \cdots \otimes H(S_n)$. We call an element $u \in \widetilde{H}_{S_1, \ldots, S_n}$
\textit{rational} if there exists $a \in k[S_1 \times \cdots \times S_n]$, $a \neq 0$ such that $au \in H(S_1) \otimes \cdots \otimes H(S_n)$.
We denote by $\widetilde{H}_{S_1, \ldots, S_n, \rat}$ the set of rational elements of $\widetilde{H}_{S_1, \ldots, S_n}$.
There is a natural $k[S_1 \times \cdots \times S_n]$-module map 
$$\widetilde{H}_{S_1, \ldots, S_n,\rat} \to (H(S_1) \otimes \cdots \otimes H(S_n))
\otimes_{k[S_1 \times \cdots \times S_n]} k(S_1 \times \cdots \times S_n).$$
We set 
$$H^{\widetilde{\otimes} n}_{\rat}=\bigoplus_{S_1, \ldots, S_n} \widetilde{H}_{S_1, \ldots, S_n,\rat}.$$
The sum ranges over all $n$-tuples of connected components of $\frak S$.

\vspace{.1in}

\begin{prop} The following hold

(a) $H^{\widetilde{\otimes} n}_{\rat}$ is a subalgebra of $H^{\widetilde{\otimes} n}$,

(b) the natural map $H^{\widetilde{\otimes} n}_{\rat} \to H^{\otimes n}_{\rat}$ is an algebra homomorphism.
 
\end{prop}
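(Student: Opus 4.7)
The plan is to analyze, under the twisted multiplication in $H^{\widetilde\otimes n}$, the component decomposition $H^{\widetilde\otimes n}=\bigoplus_{S_1,\ldots,S_n}\widetilde H_{S_1,\ldots,S_n}$ introduced above; for brevity I write $S_\bullet = (S_1,\ldots,S_n)$, $H(S_\bullet)=H(S_1)\otimes\cdots\otimes H(S_n)$, and $k[S_\bullet]=k[S_1\times\cdots\times S_n]$. The key observation is that the Euler twist $\prod_{i>j}\((\Ec_i,\Fc_j\))=q^{(1-g_X)\sum_{i>j}r_ir_j'}$ depends only on the ranks $r_i=\wt(S_i)$ and $r_j'=\wt(T_j)$, so the product in $H^{\widetilde\otimes n}$ reduces to a constant scalar times the tensor product of componentwise Hall products. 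By Proposition \ref{prop:decompsupport}, the $i$-th factor sends $H(S_i)\otimes H(T_i)$ into $H(U_i)$ with $U_i=p(S_i\times T_i)$. Hence the multiplication restricts to a $k$-linear map
$$\mu_{S_\bullet,T_\bullet}:\widetilde H_{S_\bullet}\otimes\widetilde H_{T_\bullet}\lra\widetilde H_{U_\bullet},\qquad U_\bullet:=(U_1,\ldots,U_n).$$
Applying Proposition \ref{prop:hall-mult-hecke-comult}(a) slot by slot, the restriction of $\mu_{S_\bullet,T_\bullet}$ to $H(S_\bullet)\otimes H(T_\bullet)$ is a morphism of $k[U_\bullet]$-modules, where $k[U_\bullet]$ acts on the source through the integral ring homomorphism $k[U_\bullet]\to k[S_\bullet\times T_\bullet]$ induced by Witt addition.

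To prove (a), take $u\in\widetilde H_{S_\bullet,\rat}$ and $v\in\widetilde H_{T_\bullet,\rat}$ with $au\in H(S_\bullet)$ and $bv\in H(T_\bullet)$ for nonzero $a\in k[S_\bullet]$ and $b\in k[T_\bullet]$. By Proposition \ref{prop:integral}, the element $ab\in k[S_\bullet\times T_\bullet]$ is integral over $k[U_\bullet]$; hence it satisfies a monic relation $(ab)^m+c_{m-1}(ab)^{m-1}+\cdots+c_0=0$ with $c_i\in k[U_\bullet]$, which, after factoring out $ab$ if necessary (using that $k[S_\bullet\times T_\bullet]$ is a domain and $ab\neq 0$), may be taken to have $c_0\neq 0$. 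Then $c_0=ab\cdot h$ for $h=-(ab)^{m-1}-c_{m-1}(ab)^{m-2}-\cdots-c_1\in k[S_\bullet\times T_\bullet]$, and the $k[U_\bullet]$-linearity of $\mu_{S_\bullet,T_\bullet}$ yields
$$c_0\cdot(u*v)=\mu_{S_\bullet,T_\bullet}\bigl(c_0\cdot(u\otimes v)\bigr)=\mu_{S_\bullet,T_\bullet}\bigl(h\cdot(au)\otimes(bv)\bigr)\in H(U_\bullet),$$
since $H(S_\bullet)\otimes H(T_\bullet)$ is stable under the $k[S_\bullet\times T_\bullet]$-action. Hence $u*v\in\widetilde H_{U_\bullet,\rat}$ and (a) follows.

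For part (b), the natural map sends $u$ with $au\in H(S_\bullet)$ to the class of $a^{-1}\otimes au$ in $k(S_\bullet)\otimes_{k[S_\bullet]}H(S_\bullet)$. With the notation of (a), the image of $u*v$ is the class $c_0^{-1}\otimes\mu_{S_\bullet,T_\bullet}\bigl(h\cdot(au)\otimes(bv)\bigr)$ in $k(U_\bullet)\otimes_{k[U_\bullet]}H(U_\bullet)$. On the other hand, unwinding the definition of the algebra structure on $H^{\otimes n}_\rat$ (which extends the twisted multiplication of $H^{\otimes n}$ by the same $k[U_\bullet]$-linear extension of $\mu_{S_\bullet,T_\bullet}$, using the embedding $k(U_\bullet)\hookrightarrow k(S_\bullet\times T_\bullet)$ coming from the finite extension of fraction fields), the product of the images of $u$ and $v$ is represented by the very same formula. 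Multiplicativity follows, and independence from the choice of $(a,b,c_0,h)$ is automatic from the universal property of localization.

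The main obstacle is establishing the $k[U_\bullet]$-linearity asserted in the first paragraph: one has to unpack Proposition \ref{prop:hall-mult-hecke-comult}(a) and check that, on each slot $H(S_i)\otimes H(T_i)\to H(U_i)$, the resulting $A$-action factors precisely through the Witt-addition ring map $k[U_i]\to k[S_i\times T_i]=k[S_i]\otimes k[T_i]$, i.e.\ that the image of $A\to\End(H(U_i))$ and the image of $\Delta(A)\to\End(H(S_i)\otimes H(T_i))$ sit compatibly inside the rings of functions on the support subschemes $U_i,S_i,T_i\subset W_X$. Once this compatibility is secured, the rest of the proof is a routine denominator-chasing argument grounded in the integrality statement of Proposition \ref{prop:integral}.
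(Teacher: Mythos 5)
Your argument is correct and follows essentially the same route as the paper's proof: reduce to the component decomposition, use that $H(S_i)*H(T_i)\subset H(p(S_i\times T_i))$ together with the $A$-linearity of the multiplication, and invoke the integrality of the Witt-addition ring map to clear denominators. The only difference is cosmetic: where the paper simply asserts the existence of an element $a''\in k[S''_1\times\cdots\times S''_n]$ with $\Delta(a'')(u\otimes u')$ regular, you construct it explicitly as the constant term of a monic integral relation for $ab$, and you spell out the (declared obvious in the paper) compatibility in part (b).
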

\begin{proof}To prove (a) it is enough to show that for any 
$(S_1, \ldots,S_n),$ $ (S'_1, \ldots, S'_n)$ we have
$$\mu(\widetilde{H}_{S_1, \ldots, S_n,\rat} \otimes \widetilde{H}_{S'_1, \ldots, S'_n,\rat}) \subset \widetilde{H}_{S''_1, \ldots, S''_n,\rat},$$
where if $S_i \in \frak S^n$ and $S'_i \in \frak S^m$ then 
$S''_i = p(S_i \times S'_i) \in \frak S^{n+m}$ 
(see Proposition \ref{prop:decompsupport}).
Let $u,u'$ be respective elements of $\widetilde{H}_{S_1, \ldots, S_n,\rat},\widetilde{H}_{S'_1, \ldots, S'_n,\rat}$. 
There exists $a \in k[S_1 \times \cdots \times S_n]$ 
and $a' \in k[S'_1\times \cdots \times S'_n]$ such that 
$$au \in H(S_1) \otimes \cdots \otimes H(S_n), \qquad a'u' \in H(S'_1) \otimes \cdots \otimes H(S'_n).$$
Since $k[S_1 \times \cdots \times S_n] \otimes k[S'_1 \times \cdots \times S'_n]$ is, via $\Delta$, an integral extension
of $k[S''_1 \times \cdots \times S''_n]$, there exists 
$a'' \in k[S''_1 \times \cdots \times S''_n]$ such that
$$\Delta(a'') (u \otimes u') \in \big(H(S_1) \otimes \cdots \otimes H(S_n)\big)
\otimes \big(H(S'_1) \otimes \cdots \otimes H(S'_n)\big).$$
 Since the multiplication map $\mu$ is a morphism of $A$-modules and since 
$H(S_i) * H(S'_i) \subset H(S''_i)$ for all $i$, we obtain 
$$a'' (u * u') \in H(S''_1) \otimes \cdots \otimes H(S''_n)$$
as wanted. Statement (b) is obvious.
\end{proof}

\vskip3mm

\subsection{The Hall algebra as  a braided commutative bialgebra.}
We now give a Hopf-algebraic interpretation of the classical
package of theorems about principal series intertwiners  and constant
terms of Eisenstein series \cite{moeglin-waldspurger-book}.

\begin{thm}\label{thm:hall-as-R-matrix}
 (a)
The comultiplication $\Delta: H\to H^{\widetilde\otimes 2}$ takes
values in $H^{\widetilde\otimes 2}_\rat$ and so gives rise to
a coassociative map
$\Delta: H\to H^{\otimes 2}_\rat$. This map makes the sheaf $\Hc$ into a
rational coalgebra in the monoidal category $(\QCoh(\frak S),\*)$.
Let $\mu: \Hc\*\Hc\to \Hc_\rat$ and 
$\Delta: \, \Hc\to \Hc^{\* 2}_\rat$ denote the multiplication and the rational comultiplication in $\Hc$.

(b) There is an operator $M: H^{\otimes 2}_{rat}\to H^{\otimes 2}_\rat$
with the following properties

(b1) $M$ commutes with the  $A$-action defined via $\Delta: A\to A\otimes A$.
In particular, it gives rise to a rational morphism
$M: \Hc^{\*2}_{rat}\to \Hc^{\*2}_\rat$. 

(b2) $M$ is involutive and satisfies the Yang-Baxter equation
$$
\begin{gathered} 
M\circ M \,=\,\Id: \,\,\Hc^{\*2}_\rat\lra \Hc^{\* 2}_\rat\cr
M_{12}\circ M_{23}\circ M_{12}\, =\,M_{23}\circ M_{12}\circ M_{23}:\,\,
\Hc^{\* 3}\lra \Hc^{\* 3}_\rat.
\end{gathered}
$$

(b3) $\mu$ is $M$-commutative, i.e., $\mu\circ M$ maps into $\Hc$ and is equal to $\mu$.

(b4) $\Delta$ is $M$-cocommutative, i.e., 
$\Delta=M\circ\Delta$.

(b5) $\mu$ and $\Delta$ are $M$-compatible, i.e., $\Delta$
is a homomorphism of algebras, if the multiplication in $\Hc^{\* 2}_\rat$ is given by
$$(\Hc \* \Hc)_\rat \* (\Hc\ * \Hc)_{\rat}
\buildrel 1\otimes M\otimes 1\over\lra (\Hc \* \Hc \*\Hc\* \Hc)_\rat
\buildrel\mu\otimes\mu\over\lra (\Hc \* \Hc)_\rat.$$
\end{thm}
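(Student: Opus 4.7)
The plan is to interpret the classical theory of Eisenstein series, their constant terms, and functional equations as identities inside the Hall algebra $H$. By Proposition~\ref{prop:spectral-decomposition}, every element of $H$ is a linear combination of Hall products of cusp eigenforms, i.e., of coefficients of pseudo-Eisenstein series \eqref{eq:eisenstein-product}, so it suffices to check all eight assertions on such products.

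For (a), fix cusp eigenforms $f_1,\dots,f_n$ with eigencharacters $\chi_i\in\Sigma^{(r_i)}(k)$. By Proposition~\ref{prop:hall-mult-hecke-comult}(a) the product $E_{f_1}(t_1)*\cdots *E_{f_n}(t_n)$ is a joint $A$-eigenseries with eigencharacter $\chi_1 t_1^{-\deg}\boxplus\cdots\boxplus\chi_n t_n^{-\deg}$. The classical constant-term formula for Eisenstein series (see \cite[\S II.1.7]{moeglin-waldspurger-book}) computes $\Delta_{r,s}$ of such a product as a sum, indexed by $(r,s)$-shuffles of the indices, of analogous products of Eisenstein series in the two factors, each multiplied by an explicit rational factor in the $t_i$ built from the Rankin-Selberg $\LHom$ functions of \eqref{eq:rankin-selberg-c}. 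Combined with Propositions~\ref{prop:embedding-sym-sigma} and \ref{prop:inclusion-supp-H}, this shows that on each pair of components $(S_1,S_2)\in\pi_0(\frak S)^2$, $\Delta$ of a Hall product lies in $H(S_1)\otimes H(S_2)$ after clearing a single common denominator in $k[S_1\times S_2]$, hence in $H^{\widetilde\otimes 2}_\rat$. Coassociativity is inherited from $\Delta_\coh$ through Proposition~\ref{prop:def-coprod-completion}.

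For (b), define $M$ as the rational braiding $R^c$ of $(\QCoh(\Sym(\Sigma)),\*)$ transported to $\Hc^{\*2}_\rat$ through the closed morphism $\aen$. On generators it reads
\[
M\bigl(E_{f_1}(t_1)*E_{f_2}(t_2)\bigr) \,=\, c\bigl(\chi_1 t_1^{-\deg},\,\chi_2 t_2^{-\deg}\bigr)\cdot E_{f_2}(t_2)*E_{f_1}(t_1).
\]
Since $M$ is multiplication by a function of the joint $A$-eigencharacter followed by the categorical flip, (b1) is immediate. In (b2), involutivity is the antisymmetry $c(x,y)c(y,x)=1$ recorded after \eqref{eq:rankin-selberg-c}, and Yang-Baxter follows because $c$ is a bihomomorphism, extended multiplicatively as in \eqref{c-multiplicativity}, so that $R^c$ satisfies the braid relations by the general discussion of Section~\ref{sec:braided-bialgebras}. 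Property (b3), the $M$-commutativity of $\mu$, is precisely the functional equation for pseudo-Eisenstein series: the spherical case is the Gindikin-Karpelevich-Macdonald identity, and the general case is \cite[\S IV.1]{moeglin-waldspurger-book}; applied to $E_{f_1}(t_1)*E_{f_2}(t_2)$ it reproduces the defining relation for $M$.

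Property (b4) follows from (b3) by orbifold duality: $\Delta$ is the $(-,-)$-adjoint of $\mu$ under \eqref{eq:Hall-comultiplication}, one checks directly from its definition that $M$ is self-adjoint with respect to the orbifold product, so the identity $\mu\circ M=\mu$ dualizes to $M\circ\Delta=\Delta$. For (b5), the corresponding non-rational pentagon is Green's theorem applied to $H_\coh$ (Theorem~\ref{thm:green} together with Proposition~\ref{prop:def-coprod-completion}(c)); projecting to $\Hc$ via $p_\bun$ and replacing the trivial swap by $M$, the extra Rankin-Selberg factors of $c$ picked up on the two sides of the pentagon cancel by the antisymmetry $c(x,y)c(y,x)=1$ together with the coboundary form \eqref{eq:rankin-selberg-c}. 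The main obstacle throughout will be the careful bookkeeping of rationality, namely verifying that the denominators produced by the constant-term and intertwiner formulas are controlled, on each product of components of $\frak S$, by a single rational function; this is what ensures everything lands in the intended $H^{\widetilde\otimes n}_\rat$ rather than in a larger completion, and what makes the diagrammatic identities hold not only formally but as identities of rational morphisms in $\QCoh(\frak S)$.
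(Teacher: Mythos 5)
Your overall strategy is the paper's: reduce everything to Eisenstein data via Proposition \ref{prop:spectral-decomposition}, and feed in the constant term formula, the functional equation of Eisenstein series, and the functional equation of the $\LHom$-functions. Defining $M$ abstractly as the $\tilde c$-twisted flip (the rational braiding attached to the bihomomorphism $\tilde c$ on $\frak S\times\frak S$) rather than as the adelic intertwiner is a legitimate repackaging, close in spirit to the paper's alternative proof, and it does make (b1) and (b2) essentially formal. But it shifts all the content into identifying this abstract $M$ with what the automorphic formulas produce, and the "bookkeeping of rationality" you defer is exactly where the paper's work lies: to make sense of $\tilde c$ acting on coefficients of $E_{\bf f}({\bf t})\otimes E_{{\bf f}'}({\bf t}')$, and to prove (a), one must convert Laurent-polynomial denominators in the spectral variables ${\bf t}$ into elements of the Hecke algebra acting through $\Delta$; this rests on the integrality of the Witt addition (Proposition \ref{prop:integral}), not on Propositions \ref{prop:embedding-sym-sigma} and \ref{prop:inclusion-supp-H} which you cite. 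Without that step neither the claim that $\Delta(x)\in H^{\widetilde\otimes 2}_\rat$ nor the formula $M(E_{\bf f}\otimes E_{{\bf f}'})=\prod(\LHom\text{-ratios})\cdot E_{{\bf f}'}\otimes E_{\bf f}$ is established, and (b3) cannot yet be read off from \eqref{functional-equation-eisenstein}.

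The genuine gaps are in (b4) and (b5). For (b4), the assertion that $M$ is self-adjoint for the orbifold pairing "directly from its definition" is unjustified: by Proposition \ref{prop:properties-heckeop}(b) the adjoint of the right Hecke action is the dual (left) action, so the adjoint of multiplication by $\tilde c$ is multiplication by $\tilde c$ through the dual action, and identifying this with $M$ again requires the functional equation of $\LHom$ — i.e. the input you are trying to bypass; moreover the orbifold pairing does not extend to $H^{\otimes 2}_\rat$ where $M$ takes values, and at this stage $H(S)$ is not yet known to be torsion-free over $k[S]$ (Corollary \ref{C:torsionfree} is proved only after the theorem), so dualizing $\mu\circ M=\mu$ is not a formal operation. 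For (b5), Green's theorem (Theorem \ref{thm:green}) gives compatibility of $\mu_\coh$ and $\Delta_\coh$ with the Cartan-form braiding on $H_\coh$; after projecting by $p_\bun$ the map $\Delta$ is no longer an algebra homomorphism for that braiding (this is noted right after \eqref{2.38}), and restoring compatibility with the rational operator $M$ is precisely the nontrivial point: it requires the explicit constant-term formula \eqref{eq:constant-term-eisenstein}, equivalently the commutation of $\Psi_f(t)$ past Eisenstein series as in Lemma \ref{lem:psif-Eis} together with the cross-product structure $H_\coh\simeq A\ltimes H$. There is no cancellation of $c$-factors "by antisymmetry" that does this for you. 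Both (b4) and (b5) can be repaired by arguing as the paper does, directly from \eqref{eq:constant-term-eisenstein} and \eqref{eq:M-on-eisenstein}, which you already invoke for part (a).
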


So the situation is quite similar to that of  rational  bialgebras,
see Proposition \ref{prop:M-commutativity-shuffle}, 
except at this stage the operator $M$ is defined on 
just one object $\Hc\*\Hc$ and
is not yet given as a part of a braiding on any ambient category.

The rest of this section is devoted to the proof  of Theorem \ref{thm:hall-as-R-matrix}. 
Note that it is enough to establish all the statements for the case $k=\CC$
which we assume.

We begin with statement (a). Because $H$ is generated by cuspidal elements, it is enough to show that
$\Delta(x) \in H^{\widetilde{\otimes} 2}_{\rat}$ for any 
$x=E_{f_1,d_1} * \cdots * E_{f_m,d_m}$, where $f_i$ is a cusp eigenform
on $\Bun_{r_i}(X)$, and $d_i \in \ZZ$ for $i=1, \ldots, m$. Define $E_\mathbf{f}(\mathbf{t})$ as in (\ref{eq:eisenstein-product}).
Set $r=\sum r_i$ and fix $r',r''$ such that $r'+r''=r$. The element $\Delta_{r',r''}(x)$ is the coefficient of $t_1^{d_1} \cdots t_m^{d_m}$
in the constant term of the Eisenstein series $E_{\mathbf{f}}(\mathbf{t})$ with respect to the standard parabolic subgroup
\begin{equation}\label{eq:parabolic-subgroup}
\quad P_{r',r''} =\begin{pmatrix} GL_{r'} &0\\ \Mat_{r'',r'}& GL_{r''}
\end{pmatrix}.
\end{equation}
Now, the constant term of any Eisenstein series 
with respect to any parabolic subgroup
is given by the classical  formula of Langlands 
(see, e.g., \cite{moeglin-waldspurger-book}, II.1.7), 
which for the case of $P_{r',r''}$
specializes to
\begin{equation}\label{eq:constant-term-eisenstein}
\begin{gathered}
\Delta_{r',r''}(E_{\mathbf{f}}(\mathbf{t})) \,\,=\,\,\sum_{m'+m''=m}
\sum_{\substack{(i_1, ..., i_{m'})\\ (j_1, ..., j_{m''})}}
\prod_{i_\a>j_\beta} q^{r_{i_\a}r_{j_b}(1-g_X)} 
{\LHom(f_{i_\a}, f_{j_\beta}, t_{i_\alpha}/t_{j_\beta})\over 
\LHom(f_{i_\a}, f_{j_\beta}, t_{i_\alpha}/qt_{j_\beta})}\times\cr
\times\bigl( E_{f_{i_1}}(t_{i_1})*\cdots 
*E_{f_{i_{m'}}}(t_{i_{m'}})\bigr) \otimes
\bigl( E_{f_{j_1}}(t_{j_1})* \cdots * E_{f_{j_{m''}}}(t_{j_{m''}})\bigr).
\end{gathered} \end{equation}
Here $(i_1, ..., i_{m'}, j_1, ..., j_{m''})$ run over all permutations of
$\{1,2,...,m\}$ such that $\sum i_\nu = r'$ and $\sum j_\nu = r''$.
The equality in \eqref{eq:constant-term-eisenstein} 
is understood as an equality of formal power series, when
we expand the rational functions on the right hand side
in the domain $|t_1|\gg ... \gg |t_m|$.  There exists a Laurent
polynomial $P(\mathbf{t}) \in \CC[t_1^{\pm 1}, \ldots, t_m^{\pm 1}]$ such that 
$\Delta_{r',r''}(P(\mathbf{t}) E_{\mathbf{f}}(\mathbf{t}))$
is a $\CC[t_{1}^{\pm 1}, \ldots, t_m^{\pm 1}]$-linear combination of the 
$$\bigl( E_{f_{i_1}}(t_{i_1})*\cdots *E_{f_{i_{m'}}}(t_{i_{m'}})\bigr) \otimes
\bigl( E_{f_{j_1}}(t_{j_1})* \cdots * E_{f_{j_{m''}}}(t_{j_{m''}})\bigr).$$
Recall that $E_{\mathbf{f}}(\mathbf{t})$ is an eigenfunction
of the Hecke algebra with character 
$$\chi_{E_{\bf f}({\bf t})} \,\,=\,\,(\chi_{f_1}\cdot 
t_1^{-\deg})\boxplus \cdots \boxplus
(\chi_{f_m}\cdot t_m^{-\deg})$$
(addition in the Witt scheme). We can find 
$a \in \CC[p(S_1 \times \cdots \times S_m)]$
such that $a E_{\mathbf{f}}(\mathbf{t}) =P(\mathbf{t}) E_{\mathbf{f}}(\mathbf{t})$, where $S_i \subset \Sigma^{(r_i)}$ is the connected component
of $f_i$. Considering the coefficient of $t_1^{d_1} \cdots t_m^{d_m}$ and using the fact that $\Delta$ is a morphism of $A$-modules,
we deduce that $a \Delta(x) \in H \otimes H$. Note that the action of $a$ on 
thesummand
$$\bigl( E_{f_{i_1}}(t_{i_1})*\cdots *E_{f_{i_{m'}}}(t_{i_{m'}})\bigr) \otimes
\bigl( E_{f_{j_1}}(t_{j_1})* \cdots * E_{f_{j_{m''}}}(t_{j_{m''}})\bigr)$$
factors through the map 
$$\CC[p(S_1 \times \cdots \times S_m)] \to
\CC[p(S_{i_1} \times \cdots \times S_{i_m'})] \otimes 
\CC[p(S_{j_1} \times \cdots \times S_{j_{m''}})]$$
induced by $\Delta : A \to A \otimes A$.
This implies that $\Delta(x) \in H^{\widetilde\otimes 2}_{\rat}$ as wanted, and proves (a).

\vspace{.1in}

The remainder of this section deals with (b). Before giving the definition of the operator $M$, we
recall a few notations. As before, $\Aen$ and $K$ denote the ring of adeles of and field of rational functions
on the curve $X$. 
Fix $r,s\geqslant 1$ and consider the subgroup
$$\Xi_{r,s} \,\,=\,\,\begin{pmatrix} GL_r(K)&0\\ \Mat_{s,r}(\Aen)& GL_s(K)
\end{pmatrix}\,\,\subset\,\, GL_{r+s}(\Aen).$$
Then the Iwasawa decomposition defines an identification
\begin{equation}\label{eq:double-xi-cosets}
\begin{gathered} \theta_{r,s}: \Xi_{r,s}\backslash GL_{r+s}(\Aen)/GL_{r+s}(\widehat\Oc\bigr)  
\lra {\Bun_r (X)}\times{\Bun_s(X)}\,\,\simeq\cr\simeq \,\,
\bigl(GL_r(K)\backslash GL_r(\Aen)/GL_r(\widehat \Oc)\bigr) \times
\bigl(GL_s(K)\backslash GL_s(\Aen)/GL_s(\widehat \Oc)\bigr)
\end{gathered}
\end{equation}
Using it, we define a linear isomorphism
$$\widehat\epsilon_{r,s}: (H^{\widehat \otimes 2})^{(r,s)} = 
\Fc\bigl({\Bun_r (X)}\times{\Bun_s(X)}\bigr) \lra
\Fc \bigl(\Xi_{r,s}\backslash GL_{r+s}(\Aen)/GL_{r+s}(\widehat\Oc\bigr))$$
by putting 
$$(\widehat\epsilon_{r,s} f)([g]) = \langle E'',E'\rangle f(E', E''),$$
where $g\in GL_{r+s}(\Aen)$, $[g]$ is its double coset in 
the source of 
\eqref{eq:double-xi-cosets}, $E'\in{\Bun_r(X)}$,
$E''\in{\Bun_s(X)}$ are such that $\theta_{r,s}([g]) = (E', E'')$ and $\langle E'',E'\rangle$ is the Euler form.
The restriction of $\widehat\epsilon_{r,s}$ to functions with finite support defines
an isomorphism
$$\epsilon_{r,s}: H^{(r)}\otimes H^{(s)} \lra 
\Fc_0 \bigl(\Xi_{r,s}\backslash GL_{r+s}(\Aen)/GL_{r+s}(\widehat\Oc\bigr)).$$
Recall further (\cite{moeglin-waldspurger-book}, II.1.10), the 
(pseudo-)Eisenstein series map
$$\begin{gathered}\Eis_{r,s}: \,\Fc_0 \bigl(\Xi_{r,s}\backslash GL_{r+s}(\Aen)\bigr)
\lra \Fc_0 \bigl(GL_{r+s}(K)\backslash GL_{r+s}(\Aen)\bigr), \\
\bigl(\Eis_{r,s}(f)\bigr) (g) \,\,=\,\,\sum_{\gamma\in P_{r,s}(K)\backslash GL_{r+s}(K)}
f(\gamma g), \quad \operatorname{where} \quad P_{r,s} =\begin{pmatrix} GL_r &0\\ \Mat_{s,r}& GL_s
\end{pmatrix}.
\end{gathered}
$$
 It is equivariant with respect to the right action of $GL_{r+s}(\Aen)$
and so induces a map between the spaces of invariants under $GL_{r+s}(\widehat \Oc)$.
By comparing the definitions, we see at once that~:
\begin{prop} The composition
$$\begin{gathered} H^{(r)}\otimes H^{(s)}\buildrel \epsilon_{r, s}\over\lra
  \Fc_0\bigl(\Xi_{r,s}\backslash GL_{r+s}(\Aen)\bigr)^{\GL_{r+s}(\widehat\Oc)}
\buildrel \Eis_{r,s}\over\lra \cr \buildrel \Eis_{r,s}\over\lra
\Fc_0 \bigl(GL_{r+s}(K)\backslash GL_{r+s}(\Aen)\bigr)
^{GL_{r+s}(\widehat\Oc)} \,=\, H^{(r+s)}
\end{gathered}
$$
is equal to the Hall multiplication $*$. \qed
\end{prop}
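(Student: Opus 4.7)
The plan is to verify the identity on the basis vectors $1_{E'}\otimes 1_{E''}$ with $E'\in\Bunn_r(X)$ and $E''\in\Bunn_s(X)$, which span $H^{(r)}\otimes H^{(s)}$. By definition, $\phi:=\epsilon_{r,s}(1_{E'}\otimes 1_{E''})$ is the function on $\Xi_{r,s}\backslash\GL_{r+s}(\Aen)/\GL_{r+s}(\widehat\Oc)$ equal to $\langle E'',E'\rangle$ on the unique double coset that $\theta_{r,s}$ sends to $(E',E'')$, and vanishing elsewhere. Since $P_{r,s}(K)\subset\Xi_{r,s}$ --- the unipotent radical of $\Xi_{r,s}$ is the adelic $\Mat_{s,r}(\Aen)\supset\Mat_{s,r}(K)$ --- the Eisenstein sum is well defined, and
\[
(\Eis_{r,s}\phi)(g)\,=\,\langle E'',E'\rangle\cdot\#\bigl\{\gamma\in P_{r,s}(K)\backslash \GL_{r+s}(K)\,:\,\theta_{r,s}(\gamma g)=(E',E'')\bigr\}.
\]

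The geometric core of the argument is to identify this count with the Hall number $g^V_{E',E''}$, where $V:=E_g$. Cosets in $P_{r,s}(K)\backslash\GL_{r+s}(K)$ are in canonical bijection with rank-$r$ $K$-subspaces $W$ of the generic fibre $V_K\simeq K^{r+s}$; each such $W$ is the generic fibre of a unique rank-$r$ saturated subbundle $V'\subset V$ (namely, the sheaf whose stalk at $x$ is $W\cap V_x$), and conversely every rank-$r$ subbundle of $V$ with locally free quotient arises in this way. An Iwasawa decomposition of $\gamma g$ then shows that the condition $\theta_{r,s}(\gamma g)=(E',E'')$ is equivalent to the pair of isomorphisms $V'\simeq E'$ and $V/V'\simeq E''$.

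Combining these observations,
\[
(\Eis_{r,s}\circ\epsilon_{r,s})(1_{E'}\otimes 1_{E''})(V)\,=\,\langle E'',E'\rangle\cdot g^V_{E',E''}\,=\,(1_{E'}*1_{E''})(V),
\]
the second equality being the Hall product formula \eqref{eq:g-ABC}. The main subtlety is to check that the $\Xi_{r,s}$-double coset of $\gamma g$ records exactly the pair of Levi-factor isomorphism classes and no further extension data: writing the Iwasawa form $\gamma g=\begin{pmatrix}a & 0\\ b & c\end{pmatrix}\cdot k$, the lower-left block $b\in\Mat_{s,r}(\Aen)$ encodes the extension class of $V$ as an extension of $V/V'$ by $V'$, while left multiplication by the unipotent radical of $\Xi_{r,s}$ replaces $b$ by $b+\beta a$ for arbitrary $\beta\in\Mat_{s,r}(\Aen)$; since $a\in\GL_r(\Aen)$, this action is transitive on $\Mat_{s,r}(\Aen)$, so the extension class is washed out and only the Levi data $(E',E'')$ survives.
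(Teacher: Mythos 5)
Your proof is correct and follows the same route the paper intends: the paper's "proof" is just the remark that the claim follows by comparing definitions, and you have carried out exactly that comparison, evaluating on $1_{E'}\otimes 1_{E''}$, identifying cosets in $P_{r,s}(K)\backslash GL_{r+s}(K)$ with saturated rank-$r$ subsheaves of $V=E_g$ via generic fibres, and matching the count with $g^V_{E',E''}$ and the Euler-form twist in \eqref{eq:g-ABC}. The only point to keep in mind is that the identification of the rank-$r$ factor with the subbundle (rather than the quotient) depends on the unstated convention for $g\mapsto E_g$ and $\theta_{r,s}$, which your argument fixes consistently.
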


Our operator $M$ is essentially given by the classical {\em principal series intertwiner} 
(\cite{moeglin-waldspurger-book}, II.1.6). Recall that the latter is the $GL_{r+s}(\Aen)$-
equivariant operator
\begin{equation}\label{eq:M-on-representation}
M_{r,s}: \, \Fc_0\bigl(\Xi_{r,s}\backslash GL_{r+s}(\Aen)\bigr)
\lra \Fc\bigl(\Xi_{s,r}\backslash GL_{r+s}(\Aen)\bigr),
\end{equation}
  defined by
\begin{equation}\label{eq:M-operator}
(M_{r,s}f)(g) =\,\,\int_{Z\in\Mat_{r,s}(\Aen)} f \left( \begin{pmatrix}
Z&{\bf 1}_r\\{\bf 1}_s&0\end{pmatrix}\cdot g\right) dZ,
\end{equation}
where $dZ= \prod_{i,j} dz_{ij}$ and $\int_{\Aen/K} dz_{ij}=1$. 
Compare also with the discussion in \cite{joyal-street}
for the case of a finite field.
As above, $M_{r,s}$ induces an operator on $GL_{r+s}(\widehat\Oc)$-equivariant
vectors
\begin{equation}\label{eq:M-on-invariants}
\begin{gathered} M_{r,s}\,: H^{(r)}\otimes H^{(s)} \lra \Fc\left(\Xi_{s,r}\backslash GL_{r+s}(\Aen)/
GL_{r+s}(\widehat\Oc)\right) \,= \\
= \Fc(\Bun_s(X)\times\Bun_r(X)) \,\, =\,\,  H^{(s)}\widehat\otimes H^{(r)}.\end{gathered}
\end{equation}

\begin{prop}\label{prop:MrsA}
 (a) The operator $M_{r,s}$ commutes with the  $A$-action defined via $\Delta: A\to A\otimes A$.

(b) The operator $M_{r,s}$
 takes values in $H^{\widetilde{\otimes} 2}_{\rat}$.
 
\end{prop}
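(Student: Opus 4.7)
The plan is to reduce both claims to the classical theory of principal-series intertwiners, using the fact that $H$ is generated by cusp eigenforms (Proposition \ref{prop:spectral-decomposition}).

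For (a), the first step is to identify the natural $A$-action on $H^{(r)}\otimes H^{(s)}$ induced, via the isomorphism $\epsilon_{r,s}$, by right convolution against the image of $A\hookrightarrow A^{(r+s)}$: I expect this to coincide with the action through the coproduct $\Delta\colon A\to A\otimes A$, which is a Satake-type compatibility between parabolic induction and comultiplication of the cocommutative Hopf algebra $A$. It suffices to check this on the polynomial generators $1_{\Oc_x^{\oplus i}}$, using the explicit description of the corresponding double cosets recalled in Section 3.1 together with the coproduct formula \eqref{E:coprod-er} and the Iwasawa decomposition of $GL_{r+s}(\Aen)$ along the opposite parabolic of $P_{r,s}$. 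Once this is established, (a) is immediate: $M_{r,s}$ is defined by an integral intertwiner between representations of $GL_{r+s}(\Aen)$, so it is automatically equivariant for the full adelic right action, and in particular for the $GL_{r+s}(\widehat\Oc)$-biequivariant convolution action on the spaces of invariants.

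For (b), by Proposition \ref{prop:spectral-decomposition} together with Proposition \ref{prop:hall-mult-hecke-comult}, it suffices to compute $M_{r,s}$ on elements extracted from
\[
\xi(\mathbf{t}, \mathbf{s}) \,=\, \bigl(E_{f_1}(t_1)*\cdots *E_{f_m}(t_m)\bigr)\otimes\bigl(E_{g_1}(s_1)*\cdots * E_{g_n}(s_n)\bigr),
\]
where $f_i, g_j$ are cusp eigenforms of ranks summing to $r$ and $s$ respectively. By (a), $\xi(\mathbf{t}, \mathbf{s})$ is a common eigenvector of $A$, and so are the vectors obtained by permutations of the $(r,s)$-blocks. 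The Gindikin--Karpelevich formula, realized here as the Langlands constant-term/intertwiner formula (compare the shape of \eqref{eq:constant-term-eisenstein}), expresses $M_{r,s}(\xi(\mathbf{t},\mathbf{s}))$ as the mirror family $\bigl(E_{g_1}(s_1)*\cdots * E_{g_n}(s_n)\bigr)\otimes\bigl(E_{f_1}(t_1)*\cdots *E_{f_m}(t_m)\bigr)$ times a product, over all cross-pairs $(i,j)$, of Rankin--Selberg factors of the form
\[
q^{r_i r'_j(1-g_X)}\;\frac{\LHom(f_i,g_j;\,t_i/s_j)}{\LHom(f_i,g_j;\,t_i/qs_j)},
\]
each of which is a rational function on the corresponding components of $\Sigma\times\Sigma$ by Proposition \ref{prop:rankin-selberg}. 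Extracting a fixed monomial coefficient of $\xi(\mathbf t,\mathbf s)$ and clearing denominators of these factors yields a single element $a\in k[S_1\times\cdots\times S_m\times S'_1\times\cdots\times S'_n]$ such that $a\cdot M_{r,s}$ sends this coefficient into the uncompleted tensor product; this is exactly the definition of $M_{r,s}$ landing in $\widetilde H_{S'_\bullet,S_\bullet,\rat}\subset H^{\widetilde\otimes 2}_\rat$.

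The main technical obstacle is the Gindikin--Karpelevich/Rankin--Selberg computation of the intertwiner integral \eqref{eq:M-operator} on cuspidal Eisenstein data. In the basic case $m=n=1$ the integral splits as an Euler product of local intertwiner integrals, each of which is identified with a local Rankin--Selberg zeta integral whose unramified evaluation is the local L-factor entering $\LHom$; the ratio in the displayed factor above arises from the normalization inherent in the definition of $M_{r,s}$ (bringing in a $q$-shift). The general case reduces to this one by writing $E_{f_1}(t_1)*\cdots * E_{f_m}(t_m)$ as an iterated parabolic induction and factoring $M_{r,s}$ through a sequence of simple-reflection intertwiners in the Weyl group of $GL_{r+s}$, with the product of the resulting local Rankin--Selberg factors giving the total expression above.
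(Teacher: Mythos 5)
Your overall strategy coincides with the paper's: part (a) is proved there exactly as you propose, by identifying the $\Delta$-action of $A^{(r+s)}$ with the Hecke-algebra action on the $GL_{r+s}(\widehat\Oc)$-invariants of $\Fc(\Xi_{r,s}\backslash GL_{r+s}(\Aen))$ and invoking the $GL_{r+s}(\Aen)$-equivariance of the intertwiner; and your Gindikin--Karpelevich computation on cuspidal Eisenstein data, factored through simple-reflection intertwiners and local $GL_2$ calculations, is precisely Lemma \ref{prop:M-on-eisenstein} (up to matching the exact placement of arguments in $\LHom$ with \eqref{eq:M-on-eisenstein}). The reduction to such data via Proposition \ref{prop:spectral-decomposition} is also how the paper proceeds.

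The gap is in your final step, where you assert that ``clearing denominators'' of the Rankin--Selberg factors ``is exactly the definition'' of landing in $H^{\widetilde\otimes 2}_{\rat}$. By definition, an element of $\widetilde H_{S',S}$, with $S=p(S_1\times\cdots\times S_m)$ and $S'=p(S'_1\times\cdots\times S'_n)$ components of $\frak S$, is rational iff some \emph{nonzero element of $k[S'\times S]$} moves it into $H(S')\otimes H(S)$. The ring you name, $k[S_1\times\cdots\times S_m\times S'_1\times\cdots\times S'_n]$ (a product of components of $\Sigma$), is not the ring occurring in that definition and does not act on $H^{\widetilde\otimes 2}$; it is only an integral extension of $k[S'\times S]$. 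Moreover a Laurent-polynomial denominator $Q(\mathbf{t},\mathbf{s})$ is not itself an operator on $H^{(r)}\otimes H^{(s)}$ or its completion: one must realize a multiple of it as the eigenvalue of an actual element of the acting ring. The paper supplies exactly this missing mechanism: it first finds $a\in\CC[p(S_1\times\cdots\times S_m)]$ and $b\in\CC[p(S'_1\times\cdots\times S'_n)]$ whose eigenvalue on $E_{\mathbf f}(\mathbf t)\otimes E_{\mathbf f'}(\mathbf t')$ equals $Q$, then uses that $\Delta\colon A^{(r+s)}\to A^{(r)}\otimes A^{(s)}$ is an \emph{integral} extension (Proposition \ref{prop:integral}) to produce $c\in A^{(r+s)}$ with eigenvalue $Q'$ divisible by $Q$, and finally uses part (a) to move $c$ across $M_{r,s}$, so that $c\,M_{r,s}(\cdots)$ lies in the uncompleted tensor product. (It also checks beforehand, via the determinant-order argument giving \eqref{eq:M-laurent-type}, that $M_{r,s}$ lands in the graded completion $H^{\widetilde\otimes 2}$ at all, a point you pass over.) Without some version of this eigen-element construction — either on the source using (a), or directly on the target with an element of $k[S']\otimes k[S]$ of suitable eigenvalue — the membership in $H^{\widetilde\otimes 2}_{\rat}$, which is the actual content of (b) beyond the rationality of the spectral factors, is not established.
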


\begin{proof} To prove (a), recall that $A^{(r+s)}$ is the Hecke algebra of 
  the group
 $GL_{r+s}(\Aen)$ by the subgroup $GL_{r+s}(\widehat\Oc)$. The action
   of $A^{(r+s)}$ on $H^{(r)}\widehat\otimes H^{(s)}$
 defined via $\Delta: A^{(r+s)}\to A^{(r)}\otimes A^{(s)}$ coincides 
 with the  standard Hecke algebra action   on the $GL_{r+s}(\widehat\Oc)$-invariant
 subspace in $\Fc(\Xi_{r,s}\backslash GL_{r+s}(\Aen))$.
 Since the operator $M_{r,s}$ in \eqref{eq:M-on-representation}
 is a morphism of $GL_{r+s}(\Aen)$-modules, its restriction
 to the invariant subspace commutes with the Hecke algebra action.

\vspace{.1in}

We now turn to (b). Let us first prove that $M_{r,s}$ takes values in $H^{(s)} \widetilde{\otimes} H^{(r)}$.
For $E\in \Bunn_r(X)$ and $F\in\Bunn_s(X)$ we  write
$$M_{r,s}(1_E\otimes 1_F)\,\,=\,\,\sum_{\substack{F'\in\Bunn_s(X)\\ E'\in\Bunn_r(X)}}
c_{EF}^{F'E'} \,(1_{E'}\otimes 1_{F'}).$$
It is enough to show that $c_{EF}^{F',E'}=0$ unless
\begin{equation}\label{eq:M-laurent-type}
  \deg(F')+\deg(E')=\deg(E)+\deg(F)
 \end{equation}
  Indeed, let $E,F$ correspond to
 $a\in GL_r(\Aen)$ and $b\in\GL_s(\Aen)$ and  let $f$ in \eqref{eq:M-operator}
 correspond to $1_E\otimes 1_F$, i.e., take $f$ to be the characteristic
 function of the double coset of $g_0=\begin{pmatrix} a&0\\0&b\end{pmatrix}\in GL_{r+s}(\Aen)$
 modulo $\Xi_{r,s}$ and $GL_{r+s}(\widehat \Oc)$. 
 For any element $h$ of this double coset we have $\ord\det(h) = \ord\det(a)=\ord\det(b)$. 
 Therefore for any $g$ in \eqref{eq:M-operator} such that 
 $f \left( \begin{pmatrix}
Z&{\bf 1}_r\\{\bf 1}_s&0\end{pmatrix}\cdot g\right)\neq 0$ we have
that $\ord\det(g)=\ord\det(a)+\ord\det(b)$. This proves
 \eqref{eq:M-laurent-type}.

\vskip .2cm

 Let us now recall the standard formulation of the rationality properties  
  satisfied by the $M_{r,s}$-operators. 
Let $f_i\in H^{(r_i)}_\cusp$, $i=1, ..., m$ and $f'_j\in H^{(s_j)}_\cusp$, $j=1,...,n$,
be two sequences of cusp eigenforms, with characters $\chi_i$ and $\chi'_j$ respectively, 
and let $r=\sum r_i$ and $s=\sum s_j$.   Define $E_{\bf f}({\bf t})$, $E_{{\bf f}'}({\bf t}')$  
  as in \eqref{eq:eisenstein-product}. Their values at any bundle
  on $X$ are series in ${\bf t}=(t_1, ..., t_m)$,
  ${\bf t}'=(t'_1, ..., t'_n)$, converging in
  \begin{equation}\label{eq:domain-convergence}
  |t_1|\gg \cdots\gg |t_m|, \quad |t'_1|\gg\cdots \gg |t'_n|
  \end{equation}
  to rational functions. 
  
  \begin{lemma}\label{prop:M-on-eisenstein} 
  For ${\bf t}, {\bf t}'$ in \eqref{eq:domain-convergence} the integral 
  \eqref{eq:M-operator} defining $M_{r,s}(E_{\bf f}({\bf t})\otimes E_{{\bf f}'}({\bf t}'))$
  converges to
  \begin{equation}\label{eq:M-on-eisenstein}
  M_{r,s}(E_{\bf f}({\bf t})\otimes E_{{\bf f}'}({\bf t}') )\,\,=\,\, E_{{\bf f}'}({\bf t}') \otimes E_{\bf f}({\bf t})\cdot
  \prod_{i=1}^m\prod_{j=1}^n
  q^{(1-g_X)r_is_j}{\LHom (\chi'_j, \chi_i; t'_j/t_i)\over \LHom(\chi'_j,\chi_i, t'_j/qt_i)
  }.
  \end{equation}
  \end{lemma} 
  
  \begin{proof} This is standard, we just indicate the main steps. 
  We write ${\bf r}=(r_1, ..., r_m)$, ${\bf s}=(s_1, ..., s_n)$ and
  consider the standard "block-lower triangular"
  parabolic subgroup $P_{{\bf r}, {\bf s}}\subset GL_{r+s}$ corresponding to
  the ordered partition of $r+s$ into $r_1, ..., r_m, s_1, ..., s_n$. Let $U_{{\bf r}, {\bf s}}$
  be the unipotent radical of $P_{{\bf r}, {\bf s}}$ and $L_{{\bf r}, {\bf s}}$ the
  Levi subgroup. We write $\Xi_{{\bf r}, {\bf s}}= L_{{\bf r}, {\bf s}}(K)U_{{\bf r}, {\bf s}}(\Aen)$. 
  We have the similar subgroups $P_{{\bf s}, {\bf r}}$ etc. defined by the ordered partition of
  $r+s$ into $s_1, ..., s_n, r_1, ..., r_n$.  Let $w_{{\bf r}, {\bf s}}$ be the minimal length element
  of  $S_{r+s}$, the Weyl group of $GL_{r+s}$,  transforming $P_{{\bf r}, {\bf s}}$ into $P_{{\bf s}, {\bf r}}$. 
    
  Notice that  $E_{\bf f}({\bf t})\otimes E_{{\bf f}'}({\bf t}')$
  is obtained by Eisenstein series summation from the element
  $$E_{{\bf f}, {\bf f"}}({\bf t}, {\bf t}') \,\,=\,\,\bigotimes_{i=1}^m E_{f_i}(t_i) \otimes\bigotimes_{j=1}^n E_{f'_j}(t_j)$$
  which can be regarded as a function on $\Xi_{{\bf r}, {\bf s}}\backslash GL_{r+s}(\Aen)/GL_{r+s}(\widehat \Oc)$. 
  We have the principal series intertwiner for $GL_{r+s}(\Aen)$ corresponding to $w_{{\bf r}, {\bf s}}$ which gives
  a map
  $$M_{w_{{\bf r}, {\bf s}}}: \Fc(\Xi_{{\bf r}, {\bf s}})\backslash GL_{r+s}(\Aen) \lra 
  \Fc(\Xi_{{\bf s}, {\bf r}})\backslash GL_{r+s}(\Aen).
  $$ 
  It is enough to prove that $M_{w_{{\bf r}, {\bf s}}}(E_{{\bf f}, {\bf f"}}({\bf t}, {\bf t}'))$
  converges in the domain \eqref {eq:domain-convergence} to 
  $E_{{{\bf f}'}, {\bf f}}({\bf t}', {\bf t})$ times the product of rational functions 
  in \eqref{eq:M-on-eisenstein}. 
  This is obtained by splitting $M_{w_{{\bf r}, {\bf s}}}$ first, into 
  the tensor
  product over $x\in X$,  of local intertwiners $M_{w_{{\bf r}, {\bf s}},x}$ for unramified principal series representations of
  $GL_{r+s}(\widehat K_x)$. The claim is that
    each local intertwiner contributes 
  the product over $i,j$ of the ratios of the   Euler factors at $x$
  for the   LHom functions in \eqref{eq:M-on-eisenstein}, times
  a constant coming from comparing the    Haar measure $da$ on $\Aen$ normalized by
  $\int_{\Aen/K}da=1$ and the product of the Haar measures $da_x$ on $\widehat K_x$,
  normalized by $\int_{\widehat\Oc_x} da_x=1$. This constant accounts for the
  power of $q$ in \eqref{eq:M-on-eisenstein}. Now, to prove the claim about each local intertwiner, 
  one represents it as the
   composition of $rs$ intertwiners
  corresponding to simple reflections in $S_{r+s}$. For each simple reflection the calculation
  becomes elementary, 
  reducing to the case of $GL_2(\widehat K_X)$,  and   yields a factor involving one eigenvalue
  of some $\chi_i$ and one eigenvalue of some $\chi'_j$; the product of these factors
  over all the eigenvalues
  gives the ratio of the two Euler factors as in \eqref{eq:M-on-eisenstein}. 
   \end{proof}
   
   \vskip .2cm

By Lemma~\ref{prop:M-on-eisenstein}, $M_{r,s}$ maps $H(S) \otimes H(S')$ to $H(S') \widetilde{\otimes} H(S)$ for any components
$S,S'$ of $\frak S$. Because the functions $\LHom$ are rational, there are Laurent polynomials $P(\mathbf{t},\mathbf{t'}), Q(\mathbf{t},\mathbf{t}')$ 
such that
 $$Q({\bf t}, {\bf t'})\cdot M_{r,s}(E_{\bf f}({\bf t})\otimes E_{{\bf f'}}({\bf t}'))\,\,=\,\, P({\bf t}, {\bf t'})\cdot
  (E_{{\bf f}'}({\bf t}')\otimes E_{{\bf f}}({\bf t}')).$$
Arguing as in the proof of Theorem~\ref{thm:hall-as-R-matrix} (a) above, we see that there exists elements $a \in \CC[p(S_1 \times \cdots \times S_m)]$ and
$b \in \CC[p(S'_1 \times \cdots \times S'_n)]$ such that $$(a \otimes b) (E_{\mathbf{f}}(\mathbf{t}) \otimes E_{\mathbf{f}'}(\mathbf{t}')) = 
Q(\mathbf{t},\mathbf{t}') E_{\mathbf{f}}(\mathbf{t}) \otimes E_{\mathbf{f}'}(\mathbf{t}').$$
Since $\Delta: A \to A \otimes A$ is an integral extension we can find $c \in A^{(r+s)}$ such that
$c(E_{\mathbf{f}}(\mathbf{t}) \otimes E_{\mathbf{f}'}(\mathbf{t}')) = 
Q'(\mathbf{t},\mathbf{t}') E_{\mathbf{f}}(\mathbf{t}) \otimes E_{\mathbf{f}'}(\mathbf{t}')$
for some Laurent polynomial $Q'(\mathbf{t}, \mathbf{t}')$ divisible by $Q(\mathbf{t}, \mathbf{t}')$. Because $M_{r,s}$ commutes with
the action of $A^{(r+s)}$, it follows that
$$c M_{r,s} (E_{\mathbf{f}}(\mathbf{t}) \otimes E_{\mathbf{f}'}(\mathbf{t}')) = 
P'(\mathbf{t},\mathbf{t}') E_{\mathbf{f}'}(\mathbf{t}') \otimes E_{\mathbf{f}}(\mathbf{t})$$
for some nonzero Laurent polynomial $P'(\mathbf{t}, \mathbf{t}')$. But then for any tuple $(d_1, \ldots, d_m, d'_1, \ldots d'_n)$, we have
$$c\, M_{r,s}((E_{f_1,d_1} * \cdots * E_{f_m,d_m}) \otimes (E_{f'_1,d'_1} * \cdots * E_{f'_n,d'_n})) \in H^{(s)} \otimes H^{(r)}$$
which implies that
$$ M_{r,s}((E_{f_1,d_1} * \cdots * E_{f_m,d_m}) \otimes (E_{f'_1,d'_1} * \cdots * E_{f'_n,d'_n})) \in H^{\widetilde{\otimes} 2}_{\rat}.$$
This finishes the proof of Proposition~\ref{prop:MrsA}.
\end{proof}

\vskip .2cm

We are now in position to define the operator $M : H^{\otimes 2} \to H^{\otimes 2}_{\rat}$. On each $H^{(r)} \otimes H^{(s)}$
it is given by the composition $H^{(r)} \otimes H^{(s)} \to H^{\widetilde{\otimes} 2}_{\rat} \to H^{\otimes 2}_{\rat}$.

Property (b1) is established in Proposition~\ref{prop:MrsA} (a).
The property $M\circ M=\Id$ from part (b2) 
follows from (\ref{eq:M-on-eisenstein}) and the functional equations satisfied by the LHom-functions, see
(\ref{E:functional-equation1}), (\ref{E:functional-equation2}). As for the Yang-Baxter equation,
it may be deduced directly from (\ref{eq:M-on-eisenstein}), or from the fact that the two sides, considered as operators
on each  $H^{(r)}\otimes H^{(s)}\otimes H^{(t)}$, are given by
integration over the same domain (appropriate Schubert cell
in the group $GL_{r+s+t}$, represented in two different ways 
as a product of smaller Schubert cells).  This proves (b2).

\vskip .2cm

Let us now prove (b3). As before,
it suffices to compare the values of $\mu$ and $\mu\circ M$ on
elements of $H\otimes H$ which are some Laurent coefficients
of $E_{\bf f}({\bf t})\otimes E_{{\bf f}'}({\bf t}')$. In this case
the statement expresses the functional equation for Eisenstein
series on the group $GL_{r+s}(\Aen)$ with respect to
the element $w_{{\bf  r}, {\bf s}}$ of $S_{r+s}$ described in the
proof of Lemma~\ref{prop:M-on-eisenstein}, namely
\begin{equation}\label{functional-equation-eisenstein}
E_{\bf f}({\bf t})* E_{{\bf f}'}({\bf t}') \,\,=\,\, E_{{\bf f}'}({\bf t}') * E_{\bf f}({\bf t})\cdot
  \prod_{i=1}^m\prod_{j=1}^n
  q^{(1-g_X)r_is_j}{\LHom (\chi'_j, \chi_i; t'_j/t_i)\over \LHom(\chi'_j,\chi_i, t'_j/qt_i)
  }.
\end{equation}

In the same vein, (b4) follows from the functional equation
of the LHom-functions and the formula \eqref{eq:constant-term-eisenstein}. 

Finally, (b5) is another consequence of the formula for
the constant term of an Eisenstein series. Indeed, we need to prove that
for  any $r_1+r_2=r$, any $u\in H^{(r_1)}$, $v\in H^{(r_2)}$,  and any
decomposition $r=r'+r''$ we have
$$\Delta_{r',r''}(u*v)\,\,=\,\,\sum \Delta_{r_{11}, r_{12}}(u) \, \tilde *\, \Delta_{r_{21}, r_{22}}(v),
$$
where $\tilde *$ is the $M$-twisted multiplication on $(H\otimes H)_\rat$ and the sum is
over $r_{ij}\in \ZZ_+$ such that
$$\begin{gathered}
r_{11}+r_{12}=r_1, \quad r_{21}+r_{22}=r_2,\\
r_{11}+r_{21}=r, \quad r_{12}+r_{22}=r'.
\end{gathered}
$$
As before, it is enough to assume that $u$ (resp. $v$) is the product of an initial
(resp. final) segment in some product $E_{f_1, d_1}* \cdots E_{f_m, d_m}$, and in this
case the statement follows from \eqref{eq:M-on-eisenstein} and \eqref{eq:constant-term-eisenstein}.
This finishes the proof of Theorem \ref{thm:hall-as-R-matrix}.

\vspace{.1in}

Let us sketch, for the reader's convenience, an alternative proof of Theorem~\ref{thm:hall-as-R-matrix} which does not
directly make use of the machinery or results of \cite{moeglin-waldspurger-book}. Recall that $H_{\coh}$ is a topological
bialgebra with coproduct $\Delta_{\coh}: H_{\coh} \to H_{\coh}^{\widetilde{\otimes} 2}$. If $f$ is a cuspidal eigenform then 
$$\Delta_{\coh}(E_f(t))=1 \otimes E_f(t) +E_f(t) \otimes  \Psi_f(t) $$
where
\begin{equation}\label{eq:psift}
\Psi_f(t)=\sum_{\mathcal{T}}=\sum_{\mathcal{T}} t^{deg \mathcal{T}}\overline{\chi}_f(\mathcal{T}) |\text{Aut}\;\mathcal{T}| 1_{\mathcal{T}}
\end{equation}
where the sum ranges over all torsion sheaves $\mathcal{T}$ on $X$ (see \cite{K}, Section (3.2)). Observe that
$\Psi_f(t)=\Psi_{\overline{\chi_f}}(t)$ where $\chi_f$ is the caharacter of $A$ associated to $f$, and where $\Psi_{\rho}(t)$ is defined as in Section 2.

Langlands' formula
\eqref{eq:constant-term-eisenstein} for the constant term of Eisenstein series may now be deduced from the following Lemma using
Proposition~\ref{prop:properties-heckeop} and standard properties of bialgebras.

\vspace{.1in}

\begin{lemma}\label{lem:psif-Eis} Let $f, g$ be cuspidal eigenforms. Then
$$E_g(t) * \Psi_f(t') = \frac{\LHom(f,g,t'/t)}{\LHom(f,g, t'/qt)} \Psi_f(t') * E_g(t).$$
\end{lemma}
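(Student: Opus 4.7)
My plan is to work inside $H_\coh$ using the cross-product decomposition $H_\coh \simeq A \ltimes H$ of Proposition \ref{prop:Hcoh-cross-product}. Unwinding the cross-product multiplication formula (\ref{eq:cross-product-def}) with $\Delta a = \sum a_{(1)} \otimes a_{(2)}$, I obtain the general commutation rule (equivalent to (\ref{eq:hall-product-cross2}))
\[
x * a \,=\, \sum_{(a)} a_{(1)} * \bigl(x \triangleleft a_{(2)}\bigr), \qquad x \in H,\ a \in A,
\]
where $\triangleleft$ denotes the Hecke action of $A$ on $H$ used in the isomorphism $\iota:A\ltimes H\to H_\coh$.

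The first input is group-likeness of the series $\Psi_f(t') = \Psi_{\overline{\chi_f}}(t') \in A[[t']]$: $\Delta\Psi_f(t') = \Psi_f(t') \otimes \Psi_f(t')$. This follows, via the decomposition $A=\bigotimes_{x\in X}A_x$, from the lemma inside the proof of Proposition \ref{prop:local-product-witt}(a). Applied to $x = E_g(t)$ and $a = \Psi_f(t')$ the sum collapses to
\[
E_g(t) * \Psi_f(t') \,=\, \Psi_f(t') * \bigl(E_g(t) \triangleleft \Psi_f(t')\bigr).
\]

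The second input is that $E_g(t)=\sum_V g(V)\,t^{\deg V}\,1_V$ is a joint eigenvector of the Hecke action: a direct calculation using (\ref{eq:heckeop})/(\ref{eq:dualheckeop}) together with the cuspidal eigenequation for $g$ shows that $E_g(t) \triangleleft 1_\Fc$ is a scalar multiple of $E_g(t)$ whose $(t,\deg\Fc)$-dependence is the expected monomial. Formally applying this to the series $\Psi_f(t')$ then yields
\[
E_g(t) \triangleleft \Psi_f(t') \,=\, \Bigl[\sum_{\Fc \in \Tor(X)}(t'/t)^{\deg\Fc}\,\overline{\chi_f}(\Fc)\,\chi_g(\Fc)\,|\Aut\Fc|\Bigr]\cdot E_g(t).
\]

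The final step, and the main obstacle, is to identify this bracketed scalar with $\LHom(f,g,t'/t)/\LHom(f,g,t'/qt)$. I will factor the sum over closed points using $\Tor(X)=\bigoplus_x\Tor_x(X)$; each local factor at $x$ is then evaluated by Proposition \ref{prop:local-product-witt}(b)--(c) as $B_{\overline{\chi_{f,x}}\boxtimes\chi_{g,x}\boxtimes\varkappa_x}\!\bigl((t'/t)^{\deg x}\bigr)$ with $B_{\varkappa_x}(s)=(1+s)/(1+q_xs)$. Recombining these local factors via the Euler product definition of $\LHom$ in Section \ref{rankin-selberg}, and carefully tracking the precise twist relating the ``conjugate'' character $\overline{\chi_f}$ to the dual character $\chi_f^*$ appearing in $\LHom(f,g,\cdot)=L(\chi_f^*\boxtimes\chi_g,\cdot)$, delivers the claimed ratio. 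The delicate point throughout is the bookkeeping of the signs and $\GG_m$-twists by $-q$ coming from $\varkappa_x$, which are precisely what convert the ``local'' expression into the asymmetric $\LHom$-ratio $\LHom(f,g,\cdot)/\LHom(f,g,\cdot/q)$.
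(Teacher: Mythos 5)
Your proposal is correct in outline, but it is worth noting that it supplies an argument the paper itself does not: the paper's entire proof of this lemma is a citation to \cite{K}, Section 4.1, whereas you reconstruct that computation from the paper's internal machinery, namely the cross-product commutation relation \eqref{eq:hall-product-cross2}, the group-likeness of $\Psi_\chi(t)$, the Hecke eigenproperty of $E_g(t)$, and Proposition \ref{prop:local-product-witt} (and its global version in Section 2.9), whose $\varkappa$-factor is exactly what turns the Euler product into the ratio $\LHom(f,g,\cdot)/\LHom(f,g,\cdot/q)$ rather than a single $\LHom$-value. This is essentially the same computation as in the cited source, so the skeleton is sound; the added value of your version is that it stays inside the paper's Witt-scheme formalism. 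Two points deserve more care than a ``direct calculation'': first, the action appearing in \eqref{eq:hall-product-cross2} is via the dual Hecke operators $T^*_{\Fc}$ (note the tension with the convention \eqref{action}), so the eigen-statement for $E_g(t)$ needs Proposition \ref{prop:properties-heckeop}(d) (duality of bundles sends cusp eigenforms to cusp eigenforms) or multiplicity one, and it produces monomials $t^{+\deg\Fc}$ with a dual-twisted eigenvalue, so your displayed intermediate formula with $(t'/t)^{\deg\Fc}$ and $\chi_g(\Fc)$ is only correct after fixing this convention; second, the identification of the resulting scalar with the $\LHom$-ratio requires knowing the precise normalization of $\overline{\chi}_f$ in \eqref{eq:psift}, which the paper itself only specifies through \cite{K}. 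You flag both issues as bookkeeping, and they are indeed of that nature, so I regard the proposal as a valid, self-contained route to the lemma.
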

\begin{proof} This follows from \eqref{eq:psift} and the definition of the $\LHom$ function, see \cite{K}, Section~4.1. Note that
the formulas here differ slightly from those of \cite{K} since we do not twist the coproduct as in \textit{loc. cit.}
\end{proof}

Next, we define the principal intertwiner map as follows~: 
\begin{equation}
 \begin{split}
  M_{r,s}~: H^{(r)} \otimes H^{(s)} &\to H^{(s)} \widetilde{\otimes} H^{(r)}\\
u \otimes v &\mapsto p_2(\Delta_{\coh, 0,r}(u) * \Delta_{\coh,s,0}(v))
 \end{split}
\end{equation}
where $p_2: H^{\widetilde{\otimes} 2}_{\coh} \to H^{\widetilde{\otimes} 2}$ is the projection map, and where $*$ is the twisted
multiplication on $H^{\widetilde{\otimes} 2}_{\coh}$ (see Section~2.4). Formula (\ref{eq:M-on-eisenstein}) is also a formal consequence
of Lemma~\ref{lem:psif-Eis} and standard properties of bialgebras. The remaining statements of Theorem~\ref{thm:hall-as-R-matrix} are deduced,
as above, from \eqref{eq:constant-term-eisenstein} and \eqref{eq:M-on-eisenstein}.

\vskip3mm

\subsection{The Hermitian scalar product and proof of Theorem \ref{thm:main}.}
Consider the grading of $H$ by ``degree of non-cuspidality"
\begin{equation}
\begin{gathered}
H\,=\bigoplus_{n\geqslant 0} H_n, \quad H_n\,=\,\Im\bigl\{\mu^{(n-1)}: 
(H_\cusp)^{\otimes n} \to H\bigr\},\\
H_n=\bigoplus_{S\in \pi_0(\frak S^n)} H(S).
\end{gathered}
\end{equation}
Here $H(S)$ is as in \eqref{eq:component-decomposition-H}. 
In particular, $H_0=k$ and $H_1=H_\cusp$.  For each $n\geqslant 0$
the tensor product $H^{\otimes n}$ has then a natural $\ZZ_+^n$-grading.
 Note that the action of the Hecke
algebra $A$ on $H^{\otimes n}$ preserves this $\ZZ_+^n$-grading and  therefore induces
a $\ZZ_+^n$-grading on $H^{\otimes n}_\rat$. 

\begin{lemma} For any $n\geqslant 0$, consider the $\ZZ_+$-grading 
on $H^{\otimes n}$
by total degree. Then 
the iterated comultiplication
$\Delta^{(n-1)}: H\to H^{\otimes n}_\rat$ preserves the grading.  
\end{lemma}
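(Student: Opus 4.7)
The plan is to argue by induction on $n$, reducing everything to the case $n=2$ via coassociativity. The base case $n=1$ is trivial since $\Delta^{(0)}=\mathrm{Id}_H$. For the inductive step, I would factor $\Delta^{(n-1)} = (\Delta \otimes \mathrm{Id}^{\otimes (n-2)}) \circ \Delta^{(n-2)}$: if $\Delta^{(n-2)}$ already sends $H_m$ into the total-degree-$m$ part of $H^{\otimes (n-1)}_\rat$, then applying $\Delta$ to the first tensor factor and invoking the $n=2$ case (applied degree by degree) finishes the step. Thus the real content lies in proving that $\Delta(H_m) \subset \bigoplus_{a+b=m}(H_a \otimes H_b)_\rat$.

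For this, the key tool is Theorem \ref{thm:hall-as-R-matrix}(b5): $\Delta$ is an algebra homomorphism when the target is equipped with the $M$-twisted product $\tilde{*}$. Writing any $x \in H_m$ as $x = x_1 * \cdots * x_m$ with each $x_i \in H_\cusp = H_1$ (which is possible by the very definition of $H_m$), I would then expand
\[
\Delta(x) \,\,=\,\, \Delta(x_1)\,\tilde{*}\,\cdots\,\tilde{*}\,\Delta(x_m).
\]
Since each $x_i$ is primitive, $\Delta(x_i) = x_i \otimes 1 + 1 \otimes x_i$ lies in $(H_1 \otimes H_0) \oplus (H_0 \otimes H_1)$, i.e., has total non-cuspidality degree exactly $1$. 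It then suffices to know that $\tilde{*}$ is additive with respect to the $\ZZ_+^2$-bigrading on $H^{\otimes 2}_\rat$ induced by $H = \bigoplus_n H_n$.

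This additivity splits into two verifications. First, the Hall product itself satisfies $H_a * H_c \subset H_{a+c}$, which is immediate from associativity of $*$ and the definition $H_n = \Im(\mu^{(n-1)}|_{H_\cusp^{\otimes n}})$. Second, $M$ sends $(H_b \otimes H_c)_\rat$ into $(H_c \otimes H_b)_\rat$: by Theorem \ref{thm:hall-as-R-matrix}(b1), $M$ commutes with the $A$-action, hence preserves scheme-theoretic supports on $\frak S \times \frak S$, and by Proposition \ref{prop:inclusion-supp-H} together with the component decomposition of $\frak S$, each $H_n$ is precisely the support-theoretic summand corresponding to $\frak S^n$. Combining the two, $(H_a \otimes H_b)_\rat \,\tilde{*}\, (H_c \otimes H_d)_\rat \subset (H_{a+c} \otimes H_{b+d})_\rat$, which shows that every summand obtained by expanding $\Delta(x_1) \,\tilde{*}\, \cdots \,\tilde{*}\, \Delta(x_m)$ has total bidegree $m$, as required. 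The one point requiring some care — and the main obstacle — is ensuring that the successive applications of $\tilde{*}$ in this expansion (and of $\Delta$ in the inductive step) remain within $H^{\otimes n}_\rat$ rather than landing in some larger completion; however, this is handled by the same device used throughout Section \ref{sec:spectral-and-proof}, namely that $M$ commutes with the Hecke action so that denominators from one tensor factor can be absorbed into the other via $\Delta : A \to A \otimes A$.
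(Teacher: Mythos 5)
Your proposal is correct in outline, but it takes a more formal route than the paper and one step of it is justified too quickly. The paper's own proof is shorter: it reduces to $n=2$, assumes $k$ algebraically closed so that $H$ is generated by the coefficients $E_{f,d}$ of cuspidal Eisenstein series, and then reads off the statement directly from Langlands' constant term formula \eqref{eq:constant-term-eisenstein}, whose right-hand side for a product of $m$ cuspidal series is visibly a sum of terms of bidegree $(m',m'')$ with $m'+m''=m$. You instead run a bialgebra argument on top of Theorem \ref{thm:hall-as-R-matrix}: write elements of $H_m$ as (sums of) products of $m$ cuspidal elements, use (b5) to expand $\Delta$ multiplicatively for the $M$-twisted product, use primitivity of cuspidal elements, and conclude from additivity of the non-cuspidality degree under $\tilde *$. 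Since (b5) was itself proved from \eqref{eq:constant-term-eisenstein}, the analytic input is the same; what your packaging buys is a purely formal deduction that would apply to any rational bialgebra generated by primitive elements once the compatibility of $M$ with the grading is known.

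The step that needs repair is your claim that, because $M$ commutes with the $A$-action (b1), it ``preserves scheme-theoretic supports on $\frak S\times\frak S$'' and hence maps $(H_b\otimes H_c)_\rat$ into $(H_c\otimes H_b)_\rat$. Property (b1) concerns the $A$-action through $\Delta:A\to A\otimes A$, so it only controls the support over $\frak S$ (the Witt-sum $\boxplus$ of the two supports), not the $A\otimes A$-support over $\frak S\times\frak S$: two bidegrees $(b,c)$ and $(b',c')$ with $b+c=b'+c'$ can have components whose Witt-sums coincide (the underlying multisets of components of $\Sigma$ need only add up to the same thing), so the bidegree swap does not follow from (b1) alone. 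There are two easy fixes. Either observe that you only need $M$ to preserve the \emph{total} degree, and this does follow from (b1): each summand $(H(S)\otimes H(S'))_\rat$ with $S\subset\frak S^b$, $S'\subset\frak S^c$ is, as an $A$-module via $\Delta$, supported in $p(S\times S')\subset\frak S^{b+c}$, and the $\frak S^n$ are pairwise disjoint by Proposition \ref{prop:embedding-sym-sigma}; total-degree preservation is all your expansion requires. Or invoke the stronger statement recorded in the paper right after Lemma \ref{prop:M-on-eisenstein} (a consequence of formula \eqref{eq:M-on-eisenstein}), namely that $M_{r,s}$ maps $H(S)\otimes H(S')$ into $H(S')\,\widetilde\otimes\, H(S)$, which gives the bidegree swap you asserted. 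With either repair your argument is complete; note also that the same support reasoning, combined with Proposition \ref{prop:hall-mult-hecke-comult}(b), yields the lemma even more directly, since the grading by non-cuspidality degree is detected by the $A$-support.
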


\begin{proof} It is enough to consider the case $n=2$. Further, it is enough
to assume that $k$ is algebraically closed, so $H$ is generated by the elements
$E_{f,d}\in H_1$ as in \eqref{eq:elements-E-f-d}. So it is enough to prove
that $\Delta(E_{f_1, d_1}*\cdots E_{f_m, d_m})$ lies in the component
of $H^{\otimes 2}_\rat$ of total degree $m$. This follows from
\eqref{eq:constant-term-eisenstein}.
\end{proof}

Let 
$$\omega_n = p_{1,\cdots 1}\circ\Delta^{n-1}: \, H_n \lra (H_1)^{\otimes n}_\rat$$
be the composition of $\Delta_{n-1}$ and the projection on the component
of multidegree $(1,...,1)$. Let
$$\omega \,=\,\bigoplus_{n\geqslant 0}\, \omega_n: \,\,\, H\lra 
\  \bigoplus_n (H_1)^{\otimes n}_\rat,
$$
be the direct sum of the $\omega_n$. 
 Theorem \ref{thm:main} follows from the next fact.

\begin{prop} (a) There is a unique rational bihomomorphism
$\tilde c:\frak S\times\frak S\to\Bbb A^1_k$ whose restriction to $\Sigma\times\Sigma$ is equal
 to the function $c\in k(\Sigma\times\Sigma)^\times$
given by \eqref{eq:rankin-selberg-c}.

(b) The map $\omega$ takes the Hall multiplication in $H$ into the
shuffle multiplication corresponding to $\tilde c$.

(c) $\omega$ is a monomorphism of vector spaces. 
\end{prop}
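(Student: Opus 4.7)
The proof proceeds in the order (a), (b), (c), with part (b) being the main computational content and part (c) relying on combining (b) with the $M$-commutativity proved in Theorem~\ref{thm:hall-as-R-matrix}.

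For part (a), uniqueness is immediate from the fact that $\Sigma = \frak S^1$ generates $\frak S$ as an $\NN$-graded semigroup under Witt addition $\boxplus$: any bihomomorphism $\frak S \times \frak S \to \AAA^1_k$ is determined by its restriction to $\Sigma \times \Sigma$. For existence, I would extend $c$ by the multiplicativity rule \eqref{c-multiplicativity} to a rational function on $\Sym(\Sigma) \times \Sym(\Sigma)$, then descend it along the regular, componentwise birational morphism $\aen \times \aen$ (the birationality is recorded just after Proposition~\ref{prop:embedding-sym-sigma}). The resulting function is a bihomomorphism because $c$ was itself built from the Rankin-Selberg $\LHom$-functions using operations in the ring scheme $W_X$, in which $\boxtimes$ distributes over $\boxplus$.

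For part (b), it is enough to verify the claim on products of cuspidal Eisenstein generating series $E_{f_1}(t_1) * \cdots * E_{f_n}(t_n)$, since their Laurent coefficients generate $H$. The key computation is an iterated application of Langlands' constant term formula \eqref{eq:constant-term-eisenstein}, which may be carried out directly or by using the primitivity $\Delta(f) = f \otimes 1 + 1 \otimes f$ of each cuspidal $f$ together with the rational bialgebra structure of $H$ from Theorem~\ref{thm:hall-as-R-matrix}(b5). Projecting to multidegree $(1, \ldots, 1)$, the resulting expression is a sum indexed by $S_n$ in which each permutation $\sigma$ contributes the permuted tensor product $E_{f_{\sigma(1)}}(t_{\sigma(1)}) \otimes \cdots \otimes E_{f_{\sigma(n)}}(t_{\sigma(n)})$ weighted by a product of ratios of $\LHom$-factors indexed by the inversions of $\sigma$. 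By the definition of $c$ in \eqref{eq:rankin-selberg-c}, this is precisely the shuffle multiplication $\sum_{\sigma \in S_n} R^c_\sigma$ applied to $E_{f_1}(t_1) \otimes \cdots \otimes E_{f_n}(t_n)$, which is what is required.

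For part (c), I combine part (b) with the $M$-commutativity of Hall multiplication. Since $\omega$ respects the grading $H = \bigoplus_n H_n$, it is enough to prove that each $\omega_n : H_n \to (H_1)^{\otimes n}_\rat$ is injective. The map $\mu^{(n-1)} : H_1^{\otimes n} \twoheadrightarrow H_n$ is surjective by the definition of $H_n$, and part (b) gives the identity $\omega_n \circ \mu^{(n-1)} = \sum_{\sigma \in S_n} R^c_\sigma$. On the other hand, iterating Theorem~\ref{thm:hall-as-R-matrix}(b3) yields $\mu^{(n-1)} \circ R^c_\sigma = \mu^{(n-1)}$ as rational maps into $H_{n,\rat}$ for every $\sigma \in S_n$, so summing gives $\mu^{(n-1)} \circ \sum_\sigma R^c_\sigma = n! \cdot \mu^{(n-1)}$. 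If $x = \mu^{(n-1)}(y) \in H_n$ satisfies $\omega_n(x) = 0$, then $\sum_\sigma R^c_\sigma(y) = 0$ by the first identity, and $n! \cdot x = \mu^{(n-1)}(\sum_\sigma R^c_\sigma(y)) = 0$ by the second, forcing $x = 0$ as $\mathrm{char}(k) = 0$.

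The main anticipated obstacle is the combinatorial bookkeeping in part (b): one must match, term by term, the sum arising from iterated Langlands constant terms with the sum defining the shuffle product, and any mismatch in conventions between \eqref{eq:constant-term-eisenstein} and \eqref{eq:rankin-selberg-c} would derail the entire argument. All subsequent steps in part (c) depend crucially on the outcome of this identification.
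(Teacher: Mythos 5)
Your parts (a) and (b) are correct and essentially identical to the paper's argument: (a) is the pushforward/bi-multiplicative extension of $c$ along the birational maps $\aen_\nu$, and (b) is the iterated constant-term formula \eqref{eq:constant-term-eisenstein} combined with Theorem \ref{thm:hall-as-R-matrix}(b5), projected to multidegree $(1,\dots,1)$.

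Part (c), however, has a genuine gap. All the identities you invoke, $\omega_n\circ\mu^{(n-1)}=\sum_{\sigma}R^c_\sigma$ and $\mu^{(n-1)}\circ R^c_\sigma=\mu^{(n-1)}$, are identities of \emph{rational} morphisms, i.e., they hold after passing to $H_{n,\rat}$ (you say so yourself: ``as rational maps into $H_{n,\rat}$''). Consequently, from $\omega_n(x)=0$ your computation yields $n!\cdot x=0$ only in $H_{n,\rat}$, i.e., it shows that the image of $x$ under the canonical map $H_n\to H_{n,\rat}$ vanishes. To conclude $x=0$ in $H_n$ you need this canonical map to be injective, equivalently that each $H(S)$ is a torsion-free $k[S]$-module. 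But this is exactly Corollary \ref{C:torsionfree}, which the paper explicitly postpones (``we will prove later \dots that the canonical map $H\to H_\rat$ is an embedding'') and then deduces \emph{from} the injectivity of $\omega$. So your argument for (c) is circular: it silently uses the torsion-freeness that injectivity of $\omega$ is supposed to establish, and without it you have only shown that $x$ is a torsion element.

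The paper's route for (c) avoids the rational completion altogether: assume $k=\CC$, view $\Delta^{(n-1)}(F)$ as an element of $H^{\widetilde\otimes n}\subset\Fc(\Bun(X)^n)$, and pair it, via the positive definite Hermitian orbifold product \eqref{hermitianproduct}, against $\phi=\sum f_{i_1}\otimes\cdots\otimes f_{i_n}\in H_1^{\otimes n}$. Adjointness of $\mu$ and $\Delta$ gives $(\Delta^{(n-1)}(F),\phi)_{\Herm}=(F,F)_{\Herm}>0$; since $\phi$ lies in multidegree $(1,\dots,1)$, only the component $\omega_n(F)$ contributes, so $\omega_n(F)\neq 0$. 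If you want to salvage your approach, you must either first prove torsion-freeness of $H(S)$ by an independent argument or replace the last step by a positivity (or other non-localized) argument of this kind.
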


\begin{proof}  Part (a) is obvious. Indeed, start with $c:\Sigma\times\Sigma\to\Bbb A^1_k$
and extend it to $c:\Sym(\Sigma)\times\Sym(\Sigma)\to\Bbb A^1_k$ by bi-homomorphicity.
Then, let $\tilde c$ be the pushforward of $c$. It is still a rational bihomomorphism.

Part (b) is equivalent to the formula for the constant
term of Eisenstein series with respect to general parabolic
subgroups in $GL_r$. It can be deduced 
 from Theorem \ref{thm:hall-as-R-matrix} (b5). Indeed, 
we assume that $k$ is algebraically closed and look at generators
$E_{f_i, d_i}$ for 
$f_i\in H^{(r_i)}_\cusp$, $i=1,...,m $.  Then the $M$-compatibility
of the product and coproduct in $H$ together with 
\eqref{eq:constant-term-eisenstein} implies the  constant term formula 
in the form
\begin{equation}
\begin{gathered}
\Delta^{(n-1)}\bigl(E_{f_1}(t_1)*\cdots * E_{f_n}(t_n)\bigr) \,\,=\cr
=\sum_{\sigma\in S_n}\biggl[ \prod_{\substack{i<j \\ 
\sigma(i)^{-1} > \sigma^{-1}(j)}}
q^{r_i r_j (1-g_X)} 
{\LHom(f_{j}, f_{i}, t_{j}/t_{i})\over 
\LHom(f_{j}, f_{i}, t_{j}/qt_{i})}\biggr]
\bigl(E_{f_{\sigma(1)}}(t_{\sigma(1)})\otimes \cdots\\
\cdots\otimes E_{f_{\sigma(n)}}(t_{\sigma(n)})\bigr),
\end{gathered}
\end{equation}
which is precisely the formula for the shuffle product on the generators.
Since both the Hall product and the shuffle product are associative,
claim (b) follows.

To see part (c), it is enough to assume $k=\CC$, which we will. Let
$F=\sum f_{i_1}*\cdots * f_{i_n}$ be a nonzero element of $H_n$, 
so $f_{i_\nu}\in H_1=H_\cusp$.
To show that
$\Delta^{(n-1)}(F)$ is nonzero, it is enough to consider it as an element of
$H^{\widetilde\otimes n}\subset\Fc(\Bun(X)^n)$ and to find 
$\phi\in \Fc_0(\Bun(X)^n) =
H^{\otimes n}$ such that $(\Delta^{(n-1)}(F),\phi)_{\Herm} \neq 0$
(orbifold Hermitian product \eqref {hermitianproduct} of functions on 
the orbifold $\Bun(X)^n$).
Let us take $\phi = \sum f_{i_1}\otimes \cdots \otimes f_{i_n}$. Then
the adjointness of the product and coproduct gives
$$(\Delta^{(n-1)}(F), \phi)_{\Herm} \,\,=\,\,(F,F)_{\Herm} \,\, > \,\,0$$
as the Hermitian product \eqref {hermitianproduct} is positive definite. 
\end{proof} 

\vspace{.2cm}
Note that the shuffle algebra $Sh(\Sigma,c)$ is the same as the 
generalized shuffle algebra $Sh(\frak S,\tilde c)$.
\vspace{.2cm}

\begin{cor}\label{C:torsionfree}
For any component $S$ of $\frak S$, the $k[S]$-module $H(S)$ is torsion free.
\end{cor}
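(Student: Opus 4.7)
The plan is to exploit the injection $\omega_n\colon H_n \hookrightarrow (H_1^{\otimes n})_{\rat}$ established in the proof of Theorem~\ref{thm:main}. Fix a component $S$ of $\frak S$; say $S \subset \frak S^\nu$ with $n = |\nu|$, so that $H(S) \subset H_n$ and $\omega_n|_{H(S)}$ is injective (this is the content of part (c) of the final proposition of the theorem, applied to $H_n$).

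The first step is to unpack the target. By Proposition~\ref{prop:alg-mult-one}(a), each $H(T)$ for a component $T$ of $\Sigma$ is a free $k[T]$-module of rank one, so
\[
(H_1^{\otimes n})_{\rat} \,=\, \bigoplus_{(S_1, \ldots, S_n)} M_{(S_1, \ldots, S_n)}, \qquad M_{(S_1,\ldots,S_n)} \simeq k(S_1 \times \cdots \times S_n),
\]
a direct sum of rank-one free modules over the corresponding function fields. The $A$-action on each summand is given by scalar multiplication through the iterated coproduct $A \to k[S_1 \times \cdots \times S_n]$ (i.e., via the Witt addition $p\colon S_1 \times \cdots \times S_n \to W_X$).

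The next step is to pin down which summands can meet $\omega_n(H(S))$. By Proposition~\ref{prop:spectral-decomposition} and the grading by components of $\frak S$ (Proposition~\ref{prop:decompsupport}), $H(S)$ is spanned by Hall products $f_1 * \cdots * f_n$ of cusp eigenforms $f_i \in H(T_i)$ with $T_i \subset \Sigma$ satisfying $p(T_1 \times \cdots \times T_n) = S$. The shuffle formula for $\omega_n$ derived in the proof of Theorem~\ref{thm:main} shows that $\omega_n(f_1 * \cdots * f_n)$ is supported on the $n!$ summands $M_{(T_{\sigma(1)}, \ldots, T_{\sigma(n)})}$, $\sigma \in S_n$. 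By commutativity of Witt addition, every such tuple again satisfies $p(T_{\sigma(1)} \times \cdots \times T_{\sigma(n)}) = S$; combined with the birationality of $\aen\colon \Sym(\Sigma)\to\frak S$ from Proposition~\ref{prop:embedding-sym-sigma}, this implies that the scheme map $T_{\sigma(1)} \times \cdots \times T_{\sigma(n)} \to S$ is dominant, so the induced ring map $k[S] \to k[T_{\sigma(1)} \times \cdots \times T_{\sigma(n)}]$ is injective.

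To conclude, any nonzero $f \in k[S]$ acts on each relevant summand $M_{(S_1,\ldots,S_n)}$ as multiplication by a nonzero element of the field $k(S_1 \times \cdots \times S_n)$, hence as an injective endomorphism. Therefore each such summand, and in turn the direct sum of summands that $\omega_n(H(S))$ meets, is a torsion-free $k[S]$-module. By the injectivity of $\omega_n|_{H(S)}$, $H(S)$ is itself torsion-free over $k[S]$. The main obstacle I anticipate is the second step: controlling exactly which summands $M_{(S_1,\ldots,S_n)}$ carry the image of $\omega_n(H(S))$ and verifying dominance for all of them. This becomes routine once one couples the spectral decomposition with the explicit shuffle formula and the birational identification of $S$ with a component of $\Sym^\nu(\Sigma)$, but it is the only nonformal ingredient.
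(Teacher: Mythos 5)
Your proposal is correct and takes essentially the same route as the paper: embed $H(S)$ via $\omega_n$ into $\bigoplus_{\sigma}\bigl(H(S_{\sigma(1)})\otimes\cdots\otimes H(S_{\sigma(n)})\bigr)_{\rat}$, use that this map intertwines the $k[S]$-actions (via Proposition \ref{prop:hall-mult-hecke-comult}), and observe that the target is torsion free over $k[S]$. The only difference is that the paper disposes of the last point with ``by definition of $(H_1^{\otimes n})_{\rat}$'', while you spell out why each map $k[S]\to k[S_{\sigma(1)}\times\cdots\times S_{\sigma(n)}]$ is injective (dominance via commutativity of $\boxplus$ and the birational, closed $\aen$); conversely, you should state explicitly the $A$-equivariance of $\omega_n$, which is what lets you transfer torsion-freeness back to $H(S)$.
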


\begin{proof} Indeed, if $S=(S_1, \ldots, S_n) \in \pi_0(\frak S^n)$ 
then $\omega_n$ restricts to an embedding
\begin{equation}\label{E:torsionfree}
\omega_n~: H(S) \to \bigoplus_{\sigma} \big( H(S_{\sigma(1)}) \otimes \cdots \otimes H(S_{\sigma(n)})\big)_{\rat}
\end{equation}
where the sum ranges over all permutations $\sigma$ of $\{1, \ldots, n\}$. By proposition~\ref{prop:hall-mult-hecke-comult},
$\omega_n$ is a morphism of $A$-modules, where $A$ acts on $(H_1)^{\otimes n}_{\rat}$ via $\Delta^{(n-1)}$. In particular,
the restriction of $\omega_n$ to $H(S)$ is a morphism of $k[S]$-modules, where $k[S]$ acts on the right hand side of (\ref{E:torsionfree})
via the embedding 
$$k[S] \to \bigoplus_{\sigma} k[S_{\sigma(1)}] \otimes \cdots 
\otimes k[S_{\sigma(n)}]$$ which is the composition
of $\Delta^{(n-1)}$ and the projections $A^{(r_i)} \to k[S_i]$.  
It remains to observe that this action of $k[S]$ on the right hand side of
(\ref{E:torsionfree}) is torsion free by definition of $(H_1^{\otimes n})_{\rat}$.
\end{proof}

\vfill\eject

\vskip 1cm

Authors' addresses:

\begin{itemize}
\item[] M.K.: Department of Mathematics, Yale University, 10 Hillhouse Avenue, New Haven CT 06520 USA, 
email: {\tt mikhail.kapranov@yale.edu}

\item[] O. S.: D\'epartement de Math\'ematiques,
B\^atiment 425, 
Facult\'e des Sciences d'Orsay, 
Universit\'e Paris-Sud 11
F-91405 Orsay Cedex, France, email: {\tt  olivier.schiffmann@gmail.com}

\item[] E. V.:  Institut de Math\'ematiques de Jussieu, UMR 7586,  Universit\'e Paris-7 Denis Diderot, 
UFR de Math\'ematiques, Case 7012, 75205 Paris Cedex 13, France, email:
{\tt vasserot@math.jussieu.fr}

\end{itemize}

\end{document}